\newcommand\myurl[1]{\url{#1}}
\newenvironment{customthm}[1]
  {\innercustomthm}
  {\endinnercustomthm}
\newenvironment{customprop}[1]
  {\innercustomprop}
  {\endinnercustomprop}
  \newenvironment{customlem}[1]
    {\innercustomlem}
    {\endinnercustomlem}
\newtheorem{thm}{Theorem}[section]
\newtheorem{prop}[thm]{Proposition}
\newtheorem{cor}[thm]{Corollary}
\newtheorem{lem}[thm]{Lemma}
\newtheorem{sublem}[thm]{Sublemma}
\theoremstyle{definition}
\newtheorem{define}[thm]{Definition}
\theoremstyle{remark}
\newtheorem{rem}[thm]{Remark}
\newtheorem{question}[thm]{Question}
\newcommand{\ve}[1]{\boldsymbol{\mathbf{#1}}}
\newcommand{\R}{\mathbb{R}}
\newcommand{\Z}{\mathbb{Z}}
\newcommand{\Q}{\mathbb{Q}}
\renewcommand{\d}{\partial}
\renewcommand{\subset}{\subseteq}
\renewcommand{\tilde}{\widetilde}
\renewcommand{\bar}{\overline}
\renewcommand{\hat}{\widehat}
\newcommand{\iso}{\cong}
\DeclareMathOperator{\cotr}{{cotr}}
\DeclareMathOperator{\Cone}{{Cone}}
\DeclareMathOperator{\gr}{{gr}}
\DeclareMathOperator{\Hom}{{Hom}}
\DeclareMathOperator{\id}{{id}}
\DeclareMathOperator{\im}{{im}}
\DeclareMathOperator{\Mod}{{mod}}
\DeclareMathOperator{\Pin}{{Pin}}
\DeclareMathOperator{\Spin}{{Spin}}
\DeclareMathOperator{\Span}{{Span}}
\DeclareMathOperator{\Sym}{{Sym}}
\DeclareMathOperator{\Tors}{{Tors}}
\DeclareMathOperator{\tr}{{tr}}
\DeclareMathOperator{\Mor}{{Mor}}
\newcommand{\bF}{\mathbb{F}}
\newcommand{\bK}{\mathbb{K}}
\newcommand{\bL}{\mathbb{L}}
\newcommand{\bS}{\mathbb{S}}
\newcommand{\bT}{\mathbb{T}}
\newcommand{\bU}{\mathbb{U}}
\newcommand{\cA}{\mathcal{A}}
\newcommand{\cC}{\mathcal{C}}
\newcommand{\cF}{\mathcal{F}}
\newcommand{\cG}{\mathcal{G}}
\newcommand{\cH}{\mathcal{H}}
\newcommand{\cL}{\mathcal{L}}
\newcommand{\cM}{\mathcal{M}}
\newcommand{\cN}{\mathcal{N}}
\newcommand{\cR}{\mathcal{R}}
\newcommand{\cT}{\mathcal{T}}
\newcommand{\frI}{\mathfrak{I}}
\newcommand{\frs}{\mathfrak{s}}
\newcommand{\frt}{\mathfrak{t}}
\newcommand{\cCFL}{\cC\!\cF\!\cL}
\newcommand{\cHFL}{\cH\!\cF\!\cL}
\newcommand{\CF}{\mathit{CF}}
\newcommand{\HF}{\mathit{HF}}
\newcommand{\CFI}{\mathit{CFI}}
\newcommand{\CFK}{\mathit{CFK}}
\newcommand{\CFL}{\mathit{CFL}}
\newcommand{\HFI}{\mathit{HFI}}
\newcommand{\HFL}{\mathit{HFL}}
\newcommand{\PD}{\mathit{PD}}
\newcommand{\xs}{\ve{x}}
\newcommand{\ys}{\ve{y}}
\newcommand{\zs}{\ve{z}}
\newcommand{\ws}{\ve{w}}
\newcommand{\ps}{\ve{p}}
\newcommand{\qs}{\ve{q}}
\newcommand{\as}{\ve{\alpha}}
\newcommand{\bs}{\ve{\beta}}
\newcommand{\gs}{\ve{\gamma}}
\newcommand{\ds}{\ve{\delta}}
\title{Connected sums and involutive knot Floer homology}
\author{Ian Zemke}
\address{Department of Mathematics\\Princeton University\\  Princeton, NJ 08544, USA}
\email{izemke@math.princeton.edu}
\thanks{This research was supported by NSF grant DMS-1703685}
\begin{document}

	\begin{abstract}
		 We prove a formula for the conjugation action on the knot Floer complex of the connected sum of two knots. Using the formula we construct a homomorphism from the smooth concordance group to an abelian group consisting of chain complexes with homotopy automorphisms,  modulo an equivalence relation. Using our connected sum formula, we perform some example computations of Hendricks and Manolescu's involutive invariants on large surgeries of connected sums of knots.
		\end{abstract}
\maketitle%
\tableofcontents

\section{Introduction}

Heegaard Floer homology is an invariant associated to 3-manifolds, introduced by Ozsv\'{a}th and Szab\'{o} \cite{OSDisks} \cite{OSProperties}. To a 3-manifold $Y$ with a $\Spin^c$ structure $\frs$, Ozsv\'{a}th and Szab\'{o} construct modules $\HF^-(Y,\frs)$ over the ring $\Z[U]$. In this paper we will work over $\bF_2[U]$. The module $\HF^-(Y,\frs)$ is the homology of a chain complex $\CF^-(Y,\frs)$. There is a natural conjugation action on the set of $\Spin^c$ structures, which is reflected on the Heegaard Floer complexes by a chain homotopy equivalence
\[
\iota\colon \CF^-(Y,\frs)\to \CF^-(Y,\bar{\frs}).
\] When $\frs$ is self conjugate, i.e. $\bar{\frs}=\frs$, $\iota$ induces a homotopy involution.

 Hendricks and Manolescu construct a refinement of Heegaard Floer homology, called \emph{involutive Heegaard Floer homology} \cite{HMInvolutive}. They define a module $\HFI^-(Y,\frs)$ over $\bF_2[U,Q]/(Q^2)$ as the homology of the mapping cone
\[
\CFI^-(Y,\frs):=\Cone\big(\CF^-(Y,\frs)\xrightarrow{Q\cdot(\id+\iota)} Q\cdot \CF^-(Y,\frs)\big),
\] 
where $Q$ is a formal variable.  They define two correction terms, $\bar{d}$ and $\underline{d}$, analogous to the Ozsv\'{a}th-Szab\'{o} $d$ invariant from Heegaard Floer homology \cite{OSIntersectionForms}.

An important property of Heegaard Floer homology is that it satisfies a K\"{u}nneth formula for connected sums.  If $Y_1$ and $Y_2$ are two connected 3-manifolds, Ozsv\'{a}th and Szab\'{o}  \cite{OSProperties} construct a quasi-isomorphism
\[
\CF^-(Y_1\# Y_2, \frs_1\# \frs_2)\iso \CF^-(Y_1,\frs_1)\otimes_{\bF_2[U]} \CF^-(Y_2,\frs_2).
\] 
In \cite{HMZConnectedSum}, the behavior of the conjugation involution $\iota$  on connected sums is described in terms of the above quasi-isomorphism, giving a version of the K\"{u}nneth theorem for involutive Heegaard Floer homology.

Knot Floer homology is a refinement of Heegaard Floer homology for knots embedded in 3-manifolds, introduced by Ozsv\'{a}th and Szab\'{o} \cite{OSKnots} and independently by Rasmussen \cite{RasmussenKnots}. Link Floer homology is a generalization of knot Floer homology for links in 3-manifolds, developed by Ozsv\'{a}th and Szab\'{o} \cite{OSLinks}. There is a similar conjugation action on knot Floer homology, and also a version of the K\"{u}nneth theorem for knot Floer homology.

 In this paper we consider an analogous conjugation action on knot Floer homology, its form on connected sums of knots, and more generally the interaction between the conjugation action and maps induced by decorated link cobordisms.

To an oriented knot $\bK=(K,p,q)$, decorated with two basepoints, in a 3-manifold $Y$ equipped with a $\Spin^c$ structure $\frs\in \Spin^c(Y)$, we consider a chain complex
\[
\cCFL^\infty(Y,\bK,\frs),
\] 
over the ring
 \[
 \cR=\bF_2[U,V,U^{-1},V^{-1}],
 \]
 the Laurent polynomial ring generated by two variables. This is a slight variation of the full knot Floer complex described by Ozsv\'{a}th and Szab\'{o} \cite{OSKnots}. The chain complex $\cCFL^\infty(Y,\bK,\frs)$ has a filtration over $\Z\oplus \Z$.
 If $K$ is null-homologous and $\frs$ is a self conjugate $\Spin^c$ structure, Hendricks and Manolescu \cite{HMInvolutive} consider a conjugation map
\begin{equation}
\iota_K\colon\cCFL^\infty(Y,\bK, \frs)\to \cCFL^\infty(Y,\bK,\frs).\label{eq:iotadef}
\end{equation}  Unlike the map $\iota$ on $\CF^-(Y,\frs)$, the map $\iota_K$ is not a homotopy involution. Instead, the map $\iota_K$ satisfies
\[
\iota_K^2\simeq \rho_*
\]
 where $\rho_*$ is the map induced by the diffeomorphism $\rho\colon(Y,K,p,q)\to (Y,K,p,q)$ obtained by twisting the knot $K$ in one full twist, in the direction of its orientation. The diffeomorphism map $\rho_*$ is considered in \cite{SarkarMovingBasepoints} and \cite{ZemQuasi}. Over $\bF_2,$ however, one has $\iota^4_K\simeq \id.$

\subsection{The conjugation map on connected sums of knots}

Our first result is the chain homotopy type of the map $\iota_K$ on connected sums of knots:

\begin{thm}\label{thm:B}Suppose that $(Y_1,\bK_1)$ and $(Y_2,\bK_2)$ are two pairs of 3-manifolds with embedded, doubly based, null-homologous knots, and  $\frs_1$ and $\frs_2$ are self-conjugate $\Spin^c$ structures on $Y_1$ and $Y_2$. Writing $\bK_1\# \bK_2$ for the connected sum, with exactly two basepoints, there are filtered, $\cR$-equivariant chain homotopy equivalences between
\[
\cCFL^\infty(Y_1,\bK_1,\frs_1)\otimes_{\cR}\cCFL^\infty(Y_2,\bK_2,\frs_2)\qquad \text{and} \qquad \cCFL^\infty(Y_1\# Y_2,\bK_1\# \bK_2,\frs_1\# \frs_2)
\]
 which intertwine $\iota_{K_1\# K_2}$, on the latter chain complex, and
\[
(\id|\id+\Phi_1|\Psi_2)(\iota_{K_1}| \iota_{K_2})
\] 
on the tensor product complex, for  endomorphisms $\Phi_i$ and $\Psi_i$ of $\cCFL^\infty(Y_i,\bK_i,\frs_i)$. There are also (different) chain homotopy equivalences of the two above complexes which intertwine  $\iota_{K_1\# K_2}$ with
\[
(\id|\id+\Psi_1|\Phi_2)(\iota_{K_1}| \iota_{K_2}).
\]
\end{thm}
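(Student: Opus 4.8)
The plan is to reprise the proof of the first assertion of the theorem, but with the mirror-image choice in the construction of the connected-sum cobordism, which interchanges the roles of the two basepoints and hence of the operators $\Phi$ and $\Psi$.

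First recall the structure of the argument for the first formula. One realizes $\bK_1\#\bK_2$ by an explicit Heegaard diagram built from doubly-pointed diagrams $\cH_1$ and $\cH_2$ for $(Y_1,\bK_1)$ and $(Y_2,\bK_2)$: form the connected sum of the diagrams, perform the band move realizing $K_1\#K_2$, and arrange the basepoints so that exactly two survive. On complexes this recovers the knot Floer K\"unneth chain homotopy equivalence with $\cCFL^\infty(Y_1,\bK_1,\frs_1)\otimes_{\cR}\cCFL^\infty(Y_2,\bK_2,\frs_2)$; the price of reducing to two basepoints is a composition of Zemke's quasi-stabilization maps localized near the connect-sum region, whose homotopy type is governed by the basepoint-action operators $\Phi$ and $\Psi$. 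One then computes $\iota_{K_1\#K_2}$: by naturality it is the flip of the connect-sum diagram followed by a return sequence of Heegaard moves; the flip is tautologically the product of the flips of $\cH_1$ and $\cH_2$, but with the $w$- and $z$-basepoints interchanged, and the return sequence must in particular drag the surviving basepoints back across the connect-sum region. Bookkeeping the quasi-stabilization maps encountered along this return sequence produces the correction $\id|\id+\Phi_1|\Psi_2$.

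For the final statement I would run the same argument with the connect-sum band — equivalently, the basepoint dragged across the connect-sum sphere — placed on the opposite side, so that the return sequence drags the $z$-type basepoint of the first summand and the $w$-type basepoint of the second summand across their connect-sum regions, rather than the reverse. The K\"unneth step is unaffected, so one lands in the same tensor product complex, but now $\Phi$ and $\Psi$ trade places on the two factors and the correction becomes $\id|\id+\Psi_1|\Phi_2$. The equivalence so obtained is again filtered and $\cR$-equivariant, since the K\"unneth equivalence, the quasi-stabilization maps, and the operators $\Phi,\Psi,\iota_{K_i}$ all carry the relevant (skew-)equivariance; and it is genuinely different from the first, differing from it by the conjugation maps $\iota_{K_1}|\iota_{K_2}$. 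This last point is consistent with a formal check: using $\Phi_i^2\simeq 0\simeq\Psi_i^2$ and $\iota_{K_i}\Phi_i\simeq\Psi_i\iota_{K_i}$ (the latter reflecting that conjugation exchanges $w$ and $z$), one sees that $(\id|\id+\Phi_1|\Psi_2)(\iota_{K_1}|\iota_{K_2})$ and $(\id|\id+\Psi_1|\Phi_2)(\iota_{K_1}|\iota_{K_2})$ are conjugate by $\iota_{K_1}|\iota_{K_2}$; but since that conjugating map is $\cR$-semilinear it cannot be used directly to produce the required $\cR$-equivariant equivalence, which is why one redoes the construction rather than merely conjugating.

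The main obstacle, exactly as for the first formula, is the identification of $\iota_{K_1\#K_2}$ with the stated explicit expression: a priori the return map in its definition is an uncontrolled zig-zag of Heegaard moves, and one must show it is chain homotopic to the composition of quasi-stabilization maps dictated by the geometry, while also verifying that the flip of the connect-sum diagram is compatible with the flips of the two factors. I would handle this by localizing at the connect-sum region: reduce to a standard genus-one doubly-pointed local model in which the flip, the quasi-stabilization maps, and the basepoint-moving maps can all be computed by hand; use the functoriality of the link-cobordism maps together with the known commutation of basepoint-action operators with Heegaard moves to carry along the rest of the diagram unchanged; and then propagate to arbitrary diagrams by naturality.
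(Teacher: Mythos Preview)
Your high-level picture is right in one respect: the two formulas do come from two different choices of where the connect-sum band sits relative to the basepoints, and the paper indeed defines two different chain homotopy equivalences $G_1,G_2$ corresponding to these two configurations, with the second statement obtained by ``an easy modification'' of the argument for the first.

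However, your proposed mechanism for producing the correction term is not the paper's, and as written it has a real gap. You assert that ``bookkeeping the quasi-stabilization maps encountered along this return sequence produces the correction $\id|\id+\Phi_1|\Psi_2$,'' and later that you would handle the main obstacle by a local genus-one model computation. But this is exactly the hard part, and you have not said what identity forces the extra $\Phi_1|\Psi_2$ summand to appear. The paper does \emph{not} compute $\iota_{K_1\#K_2}$ by tracking a sequence of Heegaard moves and basepoint drags. Instead it works entirely in the link cobordism TQFT: the equivalence $G_1$ is a link cobordism map for a pair-of-pants with a specific dividing set; one composes with the half-twist $\tau_{K_1\#K_2}$ to get another decorated pair-of-pants; and then the key step is the \emph{bypass relation} (Lemma~\ref{thm:D}), which says that three dividing sets differing by a local bypass move have cobordism maps summing to zero. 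Applying a single bypass to the dividing set for $\tau_{K_1\#K_2}\circ G_1$ produces a three-term relation whose other two terms are identified, via Lemma~\ref{lem:cobordismsforPhiPsi} (the interpretation of $\Phi,\Psi$ as cobordism maps for specific dividing sets), as $F_{W,\bar{\cF}_1}\circ(\tau_{K_1}|\tau_{K_2})$ and $F_{W,\bar{\cF}_1}\circ(\Psi_{p_1}\tau_{K_1}|\Phi_{q_2}\tau_{K_2})$. One then composes with $\eta$ and invokes Theorem~\ref{thm:C} (conjugation invariance of the cobordism maps) to convert the $\tau$'s and $\eta$'s into $\iota_K$'s. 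There is no local holomorphic-triangle computation and no explicit return sequence of Heegaard moves.

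So the missing ingredients in your proposal are precisely the bypass relation and the conjugation-invariance theorem for link cobordism maps. Your suggestion to ``localize at the connect-sum region'' and compute in a model could perhaps be made to work, but it would amount to reproving these two results in a special case, and you have not indicated how the three-term structure would emerge from such a computation.
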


In the above theorem,  the vertical bar $|$ denotes the tensor product of maps. The maps $\Phi_{i}$ and $\Psi_{i}$ appear frequently in the link Floer TQFT \cite{JCob} \cite{ZemCFLTQFT}, however they appeared earlier in work of Sarkar on diffeomorphism maps on knot Floer homology \cite{SarkarMovingBasepoints}. Using our formulation, they can algebraically be described as the formal derivatives of the differential with respect to $U$ or $V$, respectively. Notably, the maps $\Phi_i$ and $\Psi_i$ are the link cobordism maps for the decorated link cobordisms shown in Figure \ref{fig::17}. This interpretation of the maps $\Phi_i$ and $\Psi_i$ in terms of decorated link cobordisms turns out to be useful in our proof of Theorem \ref{thm:B}. The maps $\Phi_i$ and $\Psi_i$ also have a simple description in terms of counting holomorphic disks; see Section \ref{sec:PhiPsi}.

\begin{figure}[ht!]
\centering
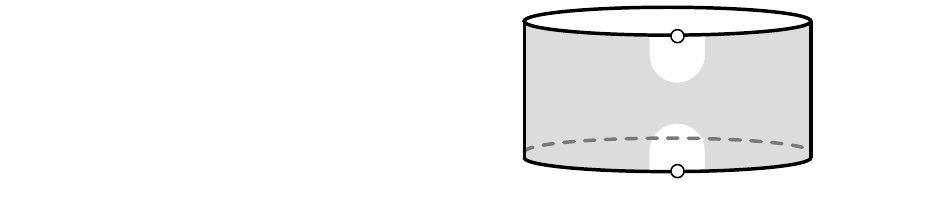
\caption{\textbf{Decorated link cobordisms for the endomorphisms $\Phi$ and $\Psi$.} The underlying undecorated link cobordism is $[0,1]\times K\subset [0,1]\times Y$.\label{fig::17}}
\end{figure}

\subsection{Conjugation invariance of the link Floer TQFT}

To arrive at Theorem \ref{thm:B}, we will study the interaction between the link Floer TQFT and the conjugation action more generally. If $\bL=(L,\ve{p},\ve{q})$ is an oriented link in $Y$ with collections of basepoints $\ve{p}$ and $\ve{q}$ in $L$ which alternate between $\ve{p}$ and $\ve{q}$ as one travels along $L$, we consider a  version of the full link Floer complex, which we denote by
\[
\cCFL^\infty(Y,\bL,\frs).
\] This is a module over $\cR=\bF_2[U,V,U^{-1},V^{-1}]$, and has a natural filtration by $\Z\oplus \Z$. 

We note that the multi-based knots $(L,\ps,\qs)$ and $(L,\qs,\ps)$ are not equal, in the framework of link Floer homology. There is a tautological conjugation action on link Floer homology
\[
\eta\colon\cCFL^\infty(Y,L,\ve{p},\ve{q},\frs)\to \cCFL^\infty(Y,L,\ve{q},\ve{p},\bar{\frs}+\PD[L]),
\] which is filtered and $\cR$-equivariant.  The map $\eta$ canonically squares to the identity, but is not an involution since it maps between the link Floer complexes of two different based links.

When $\bar{\frs}=\frs$ and  $\bK=(K,p,q)$ is a doubly based knot which is null-homologous, the endomorphism $\iota_K$ from Equation~\eqref{eq:iotadef} is defined as the composition
\[
\iota_K:=(\tau_K)_*\circ \eta,
\]
 where $\tau_K$ is the diffeomorphism associated to a half twist of $K$, in the direction of the orientation of $K$.

The chain homotopy equivalences appearing in Theorem \ref{thm:B} are the link cobordism maps for certain decorated link cobordisms, using the link cobordism maps from \cite{ZemCFLTQFT}.   The first step towards proving Theorem \ref{thm:B} is to analyze the interaction between the link cobordism maps from \cite{ZemCFLTQFT} and the conjugation map $\eta$. The maps from \cite{ZemCFLTQFT} use the following notion of  decorated link cobordism  (adapted from \cite{JCob}*{Definition~4.5}):

\begin{define}\label{def:decoratedlinkcob} We say a pair $(W,\cF)$ is  a \emph{decorated link cobordism}  between two 3-manifolds with multi-based links, and write $(W,\cF)\colon(Y_1,L_1,\ps_1,\qs_1)\to (Y_2,L_2,\ps_2,\qs_2),$ if the following are satisfied:
\begin{enumerate}
\item $W$ is a 4-dimensional cobordism from $Y_1$ to $Y_2$.
\item $\cF=(S, \cA)$, where $ S $ is an oriented surface, properly embedded in $W$, with $\d  S =-L_1\sqcup L_2$.
\item $\cA\subset  S $ is a properly embedded 1-manifold dividing $ S $ into two disjoint subsurfaces $S_{\ve{w}}$ and $ S_{\ve{z}}$, which meet along $\cA$.
\item Each component of $L_i\setminus \cA$ contains exactly one basepoint.
\item  $\ve{p}_1\cup \ve{p}_2\subset S_{\ve{w}}$ and $\ve{q}_1\cup \ve{q}_2\subset  S_{\ve{z}}$.
\end{enumerate}
\end{define}

 As a first step to proving Theorem \ref{thm:B}, we prove that the link cobordism maps from \cite{ZemCFLTQFT} satisfy a version of conjugation invariance similar to the one satisfied by the cobordism maps of Ozsv\'{a}th and Szab\'{o} \cite{OSTriangles}*{Theorem~3.6}.

\begin{thm}\label{thm:C}Suppose that $(W,\cF)\colon (Y_1,L_1,\ps_1,\qs_1)\to (Y_2,L_2,\ps_2,\qs_2)$ is a decorated link cobordism, and let $(W,\bar{\cF})\colon (Y_1,L_1,\qs_1,\ps_1)\to (Y_2,L_2,\qs_2,\ps_2)$ denote the cobordism obtained by switching the roles of $ S_{\ve{w}}$ and $ S_{\ve{z}}$. Write $\frs_i=\frs|_{Y_i}$. The following diagram commutes up to filtered, $\cR$-equivariant chain homotopy:
\[
\begin{tikzcd}\cCFL^\infty(Y_1,L_1,\ps_1,\qs_1,\frs_1)\arrow{d}{F_{W,\cF,\frs}}\arrow{r}{\eta}& \cCFL^\infty(Y_1,L_1,\qs_1,\ps_1,\bar{\frs}_1+\PD[L_1])\arrow{d}{F_{W,\bar{\cF},\bar{\frs}+\PD[ S]}}\\
\cCFL^\infty(Y_2,L_2,\ps_2,\qs_2,\frs_2) \arrow{r}{\eta} & \cCFL^\infty(Y_2,L_2,\qs_2,\ps_2,\bar{\frs}_2+\PD[L_2]).
\end{tikzcd}
\]
\end{thm}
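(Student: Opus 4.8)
The plan is to reduce the statement to the level of elementary decorated link cobordisms, together with a conjugation-invariance statement for holomorphic triangle maps, using that on the chain level $\eta$ is realized by passing to the conjugate Heegaard diagram.

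Recall that $F_{W,\cF,\frs}$ is defined in \cite{ZemCFLTQFT} by decomposing $(W,\cF)$ into elementary decorated link cobordisms --- births and deaths of unknot components, band (saddle) cobordisms, free (de)stabilizations, the cobordisms realizing isotopies of the dividing set $\cA$, and the four-dimensional $k$-handle attachments for $k=1,2,3$ --- choosing compatible Heegaard data, composing the associated elementary maps, and then appealing to the fact that the resulting map is independent of all choices. The conjugate cobordism $(W,\bar\cF)$ admits the same decomposition with the roles of $S_{\ve{w}}$ and $S_{\ve{z}}$ interchanged throughout, and, writing $S=\bigcup_j S_j$ for the induced decomposition of the surface, $\bar\frs+\PD[S]$ restricts on each piece $W_j$ to $\bar\frs|_{W_j}+\PD[S_j]$, by additivity of Poincar\'e duality and compatibility of conjugation with restriction. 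Granting this, it suffices to prove the commuting square of the statement for each elementary cobordism and then compose.

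For the elementary cobordisms supported in a fixed $Y$ --- births, deaths, bands, free (de)stabilizations, and isotopies of $\cA$ --- the associated maps are defined essentially combinatorially from a Heegaard diagram, either as identity-like maps on generators or as counts of index-one disks (the latter including the maps that produce the $\Phi$ and $\Psi$ endomorphisms). Passing to the conjugate diagram at both ends, which is precisely the chain-level model for $\eta$, and interchanging the $\ve{w}$- and $\ve{z}$-labels, one sees that the conjugated data computes exactly the corresponding elementary map for $\bar\cF$, so the square commutes on the nose, or up to the standard naturality homotopies (which are filtered and $\cR$-equivariant). The one- and three-handle maps are handled the same way. The genuinely analytic input is the two-handle map, which is a sum of holomorphic triangle maps $F_{\cT,\frt}$; here one needs $\eta\circ F_{\cT,\frt}\simeq F_{\bar\cT,\bar\frt}\circ\eta$ for the conjugate triple $\bar\cT$ and suitably conjugated $\Spin^c$ structure --- the decorated-link analogue of \cite{OSTriangles}*{Theorem~3.6}. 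This is again proved via the conjugate diagram: $\bar\cT$ with a conjugated almost complex structure counts the same holomorphic triangles as $\cT$, with $\Spin^c$ structures replaced by their conjugates, and one checks that the powers of $U$ and $V$ recording basepoint multiplicities correspond under $\ve{w}\leftrightarrow\ve{z}$.

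Composing the elementary squares along the chosen decomposition, and invoking the decomposition-independence of the link cobordism maps from \cite{ZemCFLTQFT}, gives the theorem, the total homotopy being assembled from the elementary ones and hence filtered and $\cR$-equivariant. The main obstacle I expect is not any individual elementary case --- each is either tautological or an instance of the known triangle-map symmetry --- but the careful bookkeeping: verifying that the decorations, colorings, and (relative) $\Spin^c$ structures on the two sides genuinely match piece-by-piece under $\eta$ and $S_{\ve{w}}\leftrightarrow S_{\ve{z}}$, and that the $\PD[\,\cdot\,]$ corrections telescope so that the composite computes $F_{W,\bar\cF,\bar\frs+\PD[S]}$ exactly, rather than that map twisted by a residual cohomology class.
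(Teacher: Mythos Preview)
Your overall strategy---decompose $(W,\cF)$ into elementary pieces and verify the square for each---is exactly what the paper does, and the composition-law argument at the end is correct. However, there are two genuine gaps in your treatment of the elementary pieces.

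First, the band (saddle) maps are \emph{not} ``essentially combinatorial'' or counts of index-one disks: they are holomorphic triangle maps $F_B^{\ws}(\xs)=F_{\as',\as,\bs}(\Theta^{\zs}_{\as',\as},\xs)$ and $F_B^{\zs}(\xs)=F_{\as',\as,\bs}(\Theta^{\ws}_{\as',\as},\xs)$, distinguished by which top-graded generator is inserted. The mechanism by which $\eta$ intertwines $F_B^{\ws}$ and $F_B^{\zs}$ is that $\eta$ swaps the $\gr_{\ws}$- and $\gr_{\zs}$-gradings and hence sends $\Theta^{\ws}_{\as',\as}$ to $\Theta^{\zs}_{\bar\as,\bar\as'}$; this is the same triangle-map symmetry you invoke for the $2$-handle, together with the same $\Spin^c$ shift. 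The analogous swap $\theta^{\ws}\leftrightarrow\theta^{\zs}$ is what makes $\eta$ exchange the quasi-stabilization maps $S_{p,q}^{\pm}$ and $T_{p,q}^{\pm}$; your phrase ``commutes on the nose'' obscures that the map on the $\bar\cF$ side is a \emph{different} elementary map from the one on the $\cF$ side. Second, the $\Spin^c$ bookkeeping you flag as the main obstacle is resolved by a concrete formula you do not supply: for a link Heegaard triple, $\frs_{\ps}(\psi)-\frs_{\qs}(\psi)=\PD[S_{\as,\bs,\gs}]$ (the paper's Lemma~3.2), and under the inclusion $X_{\as',\as,\bs}\hookrightarrow W$ this class pulls back from $\PD[S]$. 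This is what makes the $\PD[\,\cdot\,]$ corrections telescope correctly rather than leaving a residual class. Finally, you omit the case where a component of $S$ misses $Y_1$ or $Y_2$; the paper handles this separately by puncturing along $\cA$ and checking that $\eta$ commutes with the $0$- and $4$-handle maps.
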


\subsection{A bypass relation for link Floer homology}

To prove Theorem \ref{thm:B}, we will need to relate the maps induced by three non-isotopic dividing sets on a pair-of-pants cobordism. Such relations turn out to be quite common in the link Floer TQFT, and can be conveniently encoded in the following result:

\begin{lem}\label{thm:D}Suppose $(W,\cF_1)$, $(W,\cF_2)$ and $(W,\cF_3)$ are decorated link cobordisms from $(Y_1,\bL_1)$ to $(Y_2,\bL_2)$, which form a bypass triple, as in Figure \ref{fig::33}. Then
\[
F_{W,\cF_1,\frs}+F_{W,\cF_2,\frs}+F_{W,\cF_3,\frs}\simeq 0.
\]
\end{lem}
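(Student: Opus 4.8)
The plan is to reduce the global statement to a purely local one about dividing curves on a disk, and then establish that local relation by hand using a neck-stretching / change-of-diagram argument. First I would recall that a bypass triple $(W,\cF_1)$, $(W,\cF_2)$, $(W,\cF_3)$ consists of three decorated surfaces that agree outside a small ball (or more precisely, outside a neighborhood of a properly embedded disk $D\subset S$ meeting $\cA$ in an arc), and differ inside that ball exactly as the three standard resolutions of a bypass attachment differ — this is the content of Figure \ref{fig::33}. So by the composition law for the link cobordism maps from \cite{ZemCFLTQFT}, each $F_{W,\cF_i,\frs}$ factors as $G \circ f_i \circ G'$, where $G$ and $G'$ are the cobordism maps for the two pieces of $(W,\cF)$ outside the ball (common to all three), and $f_i$ is the map induced by a decorated cylinder $[0,1]\times(Y', L')$ whose dividing set realizes the $i$-th local picture. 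It therefore suffices to prove $f_1 + f_2 + f_3 \simeq 0$ for these three elementary cobordism maps.

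Next I would identify the three elementary maps $f_i$ concretely. Up to the reductions above, the relevant local cobordism is a product $[0,1]\times S^3$ containing a product annulus $[0,1]\times U$ (an unknotted, unlinked circle, or more generally a small arc of the link), with a dividing set that differs from the product dividing set by a single "finger" — i.e., $f_i$ is obtained from the identity cobordism by pushing the dividing curve across one basepoint, or by a birth/death of a small dividing circle, in one of three configurations. Each such move is, by the structural results of \cite{ZemCFLTQFT}, equal to one of the elementary maps in the link Floer TQFT: the maps $\Phi$, $\Psi$, a stabilization/destabilization map, or the identity. I would then recognize the three configurations of the bypass triple as yielding, respectively, (roughly) $\id$, $\Phi\Psi$ (or $\Psi\Phi$), and a third map, whose sum is forced to vanish; concretely the cleanest route is probably to observe that two of the three dividing sets are isotopic rel the basepoint data to configurations whose cobordism maps are literally equal, so that in $\bF_2$-coefficients their sum is zero and the third map is itself null-homotopic. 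A second, more robust route — which I would use if the isotopy bookkeeping gets delicate — is to compute all three maps on a fixed convenient Heegaard diagram for the product cobordism, where the bypass triple corresponds to three choices of arcs in the diagram, and verify the relation $\partial H + H \partial \simeq f_1 + f_2 + f_3$ for an explicit homotopy $H$ built from the interpolating isotopy between the dividing sets.

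The main obstacle will be the bookkeeping in this local reduction: one must make precise exactly which elementary TQFT maps the three local dividing configurations produce, keeping careful track of basepoints, the $S_{\ve w}$/$S_{\ve z}$ labeling, and the colors $U, V$, since a sign-or-index error there would break the cancellation. In particular I expect some care is needed to ensure the "outside the ball" pieces $G, G'$ really are identical for all three members of the triple (so that the factorization argument is valid) — this requires choosing the disk $D$ along which the bypass is performed so that it is disjoint from the rest of the dividing set and from all basepoints, which one can always arrange by definition of a bypass triple. Once the local relation $f_1 + f_2 + f_3 \simeq 0$ is in hand, the global statement follows immediately by composing with $G$ and $G'$, using that composition of cobordism maps is (up to filtered $\cR$-equivariant homotopy) well-defined and $\bF_2$-linear.
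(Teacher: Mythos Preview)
Your overall architecture is right and matches the paper: use the composition law to strip away the common part of the three cobordisms, reducing to a relation among three maps induced by dividing sets on a product cylinder $[0,1]\times L\subset [0,1]\times Y$. That reduction is exactly what the paper does.

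The gap is in your local step. Your guesses for the three local maps are off: the three configurations are \emph{not} $\id$, $\Phi\Psi$, and something else, and it is certainly not the case that two of them coincide while the third is null-homotopic (none of the three is null-homotopic in general). The correct identification, which the paper uses, is that after the localization the cylinder carries an extra pair of basepoints $p,q$, and the three dividing sets are precisely those for
\[
T_{p,q}^{+}\circ S_{p,q}^{-},\qquad S_{p,q}^{+}\circ T_{p,q}^{-},\qquad \id,
\]
i.e.\ compositions of the quasi-stabilization maps from \cite{ZemCFLTQFT} (see Figure~\ref{fig::34}). Once you see this, there is nothing to compute: from the defining formulas~\eqref{eq:quasidef1'} and~\eqref{eq:quasidef2'} (using $\theta^{\ws}=\xi^{\zs}$, $\xi^{\ws}=\theta^{\zs}$) one checks on generators that
\[
T_{p,q}^{+}S_{p,q}^{-}+S_{p,q}^{+}T_{p,q}^{-}=\id
\]
holds \emph{on the nose}, not merely up to homotopy. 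No neck-stretching, no explicit homotopy $H$, and no holomorphic curve count is needed. Your ``second, more robust route'' via a Heegaard diagram computation could in principle be made to work, but it is far more labor than the one-line algebraic check above, and your ``cleanest route'' would not work at all.
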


\begin{figure}[ht!]
	\centering
	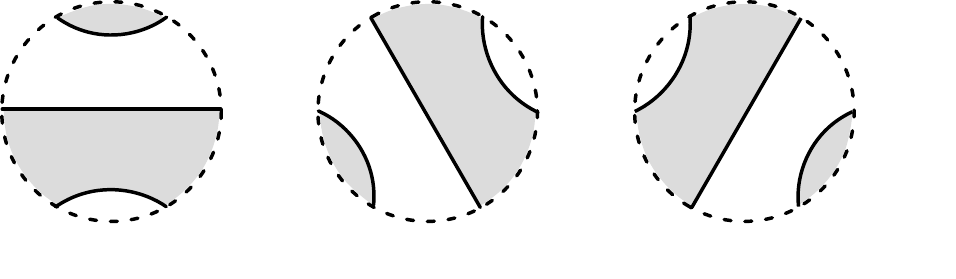
	\caption{\textbf{The bypass relation} for decorated link cobordisms $(W,\cF_1),$  $(W,\cF_2)$ and $(W,\cF_3)$ which agree outside of the region shown.\label{fig::33}}
\end{figure}

One motivation for Lemma~\ref{thm:D} is from Juh\'{a}sz's construction \cite{JCob} of cobordism maps on $\hat{\HFL}$  using the contact gluing map of Honda, Kazez and Mati\'{c} \cite{HKMTQFT}. Juh\'{a}sz uses the $S^1$-invariant contact structure on the boundary of a regular neighborhood of $ S\subset W$ which is determined by the dividing set on $ S$. The contact invariants in the sutured Floer homology of $S^1\times  S$ for a triple of contact structures fitting into a bypass triple (see \cite{HKMTQFT}*{Section~7}) satisfy a similar relation. Despite it's contact geometric inspiration, our proof of Lemma~\ref{thm:D} is a direct computation, and involves no contact geometry.

Using the bypass relation, in Section \ref{sec:bypassrelation} we give a pictorial proof of a special case of Sarkar's formula \cite{SarkarMovingBasepoints}*{Theorem~1.1} for the diffeomorphism map induced by a Dehn twist $\rho$ which twists a link component in one full twist,  when the link component has just two basepoints, $p$ and $q$. The induced diffeomorphism map satisfies the formula
\[
\rho_*\simeq \id+\Phi_p\circ \Psi_q.
\] 
Sarkar proved this on the associated graded version of the link Floer complex for links in $S^3$, using grid diagrams. The author extended the result to  the full link Floer complex for links in arbitrary 3-manifolds \cite{ZemQuasi}.

\subsection{A homomorphism from the smooth concordance group}

Using Theorem \ref{thm:B} we construct a homomorphism from the smooth concordance group $\cC$ to an algebraically defined, abelian group $\frI_K$. The construction is inspired by constructions of Hom \cite{HomEpsilon} and Stoffregen \cite{StoffregenSeifertFibered}. See also \cite{HMZConnectedSum} for a similar construction.

 In Section \ref{sec:algebraicpreliminaries}, we define a notion of an $\iota_K$-complex, which is a tuple $(C,\d,B,\iota)$, where $(C,\d)$ is a free chain complex over $\cR$ with basis $B$,  and $\iota$ is an $\cR$-skew equivariant (i.e. it switches the actions of $U$ and $V$) and $\Z\oplus\Z$-skew filtered map, which squares to $\id+\Phi\circ \Psi$.  We  define an abelian group $\frI_K$, consisting of $\iota_K$-complexes modulo a relation we call local equivalence (see Section \ref{subsec:thegroupI_K} for a precise definition). To a smooth concordance between two knots (or more generally a homology concordance; see below) the link cobordism maps from \cite{ZemCFLTQFT} induce a local equivalence. We prove the following:

\begin{thm}\label{thm:E}The set $\frI_K$ of $\iota_K$-complexes modulo local equivalence is a well defined abelian group, and the map $\cC\to \frI_K$ defined by
	\[
	K\mapsto [\cCFL^\infty(S^3,K)]
	\]
	 is a  homomorphism.
\end{thm}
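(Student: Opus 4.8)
The plan is to verify the two axioms of an abelian-group homomorphism separately: first that $\frI_K$ is an abelian group under the operation induced by $\otimes_{\cR}$, and second that $K\mapsto [\cCFL^\infty(S^3,K)]$ respects connected sum and inverses. For the group structure, I would take as given the notion of $\iota_K$-complex and of local equivalence from Section~\ref{sec:algebraicpreliminaries}. The binary operation on $\frI_K$ is $[(C_1,\iota_1)]+[(C_2,\iota_2)]:=[(C_1\otimes_{\cR}C_2,\ \iota_{12})]$, where $\iota_{12}$ is the map $(\id|\id+\Phi_1|\Psi_2)(\iota_1|\iota_2)$ dictated by Theorem~\ref{thm:B}; the first thing to check is that $\iota_{12}$ is again an $\iota_K$-complex structure, i.e. that it is $\cR$-skew-equivariant, $\Z\oplus\Z$-skew-filtered, and squares to $\id+\Phi\circ\Psi$ on the tensor product. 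The skew-equivariance and skew-filtration properties are immediate from the corresponding properties of $\iota_1,\iota_2$ together with the fact that $\Phi_i,\Psi_i$ are the $U$- and $V$-derivatives of the differential (so $\Phi$ is $U$-decreasing by one, $\Psi$ is $V$-decreasing by one, and their roles swap under the skew action). The identity $\iota_{12}^2\simeq \id+\Phi\circ\Psi$ for the tensor product is exactly the kind of computation already needed to make Theorem~\ref{thm:B} consistent, using $\Phi_i\Psi_i+\Psi_i\Phi_i\simeq 0$ and $\Phi_i^2\simeq 0\simeq\Psi_i^2$ (the basic relations among the TQFT maps), so I would cite those and sketch the bookkeeping rather than grind it out. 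Well-definedness of $+$ on equivalence classes follows because a local equivalence $C_1\to C_1'$ tensored with $\id_{C_2}$ is a local equivalence of the tensor products intertwining the $\iota_{12}$-maps — here one uses that local equivalence maps commute with $\Phi$ and $\Psi$ up to homotopy, which again comes from the link cobordism TQFT.

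Associativity and commutativity of $+$ follow from the corresponding properties of $\otimes_{\cR}$ together with the symmetry built into Theorem~\ref{thm:B}: the theorem provides \emph{two} homotopy equivalences, one intertwining $\iota_{K_1\#K_2}$ with $(\id|\id+\Phi_1|\Psi_2)(\iota_{K_1}|\iota_{K_2})$ and one with $(\id|\id+\Psi_1|\Phi_2)(\iota_{K_1}|\iota_{K_2})$; applying this with the two factors swapped shows the two candidate $\iota_K$-complex structures on $C_1\otimes C_2$ and $C_2\otimes C_1$ are locally equivalent, which gives commutativity. For the identity element I would take the class of the $\iota_K$-complex of the unknot, $\cCFL^\infty(S^3,U)\cong\cR$ with $\iota$ the identity (note $\Phi=\Psi=0$ here), and check $[\cR]+[C]=[C]$ directly. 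For inverses, the expected candidate is the "dual" complex $C^\vee$ with the dual of $\iota$; the key point is that $C\otimes_{\cR}C^\vee$ is locally equivalent to $\cR$ with its trivial $\iota$-structure, which is a duality/trace computation internal to the algebraic category — one shows the evaluation and coevaluation maps are local equivalences compatible with the $\iota_{12}$-structures.

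Granting the group structure, the map $\Phi\colon\cC\to\frI_K$, $K\mapsto[\cCFL^\infty(S^3,K)]$, is well-defined on concordance classes precisely because a concordance (or homology concordance) from $K$ to $K'$ gives a decorated link cobordism in $[0,1]\times S^3$ whose cobordism map from \cite{ZemCFLTQFT}, together with Theorem~\ref{thm:C} controlling its interaction with $\eta$ (hence with $\iota_K$), is a local equivalence $\cCFL^\infty(S^3,K)\to\cCFL^\infty(S^3,K')$; this is essentially the content promised in the paragraph before the statement, so I would reference it. That $\Phi$ is a homomorphism is then Theorem~\ref{thm:B}: $\cCFL^\infty(S^3,K_1\#K_2)$ is filtered $\cR$-chain homotopy equivalent to the tensor product with $\iota_{K_1\#K_2}$ intertwined with $(\id|\id+\Phi_1|\Psi_2)(\iota_{K_1}|\iota_{K_2})$, which is by definition the $\iota_K$-complex structure representing $[\cCFL^\infty(S^3,K_1)]+[\cCFL^\infty(S^3,K_2)]$. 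Finally the mirror $-K=m(K^r)$ maps to the inverse class, which again reduces to identifying $\cCFL^\infty(S^3,m(K^r))$ with the dual $\iota_K$-complex of $\cCFL^\infty(S^3,K)$ (a standard duality statement for knot Floer homology under mirroring, refined to track $\iota_K$ via Theorem~\ref{thm:C}) and then invoking the inverse computation from the group-structure part; alternatively, $K\#(-K)$ is slice, so its class is the identity, which forces $[\cCFL^\infty(S^3,-K)]=-[\cCFL^\infty(S^3,K)]$ directly.

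The main obstacle I anticipate is the inverse element: proving that $C\otimes_{\cR}C^\vee$ is locally equivalent to the trivial complex, \emph{compatibly with the twisted $\iota_{12}$-structure involving the $\Phi|\Psi$ correction term}. Checking that duality works at the level of plain chain complexes over $\cR$ is routine, but one must verify that the evaluation map intertwines $\iota\otimes\iota^\vee$ twisted by $\id|\id+\Phi|\Psi^\vee$ with the identity on $\cR$ up to homotopy, and this requires carefully tracking how $\Phi$ and $\Psi$ dualize and how the correction terms cancel — the same flavor of computation as verifying $\iota_{12}^2\simeq\id+\Phi\Psi$, but with duals in play. If a direct algebraic argument is awkward, the fallback is the geometric route via sliceness of $K\#(-K)$ together with the already-established homomorphism property on the monoid level, which sidesteps the explicit duality computation entirely.
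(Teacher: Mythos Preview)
Your plan matches the paper's approach closely: the group structure of $\frI_K$ is established purely algebraically (Proposition~\ref{prop:IKisagroup} via Lemmas~\ref{lem:multwelldefined}--\ref{lem:inverses2}), and the homomorphism is then deduced from Theorem~\ref{thm:B} together with the fact that concordance cobordism maps give local equivalences. A few corrections and one genuine gap are worth noting.

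\textbf{The fallback for inverses does not work.} Your primary approach to inverses---the trace/cotrace maps on $C\otimes_{\cR}C^\vee$---is exactly what the paper does (Lemma~\ref{lem:inverses2}), and it does require the $\iota$-compatibility check you anticipate. But your proposed fallback via sliceness of $K\#(-K)$ cannot substitute for it: that argument only shows that elements in the \emph{image} of $\cC$ are invertible, not that an arbitrary $\iota_K$-complex has an inverse. The statement asserts $\frI_K$ itself is a group, so you must do the algebraic duality computation. (Once you know $\frI_K$ is a group and the map is multiplicative, $[-K]\mapsto -[K]$ is automatic, so your separate mirror discussion is unnecessary.)

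\textbf{Minor inaccuracies.} The identity element $\cC_e$ has $\iota_e$ fixing $1\in\cR$ but \emph{switching} $U$ and $V$; it is skew-equivariant, not the identity map on $\cR$. Also, the fact that local-equivalence maps commute with $\Phi$ and $\Psi$ up to homotopy (Lemma~\ref{lem:equivariantmapscommutewithPhiandPsi}) is a purely algebraic consequence of $\cR$-equivariance---differentiate $\d_2 F+F\d_1=0$ with respect to $U$ or $V$---not a TQFT fact. Finally, for well-definedness on concordance classes you should make explicit why the cobordism maps are isomorphisms on homology: the paper does this by decomposing $\cCFL^\infty$ over Alexander gradings and identifying the induced map in each grading with $F^\infty_{W,\frs}$ on $\HF^\infty(S^3)$, which is the identity.
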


More generally, we can extend the above theorem to knots in integer homology spheres. One can define a group $\cC_3^\Z$ generated by pairs $(Y,K)$ where $Y$ is an integer homology sphere and $K\subset Y$ is an oriented knot. We identify two pairs $(Y_1,K_1)$ and $(Y_2,K_2)$ if there is an integer homology cobordism between $Y_1$ and $Y_2$, which contains a smoothly embedded annulus $ S$ such that $\d  S=-L_1\sqcup L_2$. Multiplication in the group is given by taking the connected sum of both the 3-manifolds and the knots. It is not hard to see that $\cC_3^\Z$ is an abelian group, where the inverse of $(Y,K)$ is given by $(-Y,-K)$.

\begin{rem}The homomorphism  $\cC\to \frI_K$ from Theorem \ref{thm:E} factors through a homomorphism $\cC_3^\Z\to \frI_K$.
\end{rem}

\subsection{Example computations}

 In Section \ref{sec:examples}, we use the above formula to compute the involutive correction terms $\bar{V}_0$ and $\underline{V}_0$ from large surgeries from \cite{HMInvolutive} for several examples of connected sums of knots. The following is a straightforward consequence of several example computations which we perform:

 \begin{prop}\label{prop:applicationsofcomp}\sloppy None of the knots $T_{4,5}\# T_{4,5}$, $T_{4,5}\# T_{4,5}\# T_{5,6}$, $T_{6,7}\# T_{6,7}$, $T_{4,5}\# T_{6,7}$, and $T_{3,4}^{-1}\# T_{4,5}^{-1}\# T_{5,6}$ are concordant to a thin knot, an $L$-space knot, or the mirror of an $L$-space knot.
 \end{prop}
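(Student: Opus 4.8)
The plan is to combine the concordance homomorphism $\cC\to\frI_K$ of Theorem~\ref{thm:E} with the explicit computations of the involutive correction terms carried out in Section~\ref{sec:examples}. The starting point is that the triple $\big(\underline{V}_0(K),\,V_0(K),\,\bar{V}_0(K)\big)$ depends only on the local equivalence class of the $\iota_K$-complex $(\cCFL^\infty(S^3,K),\iota_K)$: the ordinary $V_0$ is extracted from $\cCFL^\infty(S^3,K)$ via the large surgery formula, while $\underline{V}_0$ and $\bar{V}_0$ are extracted in the same way from the mapping cone of $Q(\id+\iota_K)$, and the filtered $\cR$-equivariant maps witnessing a local equivalence preserve exactly the data these formulas read off. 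Hence by Theorem~\ref{thm:E} this triple is a smooth concordance invariant, which moreover factors through $\cC_3^\Z$.

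Next I would record the behavior of these invariants on the three comparison families. If $K'$ is an $L$-space knot, then $\cCFL^\infty(S^3,K')$ is a staircase complex; staircases have a rigid homotopy automorphism group, which forces $\iota_{K'}$ into standard position and yields $\underline{V}_0(K')=V_0(K')=\bar{V}_0(K')$, by a computation of Hendricks--Manolescu. For a thin knot the complex is, up to local equivalence, again a staircase — the box summands are acyclic in the relevant sense and drop out — so the same equality holds. For the mirror of an $L$-space knot the complex is (co)dual to a staircase, and the same rigidity gives $\underline{V}_0(K')=V_0(K')=\bar{V}_0(K')=0$. Thus every knot in these three families, and hence every knot concordant to a member of one of them, satisfies $\underline{V}_0=V_0=\bar{V}_0$.

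Finally I would feed in the computations of Section~\ref{sec:examples}: for each of the five knots $T_{4,5}\#T_{4,5}$, $T_{4,5}\#T_{4,5}\#T_{5,6}$, $T_{6,7}\#T_{6,7}$, $T_{4,5}\#T_{6,7}$, and $T_{3,4}^{-1}\#T_{4,5}^{-1}\#T_{5,6}$, Theorem~\ref{thm:B} supplies a model for $\iota_K$ on the tensor product of the relevant staircase-type complexes, twisted by the endomorphisms $\Phi_i$ and $\Psi_i$, from which one reads off that at least one of $\underline{V}_0<V_0$ or $\bar{V}_0>V_0$ holds. Together with the previous paragraph this immediately forces the proposition: none of the five knots can be concordant to a thin knot, an $L$-space knot, or the mirror of one.

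The genuinely substantial work is exactly the content already attributed to Section~\ref{sec:examples}: applying the connected sum formula of Theorem~\ref{thm:B} to iterated connected sums, keeping track of the maps $\Phi_i|\Psi_j$ and $\Psi_i|\Phi_j$ and of the $\Z\oplus\Z$ filtration, and then pushing the resulting $\iota_K$-complex through the large surgery formula to compute $\underline{V}_0$ and $\bar{V}_0$. Once those numbers are in hand, the present deduction is purely bookkeeping; the only points meriting an explicit sentence are that $\underline{V}_0,V_0,\bar{V}_0$ really are invariants of the local equivalence class (so that Theorem~\ref{thm:E} applies), and the vanishing $\underline{V}_0=\bar{V}_0=0$ for mirrors of $L$-space knots.
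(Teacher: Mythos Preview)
Your overall strategy matches the paper's: quote the Hendricks--Manolescu constraints on $(\bar{V}_0,V_0,\underline{V}_0)$ for thin knots, $L$-space knots, and their mirrors, then check that the five computed triples in the table violate them. The gap is that you have misstated those constraints.

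You assert that for each of the three comparison families one has $\underline{V}_0=V_0=\bar{V}_0$ (and that all three vanish for mirrors of $L$-space knots). This is false. The actual result, recorded in the paper as \cite{HMInvolutive}*{Proposition~8.2}, is the dichotomy: either all three are nonnegative with $0\le \underline{V}_0-\bar{V}_0\le 1$, or $\bar{V}_0\le 0$ and $V_0=\underline{V}_0=0$. A concrete counterexample to your version sits in this very paper: $T_r\# T_r$ is alternating (hence thin), yet the computation in Section~\ref{sec:simpleexamples} gives $\bar{V}_0=V_0=1$ and $\underline{V}_0=2$. So your ``staircase rigidity forces all three equal'' step fails already for thin knots, and your claim that $\bar{V}_0=0$ for mirrors of $L$-space knots is also off (only $\bar{V}_0\le 0$ holds). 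Relatedly, your detection criterion ``$\underline{V}_0<V_0$ or $\bar{V}_0>V_0$'' is written with the inequalities reversed; since $\bar{V}_0\le V_0\le \underline{V}_0$ always, that criterion is vacuous.

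Once you replace your characterization with the correct dichotomy, the proof goes through exactly as in the paper: for the first four knots $\underline{V}_0-\bar{V}_0\ge 2$ rules out pattern~(1) and $V_0>0$ rules out pattern~(2); for $T_{3,4}^{-1}\# T_{4,5}^{-1}\# T_{5,6}$ one has $\bar{V}_0=-1<0$ (ruling out~(1)) and $V_0=1\ne 0$ (ruling out~(2)). Your detour through Theorem~\ref{thm:E} and local equivalence is not needed here, since the concordance invariance of $\bar{V}_0,V_0,\underline{V}_0$ is already established in \cite{HMInvolutive}; the paper's proof simply cites that.
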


 The proof of the above proposition uses the computation of the triple $(\bar{V}_0,V_0,\underline{V}_0)$ for thin knots, $L$-space knots, and mirrors of $L$-space knots from \cite{HMInvolutive}. For such knots, the involutive correction terms have a simple pattern: either all three are nonnegative and $\underline{V}_0-\bar{V}_0\le 1$, or $\bar{V}_0\le 0=V_0=\underline{V}_0$. Using our formula we will compute that none of the knots listed above have involutive invariants fitting into one of these two patterns. See \cite{BHRationalCuspidal} for some deeper applications of involutive large surgery invariants as well as some applications of the connected sum formula from this paper.
 
  Note that considering the triple $(\bar{V}_0,V_0,\underline{V}_0)$ is not the only way to detect various parts of the above result. For example $\Upsilon_K(t)$ is very simple for thin knots, so can be used as an obstruction to knots being concordant to thin knots. Similarly $V_0=0$ for the mirror of any $L$-space knot, and $V_0\neq 0$ for the above knots. The function $\Upsilon_K(t)$ is convex for an $L$-space knot \cite{OSSUpsilon}*{Theorem~6.2}, which can provide an obstruction to knots being concordant to $L$-space knots, however some of the examples above also have convex $\Upsilon_K(t)$ invariants.

\subsection{Further Remarks}

In \cite{HMZConnectedSum}*{Theorem 1.1}, a similar formula for the involution $\iota$ on $\CF^-$ of connected sums of closed 3-manifolds is proven, by considering the interaction between the conjugation action and the graph cobordism maps from \cite{ZemGraphTQFT}. Writing $\iota_i$ for the involution on $\CF^-(Y_i,\frs_i)$, we showed that the involution on $\CF^-(Y_1\# Y_2,\frs_1\# \frs_2)$ is $\bF_2[U]$-equivariantly chain homotopic to
\begin{equation}
(\id|\id+U(\Phi_1|\Phi_2))(\iota_1| \iota_2) \label{eq:conjugationclosed3-manifold}
\end{equation}
 under a chain homotopy equivalence with $\CF^-(Y_1,\frs_1)\otimes_{\bF_2[U]} \CF^-(Y_2,\frs_2)$. In the context of the 3-manifold invariants, the maps $\Phi_i$ are analogous to the maps $\Phi_i$ and $\Psi_i$ appearing in Theorem \ref{thm:B}. By specializing Theorem \ref{thm:B} of this paper to the case $\bK_1$ and $\bK_2$ are doubly based unknots, one recovers Equation~\eqref{eq:conjugationclosed3-manifold}. In the setting of 3-manifold invariants, the term $U(\Phi_{1}| \Phi_{2})$ turns out to be $\bF_2[U]$-equivariantly null-homotopic when each $\frs_i$ is self conjugate, by an algebraic computation. As we will see in Section \ref{sec:examples}, the term $\Phi_1|\Psi_2$ is usually non-trivial in the context of link Floer homology.

For a doubly based knot $\bK=(K,p,q)$ in $ S^3$, one more often considers a $\Z\oplus \Z$ filtered chain complex $\CFK^\infty(K)$ over $\bF_2[U]$. The  complexes $\cCFL^\infty$ are graded by the Alexander grading and
\[
\CFK^\infty(K)=\cCFL^\infty(S^3,\bK)_0,
\]
 where $\cCFL^\infty(S^3,\bK)_0\subset\cCFL^\infty(S^3,\bK)$ is the subset in zero Alexander grading. The standard action of $U$ on $\CFK^\infty(K)$ corresponds to the action of the product $UV$ on $\cCFL^\infty(S^3,\bK)$, in the notation of this paper. As the map $\iota_K$ satisfies $A(\iota_K(\ve{x}))=-A(\ve{x})$, the map $\iota_K$ preserves the subset of zero Alexander grading, so  Theorem \ref{thm:B} can be restated for the more standard complex $\CFK^\infty(K)$. Nonetheless it's worth noting that $\Phi_i$ and $\Psi_i$ are not individually endomorphisms of $\CFK^\infty(K_i)$ (as they shift Alexander grading by $+1$ and $-1$ respectively), but the tensor product $\Phi_1|\Psi_2$ shifts Alexander grading by zero, and hence is an endomorphism of the tensor product of the two $\CFK^\infty$ complexes. For the purposes of this paper, it is easier to use the complexes $\cCFL^\infty$ than the $\CFK^\infty$ complexes, since it's convenient to work with the maps $\Phi_i$ and $\Psi_i$ individually.

In a different direction, due to the work of Taubes \cite{TaubesECH=SW1},  Kutluhan, Lee and Taubes  \cites{KLTHF=HM1,KLTHF=HM2,KLTHF=HM3,KLTHF=HM4,KLTHF=HM5}, and Colin, Ghiggini and Honda  \cites{CGHHF=ECH0,CGHHF=ECH1,CGHHF=ECH2,CGHHF=ECH3}  there are isomorphisms between certain versions of Heegaard Floer homology and the monopole Floer homology groups developed by Kronheimer and Mrowka  \cite{KMMonopole}. Monopole Floer homology is an $S^1$-equivariant theory, however the action of $S^1$ can be enlarged to an action of $\Pin(2)$, and there are now constructions which take into account the full $\Pin(2)$ action \cite{ManPin2Triangle} \cite{LinPin2Monopole}. The modules $\HFI^-(Y,\frs)$ from \cite{HMInvolutive} are expected to correspond to the $\Z_4$-equivariant theory for the subgroup $\Z_4=\langle j\rangle\subset \Pin(2)$.  Lin proves a connected sum formula for $\Pin(2)$-equivariant monopole Floer homology  in terms of an $\cA_\infty$ tensor product \cite{LinConnSums}. Our connected sum formula for involutive link Floer homology is different, since the involutive complex is determined entirely by the chain homotopy type of the map $\iota_K$. It would be interesting to have an interpretation of Theorem~\ref{thm:B} in terms of an $\cA_{\infty}$ tensor product, but such an interpretation is not currently available.

\subsection{Organization} In Section \ref{sec:algebraicpreliminaries} we construct the group $\frI_K$ of $\iota_K$-complexes modulo local equivalence. In Section \ref{sec:backgroundandconjugation} we provide background on the link Floer complexes and the link Floer TQFT. In Section \ref{sec:furtherproperties}, we prove some new results about the link Floer TQFT, such as Theorem \ref{thm:C} and Lemma \ref{thm:D} (conjugation invariance and the bypass relation). In Section \ref{sec:connectedsumsandcobordisms}, we prove that several naturally defined link cobordism maps induce filtered chain homotopy equivalences for connected sums of 3-manifolds and knots. In Section \ref{sec:connectedsumformulaforiotaK} we prove Theorem \ref{thm:B}, the connected sum formula for the map $\iota_K$. In Section \ref{sec:homomorphism}, we combine our results to construct a homomorphism from $\cC$ to $\frI_K$. In Section \ref{sec:examples} we compute some examples, mostly using the computer algebra system \textit{Macaulay2} \cite{Mac2}. Finally in Appendix \ref{app:directverification}, we provide some alternate proofs of some of the results of the paper.

\subsection{Acknowledgments}

The author would like to thank Kristen Hendricks, Jen Hom and Maciej Borodzik for suggesting this problem, and for some useful additional suggestions and discussions. In addition, the author owes an intellectual debt to Kristen Hendricks and Ciprian Manolescu, who coauthored a similar paper with the author and provided many of the ideas and arguments in that context. Ciprian Manolescu also wrote parts of the code which we use to compute the invariants $\bar{V}_0$ and $\underline{V}_0$ in the last section. The author would like to thank Sucharit Sarkar and Matt Stoffregen for useful conversations.

\section{The group $\frI_K$}
\label{sec:algebraicpreliminaries}

In this section we define $\frI_K$, the group generated by chain complexes with a homotopy automorphism, up to local equivalence.

\subsection{$\iota_K$-complexes}

In this section we describe the elements of $\frI_K$, and state the group structure. In the following subsections we will prove some basic facts, and prove that $\frI_K$ is an abelian group. Throughout, we let $\cR$ denote the ring 
\[
\cR:=\bF_2[U,V,U^{-1},V^{-1}].
\]

\begin{define}Suppose that $C$ and $C'$ are two chain complexes over $\cR$ and $F\colon C\to C'$ is a homomorphism of abelian groups. We say $F$ is $\cR$ \emph{skew-equivariant} if $F(U\cdot \ve{x})=V\cdot F(\ve{x})$ and $F(V\cdot \ve{x})=U\cdot F(\ve{x})$. If $C$ and $C'$ have filtrations by $\Z\oplus \Z$, denoted $\cG$ and $\cG'$ respectively, we say that $F$ is \emph{skew-filtered} if $F(\cG_{(i,j)})\subset \cG'_{(j,i)}$. If $C$ and $C'$ are both bigraded with two gradings $\gr_U$ and $\gr_V$, we will say that $F$ is \emph{skew-graded} if $\gr_U(F(\ve{x}))=\gr_V(\ve{x})$ and $\gr_V(F(\ve{x}))=\gr_{U}(\ve{x})$ for homogeneous elements $\ve{x}$.
\end{define}

We will write $\simeq$ for $\cR$-equivariant, filtered chain homotopy equivalence, unless we specify otherwise. Similarly $\eqsim$ will mean $\cR$ skew-equivariant, skew-filtered chain homotopy equivalences, unless specify otherwise.

Suppose $(C,\d)$ is a chain complex such that $C$ is a free $\cR$-module and $\d$ is a homomorphism of $\cR$-modules. If $B$ is a basis of $C$ over $\cR$, we can define a filtration $\cG^B$ on $C$ over $\Z\oplus \Z$, by defining $\cG^B_{(i,j)}$ to be the subset generated by elements of the form $U^mV^n\cdot \ve{x}$, for $\ve{x}\in B$ and $m\ge i$ and $n\ge j$. If $\ve{x}\in B$, we can write

\[
\d(\ve{x})=\sum_{\ve{y}\in B} P_{\ve{x}\ve{y}} \cdot \ve{y},
\]
 for a unique $P_{\ve{x}\ve{y}}\in \cR$.  Since $\d$ is a homomorphism of $\cR$-modules, the polynomials $P_{\ve{x}\ve{y}}$ determine $\d$ on all of $C$.

  If all of the $P_{\ve{x}\ve{y}}$ are in $\bF_2[U,V]$ (i.e. involve only nonnegative powers of $U$ and $V$) then $(C,\d)$ is a $\Z\oplus \Z$ filtered chain complex with respect to the filtration $\cG^B$. Given a basis $B$, we can formally differentiate each $P_{\ve{x}\ve{y}}$, with respect to $U$ or $V$. We define the maps
  $\Phi_B$ and $\Psi_B$ on basis elements via the formulas
\begin{equation}\Phi_B(\ve{x})=\sum_{\ve{y}\in B} \left(\tfrac{d}{dU} P_{\ve{x}\ve{y}}\right)\cdot \ve{y}\qquad \text{and} \qquad \Psi_B(\ve{x})=\sum_{\ve{y}\in B} \left(\tfrac{d}{dV} P_{\ve{x}\ve{y}}\right)\cdot \ve{y}.\label{eq:formalderivatives}\end{equation}
We extend $\Phi_B$ and $\Psi_B$ linearly over $\cR$ to obtain $\cR$-equivariant maps. Note that the the derivatives $d/dU$ and $d/dV$ are applied only to the polynomials $P_{\xs\ys}$, not to the arguments of $\Phi_B$ and $\Psi_B$. So, for example,
\[
\Phi_B(U^i\cdot \ve{x})=U^i\sum_{\ys\in B} (\tfrac{d}{dU} P_{\xs\ys})\cdot \ys.
\]
  
 We call $\Phi_B$ and $\Psi_B$ the \emph{formal derivatives} of the differential $\d$.  Note that $\d$ is a filtered map with respect to $\cG^B$ if and only if the matrix for $\d$ in the basis $B$ involves only nonnegative powers of $U$ and $V$. Thus if $\d$ is filtered with respect to $\cG^B$, then the maps $\Phi_B$ and $\Psi_B$ are also filtered with respect to $\cG^B$.

We now define the objects which will form the elements of the group $\frI_K$:

\begin{define}\label{def:iotaKcomplex}
We say a tuple $(C,\d,B,\iota)$ is an $\iota_K$-\emph{complex} if the following hold:
\begin{enumerate}
\item\label{defiota:cond1} $(C,\d)$ is a finitely generated free chain complex over $\cR$ with basis $B$.
\item\label{defiota:cond2}  $\d$ is a filtered map with respect to $\cG^B$.
\item \label{defiota:cond3}The elements of $B$ are assigned two gradings, $\gr_U$ and $\gr_V$, taking values in $\Z$. Extending $\gr_U$ and $\gr_V$ to the entire complex, by declaring $U$ to have $\gr_U$-grading change $-2$, $V$ to have $\gr_U$-grading change 0, and $V$ to have $\gr_V$-grading change $-2$ and $U$ to have $\gr_V$-grading change zero, the differential is $-1$ graded with respect to both $\gr_U$ and $\gr_V$.
\item\label{defiota:cond4} There is a grading preserving isomorphism  $H_*(C,\d)\iso \cR$ where $1\in \cR$ has $\gr_U$ and $\gr_V$-grading 0.
\item\label{defiota:cond5} $\iota$ is a skew-filtered, skew-graded, $\cR$ skew-equivariant endomorphism of $C$.
\item\label{defiota:cond6} $\iota^2\simeq \id+\Phi_B\circ\Psi_B$.
\end{enumerate}
\end{define}

We refer the reader to Section~\ref{sec:simpleexamples} for some examples of $\iota_K$-complexes.

\begin{rem}It will be convenient to combine the two gradings $\gr_U$ and $\gr_V$ into an \emph{Alexander grading}, by defining $A=\tfrac{1}{2}(\gr_U-\gr_V)$. Notice that since $\iota$ switches $\gr_U$ and $\gr_V$, we have that $A(\iota(\ve{x}))=-A(\ve{x})$ for homogeneous $\ve{x}$. Hence given an $\iota_K$ complex $(C,\d,B,\iota)$, we can consider the subset $C_0$ of $C$ concentrated in zero Alexander grading. On $C_0$, the two gradings $\gr_U$ and $\gr_V$ coincide, and furthermore $\iota$ restricts to a homotopy automorphism of $C_0$. The group $C_0$ is no longer an  $\cR$-module, but instead a $\bF_2[\hat{U},\hat{U}^{-1}]$-module, where $\hat{U}=UV$.
\end{rem}

There is an obvious notion of homotopy equivalence between two $\iota_K$ complexes, but this notion is too strong to define a group structure on $\frI_K$. Instead we use a weaker notion of equivalence from \cite{StoffregenSeifertFibered} in the definition of $\frI_K$:

\begin{define}We will say two $\iota_K$ complexes, $\cC_1=(C_1,\d_1,B_1,\iota_1)$ and $\cC_2=(C_2,\d_2,B_2,\iota_2)$, are \emph{locally equivalent} if there are filtered, grading preserving $\cR$-equivariant chain maps
\[
F\colon C_1\to C_2\qquad \text{and} \qquad G\colon C_2\to C_1,
\]
 such that
\[
\iota_2F\eqsim F\iota_1 \qquad \text{and} \qquad \iota_1G\eqsim G\iota_2
\]
 such that $F$ and $G$ are isomorphisms on homology. If in addition we have that  $F\circ G\simeq \id$ and $G\circ F\simeq \id,$  we will say that $\cC_1$ and $\cC_2$ are \emph{homotopy equivalent}.

\end{define}

Note that homotopy equivalent $\iota_K$-complexes are also locally equivalent.

Given two $\iota_K$-complexes $\cC_1=(C_1,\d_1, B_1,\iota_1)$ and $\cC_2=(C_2,\d_2,B_2,\iota_2)$, there are two $\iota_K$-complexes which can naturally be described as the product of $\cC_1$ and $\cC_2$. We define two products, $\times_1$ and $\times_2$, via the formulas
\[
(\cC_1\times _i \cC_2)=(C_1\otimes_{\cR} C_2, \d_1|\id+\id| \d_2,B_1\times B_2, \iota_1\times_i\iota_2),
\]
where
\begin{equation}\iota_1\times_1 \iota_2=\iota_1| \iota_2+\Phi_1\iota_1| \Psi_2\iota_2\qquad \text{and}\qquad \iota_1\times_2 \iota_2=\iota_1| \iota_2+\Psi_1\iota_1| \Phi_2\iota_2.\label{eq:productformula} \end{equation}

We can now define the group $\frI_K$:

\begin{define}\label{def:IK}We define $\frI_K$ to be the set generated local equivalence classes of $\iota_K$-complexes. Group multiplication is given by either of $\times_1$ or $\times_2$. The inverse of $\cC=(C,\d,B,\iota)$ is given by $\cC^\vee=(C^\vee,\d^\vee,B^\vee,\iota^\vee)$, where $C^\vee=\Hom_{\cR}(C,\cR)$ and the other terms are defined similarly. If $\ve{x}\in B$, then one defines $\gr(\ve{x}^\vee)=-\gr(\ve{x})$, for $\gr\in \{\gr_U,\gr_V\}$. The identity complex is $\cC_e=(\cR,0,\{1\},\iota_e)$, where $\iota_e\colon \cR\to \cR$ fixes $1\in \cR$ and switches $U$ and $V$.
\end{define}

We will prove the following over the course of the following subsections:

\begin{prop}\label{prop:IKisagroup} The pair $(\frI_K,\times)$ is a well defined abelian group.
\end{prop}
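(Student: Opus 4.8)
The plan is to verify the group axioms for $(\frI_K,\times)$ in the following order: (1) the products $\times_1,\times_2$ send pairs of $\iota_K$-complexes to $\iota_K$-complexes, (2) they descend to well-defined operations on local equivalence classes, (3) on local equivalence classes $\times_1=\times_2$, (4) the operation is associative and commutative, (5) $\cC_e$ is a two-sided identity, and (6) $\cC^\vee$ is a two-sided inverse.

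\textbf{Step 1: $\times_i$ produces an $\iota_K$-complex.} Conditions \eqref{defiota:cond1}--\eqref{defiota:cond3} of Definition~\ref{def:iotaKcomplex} are immediate for the tensor product $(C_1\otimes_\cR C_2,\d_1|\id+\id|\d_2)$ with basis $B_1\times B_2$; condition \eqref{defiota:cond4} follows from the K\"unneth theorem over $\cR$ (which is a PID-like graded ring where the relevant $\Tor$ vanishes since $H_*(C_i)\iso\cR$ is free). For \eqref{defiota:cond5}, one checks directly from \eqref{eq:productformula} that $\iota_1\times_i\iota_2$ is $\cR$ skew-equivariant, skew-filtered and skew-graded, using that $\Phi_j,\Psi_j$ are $\cR$-equivariant and shift the two gradings in the expected way (this is where the mismatched $\Phi|\Psi$ pairing, rather than $\Phi|\Phi$, is forced: only then do the Alexander-grading shifts cancel). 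The main computational point is \eqref{defiota:cond6}: one must show $(\iota_1\times_1\iota_2)^2\simeq \id+\Phi_{B_1\times B_2}\circ\Psi_{B_1\times B_2}$. Here $\Phi_{B_1\times B_2}=\Phi_1|\id+\id|\Phi_2$ and similarly for $\Psi$, by the Leibniz rule for the formal derivative of $\d_1|\id+\id|\d_2$. Expanding $(\iota_1|\iota_2+\Phi_1\iota_1|\Psi_2\iota_2)^2$ and using $\iota_j^2\simeq \id+\Phi_j\Psi_j$ together with the identities $\Phi_j^2\simeq 0$, $\Psi_j^2\simeq 0$, $[\Phi_j,\Psi_j]\simeq 0$, $[\Phi_j,\iota_j^2]\simeq 0$ (all of which are standard facts about $\Phi,\Psi$, to be cited or verified in Section~\ref{sec:backgroundandconjugation}), the cross terms collapse to exactly $\Phi_1|\id+\id|\Phi_2$ composed with $\Psi_1|\id+\id|\Psi_2$. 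This algebraic identity is the heart of why the product is defined the way it is, and I expect it to be the main obstacle --- it requires being careful that the various commutators hold only up to homotopy, so one must track chain homotopies through the expansion rather than working with strict equalities.

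\textbf{Steps 2--4: well-definedness, $\times_1=\times_2$, associativity, commutativity.} For well-definedness, given local equivalences $\cC_1\sim\cC_1'$ and $\cC_2\sim\cC_2'$ realized by maps $F_1,G_1,F_2,G_2$, the tensor products $F_1|F_2$ and $G_1|G_2$ are filtered, grading-preserving, $\cR$-equivariant homology isomorphisms, and one checks $(\iota_1'\times_i\iota_2')(F_1|F_2)\eqsim (F_1|F_2)(\iota_1\times_i\iota_2)$ by expanding both sides via \eqref{eq:productformula} and using the intertwining relations together with the (standard) naturality of $\Phi,\Psi$ under filtered chain maps. That $\times_1$ and $\times_2$ agree on local equivalence classes should follow by exhibiting a local equivalence between $\cC_1\times_1\cC_2$ and $\cC_1\times_2\cC_2$ --- the natural candidate being (a homotopy-modified version of) the swap map $C_1\otimes C_2\to C_2\otimes C_1$ composed with an identification, or more directly by showing both products are locally equivalent to a symmetric model; alternatively one cites that this will be established where Theorem~\ref{thm:B} is proven, since Theorem~\ref{thm:B} itself asserts the two connected-sum formulas both hold. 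Associativity and commutativity of $\times$ on local equivalence classes then reduce to the corresponding (strict or up-to-homotopy) statements for the tensor product and the formulas \eqref{eq:productformula}, again using $\Phi^2\simeq\Psi^2\simeq 0$ and the commutation relations; commutativity uses the swap map and the fact that $\times_1$ becomes $\times_2$ under swapping, which is why having $\times_1=\times_2$ on classes is needed first.

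\textbf{Steps 5--6: identity and inverses.} For the identity, $\cC_e\times_i\cC=(\cR\otimes_\cR C,\ldots)\iso(C,\d,B,\iota)$ on the nose, since $\Phi,\Psi$ of the zero differential on $\cR$ vanish, so the correction term in \eqref{eq:productformula} disappears; this is a strict isomorphism of $\iota_K$-complexes, hence in particular a local equivalence. For inverses, the dual complex $\cC^\vee=(\Hom_\cR(C,\cR),\d^\vee,B^\vee,\iota^\vee)$ is an $\iota_K$-complex (conditions \eqref{defiota:cond1}--\eqref{defiota:cond6} dualize, using that $\Phi_{B^\vee}=\Phi_B^\vee$ and $\Psi_{B^\vee}=\Psi_B^\vee$ up to sign/homotopy, which over $\bF_2$ causes no trouble), and one must produce a local equivalence $\cC\times\cC^\vee\sim\cC_e$. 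The standard approach, following Stoffregen and Hom, is: the evaluation map $\cR\to C\otimes_\cR C^\vee$ (sending $1$ to the "diagonal" element, well-defined up to homotopy since $H_*(C)$ is free of rank one) and the coevaluation/trace map $C\otimes_\cR C^\vee\to\cR$ are filtered, grading-preserving chain maps, are homology isomorphisms, and intertwine $\iota_e$ with $\iota\times\iota^\vee$ up to skew-filtered homotopy; checking this last intertwining is the only nontrivial verification, and it uses the defining property $\iota^2\simeq\id+\Phi\Psi$ to control the correction term in $\iota\times\iota^\vee$. Assembling Steps 1--6 gives Proposition~\ref{prop:IKisagroup}.
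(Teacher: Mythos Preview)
Your outline is essentially the paper's: it proves a sequence of lemmas establishing exactly your Steps~1--6, and your verification of condition~\eqref{defiota:cond6} in Step~1 and your handling of inverses in Step~6 match the paper's arguments closely (your ``evaluation'' and ``coevaluation'' have their names swapped, but the maps are the right ones). Two points are worth flagging.

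First, and most substantively, your treatment of $\times_1=\times_2$ is where a real idea is missing. The swap map $C_1\otimes C_2\to C_2\otimes C_1$ intertwines $\iota_1\times_1\iota_2$ with $\iota_2\times_2\iota_1$, so it exhibits $\cC_1\times_1\cC_2\simeq\cC_2\times_2\cC_1$ --- not $\cC_1\times_1\cC_2\simeq\cC_1\times_2\cC_2$. Trying to close the loop via commutativity is circular, since (as you note) commutativity itself requires $\times_1=\times_2$. Citing Theorem~\ref{thm:B} is also not appropriate: that theorem is about specific knot Floer complexes, not arbitrary $\iota_K$-complexes, and in any case it only says both formulas compute $\iota_{K_1\#K_2}$, not that the two abstract products agree. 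The paper's fix is to write down an explicit filtered homotopy equivalence $F=\id|\id+\Psi_1|\Phi_2$ from $C_1\otimes C_2$ to itself and check directly that $F(\iota_1\times_1\iota_2)\eqsim(\iota_1\times_2\iota_2)F$ and vice versa; since $\Psi_1^2\simeq 0$ and $\Phi_2^2\simeq 0$, this $F$ is its own homotopy inverse. Once this is in hand, commutativity follows exactly as you say: the swap gives $\cC_1\times_1\cC_2\simeq\cC_2\times_2\cC_1$, and then the $F$-equivalence identifies the latter with $\cC_2\times_1\cC_1$.

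Second, a minor difference: for condition~\eqref{defiota:cond4} you invoke the K\"unneth theorem over $\cR$, which is fine since $H_*(C_i)\iso\cR$ is free. The paper instead decomposes each $C_i$ over Alexander gradings and works over the PID $\bF_2[\hat U,\hat U^{-1}]$ (with $\hat U=UV$), using the classification of finitely generated chain complexes over a PID to split off acyclic summands. Your route is shorter; the paper's route has the advantage of reusing the same PID decomposition for the dual complex in the inverse step.
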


\subsection{Properties of $\Phi_B$ and $\Psi_B$}
In this section, we describe some useful properties of the maps $\Phi_B$ and $\Psi_B$ defined in Equation~\eqref{eq:formalderivatives}. Recall that if $B$ is a basis of $(C,\d)$ over $\cR$, we defined $\Phi_B$ and $\Psi_B$ to be the formal derivatives of $\d$ with respect to $U$ and $V$ (respectively) in terms of the basis $B$.

 As a first property, we note that the Leibniz rule for differentiating polynomials implies that
\begin{equation}
\Phi_B=\d \circ \tfrac{d}{d U}\big|_B+ \tfrac{d}{d U}\big|_B\circ \d\qquad \text{and} \qquad \Psi_B=\d \circ \tfrac{d}{d V}\big|_B+\tfrac{d}{d V}\big|_B\circ \d.
\label{eq:nonequivchainhomotopy}
\end{equation} In particular the maps $\Phi_B$ and $\Psi_B$ vanish on homology. Note that the derivative maps $(d/dU)|_B$ and $(d/dV)|_B$ are neither filtered nor $\cR$-equivariant, so the maps $\Phi_B$ and $\Psi_B$ may still be nonzero in the group of chain maps modulo equivariant, filtered chain homotopy. 

\begin{define}\label{def:skewedcomplex}Given an $\iota_K$ complex $\cC=(C,\d,B,\iota)$ we define the \emph{skew} of $\cC$, denoted $\tilde{\cC}=(\tilde{C},\tilde{\d},\tilde{B},\tilde{\iota})$ as follows. As an $\bF_2$-module with differential, we define $(\tilde{C},\tilde{\d})=(C,\d)$, but we give $\tilde{C}$ a new $\cR$-action. We declare $U$ to act on $\tilde{C}$ the same as $V$ acts on $C$. Similarly we declare $V$ to  act on $\tilde{C}$ the same as $U$ acted on $C$.   We set $\tilde{B}=B$ and $\tilde{\iota}=\iota$. For $\ve{x}\in B$ we define $\tilde{\gr}_U(\ve{x})=\gr_{V}(\ve{x})$ and $\tilde{\gr}_V(\ve{x})=\gr_{U}(\ve{x})$.
	\end{define}

Note that by definition $\cG_{(i,j)}^{\tilde{B}}=\cG^B_{(j,i)}$. Also, there is a natural map
\begin{equation}
s\colon C\to \tilde{C},\label{eq:skewmap}
\end{equation} which is the identity map on the $\bF_2$-module $C$, but as a map between $\iota_K$-complexes is skew-equivariant, skew-filtered and skew-graded. Tautologically, we have 
\begin{equation}s\circ \Phi_B=\Psi_{\tilde{B}}\circ s\qquad \text{and}\qquad s\circ \Psi_B=\Phi_{\tilde{B}}\circ s.\label{eq:skewedmaps}\end{equation}

More generally, if $(C_1,\d_1)$ and $(C_2,\d_2)$ are two chain complexes over $\cR$, we can consider the morphism complex of $\cR$-equivariant maps
\[
\Mor_{\cR}(C_1,C_2),
\]
which has differential
\begin{equation}
\d_{\Mor}(f)=f\circ \d_1+\d_2\circ f.\label{eq:morcomplexdifferential}
\end{equation} Similarly there is the skewed morphism complex
\[
\tilde{\Mor}_{\cR}(C_1,C_2),
\]
of skew-equivariant morphisms, which can be given a differential using the same formula from Equation~\eqref{eq:morcomplexdifferential}. Notice that
\[
\tilde{\Mor}_{\cR}(C_1,C_2)\iso \Mor_{\cR}(\tilde{C}_1,C_2)\iso \Mor_{\cR}(C_1,\tilde{C}_2).
\]
The map $s\colon C\to \tilde{C}$ from Equation~\eqref{eq:skewmap} is simply the image of $\id_C$ under the map $\tilde{\Mor}_{\cR}(C,\tilde{C})\iso \Mor_\cR(C,C)$.

\begin{lem}\label{lem:equivariantmapscommutewithPhiandPsi}Suppose that $(C_1,\d_1,B_1)$ and $(C_2,\d_2,B_2)$ are two complexes over $\cR$ with bases $B_1$ and $B_2$. If $F\colon C_1\to C_2$ is $\cR$-equivariant then
\[
\Phi_2 \circ F+F\circ \Phi_1\simeq 0\qquad \text{and} \qquad \Psi_2\circ  F+F\circ \Psi_1\simeq 0,
\]
 through $\cR$-equivariant chain homotopies. Similarly, if $G\colon C_1\to C_2$ is $\cR$ skew-equivariant, then
\[
\Psi_2\circ G+G\circ \Phi_1\eqsim 0\qquad \text{and}\qquad \Phi_2\circ  G+G\circ  \Psi_1\eqsim 0,
\] 
through skew-equivariant chain homotopies. If $F$  (resp. $G$) is filtered (resp. skew-filtered) with respect to $\cG^B$ then the chain homotopies can be taken to be filtered (resp. skew-filtered).
\end{lem}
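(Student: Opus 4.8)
The plan is to produce explicit chain homotopies out of the (non-$\cR$-equivariant, non-filtered) formal differentiation operators $\tfrac{d}{dU}\big|_{B_i}$ and $\tfrac{d}{dV}\big|_{B_i}$ from Equation~\eqref{eq:nonequivchainhomotopy}; the whole point will be that a suitable combination of these with $F$ (or $G$) is $\cR$-equivariant and filtered, even though the individual summands are not.

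For the first statement, I would set
\[
H_\Phi:=\tfrac{d}{dU}\big|_{B_2}\circ F+F\circ \tfrac{d}{dU}\big|_{B_1} \qquad\text{and}\qquad H_\Psi:=\tfrac{d}{dV}\big|_{B_2}\circ F+F\circ \tfrac{d}{dV}\big|_{B_1},
\]
and show that $\d_{\Mor}(H_\Phi)=\Phi_2\circ F+F\circ \Phi_1$ and $\d_{\Mor}(H_\Psi)=\Psi_2\circ F+F\circ \Psi_1$. First I would check $\cR$-equivariance of $H_\Phi$: writing $F(\ve{x})=\sum_{\ve{y}\in B_2}Q_{\ve{x}\ve{y}}\ve{y}$ for $\ve{x}\in B_1$ and applying $H_\Phi$ to $U^aV^b\ve{x}$, the Leibniz rule produces two correction terms, both of the form $aU^{a-1}V^bQ_{\ve{x}\ve{y}}\ve{y}$: one from differentiating the monomial $U^a$ inside $F(U^aV^b\ve{x})=U^aV^bF(\ve{x})$, the other from applying $F$ to $\tfrac{d}{dU}\big|_{B_1}(U^aV^b\ve{x})=aU^{a-1}V^b\ve{x}$. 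These two terms cancel over $\bF_2$, leaving exactly $U^aV^bH_\Phi(\ve{x})$. Next I would verify the homotopy identity: expanding $\d_{\Mor}(H_\Phi)=\d_2\circ H_\Phi+H_\Phi\circ \d_1$, using $\d_2\circ F=F\circ \d_1$ to move differentials past $F$, regrouping, and invoking Equation~\eqref{eq:nonequivchainhomotopy}, one lands on exactly $\Phi_2\circ F+F\circ \Phi_1$. The computation for $H_\Psi$ is the same with $\tfrac{d}{dU}$ replaced by $\tfrac{d}{dV}$ throughout.

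For the filtered refinement: if $F$ is filtered then $F(\ve{x})\in \cG^{B_2}_{(0,0)}$ for each $\ve{x}\in B_1$, i.e.\ every $Q_{\ve{x}\ve{y}}$ lies in $\bF_2[U,V]$, and since formal differentiation preserves the subring $\bF_2[U,V]$ while $\tfrac{d}{dU}\big|_{B_1}(\ve{x})=0$ for a basis element, we get $H_\Phi(\ve{x})=\tfrac{d}{dU}\big|_{B_2}(F(\ve{x}))\in \cG^{B_2}_{(0,0)}$; $\cR$-equivariance then upgrades this to $H_\Phi(\cG^{B_1}_{(i,j)})\subseteq \cG^{B_2}_{(i,j)}$. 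The skew statements are proved identically after interchanging which variable the outer and inner derivatives use: for skew-equivariant $G$ one takes $\tfrac{d}{dV}\big|_{B_2}\circ G+G\circ \tfrac{d}{dU}\big|_{B_1}$ to witness $\Psi_2\circ G+G\circ \Phi_1\eqsim 0$, and $\tfrac{d}{dU}\big|_{B_2}\circ G+G\circ \tfrac{d}{dV}\big|_{B_1}$ to witness $\Phi_2\circ G+G\circ \Psi_1\eqsim 0$; the same Leibniz cancellation, now with $U$ and $V$ swapped on the $G$ side, gives skew-equivariance and (when $G$ is skew-filtered) skew-filteredness. I expect the only delicate point to be precisely that mod-$2$ Leibniz cancellation in the equivariance check: it is what forces the symmetric combination $\tfrac{d}{dU}\big|_{B_2}\circ F+F\circ \tfrac{d}{dU}\big|_{B_1}$, rather than either summand on its own, to be the correct homotopy operator, and it relies on working in characteristic $2$. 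As an aside, one can repackage the entire lemma as the assertion that on the morphism complex $\Mor_{\cR}(C_1,C_2)$, equipped with its basis of elementary maps, the formal derivative of $\d_{\Mor}$ with respect to $U$ is the operator $f\mapsto \Phi_2\circ f+f\circ \Phi_1$, whence the lemma follows from the vanishing of formal derivatives on homology (noted after Equation~\eqref{eq:nonequivchainhomotopy}) applied to $\Mor_{\cR}(C_1,C_2)$ itself; but writing the homotopy down explicitly, as above, is shorter and makes the filteredness claim transparent.
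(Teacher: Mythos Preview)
Your proof is correct and essentially identical to the paper's. The paper phrases the equivariant case as ``differentiate the matrix identity $\d_2\circ F+F\circ \d_1=0$ with respect to $U$,'' producing the homotopy $F'$ (the entrywise $U$-derivative of the matrix of $F$, extended $\cR$-equivariantly); your $H_\Phi=\tfrac{d}{dU}\big|_{B_2}\circ F+F\circ \tfrac{d}{dU}\big|_{B_1}$ is literally the same map, and your Leibniz-cancellation check is precisely what verifies that this entrywise derivative defines an $\cR$-equivariant morphism. For the skew case the paper takes a slightly more indirect route, composing with the tautological skew map $s\colon C_1\to \tilde{C}_1$ to reduce to the equivariant statement, whereas you write down the mixed homotopy $\tfrac{d}{dV}\big|_{B_2}\circ G+G\circ \tfrac{d}{dU}\big|_{B_1}$ directly; unwinding the paper's reduction yields exactly your formula, so there is no substantive difference.
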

\begin{proof} Suppose $F$ is $\cR$-equivariant. Writing $\d_1,$ $\d_2$ and $F$ in terms of matrices with respect to $B_1$ and $B_2$, we take the expression $\d_2\circ F+F\circ\d_1=0$ and differentiate with respect to $U$, using the Leibniz rule, to see
\[
\Phi_{B_2}\circ F+F\circ \Phi_{B_1}=F'\circ \d_1+\d_2\circ F',
\]
 where $F'$ denotes the result of differentiating the matrix for $F$ with respect to $U$. If $F$ is a filtered map, then $F'$ will be as well. The analogous argument works for $\Psi_B$.

Now suppose that $G\colon C_1\to C_2$ is skew-equivariant. Note that $G=G\circ s \circ s$, and $G\circ s$ is an $\cR$-equivariant map. Hence we just apply the previous result together with Equation \eqref{eq:skewedmaps} to see that
\[
\Psi_2\circ G+G\circ\Phi_1\eqsim 0\qquad \text{and}\qquad \Phi_2 \circ G+G\circ \Psi_1\eqsim 0.
\] 
If $G$ is skew-filtered, then $G\circ s$ is filtered, so the previous argument applies to show that the chain homotopies can be taken to be skew-filtered.
\end{proof}

We now prove that the maps $\Phi_B$ and $\Psi_B$ depend only on the filtration $\cG^B$ and not on the particular choice of basis, in the following sense:

\begin{cor}If $B$ and $B'$ are two bases for the complex $(C,\d)$, then
\[
\Phi_B\simeq \Phi_{B'}\qquad \text{and} \qquad \Psi_B\simeq \Psi_{B'},
\]
 through $\cR$-equivariant chain homotopies. If in addition $\cG^B=\cG^{B'}$, then the chain homotopies can be taken to be filtered with respect to $\cG^B=\cG^{B'}$.

\end{cor}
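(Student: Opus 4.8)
The plan is to deduce this corollary immediately from Lemma~\ref{lem:equivariantmapscommutewithPhiandPsi} by taking the identity map as the $\cR$-equivariant chain map. Concretely, I would regard $\id_C$ as an $\cR$-equivariant chain map from the complex $(C,\d)$ equipped with the basis $B$ to the complex $(C,\d)$ equipped with the basis $B'$. With respect to these two presentations, the formal derivatives of $\d$ appearing in Lemma~\ref{lem:equivariantmapscommutewithPhiandPsi} are $\Phi_B,\Psi_B$ on the source and $\Phi_{B'},\Psi_{B'}$ on the target.

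Applying Lemma~\ref{lem:equivariantmapscommutewithPhiandPsi} to $F=\id_C$ then yields $\cR$-equivariant chain homotopies
\[
\Phi_{B'}\circ \id_C + \id_C\circ \Phi_B \simeq 0 \qquad \text{and} \qquad \Psi_{B'}\circ \id_C + \id_C\circ \Psi_B\simeq 0,
\]
i.e.\ $\Phi_{B'}+\Phi_B\simeq 0$ and $\Psi_{B'}+\Psi_B\simeq 0$. Since we are working over $\bF_2$, these say precisely that $\Phi_B\simeq \Phi_{B'}$ and $\Psi_B\simeq \Psi_{B'}$, as desired.

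For the refinement: if moreover $\cG^B=\cG^{B'}$, then $\id_C$ is a filtered map from $(C,\d,B)$ to $(C,\d,B')$, since it carries $\cG^B_{(i,j)}$ to $\cG^{B'}_{(i,j)}=\cG^B_{(i,j)}$. Hence the last sentence of Lemma~\ref{lem:equivariantmapscommutewithPhiandPsi} guarantees that the chain homotopies above may be chosen filtered with respect to $\cG^B=\cG^{B'}$.

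There is essentially no obstacle here; the only point that requires (minor) care is that Lemma~\ref{lem:equivariantmapscommutewithPhiandPsi} is being invoked with the two — possibly distinct — bases $B$ and $B'$ attached to the source and target copies of the \emph{same} underlying complex $C$, so that the basis-dependent maps ``$\Phi_1,\Psi_1$'' in that lemma become $\Phi_B,\Psi_B$ and ``$\Phi_2,\Psi_2$'' become $\Phi_{B'},\Psi_{B'}$.
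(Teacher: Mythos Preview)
Your proof is correct and is essentially identical to the paper's own argument: apply Lemma~\ref{lem:equivariantmapscommutewithPhiandPsi} to $F=\id_C$, viewed as a map from $(C,\d,B)$ to $(C,\d,B')$, and note that when $\cG^B=\cG^{B'}$ the identity is filtered so the chain homotopies are too. The paper also remarks, as you implicitly do, that the matrix for $\id_C$ between distinct bases need not be the identity matrix, which is why the resulting chain homotopy is generally nonzero.
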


\begin{proof} If $B$ and $B'$ are arbitrary bases, we apply the first part of Lemma \ref{lem:equivariantmapscommutewithPhiandPsi} to the map $\id\colon C\to C$, which is an $\cR$-equivariant map, to see that $\Phi_B\simeq \Phi_{B'}$ and $\Psi_B\simeq \Psi_{B'}$ through $\cR$-equivariant chain homotopies. If $\cG^B=\cG^{B'}$, then $\id$ is also filtered, so applying the second part of Lemma \ref{lem:equivariantmapscommutewithPhiandPsi} shows that the chain homotopies can be taken to be filtered.
\end{proof}

Note that if $B$ and $B'$ are two different bases, the matrix for the identity map between $B$ and $B'$ will not be the identity matrix. As such, the chain homotopy in the previous lemma will not  vanish in general.

\begin{lem}\label{lem:PhiPsicommute}If $(C,\d)$ is a free chain complex over $\cR$ with basis $B$, then $\Phi_B$ and $\Psi_B$ are chain maps, and
\[
\Phi_B\circ\Psi_B\simeq \Psi_B\circ\Phi_B,
\] 
through $\cR$-equivariant chain homotopies. If $\d$ is filtered with respect to $\cG^B$, then the chain homotopies can be taken to be filtered with respect to $\cG^B$.
\end{lem}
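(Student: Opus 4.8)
The plan is to reduce everything to a polynomial manipulation of the relation $\d^2=0$ written out in the basis $B$. Let $M$ be the matrix of $\d$ with respect to $B$ over $\cR$; by Equation~\eqref{eq:formalderivatives}, $\Phi_B$ and $\Psi_B$ are the $\cR$-linear maps whose matrices in $B$ are the entrywise formal partial derivatives $\partial_U M$ and $\partial_V M$. Since a map presented by a matrix over $\cR$ is automatically $\cR$-equivariant, and composition of such maps corresponds to matrix multiplication, it suffices to work with $M$, $\partial_U M$, $\partial_V M$. First I would check $\Phi_B$ and $\Psi_B$ are chain maps: differentiating $M^2=0$ with respect to $U$ via the Leibniz rule gives $(\partial_U M)M+M(\partial_U M)=0$, which is exactly the statement that $\Phi_B$ commutes with $\d$; similarly for $V$ and $\Psi_B$. (This is also immediate from Equation~\eqref{eq:nonequivchainhomotopy}, which writes $\Phi_B$ and $\Psi_B$ as $\d_{\Mor}$-images of the non-equivariant operators $(d/dU)|_B$ and $(d/dV)|_B$.)

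The key point for the commutation statement is to take as the homotopy the \emph{mixed second formal derivative}, rather than the non-equivariant homotopy $(d/dU)|_B\circ\d\circ(d/dV)|_B+(d/dV)|_B\circ\d\circ(d/dU)|_B$ one would get by naively expanding $\Phi_B$ and $\Psi_B$ through \eqref{eq:nonequivchainhomotopy}. That is, let $H$ be the $\cR$-linear map whose matrix in $B$ is $\partial_U\partial_V M$, equivalently $H(\ve{x})=\sum_{\ve{y}\in B}\big(\tfrac{d^2}{dU\,dV}P_{\ve{x}\ve{y}}\big)\ve{y}$ extended $\cR$-linearly. Differentiating $M^2=0$ once in $U$ and once in $V$ by the Leibniz rule produces
\[
(\partial_U\partial_V M)M+(\partial_V M)(\partial_U M)+(\partial_U M)(\partial_V M)+M(\partial_U\partial_V M)=0,
\]
and over $\bF_2$ this rearranges to $(\partial_U M)(\partial_V M)+(\partial_V M)(\partial_U M)=M(\partial_U\partial_V M)+(\partial_U\partial_V M)M$. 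Translating back to maps, this is precisely $\Phi_B\circ\Psi_B+\Psi_B\circ\Phi_B=\d\circ H+H\circ\d$, i.e. the desired $\cR$-equivariant chain homotopy $\Phi_B\circ\Psi_B\simeq\Psi_B\circ\Phi_B$.

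For the filtered refinement, recall that $\d$ is filtered with respect to $\cG^B$ exactly when $M$ has entries in $\bF_2[U,V]$; then $\partial_U\partial_V M$ also has entries in $\bF_2[U,V]$, so $H$ is filtered with respect to $\cG^B$, and the homotopy above is filtered. I do not expect any genuine obstacle here: the one thing that matters is choosing the right homotopy — the mixed second derivative $\partial_U\partial_V M$, which is honestly $\cR$-linear (and polynomial), as opposed to the non-equivariant operator that first presents itself — together with keeping track that we work over $\bF_2$, so that the Leibniz expansion of $\partial_U\partial_V(M^2)$ carries no signs and the doubled cross terms vanish.
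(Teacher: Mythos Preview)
Your proof is correct and essentially the same as the paper's. Both differentiate the matrix relation $M^2=0$ (the paper phrases it as differentiating $\Phi_B\circ\d+\d\circ\Phi_B=0$ with respect to $V$, which amounts to the same two-variable Leibniz expansion), and both take the chain homotopy to be the $\cR$-linear map with matrix $\partial_U\partial_V M$; the filtered statement follows identically since this matrix has entries in $\bF_2[U,V]$ whenever $M$ does.
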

\begin{proof}Taking the matrix expression for $\d$ in the basis $B$, one differentiates the expression $\d^2=0$  with respect to $U$ to get $\Phi_B\circ\d+\d\circ\Phi_B=0$. Taking $\Phi_B\circ\d+\d\circ \Phi_B=0$ and differentiating it with respect to $V$, one sees that $\Phi_B\circ\Psi_B+\Psi_B\circ\Phi_B\simeq 0$. If $\d$ is filtered, then so is $\Phi_B$. Similarly if $\Phi_B$ is filtered, then so is the chain homotopy between $\Phi_B\circ \Psi_B+\Psi_B\circ \Phi_B$ and $0$, since it is obtained by differentiating the matrix for $\Phi_B$ with respect to $V$.
\end{proof}

From Equation~\eqref{eq:nonequivchainhomotopy}, we see that the maps $\Phi_B$ and $\Psi_B$ are chain homotopic to zero, though not necessarily through filtered $\cR$-equivariant chain homotopies. On the other hand, we have the following:

\begin{lem}\label{lem:Phi^2Psi^2=0}If $(C,\d)$ is a free chain complex over $\cR$, with a basis $B$, then $(\Phi_B)^2 \simeq 0$ and $(\Psi_B)^2\simeq 0$, through $\cR$-equivariant chain homotopies. If $\d$ is a filtered map with respect to $\cG^B$, then the chain homotopies can also be taken to be filtered.
\end{lem}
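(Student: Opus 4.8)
The plan is to exhibit, for each of $\Phi_B^2$ and $\Psi_B^2$, an explicit $\cR$-equivariant map $Q$ with $\d\circ Q+Q\circ\d=\Phi_B^2$ (resp.\ $\Psi_B^2$), which is moreover filtered with respect to $\cG^B$ when $\d$ is; this is the same style of argument as in Lemma~\ref{lem:PhiPsicommute}, packaged through the morphism complex $\Mor_\cR(C,C)$ of Equation~\eqref{eq:morcomplexdifferential}. The naive move --- differentiate $\d^2=0$ twice with respect to $U$ --- does not work over $\bF_2$: the second formal derivative of any Laurent polynomial vanishes identically (the resulting coefficient is $\binom{n}{2}\cdot 2=0$), so one only recovers the trivial identity $0=0$. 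The remedy is to replace the (vanishing) second ordinary derivative by the \emph{divided} second derivative, which I would introduce cleanly with an auxiliary formal variable.

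Concretely, I would work in the ring $\cR[t]/(t^3)$ and let $\phi_t\colon\cR\to\cR[t]/(t^3)$ be the $\bF_2$-algebra homomorphism fixing $V$ and $V^{-1}$ and sending $U\mapsto U+t$ and $U^{-1}\mapsto U^{-1}+U^{-2}t+U^{-3}t^2$; one checks at once that $(U+t)(U^{-1}+U^{-2}t+U^{-3}t^2)=1$ in $\cR[t]/(t^3)$, so $\phi_t$ is well defined. Applying $\phi_t$ entrywise to the matrix $P=(P_{\ve{x}\ve{y}})$ of $\d$ in the basis $B$ and writing the result in $\cR[t]/(t^3)$ as $P_0+tP_1+t^2P_2$, the identity $(U+t)^a=\sum_k\binom{a}{k}U^{a-k}t^k$ identifies $P_0$ with $P$, identifies $P_1$ with $\tfrac{d}{dU}P$ --- i.e.\ with the matrix of $\Phi_B$, since $\binom{a}{1}\equiv a$ --- and identifies $P_2$ with the matrix obtained from $P$ by the entrywise rule $U^aV^b\mapsto\binom{a}{2}U^{a-2}V^b$. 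Let $Q\colon C\to C$ be the $\cR$-equivariant map whose matrix in the basis $B$ is $P_2$.

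The computation is then immediate. Since $\d^2=0$ in $\cR$ we get $(\phi_t P)^2=\phi_t(P^2)=0$ (applying a ring homomorphism entrywise is compatible with matrix multiplication), and the coefficient of $t^2$ in $(\phi_t P)^2=(P_0+tP_1+t^2P_2)^2$ gives $P_0P_2+P_1^2+P_2P_0=0$. Read back as an identity of $\cR$-equivariant endomorphisms of $C$, this is exactly $\d\circ Q+\Phi_B^2+Q\circ\d=0$, i.e.\ $\Phi_B^2=\d_{\Mor}(Q)$, so $\Phi_B^2$ is null-homotopic through the $\cR$-equivariant chain homotopy $Q$. If $\d$ is filtered with respect to $\cG^B$, meaning every $P_{\ve{x}\ve{y}}\in\bF_2[U,V]$, then every entry of $P_2$ also lies in $\bF_2[U,V]$ --- the rule $U^aV^b\mapsto\binom{a}{2}U^{a-2}V^b$ kills the monomials with $a<2$ and so introduces no negative powers --- hence $Q$ is filtered and so is the homotopy. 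The case of $\Psi_B^2$ is identical after interchanging the roles of $U$ and $V$, using $V\mapsto V+t$.

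I expect the only real obstacle to be recognizing this characteristic-$2$ phenomenon and its fix: the honest analogue of ``differentiate twice'' is the divided power $P_2$, not the identically zero second ordinary derivative, and one must check that $P_2$ makes sense over $\cR$, is $\cR$-equivariant after linear extension, and preserves $\bF_2[U,V]$. Everything after that --- the $t^2$-coefficient identity and its translation into the morphism complex --- is routine matrix bookkeeping, entirely parallel to the proofs of Lemmas~\ref{lem:equivariantmapscommutewithPhiandPsi} and~\ref{lem:PhiPsicommute}.
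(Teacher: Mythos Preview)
Your proof is correct and is essentially the same as the paper's. The paper writes $\d=\sum_n P_n\cdot U^n$ and takes as homotopy $H_B:=\sum_n\tfrac{n(n-1)}{2}\,P_n\cdot U^{n-2}$, then checks directly that $\Phi_B^2=\d H_B+H_B\d$; since $\binom{a}{2}=\tfrac{a(a-1)}{2}$, your map $Q$ with matrix $P_2$ is exactly this $H_B$, and your $t^2$-coefficient identity is the same verification, just packaged via the ring homomorphism $U\mapsto U+t$ rather than written out coefficient-by-coefficient.
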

\begin{proof}The proof is the same as \cite{ZemCFLTQFT}*{Lemma~4.9}. For $\Phi_B$, the chain homotopy is defined by first writing $\d=\sum_{n=-\infty}^\infty P_n \cdot U^n$ where $P_n$ is a matrix (in terms of the basis $B$) with entries in $\bF_2[V,V^{-1}]$. We define
\[
H_B:=\sum_{n=-\infty}^{\infty}\left( \frac{n(n-1)}{2}\right) P_n\cdot U^{n-2}.
 \]
  It is easy to check that
\[
\Phi_B^2=\d\circ H_B+H_B\circ \d.
\]
 Finally, if $\d$ is filtered with respect to $\cG^B$, then $P_n$ vanishes for $n<0$ and $P_n$ involves only nonnegative powers of $V$, so $H_B$ is filtered as well.
\end{proof}

\begin{rem} Since we are working over $\bF_2$, Lemmas~\ref{lem:PhiPsicommute} and~\ref{lem:Phi^2Psi^2=0} imply that if $(C,\d,B,\iota)$ is an $\iota_K$-complex, then $\iota^4\simeq \id$, since
\[
\iota^4\simeq (\id+\Phi \Psi)^2\simeq \id+2\Phi\Psi+\Phi^2\Psi^2\simeq \id.
\]
\end{rem}

\subsection{$\frI_K$ is an abelian group}\label{subsec:thegroupI_K}

In this section we verify  $\frI_K$ is a well defined abelian group.

We first show that the product of two $\iota_K$-complexes is an $\iota_K$-complex:

\begin{lem}\label{lem:multwelldefined} If $\cC_1=(C_1,\d_1,B_1,\iota_1)$ and $\cC_2=(C_2,\d_2,B_2,\iota_2)$ are two $\iota_K$-complexes, then $\cC_1\times_1\cC_2$ and $\cC_1\times_2\cC_2$ are both $\iota_K$-complexes.
	\end{lem}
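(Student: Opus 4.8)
The plan is to verify each of the six conditions in Definition~\ref{def:iotaKcomplex} for both products $\cC_1 \times_1 \cC_2$ and $\cC_1 \times_2 \cC_2$. Conditions \eqref{defiota:cond1}--\eqref{defiota:cond4} are essentially formal and follow from the standard K\"{u}nneth picture: the tensor product $C_1 \otimes_\cR C_2$ is free over $\cR$ with basis $B_1 \times B_2$; the tensor differential $\d_1|\id + \id|\d_2$ involves only nonnegative powers of $U$ and $V$ in the product basis (since each $\d_i$ does), so it is filtered with respect to $\cG^{B_1 \times B_2}$; the gradings $\gr_U$ and $\gr_V$ add across the tensor product, and the tensor differential drops each by $1$; and condition \eqref{defiota:cond4} follows from the K\"{u}nneth theorem over $\cR$, using that $\cR$ is a (graded) field-like ring for which $H_*(C_1) \otimes_\cR H_*(C_2) \cong \cR \otimes_\cR \cR \cong \cR$ with the correct grading and no $\Tor$ term. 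I would handle these quickly and then concentrate on conditions \eqref{defiota:cond5} and \eqref{defiota:cond6}.

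For condition \eqref{defiota:cond5}, I must check that $\iota_1 \times_i \iota_2$ is skew-equivariant, skew-filtered, and skew-graded. Take $i=1$, so $\iota_1 \times_1 \iota_2 = \iota_1|\iota_2 + \Phi_1\iota_1 | \Psi_2\iota_2$. For skew-equivariance: $\iota_1|\iota_2$ is skew-equivariant because each $\iota_j$ is; for the second term, $\Phi_1$ is $\cR$-equivariant and $\iota_1$ is skew-equivariant, so $\Phi_1\iota_1$ is skew-equivariant on $C_1$ (and similarly $\Psi_2\iota_2$ on $C_2$), hence their tensor product is skew-equivariant. For skew-gradedness: $\iota_j$ swaps $\gr_U$ and $\gr_V$, $\Phi_1$ raises Alexander grading by $+1$ (equivalently lowers $\gr_V$ by $1$ relative to $\gr_U$) and $\Psi_2$ lowers it by $1$, so on the tensor product the shifts cancel and $\Phi_1\iota_1|\Psi_2\iota_2$ is skew-graded exactly like $\iota_1|\iota_2$; I would record the grading bookkeeping carefully here since it is the one place the two different products genuinely differ. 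Skew-filteredness follows from the same bookkeeping together with the fact (noted after Equation~\eqref{eq:formalderivatives}) that $\Phi_i$ and $\Psi_i$ are filtered when $\d_i$ is.

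The heart of the argument is condition \eqref{defiota:cond6}: showing $(\iota_1\times_1\iota_2)^2 \simeq \id + \Phi_{B_1\times B_2}\circ \Psi_{B_1 \times B_2}$. I would first record that, for the tensor differential, the formal derivatives of the product satisfy $\Phi_{B_1\times B_2} = \Phi_1|\id + \id|\Phi_2$ and $\Psi_{B_1\times B_2} = \Psi_1|\id + \id|\Psi_2$ (this is just the Leibniz rule applied to $\d_1|\id + \id|\d_2$). Then I expand
\[
(\iota_1|\iota_2 + \Phi_1\iota_1|\Psi_2\iota_2)^2 = \iota_1^2|\iota_2^2 + (\Phi_1\iota_1)^2|(\Psi_2\iota_2)^2 + \iota_1\Phi_1\iota_1|\iota_2\Psi_2\iota_2 + \Phi_1\iota_1^2|\Psi_2\iota_2^2,
\]
working over $\bF_2$ so signs are irrelevant. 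Now I substitute $\iota_j^2 \simeq \id + \Phi_j\Psi_j$ and use the commutation relations from Lemma~\ref{lem:equivariantmapscommutewithPhiandPsi} (to move $\Phi$'s and $\Psi$'s past the skew-equivariant $\iota_j$, picking up the swap $\Phi \leftrightarrow \Psi$), together with $\Phi_j^2 \simeq 0$, $\Psi_j^2 \simeq 0$ (Lemma~\ref{lem:Phi^2Psi^2=0}) and $\Phi_j\Psi_j \simeq \Psi_j\Phi_j$ (Lemma~\ref{lem:PhiPsicommute}), and the fact that $\Phi_j, \Psi_j$ vanish on homology so cross terms of high order drop out. The target $\id + \Phi_{B_1\times B_2}\Psi_{B_1\times B_2} = \id + (\Phi_1|\id + \id|\Phi_2)(\Psi_1|\id + \id|\Psi_2) = \id + \Phi_1\Psi_1|\id + \Phi_1|\Psi_2 + \Psi_1|\Phi_2 + \id|\Phi_2\Psi_2$ should match the expansion after all reductions. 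The main obstacle is precisely this bookkeeping: keeping track of which terms survive modulo filtered $\cR$-equivariant chain homotopy and correctly applying the skew-commutation relations (noting that each application of Lemma~\ref{lem:equivariantmapscommutewithPhiandPsi} transposes $\Phi \leftrightarrow \Psi$ and introduces a chain homotopy that must itself be filtered). The case $i=2$ is identical after applying the skew map $s$ of Equation~\eqref{eq:skewmap}, or equivalently by the symmetry exchanging the two factors, so I would dispatch it by that remark rather than repeating the computation.
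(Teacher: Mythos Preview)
Your proposal is correct and follows essentially the same route as the paper: conditions \eqref{defiota:cond1}--\eqref{defiota:cond3} and \eqref{defiota:cond5} are dispatched formally, and condition \eqref{defiota:cond6} is handled by expanding $(\iota_1\times_1\iota_2)^2$, substituting $\iota_j^2\simeq\id+\Phi_j\Psi_j$, and simplifying via Lemmas~\ref{lem:equivariantmapscommutewithPhiandPsi}, \ref{lem:PhiPsicommute}, and \ref{lem:Phi^2Psi^2=0}---exactly as the paper does.

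The one substantive difference is your treatment of condition~\eqref{defiota:cond4}. You invoke the K\"unneth theorem, calling $\cR$ ``field-like''; that phrasing is misleading, since $\cR=\bF_2[U,V,U^{-1},V^{-1}]$ is not a field or a PID. What actually makes the argument go through is that $H_*(C_i)\cong\cR$ is \emph{free} over $\cR$, so the Tor terms vanish (equivalently, each $C_i$ is $\cR$-equivariantly chain homotopy equivalent to $\cR$, whence $C_1\otimes_\cR C_2\simeq\cR$). The paper instead argues this point hands-on: it restricts to the Alexander-grading-zero summand, which is a complex over the PID $\cR_0=\bF_2[UV,(UV)^{-1}]$, decomposes into 1- and 2-step complexes, and observes that the 2-step pieces are null-homotopic over $\cR$ since $UV$ is invertible. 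Your approach is cleaner once the justification is stated correctly; the paper's is more explicit and self-contained. Also, your parenthetical ``$\Phi_j,\Psi_j$ vanish on homology so cross terms of high order drop out'' is a red herring---vanishing on homology is not the same as vanishing up to filtered homotopy; the actual reason those terms drop is $\Phi_j^2\simeq 0\simeq\Psi_j^2$, which you do cite.
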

	\begin{proof} Only conditions \eqref{defiota:cond4} and \eqref{defiota:cond6} of Definition~\ref{def:iotaKcomplex} are not entirely straightforward to check. Condition \eqref{defiota:cond4} states $H_*(C_1\otimes_\cR C_2)\iso \cR$.  By our grading assumptions, we can write $C_1$ and $C_2$ each as a direct sum (over $\bF_2$) over Alexander gradings (where $A:=\tfrac{1}{2}(\gr_U-\gr_V)$). Define the subring
	\[ 
	\cR_0:=\bF_2[\hat{U},\hat{U}^{-1}]\subset \cR,
	\]  where $\hat{U}=UV$.	
	Write $(C_i)_{j}$ for the subset of $C_i$ in Alexander grading $j$. We note that each $(C_i)_{j}$ is a finitely generated, free $\cR_0$-module.  Since $\cR_0$ is a PID, the classification of free, finitely generated chain complexes over a PID (see e.g. \cite{HMZConnectedSum}*{Lemma 6.1}) implies that we can write $(C_i)_{0}$ as a direct sum of 1-step and 2-step complexes. We recall that a 1-step complex is a copy of $\cR_0$ with vanishing differential, and a 2-step complex is a complex over $\cR_0$ with two generators $a$ and $b$ over $\cR_0$,   such that $\d(a)=f(\hat{U})\cdot b$ for some $f(\hat{U})\in \cR_0$. Since $(C_i)_{0}$ has a $\Z$-grading, it is not hard to see that the 2-step complexes must be of the form $\d(a)=\hat{U}^n\cdot  b$. This decomposition of $(C_i)_{0}$ as a sum of 1-step and 2-step complexes over $\cR_0$ can be extended linearly over $\cR$ to a decomposition of $C_i$ as a sum of 1-step and 2-step complexes over the ring $\cR$.

 Over $\cR_0$ and $\cR$, 2-step complexes with generators $a$ and $b$ such that $\d(a)=\hat{U}^n \cdot b$ are $\cR$-equivariantly chain homotopy equivalent to the zero complex, since $\hat{U}^n=U^n V^n$ is invertible in $\cR$ (note that of course such a 2-step complex is not necessarily \emph{filtered} chain homotopy equivalent to the zero complex). The homology $H_*(C_1\otimes_{\cR} C_2)$ only depends on the chain homotopy type of $C_i$ over $\cR$, so such 2-step complexes can be deleted from each $C_i$. Since by assumption the homology of $C_{i}$ is isomorphic to $\cR$, with $1\in \cR$ given zero $\gr_U$ and $\gr_V$-grading, we know that both $C_1$ and $C_2$ are $\cR$-equivariantly chain homotopy equivalent  to a 1-step complex. Hence  $H_*(C_1\otimes_\cR C_2)\iso \cR$.

 The other condition worth discussing is condition \eqref{defiota:cond6}, which requires that $(\iota_1\times_i \iota_2)^2\simeq \id+\Phi\circ \Psi$. To see this, note that on $C_1\otimes_{\cR} C_2$, we have 
		\begin{equation}\id+\Phi\circ\Psi=\id|\id+(\Phi_1|\id+\id|\Phi_2)(\Psi_1|\id+\id|\Psi_2).\label{eq:involutionsquared}\end{equation} On the other hand
		\begin{align*}(\iota_1\times_1 \iota_2)^2&=(\iota_1|\iota_2+\Phi_1\iota_1|\Psi_2\iota_2)(\iota_1|\iota_2+\Phi_1\iota_1|\Psi_2\iota_2)\\
		&=\iota_1^2|\iota_2^2 +\Phi_1\iota_1^2|\Psi_2\iota_2^2+\iota_1\Phi_1\iota_1|\iota_2\Psi_2\iota_2+\Phi_1\iota_1\Phi_1\iota_1|\Psi_2\iota_2\Psi_2\iota_2\\
		&\simeq (\id+\Phi_1\Psi_1)|(\id+\Phi_2\Psi_2)+\Phi_1(\id+\Phi_1\Psi_1)|\Psi_2(\id+\Phi_2\Psi_2)+\Psi_1(\id+\Phi_1\Psi_1)|\Phi_2(\id+\Phi_2\Psi_2)\\
		&\qquad+\Phi_1\Psi_1(\id+\Phi_1\Psi_1)|\Psi_2\Phi_2(\id+\Phi_2\Psi_2) \\
		&\simeq \id|\id+\Phi_1|\Psi_2+\Psi_1|\Phi_2+\Phi_1\Psi_1|\id+\id|\Phi_2\Psi_2\end{align*} which is the same as Equation \eqref{eq:involutionsquared}. A similar computation yields that $(\iota_1\times_2\iota_2)^2\simeq \id+\Phi\circ \Psi$, as well.
		\end{proof}

We now show that the two multiplications $\times_1$ and $\times_2$ coincide on homotopy classes of $\iota_K$-complexes (and hence local equivalence classes):

\begin{lem}\label{lem:productsarestronglyequivalent} If $\cC_1=(C_1,\d_1,B_1,\iota_1)$ and $\cC_2=(C_2,\d_2,B_2,\iota_2)$ are two $\iota_K$-complexes, then the $\iota_K$-complexes $\cC_1\times_1 \cC_2$ and $\cC_1\times_2 \cC_2$ are homotopy equivalent.
\end{lem}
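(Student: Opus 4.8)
The plan is to exhibit an explicit self-equivalence of the common underlying complex that intertwines the two involutions. Note first that $\cC_1\times_1\cC_2$ and $\cC_1\times_2\cC_2$ have the \emph{same} underlying filtered chain complex $(C_1\otimes_{\cR}C_2,\ \d_1|\id+\id|\d_2)$ and the same basis $B_1\times B_2$; only the involution differs, and abbreviating $\Phi_i=\Phi_{B_i}$, $\Psi_i=\Psi_{B_i}$ we may rewrite~\eqref{eq:productformula} as
\[
\iota_1\times_1\iota_2=(\id|\id+\Phi_1|\Psi_2)(\iota_1|\iota_2),\qquad \iota_1\times_2\iota_2=(\id|\id+\Psi_1|\Phi_2)(\iota_1|\iota_2).
\]
I claim that
\[
F:=\id|\id+\Psi_1|\Phi_2
\]
is a homotopy equivalence of $\iota_K$-complexes from $\cC_1\times_1\cC_2$ to $\cC_1\times_2\cC_2$ which is its own homotopy inverse.

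First I would check that $F$ has the properties required of a morphism of $\iota_K$-complexes. Since $\Psi_1$ and $\Phi_2$ are $\cR$-equivariant chain maps (Lemma~\ref{lem:PhiPsicommute}), so is $\Psi_1|\Phi_2$, and hence $F$; it is filtered because $\Psi_1$ and $\Phi_2$ are (as $\d_1,\d_2$ are filtered); and it preserves $\gr_U$ and $\gr_V$ because, under the conventions of Definition~\ref{def:iotaKcomplex}, $\Psi_i$ shifts $(\gr_U,\gr_V)$ by $(-1,+1)$ while $\Phi_i$ shifts it by $(+1,-1)$, so $\Psi_1|\Phi_2$ is bigrading preserving.

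The heart of the argument is the intertwining relation. The key input is Lemma~\ref{lem:equivariantmapscommutewithPhiandPsi} applied to the skew-equivariant, skew-filtered chain maps $\iota_i$: it supplies skew-equivariant, skew-filtered chain homotopies $\iota_i\Phi_i\eqsim\Psi_i\iota_i$ and $\iota_i\Psi_i\eqsim\Phi_i\iota_i$, whence, after tensoring and composing with the relevant chain maps, $(\iota_1|\iota_2)(\Psi_1|\Phi_2)\eqsim(\Phi_1|\Psi_2)(\iota_1|\iota_2)$. Combining this with the two factorizations above, $F\circ(\iota_1\times_1\iota_2)=(\id|\id+\Psi_1|\Phi_2)(\id|\id+\Phi_1|\Psi_2)(\iota_1|\iota_2)$ as an identity of maps, while $(\iota_1\times_2\iota_2)\circ F=(\id|\id+\Psi_1|\Phi_2)(\iota_1|\iota_2)(\id|\id+\Psi_1|\Phi_2)$ is $\eqsim$ to the same expression; hence $(\iota_1\times_2\iota_2)\circ F\eqsim F\circ(\iota_1\times_1\iota_2)$. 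The symmetric manipulation gives $(\iota_1\times_1\iota_2)\circ F\eqsim F\circ(\iota_1\times_2\iota_2)$, using in addition $(\Phi_1|\Psi_2)^2=\Phi_1^2|\Psi_2^2\simeq 0$ and $(\Psi_1|\Phi_2)^2=\Psi_1^2|\Phi_2^2\simeq 0$, which follow from Lemma~\ref{lem:Phi^2Psi^2=0} by tensoring its (filtered, $\cR$-equivariant) homotopies with the chain maps $\Psi_2^2$, resp.\ $\Phi_2^2$. Those same two facts give $F^2=\id|\id+\Psi_1^2|\Phi_2^2\simeq\id|\id$ over $\bF_2$, so $F$ is a filtered $\cR$-equivariant chain homotopy equivalence — in particular an isomorphism on homology — that serves as its own homotopy inverse. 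This exhibits $\cC_1\times_1\cC_2$ and $\cC_1\times_2\cC_2$ as homotopy equivalent $\iota_K$-complexes.

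The only genuinely delicate point I anticipate is the bookkeeping of equivariance and filtration — keeping $\simeq$ and $\eqsim$ straight, and verifying that the tensor products of the homotopies coming from Lemma~\ref{lem:equivariantmapscommutewithPhiandPsi} and Lemma~\ref{lem:Phi^2Psi^2=0} are honest chain homotopies of the stated type (skew versus non-skew, filtered versus skew-filtered). The precise form of those lemmas (skew homotopies for skew-equivariant maps, and homotopies obtained by differentiating the relevant matrices) is exactly what makes every combination come out right.
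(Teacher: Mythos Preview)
Your proposal is correct and is essentially identical to the paper's own proof: the paper also takes $F=\id|\id+\Psi_1|\Phi_2$, shows $F^2\simeq\id$ via Lemma~\ref{lem:Phi^2Psi^2=0}, and verifies the two intertwining relations using exactly the manipulation $(\iota_1|\iota_2)(\Psi_1|\Phi_2)\eqsim(\Phi_1|\Psi_2)(\iota_1|\iota_2)$ from Lemma~\ref{lem:equivariantmapscommutewithPhiandPsi}. Your additional remarks on why $F$ is filtered, $\cR$-equivariant, and bigrading preserving are a welcome bit of extra care that the paper leaves implicit.
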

\begin{proof} We define  map $F\colon C_1\otimes C_2\to C_1\otimes C_2$ by
\[
F=(\id| \id+\Psi_1| \Phi_2).
\] 
 Note that since $\Psi_1^2\simeq 0$ and $\Phi_2^2\simeq 0$, and since we are working in characteristic 2, it follows that $F^2\simeq \id$. We claim that $F$ induces a homotopy equivalence of $\iota_K$-complexes, with $F$ being it's own homotopy inverse. We need to show that $F$ intertwines $\iota_1\times_1\iota_2$ and $\iota_1\times_2\iota_2$. We first compute
\begin{align*}F(\iota_1\times_1 \iota_2)&\eqsim (\id| \id+\Psi_1| \Phi_2)(\id|\id+\Phi_1|\Psi_2)(\iota_1|\iota_2)\\
&\eqsim (\id| \id+\Psi_1| \Phi_2)(\iota_1|\iota_2)(\id|\id+\Psi_1|\Phi_2)\\
&\eqsim (\iota_1\times_2\iota_2)F.
\end{align*}
In a similar manner, we have
\begin{align*}F(\iota_1\times_2 \iota_2)& \eqsim (\id|\id+\Psi_1|\Phi_2)(\id|\id+\Psi_1|\Phi_2)(\iota_1|\iota_2)\\
&\eqsim (\iota_1|\iota_2)\\
&\eqsim (\iota_1|\iota_2)(\id|\id+\Psi_1|\Phi_2)(\id|\id+\Psi_1|\Phi_2)\\
&\eqsim (\id|\id+\Phi_1|\Psi_2)(\iota_1|\iota_2)(\id|\id+\Psi_1|\Phi_2)\\
&\eqsim (\iota_1\times_1\iota_2)F,
\end{align*}
completing the proof.
\end{proof}

In light of the previous lemma, we will write $\times$ for the multiplication on $\frI_K$, when no confusion will arise. We now show that $\times$ is well defined on local equivalence classes:

\begin{lem}\label{lem:multwelldefinedonlocalclass} The $\times$ operation is a well defined multiplication on $\frI_K$. That is, if $\cC_1$ and $\cC_1'$ are locally equivalent, and $\cC_2$ and $\cC_2'$ are locally equivalent, then $\cC_1\times\cC_2$ and $\cC_1'\times\cC_2'$ are locally equivalent.
\end{lem}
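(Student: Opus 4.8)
The plan is to reduce the claim to a single, canonical maneuver: showing that if $\cC_1$ and $\cC_1'$ are locally equivalent, then $\cC_1\times\cC_2$ and $\cC_1'\times\cC_2$ are locally equivalent for any fixed $\iota_K$-complex $\cC_2$. Once this ``one-sided'' statement is established, the full statement follows by applying it twice and using transitivity of local equivalence:
\[
\cC_1\times\cC_2 \,\sim\, \cC_1'\times\cC_2 \,\sim\, \cC_2\times\cC_1' \,\sim\, \cC_2'\times\cC_1' \,\sim\, \cC_1'\times\cC_2',
\]
where the middle two ``commutations'' use that $\times_1$ and $\times_2$ agree up to homotopy equivalence (Lemma~\ref{lem:productsarestronglyequivalent}) together with the fact that, say, $\cC_1\times_1\cC_2$ and $\cC_2\times_2\cC_1$ are literally isomorphic as $\iota_K$-complexes under the swap map on the tensor product (one should check the swap intertwines $\iota_1\times_1\iota_2$ with $\iota_2\times_2\iota_1$, which is immediate from Equation~\eqref{eq:productformula}).

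For the one-sided statement, let $F\colon C_1\to C_1'$ and $G\colon C_1'\to C_1$ be the filtered, grading-preserving, $\cR$-equivariant chain maps witnessing the local equivalence, so $\iota_1' F\eqsim F\iota_1$, $\iota_1 G\eqsim G\iota_1'$, and $F,G$ are isomorphisms on homology. I would take the induced maps $F|\id$ and $G|\id$ on the tensor products $C_1\otimes_\cR C_2$ and $C_1'\otimes_\cR C_2$. These are manifestly filtered, grading-preserving, and $\cR$-equivariant, and by the K\"unneth theorem over $\cR$ (using condition~\eqref{defiota:cond4}, so that $H_*(C_i)\iso\cR$ is free, hence Tor vanishes) they are isomorphisms on homology. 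The crux is verifying the intertwining relation
\[
(\iota_1'\times_1\iota_2)\circ(F|\id)\,\eqsim\,(F|\id)\circ(\iota_1\times_1\iota_2),
\]
and similarly for $G$. Expanding via Equation~\eqref{eq:productformula}, the left side is $(\iota_1' F|\iota_2)+(\Phi_1'\iota_1' F|\Psi_2\iota_2)$ and the right side is $(F\iota_1|\iota_2)+(F\Phi_1\iota_1|\Psi_2\iota_2)$. The first terms match up to skew-filtered chain homotopy since $\iota_1' F\eqsim F\iota_1$. For the second terms, I need $\Phi_1' F\simeq F\Phi_1$: this is precisely the first part of Lemma~\ref{lem:equivariantmapscommutewithPhiandPsi} applied to the $\cR$-equivariant map $F$ (working over $\bF_2$, so signs are irrelevant), and since $F$ is filtered the homotopy is filtered; tensoring with $\Psi_2\iota_2$ and combining with $\iota_1' F\eqsim F\iota_1$ gives the needed skew-filtered homotopy. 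One must be mildly careful that tensoring a chain homotopy $\Phi_1' F+F\Phi_1 = \d H + H\d$ with the chain map $\Psi_2\iota_2$ on the second factor yields a genuine chain homotopy of the tensored maps; this is routine since $\Psi_2\iota_2$ commutes with $\d_2$ up to homotopy and $\Phi,\Psi$ are chain maps (Lemma~\ref{lem:PhiPsicommute}).

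The main obstacle I anticipate is bookkeeping rather than conceptual: keeping track of which homotopies are filtered versus skew-filtered and ensuring that the tensor-product constructions preserve these refinements, since $\eqsim$ (skew-filtered) and $\simeq$ (filtered) must be threaded correctly through the product formula. A secondary point requiring a little care is that we only assume $F$ and $G$ are isomorphisms \emph{on homology}, not that $FG\simeq\id$ and $GF\simeq\id$; so for the local-equivalence conclusion I must only verify the homology-isomorphism condition for $F|\id$ and $G|\id$ (handled by K\"unneth as above) and the two intertwining relations, and I should \emph{not} attempt to prove a full homotopy equivalence. Finally, I would remark that the choice of $\times_1$ versus $\times_2$ throughout is harmless by Lemma~\ref{lem:productsarestronglyequivalent}, so the multiplication descends unambiguously to local equivalence classes, completing the proof of Lemma~\ref{lem:multwelldefinedonlocalclass}.
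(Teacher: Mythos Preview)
Your proposal is correct and follows essentially the same route as the paper: take $F|\id$ and $G|\id$, use K\"unneth (the paper phrases this via the argument of Lemma~\ref{lem:multwelldefined}) to get homology isomorphisms, and verify the intertwining by combining the hypothesis $\iota_1'F\eqsim F\iota_1$ with Lemma~\ref{lem:equivariantmapscommutewithPhiandPsi} for $\Phi_1'F\simeq F\Phi_1$. The only minor difference is that the paper handles the second factor directly by the symmetric argument (using $\Psi_2'F_2\simeq F_2\Psi_2$), whereas you route through commutativity and Lemma~\ref{lem:productsarestronglyequivalent}; both work, though the direct approach is slightly shorter.
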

\begin{proof}Let us consider changing the first factor, since changing the second factor follows similarly. Suppose that $F\colon C_1\to C_1'$ and $G\colon C_1'\to C_1$ are maps which induce a local equivalence. Since the maps $H_*(C_1)\otimes H_*(C_2)\to H_*(C_1\otimes C_2)$ and $H_*(C_1')\otimes H_*(C_2)\to H_*(C_1'\otimes C_2)$ are isomorphisms by the argument in Lemma \ref{lem:multwelldefined}, we see that $(F| \id)\circ (G| \id)=(F\circ G)| \id$ and $(G| \id)\circ (F| \id)$ are isomorphisms on homology. It remains only to show that $F| \id$ and $G|\id$ intertwine the involutions, up to skew-equivariant, skew-filtered homotopies.
	
For $F|\id$, this amounts to showing that
	\[
	(F|\id)(\iota_1|\iota_2+\Phi_1\iota_1|\Psi_2\iota_2)\eqsim (\iota_1'|\iota_2+\Phi_1'\iota_1'|\Psi_2\iota_2)(F|\id).\]
	 Rearranging terms reduces this to
	\[
	(F\iota_1+\iota_1'F)|\iota_2+(F\Phi_1\iota_1+\Phi_1'\iota_1'F)|\Psi_2\iota_2\eqsim 0.
	\]
	 By assumption, $F\iota_1\eqsim \iota_1'F$, since $F$ was part of a local equivalence. By Lemma \ref{lem:equivariantmapscommutewithPhiandPsi}, we also know that $F\Phi_1\simeq \Phi_1'F$. Combining these facts shows that the above identity is satisfied, so $F|\id$ intertwines the involutions, up to skew-equivariant, skew-filtered chain homotopies. The same argument works for $G$. The same argument also works for the second factor of the tensor product, and hence the proof is complete.
\end{proof}

\begin{lem}\label{lem:multassoc}
	The multiplication $\times$ is associative on $\frI_K$.
\end{lem}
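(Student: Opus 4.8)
The statement to prove is Lemma~\ref{lem:multassoc}: that $\times$ is associative on $\frI_K$. Since Lemma~\ref{lem:productsarestronglyequivalent} tells us that $\times_1$ and $\times_2$ agree up to homotopy equivalence (hence local equivalence), it suffices to verify associativity for one of them, say $\times_1$; the plan is to exhibit, for any three $\iota_K$-complexes $\cC_1, \cC_2, \cC_3$, an explicit filtered, grading-preserving, $\cR$-equivariant chain isomorphism between $(\cC_1\times_1\cC_2)\times_1\cC_3$ and $\cC_1\times_1(\cC_2\times_1\cC_3)$ that intertwines the two resulting conjugation maps up to skew-equivariant, skew-filtered homotopy. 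The underlying chain complex in both cases is $C_1\otimes_\cR C_2\otimes_\cR C_3$ with differential $\d_1|\id|\id + \id|\d_2|\id + \id|\id|\d_3$, and the standard associator of the triple tensor product (the identity map on the nose, once one suppresses parenthesization) is clearly a filtered, grading-preserving $\cR$-isomorphism. So the entire content is the identification of the involutions.

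\textbf{Key steps.} First I would compute $(\iota_1\times_1\iota_2)\times_1\iota_3$ explicitly. Unwinding Equation~\eqref{eq:productformula} twice: the ``outer'' product pairs the complex $C_1\otimes C_2$ (with its maps $\Phi_{12} = \Phi_1|\id+\id|\Phi_2$ and $\Psi_{12}=\Psi_1|\id+\id|\Psi_2$, which by Equation~\eqref{eq:involutionsquared}-type bookkeeping are the formal derivatives of the tensor-product differential) against $C_3$, giving
\[
(\iota_1\times_1\iota_2)\times_1\iota_3 = (\iota_1\times_1\iota_2)|\iota_3 + \Phi_{12}(\iota_1\times_1\iota_2)|\Psi_3\iota_3.
\]
Then expand $\iota_1\times_1\iota_2 = \iota_1|\iota_2 + \Phi_1\iota_1|\Psi_2\iota_2$ and $\Phi_{12} = \Phi_1|\id + \id|\Phi_2$, and symmetrically compute $\iota_1\times_1(\iota_2\times_1\iota_3)$ using $\Psi_{23} = \id|\Psi_2|\id + \id|\id|\Psi_3$ etc. The second step is to reduce both expanded expressions, using the tools already established: $\Phi_i\Psi_i\simeq\Psi_i\Phi_i$ (Lemma~\ref{lem:PhiPsicommute}), $\Phi_i^2\simeq 0$ and $\Psi_i^2\simeq 0$ (Lemma~\ref{lem:Phi^2Psi^2=0}), $\iota_i^2\simeq\id+\Phi_i\Psi_i$ (Definition~\ref{def:iotaKcomplex}\eqref{defiota:cond6}), and the fact that we are over $\bF_2$. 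This is precisely the kind of manipulation already carried out in the proof of Lemma~\ref{lem:multwelldefined}, just with one more tensor factor. The third step is to observe that both sides reduce to the same symmetric normal form, which I expect to be
\[
\iota_1|\iota_2|\iota_3 + \Phi_1\iota_1|\Psi_2\iota_2|\iota_3 + \Phi_1\iota_1|\iota_2|\Psi_3\iota_3 + \iota_1|\Phi_2\iota_2|\Psi_3\iota_3 + \Phi_1\iota_1|\Phi_2\Psi_2\iota_2|\Psi_3\iota_3
\]
up to skew-equivariant, skew-filtered homotopy (the exact form of the higher terms is a routine but slightly tedious bookkeeping exercise, of which I will record only the final answer). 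Finally, one notes this normal form manifestly does not depend on the parenthesization, establishing that the associator intertwines the two involutions up to the required homotopy, so that $(\cC_1\times_1\cC_2)\times_1\cC_3$ and $\cC_1\times_1(\cC_2\times_1\cC_3)$ are homotopy equivalent, hence locally equivalent, in $\frI_K$.

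\textbf{Main obstacle.} The only real difficulty is bookkeeping: making sure that all the cross-terms arising from $\Phi_{12}$ and $\Psi_{23}$ acting as derivations on tensor products, combined with the $\iota_i^2$ substitutions and the $\Phi\Psi$-commuting homotopies, cancel in pairs (using characteristic $2$) to leave a fully parenthesization-symmetric expression. It is conceptually clean because every identity needed has already been isolated as a lemma above, and because one can first check associativity ``at the level of the symbols $\Phi_i, \Psi_i, \iota_i$'' treating $\iota_i^2$ as $\id+\Phi_i\Psi_i$ — essentially a computation in a small noncommutative $\bF_2$-algebra — and only afterwards invoke Lemma~\ref{lem:equivariantmapscommutewithPhiandPsi} and Lemmas~\ref{lem:PhiPsicommute}--\ref{lem:Phi^2Psi^2=0} to upgrade the symbolic equalities to the actual filtered/skew-filtered chain homotopies. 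I would also remark, for the record, that since associativity has now been checked for $\times_1$ and $\times_1\simeq\times_2$, associativity for $\times$ (as an operation on local equivalence classes, which is all we need) follows immediately.
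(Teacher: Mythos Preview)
Your proposal is correct and follows essentially the same approach as the paper: take the identity map as the associator on the underlying triple tensor product, expand $(\iota_1\times_1\iota_2)\times_1\iota_3$ and $\iota_1\times_1(\iota_2\times_1\iota_3)$ directly from Equation~\eqref{eq:productformula}, and show the discrepancy is null-homotopic using only Lemmas~\ref{lem:PhiPsicommute} and~\ref{lem:Phi^2Psi^2=0}. One small note: the relation $\iota_i^2\simeq\id+\Phi_i\Psi_i$ is not actually needed here (no $\iota_i^2$ appears in the expansion), so you can drop that from your list of tools; otherwise your normal form matches exactly what the paper obtains after applying $\Phi_1^2\simeq 0$, $\Psi_3^2\simeq 0$, and $\Phi_2\Psi_2\simeq\Psi_2\Phi_2$.
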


\begin{proof}
	Suppose that $\cC_i=(C_i,\d_i,B_i,\iota_i)$ are $\iota_K$ complexes for $i\in \{1,2,3\}$. We will construct a homotopy equivalence between the two $\iota_K$ complexes $(\cC_1\times\cC_2)\times\cC_3$ and $\cC_1\times(\cC_2\times\cC_3)$. The map on chain complexes will be the identity. The only nontrivial thing to show is that the identity map intertwines the involutions, up to skew-equivariant, skew-filtered homotopy. This is a computation. We have
	\begin{equation}	\begin{split}(\iota_1\times_1\iota_2)\times_1\iota_3&=(\iota_1|\iota_2+\Phi_1\iota_1|\Psi_2\iota_2)|\iota_3+\big((\Phi_1|\id+\id|\Phi_2)(\iota_1|\iota_2+\Phi_1\iota_1|\Psi_2\iota_2)\big)|\Psi_3\iota_3\\
	&=\iota_1|\iota_2|\iota_3+\Phi_1\iota_1|\Psi_2\iota_2|\iota_3+\Phi_1\iota_1|\iota_2|\Psi_3\iota_3+\iota_1|\Phi_2\iota_2|\Psi_3\iota_3\\
	&
	\qquad +\Phi_1\iota_1|\Phi_2\Psi_2\iota_2|\Psi_3\iota_3+\Phi_1^2\iota_1|\Psi_2\iota_2|\Psi_3\iota_3.\label{eq:associative1}
	\end{split}
	\end{equation}
	On the other hand, we can also compute that
	\begin{equation}
	\begin{split}\iota_1\times_1(\iota_2\times_1\iota_3)&=\iota_1|(\iota_2|\iota_3+\Phi_2\iota_2|\Psi_3\iota_3)+\Phi_1\iota_1|\big((\Psi_2|\id+\id|\Psi_3)(\iota_2|\iota_3+\Phi_2\iota_2|\Psi_3\iota_3)\big)\\
	&=\iota_1|\iota_2|\iota_3+\iota_1|\Phi_2\iota_2|\Psi_3\iota_3+\Phi_1\iota_1|\Psi_2\iota_2|\iota_3+\Phi_1\iota_1|\iota_2|\Psi_3\iota_3\\
	&\qquad+\Phi_1\iota_1|\Psi_2\Phi_2\iota_2|\Psi_3\iota_3+\Phi_1\iota_1|\Phi_2\iota_2|\Psi_3^2\iota_3.
	\end{split}
	\label{eq:associative2}
	\end{equation}
The difference between the expressions in Equation~\eqref{eq:associative1} and \eqref{eq:associative2} is
	\[
	\Phi_1\iota_1|\Phi_2\Psi_2\iota_2|\Psi_3\iota_3+\Phi_1^2\iota_1|\Psi_1\iota_2|\Psi_3\iota_3+\Phi_1\iota_1|\Psi_2\Phi_2\iota_2|\Psi_3\iota_3+\Phi_1\iota_1|\Phi_2\iota_2|\Psi_3^2\iota_3,
	\]
	 which is skew-equivariantly, skew-filtered chain homotopic to 0, by Lemmas \ref{lem:PhiPsicommute} and  \ref{lem:Phi^2Psi^2=0}.
\end{proof}

We now consider inverses on $\frI_K$. We first prove the following:

\begin{lem}\label{lem:inverses1} If $\cC=(C,\d,B,\iota)$ is an $\iota_K$-complex, then the tuple $\cC^\vee=(C^\vee,\d^\vee,B^\vee,\iota^\vee)$ is an $\iota_K$-complex.
	\end{lem}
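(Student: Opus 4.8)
The plan is to verify the six conditions of Definition~\ref{def:iotaKcomplex} for the dual tuple $\cC^\vee=(C^\vee,\d^\vee,B^\vee,\iota^\vee)$, where $C^\vee=\Hom_\cR(C,\cR)$, the basis $B^\vee=\{\ve{x}^\vee:\ve{x}\in B\}$ is the dual basis, and $\d^\vee$, $\iota^\vee$ are the induced maps (with the sign conventions trivial over $\bF_2$), and the gradings are negated as in Definition~\ref{def:IK}. Conditions \eqref{defiota:cond1}, \eqref{defiota:cond2}, \eqref{defiota:cond3} and \eqref{defiota:cond5} are essentially formal: dualizing a finitely generated free complex gives a finitely generated free complex; the matrix of $\d^\vee$ in the basis $B^\vee$ is the transpose of the matrix of $\d$ in $B$, so it again involves only nonnegative powers of $U$ and $V$, hence $\d^\vee$ is filtered with respect to $\cG^{B^\vee}$; negating both gradings turns a $(-1)$-graded differential into a $(-1)$-graded differential; and $\iota^\vee$ is skew-equivariant, skew-filtered and skew-graded because transposition interchanges the roles consistently with negating gradings and swapping $U\leftrightarrow V$.

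Next I would treat condition \eqref{defiota:cond4}, that $H_*(C^\vee,\d^\vee)\iso\cR$ with the generator in grading $0$. Here I would invoke the same structural argument used in Lemma~\ref{lem:multwelldefined}: decompose $(C,\d)$ up to $\cR$-equivariant chain homotopy equivalence, Alexander-grading by Alexander-grading, into $1$-step and $2$-step complexes over the PID $\cR_0=\bF_2[\hat U,\hat U^{-1}]$, extended linearly over $\cR$. Dualizing sends a $1$-step complex to a $1$-step complex (in the negated grading) and sends a $2$-step complex with $\d(a)=\hat U^n b$ to another $2$-step complex, which is again $\cR$-equivariantly acyclic since $\hat U^n$ is invertible. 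Since by hypothesis $H_*(C)\iso\cR$ with generator in grading $0$, there is exactly one surviving $1$-step summand in Alexander grading $0$, and its dual contributes the single $1$-step summand of $C^\vee$, in grading $0$ after negation; hence $H_*(C^\vee)\iso\cR$ as required.

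The main obstacle is condition \eqref{defiota:cond6}, namely $(\iota^\vee)^2\simeq\id+\Phi_{B^\vee}\circ\Psi_{B^\vee}$. The key point is to identify the formal derivatives of $\d^\vee$ with the duals of the formal derivatives of $\d$: since the matrix of $\d^\vee$ is the transpose of that of $\d$, differentiating entrywise commutes with transposition, so $\Phi_{B^\vee}=(\Phi_B)^\vee$ and $\Psi_{B^\vee}=(\Psi_B)^\vee$ as maps $C^\vee\to C^\vee$ (matching the grading shifts, since $\Phi$ shifts Alexander grading by $+1$ and its dual by... the same after negation, consistently). Dualization is a contravariant, multiplication-reversing operation that takes chain homotopies to chain homotopies; applying it to $\iota^2\simeq\id+\Phi_B\Psi_B$ gives $(\iota^\vee)^2=(\iota^2)^\vee\simeq(\id+\Phi_B\Psi_B)^\vee=\id+(\Psi_B)^\vee(\Phi_B)^\vee=\id+\Psi_{B^\vee}\Phi_{B^\vee}$, and then I would invoke Lemma~\ref{lem:PhiPsicommute} to replace $\Psi_{B^\vee}\Phi_{B^\vee}$ by $\Phi_{B^\vee}\Psi_{B^\vee}$ up to filtered $\cR$-equivariant chain homotopy. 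The care needed is only in checking that the chain homotopy witnessing $\iota^2\simeq\id+\Phi_B\Psi_B$ can be chosen filtered and $\cR$-equivariant so that its dual is again filtered and $\cR$-equivariant; this is built into the definition of $\simeq$ and the fact that transposing a filtered matrix over $\bF_2[U,V]$ yields a filtered matrix, so the argument closes.
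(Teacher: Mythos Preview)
Your proposal is correct and follows essentially the same route as the paper: the paper declares most axioms straightforward and focuses on condition~\eqref{defiota:cond4}, invoking exactly the decomposition-over-$\cR_0$ argument from Lemma~\ref{lem:multwelldefined} that you describe. You actually go further than the paper by spelling out condition~\eqref{defiota:cond6} (the paper folds it into ``most of the axioms are straightforward''), and your argument there---that $\Phi_{B^\vee}=(\Phi_B)^\vee$ because transposition commutes with entrywise differentiation, then dualize the homotopy and apply Lemma~\ref{lem:PhiPsicommute}---is correct and a welcome addition.
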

	\begin{proof}Most of the axioms are straightforward to verify. To see that $H_*(C^\vee)\iso \cR$, one can write down an explicit description of the complex $C$ over $\cR$, using the strategy of Lemma \ref{lem:multwelldefined} to decompose over Alexander gradings, and then apply the classification of chain complexes over the PID $\bF_2[\hat{U},\hat{U}^{-1}]$ where $\hat{U}=UV$,  to the subcomplex $C_0\subset C$ in 0 Alexander grading. The dual chain complex then has a simple description, obtained by dualizing the presentation of the complex $C_0$ over the ring $\bF_2[\hat{U}, \hat{U}^{-1}]$ and then extending linearly over $\cR$. From this description, the result follows immediately.
		\end{proof}

We now show that $\cC^\vee$ is the inverse of $\cC$ in $\frI_K$ (compare \cite{HMZConnectedSum}*{Proposition 8.8}):

\begin{lem}\label{lem:inverses2}If $\cC=(C,\d,B,\iota)$ is an $\iota_K$-complex, the product $\cC\times \cC^\vee$ is locally equivalent to $\cC_e=(\cR,0,\{1\},\iota_e),$ the identity $\iota_K$-complex.
	\end{lem}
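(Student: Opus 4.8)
The plan is to show directly that the ``evaluation'' and ``coevaluation'' maps for the duality pairing realize a local equivalence between $\cC \times \cC^\vee$ and $\cC_e$. First I would write down the candidate chain maps explicitly. For the map $G\colon \cC_e \to \cC \times \cC^\vee$ I would pick a basis $B = \{\ve{x}_1,\dots,\ve{x}_n\}$ of $C$ over $\cR$, with dual basis $\{\ve{x}_1^\vee,\dots,\ve{x}_n^\vee\}$ of $C^\vee$, and set $G(1) = \sum_i \ve{x}_i \otimes \ve{x}_i^\vee$, the coevaluation element. One checks $\d G(1)=0$ using $\d^\vee(\ve{x}_j^\vee) = \sum_i P_{\ve{x}_i \ve{x}_j}\cdot \ve{x}_i^\vee$ (the transpose of the matrix of $\d$), so that $\d(\sum_i \ve{x}_i\otimes \ve{x}_i^\vee) = \sum_{i,j} P_{\ve{x}_i\ve{x}_j}\ve{x}_j\otimes \ve{x}_i^\vee + \sum_{i,j} \ve{x}_i \otimes P_{\ve{x}_i \ve{x}_j}\ve{x}_j^\vee$, and the two double sums cancel over $\bF_2$ after relabeling. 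For $F\colon \cC\times\cC^\vee \to \cC_e$ I would use the evaluation map $F(\ve{x}\otimes \varphi) = \varphi(\ve{x})$, which is visibly an $\cR$-equivariant, filtered, grading-preserving chain map. Then $F\circ G(1) = \sum_i \ve{x}_i^\vee(\ve{x}_i) = n \cdot 1$; here one must be slightly careful, but since the $\gr_U,\gr_V$-grading constraints plus the homology condition \eqref{defiota:cond4} force, after deleting acyclic $2$-step summands as in Lemma~\ref{lem:multwelldefined}, the reduced model of $C$ to be a single $1$-step complex, one may assume $n=1$ on the level of homology, so $F\circ G$ is an isomorphism on $H_*$. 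Likewise $G\circ F$ is an isomorphism on homology because both sides have homology $\iso\cR$ and the composite is nonzero there.

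Next I would verify the two maps intertwine the involutions up to skew-equivariant, skew-filtered chain homotopy. The involution on $\cC\times\cC^\vee$ is $\iota \times_1 \iota^\vee = \iota|\iota^\vee + \Phi\iota|\Psi^\vee\iota^\vee$ (or $\times_2$, which agrees up to homotopy by Lemma~\ref{lem:productsarestronglyequivalent}), and on $\cC_e$ it is $\iota_e$. For the evaluation map the key identity is that $\iota^\vee$ is the adjoint of $\iota$ in the sense $\varphi(\iota(\ve{x})) = (\iota^\vee \varphi)(\ve{x})$ after accounting for the skew of the $\cR$-action, so that $F(\iota\times_1\iota^\vee)(\ve{x}\otimes\varphi)$ and $\iota_e F(\ve{x}\otimes\varphi)$ differ by the contribution of the $\Phi\iota|\Psi^\vee\iota^\vee$ term; this term is $\varphi\mapsto (\Psi^\vee\iota^\vee\varphi)(\Phi\iota\ve{x})$, which by the defining relation $\Phi^\vee = $ adjoint of $\Psi$ (and vice versa — these are the formal derivatives of the transposed matrix) equals the pairing of $\Psi\Phi\iota\ve{x}$ against $\iota^\vee$-stuff, hence is chain homotopic to $0$ by Lemma~\ref{lem:equivariantmapscommutewithPhiandPsi} together with the fact that $\Phi,\Psi$ vanish on homology (Equation~\eqref{eq:nonequivchainhomotopy}). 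The analogous computation for $G$ uses $\iota^2\simeq \id + \Phi\Psi$ and the commutation relations of Lemma~\ref{lem:PhiPsicommute}. I would carry the $\Phi,\Psi$-bookkeeping in a short display, being careful that all homotopies produced are skew-filtered because the relevant maps $F$, $G$, $s$ are (skew-)filtered.

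I expect the main obstacle to be the compatibility with the involutions, specifically pinning down precisely how $\iota^\vee$, $\Phi^\vee$, $\Psi^\vee$ behave under the duality pairing, since the dualization interacts with the skew-equivariance in a slightly subtle way (dualizing an $\cR$-skew-equivariant map again gives an $\cR$-skew-equivariant map, but the matrix transpose and the formal-derivative operations have to be tracked simultaneously). The cleanest route is probably to phrase everything in terms of the skew complex $\tilde\cC$ of Definition~\ref{def:skewedcomplex} and the map $s$ of Equation~\eqref{eq:skewmap}: one has $\tilde{\Mor}_\cR(C,C)\iso \Mor_\cR(C,\tilde C)$, and the coevaluation $G(1)$ is the image of $s = \id_C$ under an isomorphism $\tilde\Mor_\cR(C,\tilde C)\iso C\otimes_\cR C^\vee$, while $F$ is the image of $s$ under $\tilde\Mor_\cR(C^\vee{}^\vee, \tilde{C^\vee}) \iso \cdots$; in this language the relations $s\circ\Phi_B = \Psi_{\tilde B}\circ s$ from Equation~\eqref{eq:skewedmaps} do most of the work. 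I would then invoke Lemma~\ref{lem:inverses1} (so $\cC^\vee$ is genuinely an $\iota_K$-complex), conclude by Lemma~\ref{lem:multwelldefined} that $\cC\times\cC^\vee$ is an $\iota_K$-complex, and assemble the above into the statement that $F$ and $G$ constitute a local equivalence with $\cC_e$, finishing the proof.
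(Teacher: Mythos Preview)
Your overall architecture matches the paper's: both use the canonical trace and cotrace (evaluation and coevaluation) maps, verify they are filtered grading-preserving chain maps, and use the Euler characteristic computation $\chi(C)\equiv 1$ to get the homology isomorphism. The names $F$ and $G$ are reversed relative to the paper, but that is cosmetic.

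The gap is in your argument that the maps intertwine the involutions. You write that $F(\iota\times_1\iota^\vee)(\ve x\otimes\varphi)$ and $\iota_e F(\ve x\otimes\varphi)$ ``differ by the contribution of the $\Phi\iota|\Psi^\vee\iota^\vee$ term'', and that this extra term is chain homotopic to zero. Both assertions are wrong. Unwinding the dualities (with $(\iota^\vee\varphi)(y)=\iota_e(\varphi(\iota y))$ so that $\iota^\vee$ is skew-equivariant), the $\iota|\iota^\vee$ piece alone gives $\iota_e(\varphi(\iota^2 \ve x))$, which differs from $\iota_e(\varphi(\ve x))$ by a term involving $\Phi\Psi$ since $\iota^2\simeq \id+\Phi\Psi$. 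The $\Phi\iota|\Psi^\vee\iota^\vee$ piece does \emph{not} vanish; it is precisely what cancels that $\Phi\Psi$ discrepancy. Your appeal to ``$\Phi,\Psi$ vanish on homology (Equation~\eqref{eq:nonequivchainhomotopy})'' is the red flag: that homotopy is explicitly non-equivariant and non-filtered, so it cannot be used here.

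The paper handles this via a clean adjunction identity: for skew-equivariant $M,N$ and $F$ the cotrace, one has $(M|N)F\iota_e \simeq (MN^\vee|\id)F$. Applying this with $(M,N)=(\iota,\iota^\vee)$ and $(M,N)=(\Phi\iota,\Psi^\vee\iota^\vee)$ reduces the intertwining to showing
\[
\id+\iota^2+\Phi\iota^2\Psi\simeq 0,
\]
which follows immediately from $\iota^2\simeq\id+\Phi\Psi$ and $\Phi^2\simeq\Psi^2\simeq 0$. I would recommend reorganizing your argument around this identity rather than trying to kill the correction term separately. (Also, your aside ``$\Phi^\vee=$ adjoint of $\Psi$'' is not right: $\Phi^\vee$ is the adjoint of $\Phi$, i.e.\ the $U$-derivative of the transposed matrix; you nonetheless arrive at the correct expression $(\iota^\vee\varphi)(\Psi\Phi\iota\ve x)$ because $(\Psi^\vee\psi)(y)=\psi(\Psi y)$.)
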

	\begin{proof}We define two maps
	\[
F\colon \cR\to C\otimes_{\cR} C^\vee\qquad \text{and} \qquad G\colon C\otimes_{\cR} C^\vee\to \cR.	
	\]
	The map $G$ is defined to be the canonical trace map $\tr(a\otimes b)= b(a)$. The map $F$ is defined to be the cotrace is defined to be the dual of the canonical trace map $C^{\vee}\otimes_{\cR} C\to \cR$. If $B$ is a basis of $C$, the map $F$ satisfies
	\[
	F(1)=\cotr(1)=\sum_{\ve{x}\in B}\ve{x}\otimes \ve{x}^\vee.
	\] Notice that under the canonical identifications of $\Mor_{\cR}(\cR, C\otimes_{\cR}C^\vee)$ and $\Mor_{\cR}(C\otimes C^\vee, \cR)$ with $\Mor_{\cR}(C,C)$, the maps $F$ and $G$ are both identified as the identity map on $C$.
	
	The maps $F$ and $G$ are clearly filtered, and respect the grading. We need to show that they are chain maps, intertwine the involutions, and are isomorphisms on homology. That they are chain maps is an easy computation. To see that they are isomorphisms on homology, we note that the composition $G\circ F$ gives $\chi(C)\cdot \id_{\cR}$, where $\chi(C)$ is Euler characteristic of $C$ over $\cR$, modulo 2, which is $1$ since $H_*(C)\iso \cR$. Since $\cC,$ $\cC^\vee$ and $\cC\times \cC^\vee$ are all $\iota_K$-complexes, we know the associated homology groups are all $\cR$ so we conclude that $F$ and $G$ must be isomorphisms on homology.
	
	To see that $F$ intertwines the involutions, it is sufficient to show that
		\begin{equation}
		F\iota_e+(\iota|\iota^\vee+\Phi\iota|\Psi^\vee\iota^\vee)F\eqsim 0.\label{eq:iotacomplexcomp}
		\end{equation}
			Composing with $\iota_e$ on the right, it is sufficient to instead show that
\begin{equation}
F+(\iota|\iota^\vee+\Phi\iota|\Psi^\vee\iota^\vee)F\iota_e\simeq 0\label{eq:inversecomplexcomp}
\end{equation}
	
We now claim that if $M\in \tilde{\Mor}_{\cR}(C,C)$ and $N\in \tilde{\Mor}_{\cR}(C^\vee,C^\vee)$ then
\begin{equation}
(M|N)F\iota_e\simeq (MN^{\vee}|\id)F\simeq (\id|NM^{\vee})F.\label{eq:adjunction}
\end{equation}
We first consider the relation $(M|N)F\iota_e\simeq (MN^\vee|\id)F$. Since both sides are skew-equivariant, it is sufficient to show that the two maps agree on $1\in \cR$. Write $B=\{\xs_1,\dots, \xs_n\}$.	
 Write $M(\xs_i)=\sum_{j=1}^n m_{j,i}\cdot \xs_j$, for $m_{j,i}\in \cR$, and define coefficients $n_{j,i}\in \cR$ for $N$ similarly. Since $F(1)=\sum_{i=1}^n \xs_i\otimes \xs_i^\vee$, we simply compute
 \[
 (M|N)F(1)=\sum_{i,j,k} (m_{j,i} n_{k,i}) \cdot(\xs_j \otimes \xs_k^\vee)=(MN^\vee|\id)F(1),
\]
establishing the first relation in Equation~\eqref{eq:adjunction}. The second relation of Equation~\eqref{eq:adjunction} is similar.

Using Equation~\eqref{eq:adjunction}, it follow that
\begin{equation}
F+(\iota|\iota^\vee+\Phi\iota|\Psi^\vee\iota^\vee)F\iota_e\simeq ((\id+\iota^2+\Phi\iota^2\Psi)|\id)F\simeq 0.\label{eq:useadjunction}
\end{equation}
The last chain homotopy in Equation~\eqref{eq:useadjunction} follows from the manipulation 
\[
\id+\iota^2+\Phi\iota^2\Psi\simeq \id+(\id+\Phi\Psi)+\Phi(\id+\Phi\Psi)\Psi\simeq 0.
\]
  Equation~\eqref{eq:inversecomplexcomp} now follows, implying Equation~\eqref{eq:iotacomplexcomp} as well, so $F$ intertwines the involutions up to chain homotopy.

		The proof that $G$ intertwines the involutions is similar. 
		\end{proof}

We can now prove that $\frI_K$ is a well defined abelian group:

\begin{proof}[Proof of Proposition \ref{prop:IKisagroup}] Lemma \ref{lem:multwelldefined} shows that the product of two $\iota_K$-complexes under either multiplication $\times_1$ and $\times_2$ is an $\iota_K$-complex. Lemma \ref{lem:productsarestronglyequivalent} shows that the two products yield homotopy equivalent (and hence locally equivalent) $\iota_K$-complexes. Similarly, if $\cC_1$ and $\cC_2$ are two $\iota_K$-complexes, the natural map $ C_1\otimes_\cR C_2\to C_2\otimes_{\cR} C_1$ induces a homotopy equivalence of $\iota_K$-complexes between $\cC_1\times_1\cC_2$ and $\cC_2\times_2\cC_1$. An application of Lemma \ref{lem:productsarestronglyequivalent} now shows that $\cC_1\times_2\cC_2$ is homotopy equivalent to $\cC_1\times_1\cC_2$, from which we conclude that $\cC_1\times_1 \cC_2$ and $\cC_2\times_1\cC_1$ are homotopy equivalent, so $\frI_K$ is abelian. Lemma \ref{lem:multassoc} shows that multiplication is associative. Lemma \ref{lem:multwelldefinedonlocalclass} shows that multiplication is well defined on local equivalence classes. Lemmas \ref{lem:inverses1} and \ref{lem:inverses2} show that $\frI_K$ contains inverses.
\end{proof}

\section{Background on link Floer homology and the link Floer TQFT}\label{sec:backgroundandconjugation}

In this section, we provide background on link Floer homology, and describe some previous results about link cobordisms and link Floer homology.  The original construction of link Floer homology can be found in \cite{OSLinks}.

\subsection{Preliminaries on the link Floer complexes}
\label{sec:curvedcomplexesandeta}

\begin{define} An \emph{oriented multi-based link} in $Y$ is an ordered triple $\bL=(L,\ps,\qs)$ where $\ps$ and $\qs$ are two disjoint collections of basepoints on $L$, such that each component of $L$ contains at least two basepoints, and the basepoints in $\ps$ and $\qs$ alternate along each component of $L$. Furthermore, each component of $Y$ contains at least one component of $L$.
\end{define}

If $\bL=(L,\ps,\qs)$ is an oriented multi-based link in $Y$, we say that the first collection of basepoints, $\ps$, are of \emph{type-$\ws$} and we say the second collection of  basepoints, $\qs$, are of \emph{type-$\zs$}.

 Given a Heegaard diagram $\cH=(\Sigma, \ve{\alpha},\ve{\beta},\ve{p},\ve{q})$ representing the multi-based link $\bL=(L,\ve{p},\ve{q})$ in $Y$, we define the two tori
 \[
 \bT_{\as}:=\alpha_1\times \cdots \times \alpha_{g+n-1} \qquad \text{and} \qquad \bT_{\bs}:=\beta_1\times \cdots \times \beta_{g+n-1},
 \] inside of the symmetric product $\Sym^{g+n-1}(\Sigma)$,  where $n=|\ps|=|\qs|$.
 
 If $(\Sigma,\as,\bs,\ps)$ is a diagram for the multi-based 3-manifold $(Y,\ps)$, Ozsv\'{a}th and Szab\'{o} describe a map
\begin{equation}
\frs_{\ps}\colon \bT_{\as}\cap\bT_{\bs}\to \Spin^c(Y)\label{eq:spincmap}
\end{equation}
in \cite{OSDisks}*{Section~2.6}.

We will  consider a version of the  link Floer complex, which we will denote by
\[
\cCFL^\circ(\cH,\frs),
\]
 for $\frs\in \Spin^c(Y)$ and $\circ \in \{\wedge,-,\infty\}$. The ``$\infty$'' flavor of the complex is generated over $\bF_2$ by monomials of the form $U^i V^j\cdot  \ve{x}$, for arbitrary $i,j\in \Z$, and intersection points $\ve{x}\in \bT_{\as}\cap \bT_{\bs}$ with $\frs_{\ve{p}}(\ve{x})=\frs$. The ``$-$'' flavor is generated over $\bF_2$ by monomials with nonnegative $i$ and $j$. The hat flavor, denoted $\hat{\CFL}$, is generated over $\bF_2$ by intersection points $\xs\in \bT_{\as}\cap \bT_{\bs}$.

After picking a generic 1-parameter family of almost complex structures $J_s$ on $\Sym^{g+n-1}(\Sigma)$, one can define an endomorphism $\d$ which counts pseudo-holomorphic disks of Maslov index 1 inside of $\Sym^{g+n-1}(\Sigma)$, where $g=g(\Sigma)$ and $n=|\ve{p}|=|\ve{q}|$. Equivalently, one can count holomorphic curves inside of $\Sigma\times [0,1]\times \R$, using Lipshitz's cylindrical reformulation \cite{LipshitzCylindrical}.

In both formulations, the differential $\d$ is defined via the formula
\[
\d(\xs)=\sum_{\substack{\phi\in \pi_2(\xs,\ys)\\ \mu(\phi)=1}} \# \hat{\cM}(\phi) U^{n_{\ps}(\phi)} V^{n_{\qs}(\phi)}\cdot \ys.
\]
In the above formula, if $\phi\in \pi_2(\xs,\ys)$ is a class, the quantity $n_{\ps}(\phi)$ is the total multiplicity of $\phi$ over the basepoints in $\ps$, and $n_{\qs}(\phi)$ is defined similarly. The differential on $\hat{\CFL}$ counts disks with $n_{\ve{p}}(\phi)=n_{\qs}(\phi)=0$.

There is a slightly more general version of the full link Floer complex, which  is considered in \cite{ZemCFLTQFT}, where we have a distinct variable for each basepoint. In that setting, we write $U_p$ for the variable associated to a $p\in \ps$, and $V_q$ for variable associated to $q\in \qs$. In this more general setup, counting the ends of the moduli spaces of Maslov index two holomorphic disks shows yields the formula
\begin{equation}\d^2(\ve{x})=\sum_{K\in C(L)} (U_{p_{K,1}}V_{q_{K,1}}+V_{q_{K,1}}U_{p_{K,2}}+U_{p_{K,2}}V_{q_{K,2}}+\cdots +V_{q_{K,n_K}}U_{p_{K,1}})\cdot \ve{x}.\label{eq:d^2=k}
\end{equation} 
See \cite{ZemQuasi}*{Lemma~2.1} for a proof. Here $C(L)$ denotes the set of components of $L$, and if $K\in C(L)$, we are writing $p_{K,1},$ $q_{K,1},$ $\dots,$ $q_{K,n_{K}}$ for the basepoints on $K$, in the order that they appear on $K$. The above formula follows from the fact that the compactification of the moduli spaces of index 2 disks consists of broken strips (corresponding to $\d^2(\ve{x})$) as well as index 2 boundary degenerations (corresponding to the curvature term on the right side of Equation~\eqref{eq:d^2=k}).

In our present paper, since we set all of the $U_{p}$ variables equal and we set all of the $V_{q}$ variables equal, the expression in Equation~\eqref{eq:d^2=k} vanishes, and the module $\cCFL^\circ(\cH,\frs)$ becomes a genuine (i.e. not curved) chain complex.

For $\circ\in \{-,\infty\}$, the $\cR$-modules $\cCFL^\circ(\cH,\frs)$ have a $\Z\oplus \Z$ filtration given by powers of the variables $U$ and $V$. As pseudo-holomorphic disks have only nonnegative multiplicities on the Heegaard diagram, the differential is a filtered map.

Any two Heegaard diagrams for a link $\bL$ in $Y$ can be connected by a sequence of elementary Heegaard moves. To a sequence of elementary Heegaard moves between two diagrams $\cH_1$ and $\cH_2$ for a link $\bL$ in $Y$, we can define a filtered, equivariant chain homotopy equivalence $\Phi_{\cH_1\to \cH_2}$, following the construction in \cite{OSDisks}. A fundamental result is that the map $\Phi_{\cH_1\to \cH_2}$ is independent of the choice of elementary Heegaard moves, up to equivariant, filtered chain homotopy. A summary of the proof in our present context can be found in \cite{ZemCFLTQFT}*{Proposition~3.5}. See \cite{JTNaturality} for more on naturality in Heegaard Floer homology. We write
\[
\cCFL^\circ(Y,\bL,\frs),
\]
 to mean the collection of all the chain complexes $\cCFL^\circ(\cH,\frs)$ ranging over strongly $\frs$-admissible diagrams $\cH$ for $(Y,\bL)$, together with the change of diagrams maps.

\subsection{The conjugation action on link Floer homology}
\label{sec:conjugationaction}

In this section we provide some background about the conjugation action on link Floer homology.  Recall that $\Spin^c$ structures on $Y$ can be described as homology classes of non-vanishing vector fields. Given a non-vanishing vector field $v$ on $Y$ which corresponds to a $\Spin^c$ structure $\frs$,  the conjugate $\Spin^c$ structure $\bar{\frs}$ corresponds to the vector field $-v$.

 The map $\frs_{\ve{p}}$ from Equation~\eqref{eq:spincmap} depends on the choice of basepoints $\ve{p}$, in the following sense:

\begin{lem}\label{lem:changeSpincstructure}If $(\Sigma, \ve{\alpha},\ve{\beta},\ve{p},\ve{q})$ is a diagram for $(Y,L,\ve{p},\ve{q})$ and $\ve{x}\in \bT_{\as}\cap \bT_{\bs}$, then the maps $\frs_{\ve{p}}$ and $\frs_{\ve{q}}$ are related by the formula
\[
\frs_{\ve{p}}(\ve{x})-\frs_{\ve{q}}(\ve{x})=\PD[L].
\]
\end{lem}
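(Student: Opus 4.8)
The claim is the standard comparison of the two $\Spin^c$-assignment maps $\frs_{\ve p}$ and $\frs_{\ve q}$ attached to the same Heegaard diagram but using the two different collections of basepoints. The plan is to reduce to the case $n = |\ve p| = |\ve q| = 1$ and a single link component, and then to the known computation of the difference of the vector fields underlying $\frs_{\ve p}(\ve x)$ and $\frs_{\ve q}(\ve x)$. Recall from \cite{OSDisks}*{Section~2.6} that $\frs_{\ve p}(\ve x)$ is constructed by choosing a Morse function compatible with the Heegaard diagram, deleting neighborhoods of the flowlines through the points of $\ve x$ and of the index-$0$/index-$3$ flowlines through the basepoints $\ve p$, and then extending the resulting nonvanishing vector field over the deleted balls; the difference of two such assignments is measured by the Poincar\'e dual of a $1$-cycle built from those flowlines.

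First I would set up the Morse-theoretic picture: fix a self-indexing Morse function $f$ with gradient-like vector field adapted to $(\Sigma,\ve\alpha,\ve\beta)$, so that the ascending and descending manifolds of the index-$1$ and index-$2$ critical points meet $\Sigma$ in the $\alpha$- and $\beta$-circles. The intersection point $\ve x$ determines a union of flowlines $\gamma_{\ve x}$ connecting index-$1$ to index-$2$ critical points, while each basepoint $p$ (resp. $q$) lies on a flowline $\gamma_p$ (resp. $\gamma_q$) from the index-$0$ critical point to the index-$3$ critical point. Both $\frs_{\ve p}(\ve x)$ and $\frs_{\ve q}(\ve x)$ are represented by the vector field $\nabla f$ modified in a neighborhood of $\gamma_{\ve x}$ together with, respectively, $\gamma_{\ve p}$ or $\gamma_{\ve q}$. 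Hence the difference $\frs_{\ve p}(\ve x) - \frs_{\ve q}(\ve x) \in H^2(Y;\Z)$ is Poincar\'e dual to the homology class of the $1$-cycle obtained by concatenating $\gamma_p$ with a reversed $\gamma_q$, closed up by an arc on $\Sigma$ along the knot between $p$ and $q$ (the arc of $L$ from $q$ to $p$, say).

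The second step is to identify that resulting $1$-cycle. Running over all the paired basepoints $p_{K,i}, q_{K,i}$ along each component $K$, the arcs on $\Sigma$ assemble — using that $\ve p$ and $\ve q$ alternate along each component — into exactly the knot component $K$ itself, traversed once; the vertical flowline pieces $\gamma_{p}$ and $\gamma_{q}$ cancel in homology in pairs because $p$ and $q$ are joined to the \emph{same} pair of index-$0$ and index-$3$ critical points. Summing over all components of $L$ gives the cycle $[L]$, so $\frs_{\ve p}(\ve x) - \frs_{\ve q}(\ve x) = \PD[L]$. (Over $\bF_2$, or in the oriented setting after checking signs, the orientation bookkeeping is harmless.)

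The main obstacle, and the only place requiring genuine care, is the bookkeeping in the second step: making precise how the on-surface arcs and the vertical flowline segments fit together into a cycle representing $[L]$, and verifying that the contributions of the index-$0$ and index-$3$ flowlines through distinct basepoints cancel rather than accumulate. This is exactly the kind of argument appearing in \cite{OSLinks} for the relation between the link and knot $\Spin^c$-assignments, so I would either cite that directly or reproduce the flowline computation in the one-basepoint-per-side model and then extend additively over basepoints and components.
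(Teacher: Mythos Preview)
Your proposal is correct and in fact more detailed than the paper's own treatment: the paper simply cites \cite{RasLSpaceSurgeries}*{Equation~(1)} and \cite{OSDisks}*{Lemma~2.19} without reproducing the argument. What you have sketched is precisely the Morse-theoretic computation underlying those references, so the approaches are the same in spirit; you are supplying the content of the citation rather than taking a different route.

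One small remark on the bookkeeping you flag as the delicate point: in the multi-pointed setting each component of $\Sigma\setminus\as$ (and of $\Sigma\setminus\bs$) contains exactly one $p$-basepoint and one $q$-basepoint, but the $q$ sharing an $\as$-region with a given $p$ need not be the same $q$ sharing a $\bs$-region with it. So the vertical flowline segments do not cancel in adjacent $(p,q)$ pairs; rather, as you chain around a link component, the index-$0$ and index-$3$ flowline segments telescope and the remaining on-surface arcs assemble into the component. This is exactly the care you anticipated, and the cited references handle it.
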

\begin{proof}See \cite{RasLSpaceSurgeries}*{Equation (1)} or \cite{OSDisks}*{Lemma 2.19}.
	\end{proof}

 Given a Heegaard diagram $\cH=(\Sigma, \ve{\alpha},\ve{\beta},\ve{p},\ve{q})$ for $\bL=(L,\ve{p},\ve{q})$, we form the conjugate diagram $\bar{\cH}=(-\Sigma, \bar{\ve{\beta}},\bar{\ve{\alpha}},\ve{q},\ve{p})$ for $\bar{\bL}=(L,\ve{q},\ve{p})$. Here $\bL$ and $\bar{\bL}$ have the same underlying oriented link, and both have  basepoints at $\ve{p}\cup \ve{q}$, but the role of $\ve{p}$ and $\ve{q}$  as type-$\ve{w}$ or -$\ve{z}$ is switched.

There is a tautological chain isomorphism
\[
\eta\colon \cCFL_{J_s}^{\circ}(\cH,\frs)\to \cCFL_{-J_s}^{\circ}(\bar{\cH}, \bar{\frs}+\PD[L]).
\]
 The map $\eta$ sends an intersection point in $\ve{x}\in \bT_{\as}\cap \bT_{\bs}$ on $\cH$ to the corresponding intersection point in $ \bT_{\bar{\bs}}\cap \bT_{\bar{\as}}$. The map $\eta$ is $\cR$-equivariant.

\begin{lem}\label{lem:etacommuteswithchangeofdiagram}If $\cH_1$ and $\cH_2$ are two diagrams, then the following diagram commutes up to chain homotopy:
\[
\begin{tikzcd}\cCFL^{\circ}(\cH_1,\frs)\arrow{r}{\eta}\arrow{d}{\Phi_{\cH_1\to \cH_2}}& \cCFL^{\circ}(\bar{\cH}_1,\bar{\frs}+\PD[L])\arrow{d}{\Phi_{\bar{\cH}_1\to \bar{\cH}_2}}\\
\cCFL^{\circ}(\cH_2,\frs)\arrow{r}{\eta}& \cCFL^{\circ}(\bar{\cH}_2,\bar{\frs}+\PD[L]).
\end{tikzcd}
\]
\end{lem}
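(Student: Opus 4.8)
The plan is to prove Lemma~\ref{lem:etacommuteswithchangeofdiagram} by the standard ``reduce to elementary Heegaard moves and track the conjugation symmetry'' argument. First I would recall that any two diagrams $\cH_1$ and $\cH_2$ for $(Y,\bL)$ are connected by a finite sequence of elementary Heegaard moves: isotopies of the $\as$ and $\bs$ curves, handleslides among the $\as$ (resp.\ $\bs$) curves, and index one/two (de)stabilizations, and that the map $\Phi_{\cH_1\to\cH_2}$ is defined as a composition of the associated elementary maps (continuation maps, triangle maps, stabilization maps). The conjugation operation $\cH\mapsto\bar\cH=(-\Sigma,\bar\bs,\bar\as,\qs,\ps)$ carries a sequence of elementary moves from $\cH_1$ to $\cH_2$ to a sequence of elementary moves from $\bar\cH_1$ to $\bar\cH_2$ (an $\as$-handleslide becomes a $\bs$-handleslide on the reversed surface, etc.). So it suffices to check the commuting square when $\cH_1\to\cH_2$ is a single elementary move, and then assemble.

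For a single elementary move the verification is that $\eta$ \emph{tautologically} intertwines each type of elementary map with its conjugate. The key point is that $\eta$ is a chain isomorphism induced by a diffeomorphism-type correspondence: it sends $\cCFL^\circ_{J_s}(\cH,\frs)$ to $\cCFL^\circ_{-J_s}(\bar\cH,\bar\frs+\PD[L])$ by reflecting the Heegaard surface, reversing the almost complex structure, and swapping the roles of the $\ws$- and $\zs$-basepoints, which sends an intersection point $\xs\in\bT_\as\cap\bT_\bs$ to the same point viewed in $\bT_{\bar\bs}\cap\bT_{\bar\as}$. Under this correspondence a holomorphic disk in $\Sym(\Sigma)$ for $J_s$ contributing to a continuation/triangle/(de)stabilization map on $\cH_1\to\cH_2$ is carried bijectively, with the same algebraic count and the same powers of $U$ and $V$ (the roles of $n_{\ps}$ and $n_{\qs}$ are swapped consistently on both sides), to the corresponding disk for $-J_s$ contributing to the conjugate move $\bar\cH_1\to\bar\cH_2$. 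Hence each elementary square commutes \emph{on the nose} at the level of the specific diagrams and complex structures chosen; one then invokes the fact (cited from \cite{ZemCFLTQFT}*{Proposition~3.5}, or originally \cite{OSDisks}) that the change-of-diagram maps are independent of these choices up to filtered $\cR$-equivariant chain homotopy, so the composite square for $\cH_1\to\cH_2$ commutes up to chain homotopy.

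The main obstacle — really the only nontrivial point — is the bookkeeping for (de)stabilizations and for the fact that $\eta$ changes the $\Spin^c$ label by $\PD[L]$ and reverses the surface orientation: I need to confirm that the ``shifted'' target complex $\cCFL^\circ(\bar\cH_i,\bar\frs+\PD[L])$ is exactly the one on which the conjugate change-of-diagram map $\Phi_{\bar\cH_1\to\bar\cH_2}$ is defined, using Lemma~\ref{lem:changeSpincstructure} to see that the generators with $\frs_{\ps}(\xs)=\frs$ on $\cH_i$ are precisely those with $\frs_{\qs}(\xs)=\bar\frs+\PD[L]$ after conjugation, together with the identity $\frs_{\bar\ps}=\bar\frs_{\ps}$ (i.e.\ conjugating the diagram conjugates the $\Spin^c$ assignment). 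Once this identification of source and target is pinned down, the diagram-chase through elementary moves is routine and the lemma follows. A remark worth including is that this is the link Floer analogue of the conjugation-naturality statements for $\CF^-$ appearing in \cite{HMInvolutive} and \cite{HMZConnectedSum}, proved by the same mechanism.
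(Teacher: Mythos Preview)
Your proposal is correct and follows essentially the same approach as the paper: reduce to elementary Heegaard moves, observe that conjugation carries such a sequence to a sequence for the conjugate diagrams, and verify that for each elementary move the square commutes on the nose (with the change-of-diagram maps themselves only well-defined up to chain homotopy). Your write-up is considerably more detailed than the paper's, which dispatches the lemma in two sentences, but the underlying argument is identical.
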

\begin{proof}The map $\Phi_{\cH_1\to \cH_2}$ is a computed by picking a sequence of intermediate diagrams which differ each from the previous by a single elementary Heegaard move. The map $\Phi_{\bar{\cH}_1\to \bar{\cH}_2}$ can be computed using the same sequence of Heegaard moves, and it is easy to verify that the diagram commutes on the nose for each elementary Heegaard move (though the change of diagrams maps themselves are only defined up to chain homotopy).
\end{proof}

As a consequence of Lemma~\ref{lem:etacommuteswithchangeofdiagram}, the map $\eta$ descends to a morphism between transitive chain homotopy types
\[
\eta\colon \cCFL^\infty(Y,\bL,\frs)\to \cCFL^\infty(Y,\bar{\bL},\bar{\frs}+\PD[L]),
\] 
 since it commutes up to filtered, equivariant chain homotopy, with the change of diagrams maps.

For a doubly based knot $(K,p,q)$ the conjugation action $\iota_K$ is defined as the composition 
\[
\iota_K:=\tau_K\circ \eta,
\]
 where $\tau_K$ denotes a half twist in the direction of the link's orientation, switching $p$ and $q$. The map $\eta$ is filtered and equivariant, while $\tau_K$ is skew-filtered and skew-equivariant since it switches the two basepoints.

\subsection{Maps associated to decorated link cobordisms}
\label{sec:linkcobordismmapsoverview}
In \cite{ZemCFLTQFT}, the author describes $\Spin^c$ functorial maps associated to decorated link cobordisms. To a decorated link cobordism 
\[
(W,\cF)\colon (Y_1,\bL_1)\to (Y_2,\bL_2)
\]
 in the sense of Definition \ref{def:decoratedlinkcob}, there is a  map
\[
F_{W,\cF,\frs}\colon \cCFL^\circ(Y_1,\bL_1,\frs|_{Y_1})\to \cCFL^\circ(Y_2,\bL_2,\frs|_{Y_2}),
\]
 which is an invariant of the decorated link cobordism, up to equivariant, filtered chain homotopy.  The identity cobordism induces the identity map, and together the maps satisfy the $\Spin^c$ composition law \cite{ZemCFLTQFT}*{Theorem~B}: If $(W,\cF)=(W_2,\cF_2)\circ (W_1,\cF_1)$ and $\frs_i\in \Spin^c(W_i)$, then 
\begin{equation}
F_{W_2,\cF_2,\frs_2}\circ F_{W_1,\cF_1,\frs_1}\simeq \sum_{\substack{\frs\in \Spin^c(W)\\
\frs|_{W_i}=\frs_i}} F_{W,\cF,\frs}.\label{eq:compositionlaw}
\end{equation}

We now briefly describe the construction of the link cobordism maps from \cite{ZemCFLTQFT}. More details  (such as explicit formulas)  can be found in our proof of Theorem \ref{thm:C}, below. Further details can be found in \cite{ZemCFLTQFT}.

The cobordism maps are defined by taking a decomposition of the link cobordism into simple pieces, and then defining a map for each piece. One defines maps for 4-dimensional handles attached away from $\bL$, maps associated to band surgery on the link $\bL$, as well as cylindrical cobordisms with simple but nontrivial  dividing sets. 

The 1-handle and 3-handle maps are approximately the same as in \cite{OSTriangles} (the difference being now that we allow 1-handles or 3-handles which connect two components or split a component of a 3-manifold). There are also new 0-handle and 4-handle maps, corresponding to a 4-ball containing a 2-dimensional disk in a standard way.

The 2-handle maps are very similar to those defined by Ozsv\'{a}th and Szab\'{o} in \cite{OSTriangles}, which count holomorphic triangles in a Heegaard triple representing surgery on a framed 1-dimensional link $\bS^1\subset Y$. 
Similarly the 2-handle maps from \cite{ZemCFLTQFT} are also defined by counting holomorphic triangles on a Heegaard triple $(\Sigma, \ve{\alpha},\ve{\beta},\ve{\beta}',\ve{p},\ve{q})$ representing homology classes of triples $\psi$ with $\frs_{\ve{p}}(\psi)=\frs$,
where 
\[
\frs_{\ps}\colon \pi_2(\xs,\ys,\zs)\to \Spin^c(X_{\as,\bs,\bs'})\subset W(\bS^1),\label{eq:swmapmoregeneral}
\]
is a map defined by Ozsv\'{a}th and Szab\'{o} \cite{OSTriangles}*{Section~2.2}. Note that $X_{\as,\bs,\bs'}$ is a 4-manifold with three ends, which is naturally embedded in $W(\bS^1)$.

 We note that the map $\frs_{\ps}$ described by Ozsv\'{a}th and Szab\'{o}  has an important dependence on the basepoints $\ps$. We will say a Heegaard triple $(\Sigma, \ve{\alpha},\ve{\beta},\ve{\gamma},\ve{p},\ve{q})$ is a \emph{link Heegaard triple} if each component of $\Sigma\setminus \ve{\tau}$ has exactly one basepoint in $\ve{p}$ and one basepoint in $\qs$, for each $\ve{\tau}\in \{\ve{\alpha},\ve{\beta},\ve{\gamma}\}$. In \cite{ZemAbsoluteGradings}, the author describes an oriented, properly embedded surface with boundary $S_{\as,\bs,\gs}\subset X_{\as,\bs,\gs}$. Analogous to the map $\frs_{\ps}$, one can consider the map $\frs_{\qs}$. The set $\Spin^c(X_{\as,\bs,\gs})$ is an affine space over $H^2(X_{\as,\bs,\gs};\Z)\iso H_2(X_{\as,\bs,\gs},\d X_{\as,\bs,\gs};\Z)$. The two maps $\frs_{\ve{p}}$ and $\frs_{\ve{q}}$ are related by the following formula:

\begin{lem}[\cite{ZemAbsoluteGradings}*{Lemma 3.3}]\label{lem:trianglespincstructures}If $(\Sigma, \ve{\alpha},\ve{\beta},\ve{\gamma},\ve{p},\ve{q})$ is a link Heegaard triple, and $\psi$ is a homotopy class of triangles, then
\[
\frs_{\ve{p}}(\psi)-\frs_{\ve{q}}(\psi)=\PD[S_{\as,\bs,\gs}].
\]
\end{lem}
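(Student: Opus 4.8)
The plan is to argue directly from Ozsv\'{a}th and Szab\'{o}'s definition of the refinement map $\frs_{\ps}\colon \pi_2(\xs,\ys,\zs)\to \Spin^c(X_{\as,\bs,\gs})$ from \cite{OSTriangles}*{Section~2.2}, and to show that replacing $\ps$ by $\qs$ changes the resulting $\Spin^c$ structure by a class that is Poincar\'e dual to $[S_{\as,\bs,\gs}]$. This is the triangle-cobordism analogue of Lemma~\ref{lem:changeSpincstructure}, and the argument below should specialize to a proof of that lemma upon restriction to a boundary $3$-manifold of $X_{\as,\bs,\gs}$.

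First I would recall the construction: fixing a handle decomposition of $X_{\as,\bs,\gs}$ subordinate to the Heegaard triple, a homotopy class $\psi$ together with the basepoints $\ps$ cuts out a $1$-dimensional subcomplex $\Gamma_{\ps}\subset X_{\as,\bs,\gs}$ --- the flowlines recording $\psi$, together with arcs running through the basepoints of $\ps$ inside the regions of $\Sigma\setminus(\as\cup\bs\cup\gs)$ that contain them --- and $\frs_{\ps}(\psi)$ is the homology class of the canonical nonvanishing vector field (equivalently, almost complex structure) that is defined away from a neighborhood of $\Gamma_{\ps}$ and extended canonically over it. Next I would compare the representatives of $\frs_{\ps}(\psi)$ and $\frs_{\qs}(\psi)$: they agree outside neighborhoods of $\Gamma_{\ps}$ and $\Gamma_{\qs}$, which share the same $\psi$-flowlines and differ only in the basepoint arcs. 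Here the link Heegaard triple hypothesis is essential: each region of $\Sigma\setminus\ve{\tau}$, for $\ve{\tau}\in\{\as,\bs,\gs\}$, contains exactly one basepoint of $\ps$ and one of $\qs$, so there is a canonical isotopy, supported in $\Sigma$, carrying the $\ps$-arcs onto the $\qs$-arcs; sweeping this isotopy along the flow produces an embedded surface $S\subset X_{\as,\bs,\gs}$ with $\d S$ the union of the relevant links on the three ends. Obstruction theory for nonvanishing vector fields (as in \cite{OSDisks}*{Lemma~2.19}) then gives $\frs_{\ps}(\psi)-\frs_{\qs}(\psi)=\PD[S]$, reducing the lemma to the identification $[S]=[S_{\as,\bs,\gs}]$ in $H_2(X_{\as,\bs,\gs},\d X_{\as,\bs,\gs};\Z)$.

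I expect this last identification to be the main obstacle. The surface $S_{\as,\bs,\gs}$ is built in \cite{ZemAbsoluteGradings} from an explicit subsurface of $\Sigma$ glued to cores of compressing disks for the $\as$-, $\bs$-, and $\gs$-curves, so matching it with the sweep surface $S$ requires tracing $S$ through each handle of the decomposition and checking that the two presentations agree in relative homology. A useful consistency check is that restricting the claimed identity to each of the three boundary $3$-manifolds recovers Lemma~\ref{lem:changeSpincstructure}; but since $H^2(X_{\as,\bs,\gs};\Z)$ need not inject into the cohomology of $\d X_{\as,\bs,\gs}$, this boundary information does not by itself close the argument, so the interior sweep computation genuinely has to be carried out.
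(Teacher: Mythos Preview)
The paper does not actually prove this lemma: it is stated with an external citation to \cite{ZemAbsoluteGradings}*{Lemma~3.3} and used as a black box (in the proof of Theorem~\ref{thm:C}, for the 2-handle and band maps). So there is no in-paper proof to compare against.

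That said, your outline is the natural one and matches the shape of the argument one expects in \cite{ZemAbsoluteGradings}: work directly from the Ozsv\'ath--Szab\'o vector-field definition of $\frs_{\ps}$ on $X_{\as,\bs,\gs}$, compare the two representatives, and identify the obstruction class with the Poincar\'e dual of a surface. You are right that the substantive step is the identification $[S]=[S_{\as,\bs,\gs}]$, and that checking it only on $\partial X_{\as,\bs,\gs}$ (i.e.\ recovering Lemma~\ref{lem:changeSpincstructure} on each end) is not enough, since the restriction map on $H^2$ need not be injective. To close this gap you would need the explicit description of $S_{\as,\bs,\gs}$ from \cite{ZemAbsoluteGradings} (as a subsurface of $\Sigma\times\{\mathrm{pt}\}$ capped with compressing disks) and a handle-by-handle comparison with your sweep surface; this is where the link Heegaard triple hypothesis --- one $\ps$- and one $\qs$-basepoint in each elementary region --- is used in an essential way, since it is exactly what makes the sweep well defined and homologous to the model surface.
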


Another important component of the construction of the maps in \cite{ZemCFLTQFT} are maps for adding or removing basepoints on a link component. These are  the \emph{quasi-stabilization maps.} The quasi-stabilization operation was first considered in \cite{MOIntSurg}.  The author considered them further in \cite{ZemQuasi}, proving several useful results. According to \cite{ZemQuasi}*{Theorem~A}, if $\bL=(L,\ve{p},\ve{q})$ is a multi-based link in $Y$ and $(p,q)$ is a pair of new basepoints which are contained in a single component of $L\setminus (\ps\cup \qs)$, and $p$ follows $q$ with respect to the link's orientation (note $p$ is of type-$\ws$ and $q$ is of type-$\zs$), then there are chain maps
\begin{align*}
S_{p,q}^{+},\,\, T_{p,q}^+&\colon \cCFL^\circ(Y,L,\ps,\qs,\frs)\to \cCFL^\circ(Y,L,\ps\cup \{p\},\qs\cup \{q\},\frs)\\
S_{p,q}^{-}, \,\, T_{p,q}^-&\colon \cCFL^\circ(Y,L,\ps\cup \{p\},\qs\cup \{q\},\frs)\to \cCFL^\circ(Y,L,\ps,\qs,\frs),
\end{align*}
which  commute with the change of diagrams maps. The quasi-stabilization maps are the link cobordism maps for the decorated link cobordisms shown in Figure \ref{fig::18}.

 If $q$ comes after $p$ (where $p$ is of type-$\ws$ and $q$ is of type-$\zs$), there are also similarly defined maps, for which we write instead $S_{q,p}^+$, $S_{q,p}^-$, $T_{q,p}^{+}$, and $T_{q,p}^{-}$. The maps $S_{q,p}^{\circ}$ and $T_{q,p}^\circ$ have a similar interpretation in terms of decorated link cobordisms (see \cite{ZemCFLTQFT}*{Section~4.4}).

The final ingredient of the link cobordism maps are the band surgery maps \cite{ZemCFLTQFT}*{Section~6}. If $\bL$ is an oriented link, and $B$ is an oriented band which is appropriately positioned with respect to the basepoints, then we can perform band surgery to obtain a new link $\bL(B)$ inside of $Y$. There are two band maps defined in \cite{ZemCFLTQFT} for band surgery, corresponding to adding a band which forms a region of type-$\ve{w}$ or of type-$\ve{z}$. They are denoted $F_{B}^{\ve{w}}$ and $F_{B}^{\ve{z}}$ and they determine maps
\[
F_{B}^{\ve{w}},\,\, F_B^{\ve{z}}\colon \cCFL^\circ(Y,\bL,\frs)\to \cCFL^\circ(Y,\bL(B),\frs).
\]
 Both the type-$\ws$ band maps and the type-$\zs$ band maps are defined by counting holomorphic triangles in a Heegaard triple which respects the band. We will describe the band maps in more detail in the proof of Theorem~\ref{thm:C}.

\subsection{Maslov and Alexander gradings and link cobordisms}
Ozsv\'{a}th and Szab\'{o} defined two gradings on knot and link Floer homology \cite{OSKnots} \cite{OSLinks}, which they call the Maslov and Alexander gradings. It's somewhat more convenient for our purposes to describe these two gradings in terms of three gradings
\[
\gr_{\ve{w}}, \quad \gr_{\ve{z}} \quad \text{and} \quad A,
\]
which satisfy a linear dependency.

The gradings $\gr_{\ve{w}}$ and $\gr_{\ve{z}}$ are called the Maslov gradings, and $A$ is the Alexander grading. If $\ve{x}$ and $\ve{y}$ are intersection points on a Heegaard diagram $(\Sigma,\as,\bs,\ps,\qs)$, and $\phi\in \pi_2(\xs,\ys)$ is an arbitrary class, then relative versions of the gradings can be defined by the formulas
\begin{align*}\gr_{\ve{w}}(\ve{x},\ve{y})&=\mu(\phi)-2n_{\ve{p}}(\phi)\\
\gr_{\ve{z}}(\ve{x},\ve{y})&=\mu(\phi)-2n_{\ve{q}}(\phi)\\
A(\ve{x},\ve{y})&=n_{\ve{q}}(\phi)-n_{\ve{p}}(\phi).
\end{align*}
If $\frs$ is torsion, then the formula for $\gr_{\ve{w}}$ is independent of the disk $\phi$, and if $\frs-\PD[L]$ is torsion, the formula for $\gr_{\ve{z}}$ is independent of $\phi$. If $[L]=0\in H_1(Y;\Z)$, then $A$ is independent of the disk $\phi$.

  If $\frs$ is torsion, then an absolute lift of $\gr_{\ve{w}}$ can be specified by using the absolute grading on $\CF^\infty(Y,\ws,\frs)$ from \cite{OSTriangles}. Similarly if $\frs-\PD[L]$ is torsion, then an absolute lift of $\gr_{\zs}$ can be specified by using the absolute grading on $\CF^\infty(Y,\zs,\frs-\PD[L])$.

 Assume $U$ is assigned to the type-$\ws$ basepoints of $\bL$ and $V$ is assigned to the type-$\zs$ basepoints (note that this will not always be the case in this paper). With respect to  $\gr_{\ve{w}}$, the variable $U$ is then $-2$ graded, while $V$ is 0 graded. With respect to $\gr_{\ve{z}}$, the variable $U$ is 0 graded, while $V$ is $-2$ graded. With respect to the Alexander grading, the variable $U$ is $-1$ graded, and the variable $V$ is $+1$ graded.

 When $\frs$ is torsion and $L$ is null-homologous, all three gradings are defined, and are related via the formula
\[
A(\ve{x})=\frac{1}{2}(\gr_{\ve{w}}(\ve{x})-\gr_{\ve{z}}(\ve{x})).
\]

In \cite{ZemAbsoluteGradings}*{Theorems~1.5, 1.6}, the author shows that if $(W,\cF)\colon (Y_1,K_1)\to (Y_2,K_2)$ is a decorated link cobordism and $\ve{x}$ is a homogeneously graded element then
\begin{equation}A(F_{W,\cF,\frs}(\ve{x}))-A(\ve{x})=\frac{\langle c_1(\frs),[\hat{ S}]\rangle-[\hat{ S}]\cdot [\hat{ S}]}{2}+\frac{\chi( S_{\ve{w}})-\chi( S_{\ve{z}})}{2},\label{eq:grading1}\end{equation}
\begin{equation}\gr_{\ve{w}}(F_{W,\cF,\frs}(\ve{x}))-\gr_{\ve{w}}(\ve{x})=\frac{c_1(\frs)^2-2\chi(W)-3\sigma(W)}{4}+\tilde{\chi}( S_{\ve{w}})\label{eq:grading2}\end{equation} and
\begin{equation}\gr_{\ve{z}}(F_{W,\cF,\frs}(\ve{x}))-\gr_{\ve{z}}(\ve{x})=\frac{c_1(\frs-\PD[ S])^2-2\chi(W)-3\sigma(W)}{4}+\tilde{\chi}( S_{\ve{z}}).\label{eq:grading3}\end{equation} Here $
\hat{ S}$ is the surface obtained by capping $ S$ off with Seifert surfaces,  and $\tilde{\chi}( S_{\ve{w}})$ denotes the reduced Euler characteristic:
\[
\tilde{\chi}( S_{\ve{w}})=\chi( S_{\ve{w}})-\tfrac{1}{2}(|\ve{w}_1|+|\ve{w}_2|).
\]

The conjugation map $\iota_K$ is graded, in the following sense:

\begin{lem}The map $\iota_K$ satisfies \[
\gr_{\ve{w}}(\iota_K(\ve{x}))=\gr_{\ve{z}}(\ve{x}),\qquad  \gr_{\ve{z}}(\iota_K(\ve{x}))=\gr_{\ve{w}}(\ve{x}) \qquad \text{and} \qquad A(\iota_K(\ve{x}))=-A(\ve{x}),
\]
 for homogeneously graded $\ve{x}$.
\end{lem}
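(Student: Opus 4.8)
The plan is to use the factorization $\iota_K=(\tau_K)_*\circ\eta$ from Section~\ref{sec:conjugationaction} and to follow the three gradings through each factor. Write $C=\cCFL^\infty(Y,K,p,q,\frs)$, with its gradings $\gr_{\ve{w}},\gr_{\ve{z}},A$, and $C'=\cCFL^\infty(Y,K,q,p,\bar{\frs}+\PD[K])$, with gradings $\gr_{\ve{w}}',\gr_{\ve{z}}',A'$; since $K$ is null-homologous and $\frs$ is self-conjugate we have $\bar{\frs}+\PD[K]=\frs$, so $\eta\colon C\to C'$ and $(\tau_K)_*\colon C'\to C$ compose to an endomorphism of $C$.

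First I would analyze $\eta$. On a Heegaard diagram it is the identity on intersection points and sends a class $\phi\in\pi_2(\xs,\ys)$ to a conjugate class $\bar\phi\in\pi_2(\bar\xs,\bar\ys)$ with $\mu(\bar\phi)=\mu(\phi)$ and with the same multiplicity at every basepoint. Since passing from $(K,p,q)$ to $(K,q,p)$ interchanges the type-$\ws$ and type-$\zs$ basepoints, the relative-grading formulas $\gr_{\ve{w}}(\xs,\ys)=\mu(\phi)-2n_{\ve{p}}(\phi)$, $\gr_{\ve{z}}(\xs,\ys)=\mu(\phi)-2n_{\ve{q}}(\phi)$, $A(\xs,\ys)=n_{\ve{q}}(\phi)-n_{\ve{p}}(\phi)$ show immediately that $\eta$ interchanges the relative $\gr_{\ve{w}}$ grading on $C$ with the relative $\gr_{\ve{z}}'$ grading on $C'$ and negates the relative Alexander grading. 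For the absolute statement, the absolute lifts of $\gr_{\ve{w}}$ and $\gr_{\ve{z}}$ are normalized by the absolute $\mathbb{Q}$-grading of $\CF^\infty$ at the type-$\ws$, resp.\ type-$\zs$, basepoints; because $\bar{\frs}=\frs$ and $\PD[K]=0$, the $\CF^\infty$ complex used to normalize $\gr_{\ve{z}}$ on $C$ coincides with the one used to normalize $\gr_{\ve{w}}'$ on $C'$, and the conjugation symmetry of $\CF^\infty(Y,\frs)$ \cite{OSProperties} preserves its absolute grading. Hence $\gr_{\ve{w}}'(\eta(\xs))=\gr_{\ve{z}}(\xs)$ and $\gr_{\ve{z}}'(\eta(\xs))=\gr_{\ve{w}}(\xs)$ exactly, and $A'(\eta(\xs))=-A(\xs)$ then follows from $A=\tfrac12(\gr_{\ve{w}}-\gr_{\ve{z}})$; alternatively this step can be quoted from \cite{ZemAbsoluteGradings}.

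Next I would analyze the diffeomorphism map $(\tau_K)_*$. The map $\tau_K$ is supported in a ball meeting $K$ in a single arc through $p$ and $q$, is orientation-preserving, acts trivially on $H_*(Y)$, and carries the type-$\ws$ basepoint of $C'$ onto the type-$\ws$ basepoint of $C$ and the type-$\zs$ basepoint onto the type-$\zs$ basepoint. Consequently $(\tau_K)_*$ has vanishing grading shift for each of $\gr_{\ve{w}}$, $\gr_{\ve{z}}$, $A$: it acts as the identity on $\CF^\infty(Y,\mathrm{pt},\frs)$, which fixes the absolute normalizations, while the relative gradings are manifestly preserved. (Equivalently, viewing $\tau_K$ as the cobordism map of $[0,1]\times Y$ equipped with the twisted annular surface, every term of the grading-change formulas \eqref{eq:grading2} and \eqref{eq:grading3} vanishes because $S_{\ve{w}}$ and $S_{\ve{z}}$ are disks.)

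Composing the two factors gives $\gr_{\ve{w}}(\iota_K(\xs))=\gr_{\ve{w}}\big((\tau_K)_*\eta(\xs)\big)=\gr_{\ve{w}}'(\eta(\xs))=\gr_{\ve{z}}(\xs)$, symmetrically $\gr_{\ve{z}}(\iota_K(\xs))=\gr_{\ve{w}}(\xs)$, and then $A(\iota_K(\xs))=\tfrac12\big(\gr_{\ve{z}}(\xs)-\gr_{\ve{w}}(\xs)\big)=-A(\xs)$. The relative parts of both steps are forced by the symmetry of the defining formulas, so the only point I expect to require care is the bookkeeping: the conjugate complex $C'$ carries the nonstandard assignment of $U$ and $V$ to the type-$\zs$ and type-$\ws$ basepoints, so one must keep careful track of which Maslov grading acts on which complex at each stage and must check that the absolute normalizations match rather than merely the relative gradings.
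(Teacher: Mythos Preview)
Your proposal is correct and follows essentially the same approach as the paper: factor $\iota_K=(\tau_K)_*\circ\eta$, observe that the diffeomorphism map $(\tau_K)_*$ is grading preserving, and that $\eta$ swaps $\gr_{\ve{w}}$ and $\gr_{\ve{z}}$ because it reverses the type-$\ws$/type-$\zs$ designation of the basepoints, with the Alexander statement following from $A=\tfrac{1}{2}(\gr_{\ve{w}}-\gr_{\ve{z}})$. Your treatment is more detailed than the paper's, particularly in tracking the absolute normalizations of the Maslov gradings through $\eta$, which the paper simply asserts is ``straightforward to see from the definition.''
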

\begin{proof}As $A(\ve{x})=\tfrac{1}{2}(\gr_{\ve{w}}(\ve{x})-\gr_{\ve{z}}(\ve{x}))$, the third relation follows from the first two, so let us consider only the relations involving $\gr_{\ve{w}}$ and $\gr_{\ve{z}}$. The map $\iota_K$ is defined as the composition $\tau_K\circ \eta$. Since $\tau_K$ is a diffeomorphism map, it is grading preserving, so it remains only to consider the map $\eta$. Since $\eta$ reverses which basepoints are identified as type-$\ws$ or type-$\zs$, it is straightforward to see from the definition that it switches $\gr_{\ws}$ and $\gr_{\zs}$.
\end{proof}

\section{Further properties of the link Floer TQFT}
\label{sec:furtherproperties}
In this section we describe several properties of the link cobordism maps from \cite{ZemCFLTQFT}. Firstly, we prove Theorem \ref{thm:C},  conjugation invariance for the link Floer TQFT. Then we prove several important relations for the link cobordism maps, such as the bypass relation, as well as an interpretation of the maps $\Phi_p$ and $\Psi_q$ in terms of dividing sets on cylindrical link cobordisms.

\subsection{Conjugation invariance of the link cobordism maps}

We now prove Theorem~\ref{thm:C}:

\begin{customthm}{\ref{thm:C}}Suppose that $(W,\cF)\colon (Y_1,\bL_1)\to (Y_2,\bL_2)$ is a decorated link cobordism. Let 
\[
(W,\bar{\cF})\colon (Y_1,\bar{\bL}_1)\to (Y_2,\bar{\bL}_2)
\]
 denote the conjugate decorated link cobordism (obtained by reversing the designation of regions as type-$\ws$ or type-$\zs$). The following diagram commutes up to filtered, $\cR$-equivariant chain homotopy:
	\[
	\begin{tikzcd}\cCFL^\circ(Y_1,\bL_1,\frs_1)\arrow{d}{F_{W,\cF,\frs}}\arrow{r}{\eta}& \cCFL^\circ(Y_1,\bar{\bL}_1,\bar{\frs}_1+\PD[L_1])\arrow{d}{F_{W,\bar{\cF},\bar{\frs}+\PD[ S]}}\\
	\cCFL^\circ(Y_2,\bL_2,\frs_2) \arrow{r}{\eta} & \cCFL^\circ(Y_2,\bar{\bL}_2,\bar{\frs}_2+\PD[L_2]).
	\end{tikzcd}
	\]
\end{customthm}

\begin{proof}Using the construction from \cite{ZemCFLTQFT}, the theorem is nearly (but not quite) a tautology. Let us first consider the case that each component of the underlying surface of the link cobordism, $S$, intersects both $Y_1$ and $Y_2$ non-trivially.

 The construction from \cite{ZemCFLTQFT} involves taking a decomposition of the link cobordism $(W,\cF)$, into elementary link cobordisms $(W_i,\cF_i)\colon (Y_i,L_i)\to (Y_i,L_{i+1})$. Each $(W_i,\cF_i)$ satisfies one of the following:
\begin{enumerate}
\item $W_i$ has a Morse function $f$ such that $f$ and $f|_{\cF_i}$ have no critical points, and the divides on $\cF_i$ go from the incoming end to the outgoing end.

\item $W_i$ has a Morse function $f$ with a single critical point, which is of index 1 or 3, or an arbitrary number of index 2 critical points. Furthermore $f|_{\cF_i}$ has no critical points and the divides on $\cF_i$ all go from the incoming end to the outgoing end.

\item $W_i$ has a Morse function function $f$ such that $f$ and $f|_{\cF_i}$ have no critical points. Furthermore, all divides on $\cF_i$ go from the incoming boundary to the outgoing boundary, except for one arc, which has both endpoints on the incoming link, or the outgoing link.

\item $W_i$ has a Morse function $f$ with no critical points, such that $f|_{\cF_i}$ has a single critical point, which is index 1. The divides on $\cF_i$ all go from the incoming to the outgoing boundary.
\end{enumerate}

At most one elementary cobordism in the decomposition is allowed to have a Morse function $f$ with index 2 critical points.

The maps induced by link cobordisms of type (1) are the maps induced by diffeomorphisms, so for such link cobordisms the claim follows from naturality of the map $\eta$ from Lemma \ref{lem:etacommuteswithchangeofdiagram}.
	
	The maps for cobordisms of type (2) are the 4-dimensional handle attachment maps. Consider first the 1-handle and 3-handle maps. Suppose $\bS^0\colon S^0\times D^3\to Y\setminus L$ is a framed 0-sphere, and $\cH=(\Sigma,\as,\bs,\ps,\qs)$ is a diagram for $(Y,\bL)$, such that $\Sigma$ intersects the image of $\bS^0$ along two disks, which are disjoint from $\as\cup \bs\cup \ps\cup \qs$. We can obtain a diagram for $(Y(\bS^0), \bL)$ by removing the two disks in $\Sigma$ corresponding to the image of $\bS^0$, and connected the resulting boundary components with an annulus. We add two new curves, $\alpha_0$ and $\beta_0$, which are homologically essential in the annulus and intersect in a pair of points, $\theta^+$ and $\theta^-$. The points $\theta^+$ and $\theta^-$ are distinguished by the Maslov gradings (the designation into higher and lower generators is the same for $\gr_{\ve{w}}$ and $\gr_{\ve{z}}$). The 1-handle map is $\xs\mapsto \xs\times \theta^+$, extended $\cR$-equivariantly, and the 3-handle map satisfies $\xs\times \theta^+\mapsto 0$ and $\xs\times \theta^-\mapsto \xs$. It is easy to see that $\eta$ preserves this decomposition into higher and lower generators. Hence $\eta$ commutes with the 1-handle and 3-handle maps.
	
	The interaction of $\eta$ with the 2-handle maps is slightly more subtle. Suppose $(W,\cF)\colon (Y_1,\bL_1)\to (Y_2,\bL_2)$ is a 2-handle cobordism for a framed 1-dimensional link $\bS^1$ in $Y_1\setminus L_1$.  The map $F_{W,\cF,\frs}$ is defined by counting holomorphic triangles in a Heegaard triple $\cT=(\Sigma, \ve{\alpha}',\ve{\alpha},\ve{\beta},\ve{p},\ve{q})$ which is subordinate to an $\alpha$-bouquet for $\bS^1$ (we recall that an \emph{$\alpha$-bouquet} for $\bS^1\subset Y\setminus \bL$ is an embedded collection of arcs in $Y$, one for each component of $\bS^1$, each with one end on $\bS^1$, and one end on a component of $L\setminus (\ve{p}\cup \ve{z})$ which is oriented to be going from a type-$\ve{w}$ basepoint to a type-$\ve{z}$ basepoint; compare \cite{OSTriangles}*{Definition~4.1}). For such a Heegaard triple, the cobordism map is defined by the formula
	\[
	F_{W,\cF,\frs}(\ve{x}):=F_{\as',\as,\bs,\frs}(\Theta_{\as',\as}^+,\ve{x}),
	\]
	 where $F_{\as',\as,\bs,\frs}$ is the map which counts holomorphic triangles in $\Sigma\times \Delta$ (for an auxiliary choice of almost complex structure $J$) representing Maslov index zero homology classes $\psi$ with $\frs_{\ve{p}}(\psi)=\frs$ on the triple $(\Sigma,\ve{\alpha}',\ve{\alpha},\ve{\beta})$. Here $(\Sigma,\ve{\alpha}',\ve{\alpha},\ps,\qs)$ is a diagram for an unlink in $(S^1\times S^2)^{\# k}$ where each component has exactly two basepoints, and $\Theta_{\as',\as}^+$ is the element of $\cHFL^-(\Sigma,\ve{\alpha}',\ve{\alpha},\ps,\qs)$ of top $\gr_{\ve{w}}$- and $\gr_{\ve{z}}$-grading.
	
	There is a bijection between holomorphic triangles on the triple $\mathcal{T}=(\Sigma,\ve{\alpha}',\ve{\alpha},\ve{\beta})$ with the almost complex structure $ J$, and holomorphic triangles on the conjugate triple $\bar{\cT}=(-\Sigma,\bar{\ve{\beta}},\bar{\ve{\alpha}},\bar{\ve{\alpha}}')$ with complex structure $-J$. Note also that $\bar{\cT}$ is now subordinate to a $\beta$-bouquet for the framed link $\bS^1$, whereas $\cT$ was subordinate to an $\alpha$-bouquet. We note that, as in \cite{OSTriangles}*{Lemma 5.2}, we can use either Heegaard triples which are subordinate to an $\alpha$-bouquet or a $\beta$-bouquet to compute the cobordism maps for surgery on a framed 1-dimensional link (see \cite{ZemCFLTQFT}*{Lemma~5.9}).
	
	However, there is a distinction in how $\Spin^c$ structures are assigned to homology classes of triangles on the triples $\cT$ and $\bar{\cT}$, since the designation of $\ps$ and $\qs$ as type-$\ws$ or type-$\zs$ is switched. Note that the 4-manifolds $X_{\as',\as,\bs}$ and $X_{\bar{\bs},\bar{\as},\bar{\as}'}$, are canonically diffeomorphic, and the diffeomorphism restricts to a diffeomorphism between the embedded surfaces $S_{\as',\as,\bs}$ and $ S_{\bar{\bs},\bar{\as},\bar{\as}'}$. Write 
	\[
	\iota\colon X_{\as',\as,\bs}\to W(Y_1,\bS^1)
	\]
	 for the embedding, which is well defined up to isotopy (see \cite{OSTriangles}*{Proposition~4.3} or \cite{JCob}*{Proposition~6.6}). Since $W(Y_1,\bS^1)$ is formed by attaching 3-handles and 4-handles to $X_{\as',\as,\bs}$, it follows that $\iota$ induces an isomorphism on $H^2(-;\Z)$ and on the set of $\Spin^c$ structures. If $\frt\in \Spin^c(W)$, the map $F_{W, \bar{\cF},\frt}$ can be computed using the triple $\bar{\cT}$ by counting holomorphic triangles representing homology classes $\bar{\psi}$ with $\frs_{\qs}(\bar{\psi})=\frt$. From Lemma~\ref{lem:trianglespincstructures}, we have
	\[
	\frs_{\qs}(\bar{\psi})=\frs_{\ps}(\bar{\psi})+\PD[ S_{\bar{\bs},\bar{\as},\bar{\as}'}].
	\]
	Since $\iota^*\PD[ S]=\PD[ S_{\bar{\bs},\bar{\as},\bar{\as}'}]$, we conclude that
	\[
	\eta(F_{\as',\as,\bs,\frs}(\Theta_{\as',\as}^+,\ve{x}))=F_{\bar{\bs},\bar{\as},\bar{\as}',\bar{\frs}+\iota^*\PD[ S]}(\eta(\ve{x}),\eta(\Theta_{\as',\as}^+)).
	\]
	 A straightforward computation shows that $\eta(\Theta_{\as',\as}^+)=\Theta^+_{\bar{\as},\bar{\as}'}$. Hence, for 2-handle maps we conclude that
	\[
	\eta\circ F_{W,\cF,\frs}\simeq F_{W,\bar{\cF},\bar{\frs}+\PD[ S]}\circ \eta.
	\]

	We now consider link cobordisms of type (3). The induced cobordism maps is equal to one of the quasi-stabilization maps $S_{p,q}^{+}$, $S_{p,q}^-,$ $T_{p,q}^+$ or $T_{p,q}^-$, depending on the configuration of $ S_{\ve{w}}$ and $ S_{\ve{z}}$ regions with respect to the dividing set. The configurations of the regions, as well as the maps induced, are shown in Figure \ref{fig::18}.

	\begin{figure}[ht!]
		\centering
		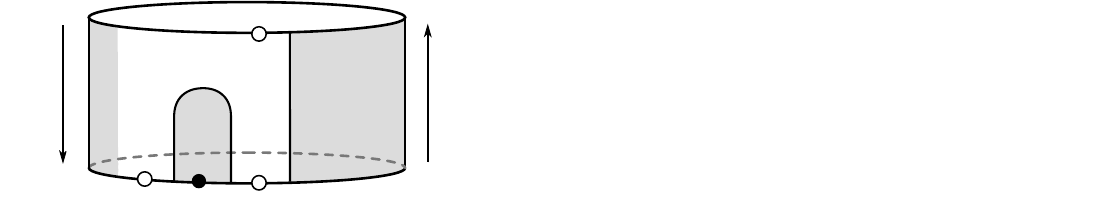
		\caption{\textbf{Decorated link cobordisms for the quasi-stabilization maps.} The underlying undecorated link cobordism is $[0,1]\times  L\subset [0,1]\times  Y$.\label{fig::18}}
	\end{figure}

Since the type of each region of $ S\setminus \cA$ is changed (from $\ve{w}$ to $\ve{z}$, or vice versa), it is sufficient to show that 
	\[
	S_{p,q}^{\circ}\circ \eta\simeq \eta\circ  T_{p,q}^{\circ}\qquad \text{and} \qquad T_{p,q}^{\circ}\circ \eta\simeq \eta\circ  S_{p,q}^{\circ},
	\]
	 for $\circ\in \{+,-\}$. This is essentially a tautology from the definitions. The quasi-stabilization maps are defined by inserting a new $\as$ and $\bs$ curve in the diagram, as shown in Figure~\ref{fig::14}. Let us write $\cH^+$ for the diagram obtained by quasi-stabilizing. There are two new intersection points. We will write $\theta^{\ws}$ for the intersection point with higher $\gr_{\ws}$-grading, and $\xi^{\ws}$ for the intersection point with lower $\gr_{\ws}$-grading. Somewhat redundantly, we will write $\theta^{\zs}$ for the intersection point with higher $\gr_{\zs}$-grading, and $\xi^{\zs}$ for the intersection point with lower $\gr_{\zs}$-grading. Of course, we have $\theta^{\ws}=\xi^{\zs}$ and $\xi^{\ws}=\theta^{\zs}$. 
	 
As $\cR$-modules we have an isomorphism
	 \[
	 \cCFL^-(\cH^+,\frs)\iso \cCFL^-(\cH,\frs)\otimes_{\bF_2} \langle \theta^{\ws}, \theta^{\zs} \rangle
	 \]
	 where $\langle\theta^{\ws},\theta^{\zs}\rangle$ is the 2-dimensional vector space generated by the elements $\theta^{\ws}$ and $\theta^{\zs}$.

	  The maps $S_{p,q}^{+}$ and $S_{p,q}^-$ are defined by the formulas
\begin{equation}
S_{p,q}^+(\xs)= \xs\times \theta^{\ws}, \qquad S_{p,q}^-(\xs\times \theta^{\ws})=0\qquad \text{and} \qquad S_{p,q}^-(\xs\times \xi^{\ws})=\xs,\label{eq:quasidef1'} 
	\end{equation}
extended $\cR$-equivariantly. Analogously the maps $T_{p,q}^+$ and $T_{p,q}^-$ are defined by the formulas \begin{equation}
T_{p,q}^+(\ve{x})=\ve{x}\times \theta^{\zs},\qquad T_{p,q}^-(\ve{x}\times \theta^{\ve{z}})=0\qquad \text{and} \qquad T_{p,q}^-(\ve{x}\times \xi^{\ve{z}})=\ve{x}.\label{eq:quasidef2'}
\end{equation}
Since $\eta$ switches the $\gr_{\ws}$- and $\gr_{\zs}$-gradings, it sends $\theta^{\ws}$ to $\theta^{\zs}$ and $\xi^{\ws}$ to $\xi^{\zs}$.  Hence,  we have $T_{p,q}^+\circ\eta\simeq \eta\circ S_{p,q}^+$ and $T_{p,q}^-\circ \eta\simeq \eta\circ S_{p,q}^-$.
	
	\begin{figure}[ht!]
		\centering
		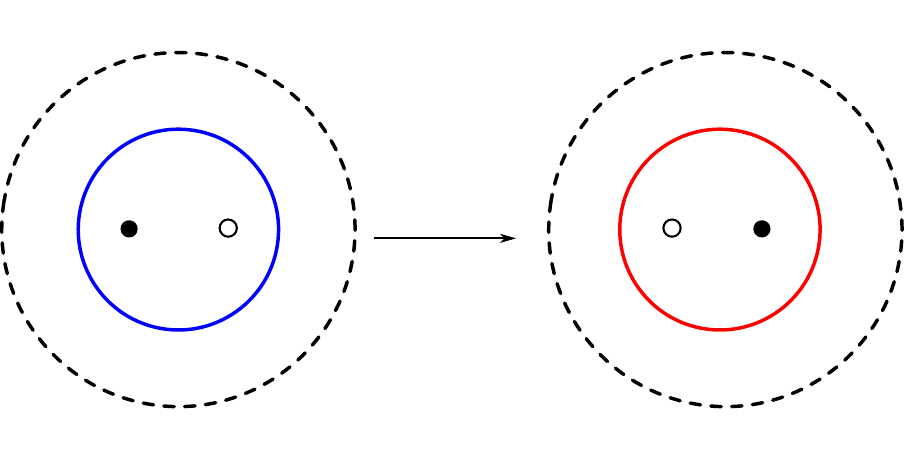
		\caption{\textbf{The effect of conjugation on a quasi-stabilized diagram.} The map $\eta$ sends $\xs\times \theta^{\ws}$ to $\xs\times \theta^{\zs}$. The outer dashed circle represents where the connected sum with the rest of the diagram takes place. The circular arrows indicate the orientation of the Heegaard surface.\label{fig::14}}
	\end{figure}
	
	We now consider the final type of elementary link cobordism, type (4), corresponding to a saddle cobordism, which induces a band map.  As defined in \cite{ZemCFLTQFT}*{Section~6}, there are two band maps, $F_{B}^{\ve{w}}$ and $F_{B}^{\ve{z}}$, both defined by counting holomorphic triangles. Supposing that $B$ is an $\alpha$-band (i.e. the ends of $B$ are in segments of $\bL$ going from type-$\ve{w}$ basepoints to type-$\ve{z}$ basepoints) then we can pick a Heegaard triple $\cT=(\Sigma, \ve{\alpha}',\ve{\alpha},\ve{\beta},\ps,\qs)$ representing surgery of the link on the band. The band maps $F_{B}^{\ve{z}}$ and $F_{B}^{\ve{w}}$ are defined by the formulas
	\[
	F_{B}^{\ve{z}}(\ve{x}):= F_{\as',\as,\bs,\frs}(\Theta_{\as',\as}^{\ve{w}},\ve{x})\qquad \text{and} \qquad F_{B}^{\ve{w}}(\ve{x}):= F_{\as',\as,\bs,\frs}(\Theta_{\as',\as}^{\ve{z}},\ve{x})
	\] 
	for $\frs\in \Spin^c(Y)$. The diagram $(\Sigma,\as',\as,\ps,\qs)$ represents an unlink in $(S^1\times S^2)^{\# g(\Sigma)}$, such that all components have exactly two basepoints, except for one basepoint, which has four. Here $\Theta^{\ve{w}}_{\as',\as}$ is a distinguished element in the top $\gr_{\ws}$-graded subset of $\cHFL^-(\Sigma, \ve{\alpha}',\ve{\alpha},\ps,\qs)$, and $\Theta_{\as',\as}^{\ve{z}}$ is a distinguished element in the top $\gr_{\zs}$-graded subset (see \cite{ZemCFLTQFT}*{Lemma~3.7}). The map $F_B^{\ve{w}}$ corresponds to a saddle cobordism where the band $B$ we attach is a subset of $S_{\ve{w}}$. The map $F_{B}^{\ve{z}}$ corresponds to the case where the band $B$ is a subset of $ S_{\ve{z}}$. Analogous to the quasi-stabilization maps, it is easy an easy computation to see that $\eta$ maps $\Theta^{\ve{w}}_{\as',\as}$ to $\Theta^{\ve{z}}_{\bar{\as},\bar{\as}'}$, and $\eta$ maps $\Theta^{\zs}_{\as',\as}$ to $\Theta^{\ws}_{\bar{\as},\bar{\as}'}$. Paying attention to the change of $\Spin^c$ structures, as we did with the 2-handle maps, we arrive at the relation
	\[
	\eta( F_{\as',\as,\bs,\frs}(\Theta^{\ve{w}}_{\as',\as},\ve{x}))= F_{\bar{\bs},\bar{\as},\bar{\as}',\bar{\frs}+\PD[ S_{\bar{\bs},\bar{\as},\bar{\as}'}]}(\eta(\ve{x}),\Theta^{\ve{z}}_{\bar{\as},\bar{\as}'}).
	\]
	 However we note that the 4-manifold $X_{\as',\as,\bs}$ is diffeomorphic to $[0,1]\times  Y$ once we fill in $Y_{\as',\as}$ with 3-handles and 4-handles. Hence
	\begin{equation}
	\Spin^c(X_{\as',\as,\bs})\iso \Spin^c(Y).\label{eq:isomorphismofspinc}
	\end{equation}
	 It is not hard to see that the cohomology class $\PD[ S_{\bar{\bs},\bar{\as},\bar{\as}'}]$ is equal to $\iota^*\PD[ [0,1]\times  L]$, where $\iota$ is the inclusion $\iota\colon X_{\as',\as,\bs}\hookrightarrow [0,1]\times  Y$. Hence, under the  isomorphism of $\Spin^c$ structures from Equation~\eqref{eq:isomorphismofspinc}, we have that $\PD[ S_{\bar{\bs},\bar{\as},\bar{\as}'}]$ acts by $\PD[L]$. This is, however, just the correction in $\Spin^c$ structures due to $\eta$ on the link Floer complexes, from Lemma  \ref{lem:changeSpincstructure}. Hence 	
	\[
	\eta\circ F_B^{\ve{w}}\simeq  F_B^{\ve{z}}\circ \eta \qquad \text{and} \qquad F_B^{\ve{w}}\circ \eta\simeq \eta\circ F_B^{\ve{z}}.
	\]
	
	Notice too that the $\alpha$-band has turned into a $\beta$-band after conjugating, analogously to how the 2-handle maps change after conjugation in \cite{OSTriangles}*{Lemma 5.2}. However the $\alpha$-band maps and the $\beta$-band maps are related by a basepoint moving map, according to \cite{ZemCFLTQFT}*{Proposition~9.10}, and hence either can be used to compute the link cobordism maps.
	
	Combining these observations with the composition law from Equation~\eqref{eq:compositionlaw}, we conclude that
	\[
	\eta\circ F_{W,\cF,\frs}\simeq F_{W,\bar{\cF},\bar{\frs}+\PD[ S]}\circ \eta,
	\] 
	for a link cobordism where each component of $ S$ intersects both $Y_1$ and $Y_2$ non-trivially.
	
	For a link cobordism where some components of $ S$ do not intersect one or both of $Y_1$ or $Y_2$, the link cobordism maps are defined by puncturing the link cobordism at points along the dividing set, adding extra copies of $(S^3,U,p,q)$ to $(Y_1,\bL_1)$ and $(Y_2,\bL_2)$ using the 0-handle and 4-handle maps defined in \cite{ZemCFLTQFT}*{Section~5.2}. It is easy to check that the conjugation map $\eta$ commutes with the 0-handle and 4-handle maps, and hence statement  follows for general link cobordisms.
	\end{proof}

\subsection{The maps $\Phi$ and $\Psi$}
\label{sec:PhiPsi}

In this section we describe the maps $\Phi$ and $\Psi$ which feature in Theorem \ref{thm:B}, and prove that they are induced by two link cobordisms with relatively simple dividing sets.

In general, if $\bL=(L,\ve{p},\ve{q})$ is a multi-based link in $Y$, we can define endomorphisms $\Phi_p$ and $\Psi_q$ of $\cCFL^\infty(Y,\bL,\frs)$ for each $p\in \ve{p}$ and each $q\in \ve{q}$. They are defined by counting holomorphic disks on a diagram using the formulas
\[
\Phi_p(\ve{x})=U^{-1}\sum_{\ve{y}\in \bT_{\as}\cap \bT_{\bs}}\sum_{\substack{\phi\in \pi_2(\ve{x},\ve{y})\\ \mu(\phi)=1}} n_p(\phi) \# \hat{\cM}(\phi)U^{n_{\ps}(\phi)} V^{n_{\qs}(\phi)}\cdot \ve{y},
\]
and
\[
\Psi_q(\ve{x})=V^{-1}\sum_{\ve{y}\in \bT_{\as}\cap \bT_{\bs}}\sum_{\substack{\phi\in \pi_2(\ve{x},\ve{y})\\ \mu(\phi)=1}} n_q(\phi) \# \hat{\cM}(\phi)U^{n_{\ps}(\phi)} V^{n_{\qs}(\phi)}\cdot \ve{y}.
\]

When $\bL=(K,p,q)$ is a doubly based knot, as in Theorem \ref{thm:B}, then we will often write $\Phi$ for $\Phi_p$ and $\Psi$ for $\Psi_q$. In the case of a doubly based knot, these are the same maps considered in Section \ref{sec:algebraicpreliminaries}, in the context of $\iota_K$-complexes, for the basis $B$ consisting of the set of intersection points $\ve{x}\in \bT_{\as}\cap \bT_{\bs}$ with $\frs_{\ve{p}}(\ve{x})=\frs$ on a given Heegaard diagram.

We now show that $\Phi_p$ and $\Psi_q$ have a simple interpretation in terms of decorated link cobordisms:

\begin{lem}\label{lem:cobordismsforPhiPsi}The link cobordism maps for $([0,1]\times  Y,[0,1]\times  L)$ with the dividing sets shown in Figure \ref{fig::17} are filtered, chain homotopy equivalent to the maps $\Phi_p$ and $\Psi_q$. This holds without any assumption on the number of basepoints on the link component, though if there are more than two basepoints, the remaining arcs of the dividing set (not shown in Figure \ref{fig::17}) are vertical arcs from $\{0\}\times L$ to $\{1\}\times L$.
\end{lem}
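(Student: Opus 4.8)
The plan is to compute both link cobordism maps directly from the definitions in \cite{ZemCFLTQFT}, reducing each to a holomorphic triangle count on an explicit Heegaard triple, and then recognize the answer as the formula for $\Phi_p$ or $\Psi_q$ given in Section~\ref{sec:PhiPsi}. By symmetry (applying the conjugation map $\eta$ and Theorem~\ref{thm:C}, together with Equation~\eqref{eq:skewedmaps}) it suffices to treat one of the two cobordisms, say the one whose map we expect to be $\Phi_p$; the statement for $\Psi_q$ then follows by switching the roles of the $\ve{w}$ and $\ve{z}$ regions. So I would first describe the decorated cobordism in Figure~\ref{fig::17} concretely: it is the cylinder $([0,1]\times Y,[0,1]\times L)$ with all dividing arcs vertical except for a single small closed arc (a ``finger'') pushed across the basepoint $p$, creating one extra compressible region.

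First I would decompose this elementary cobordism into the pieces used to build the link cobordism maps. The dividing set with one extra arc having both endpoints on, say, the outgoing link is precisely a type-(3) cobordism in the language of the proof of Theorem~\ref{thm:C}, so it is built from a pair of quasi-stabilization maps: inserting a pair of basepoints $(p',q')$ near $p$ via $S^{+}$ (or $T^{+}$) and then removing them via $T^{-}$ (or $S^{-}$), with the configuration of $S_{\ve{w}}$ and $S_{\ve{z}}$ regions as in Figure~\ref{fig::18} dictating which combination occurs. Thus the cobordism map is a composition such as $T_{p',q'}^{-}\circ S_{p',q'}^{+}$ (or the analogous composition with a basepoint-moving map interposed, depending on the precise picture). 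Using the explicit formulas \eqref{eq:quasidef1'} and \eqref{eq:quasidef2'} for the quasi-stabilization maps on the quasi-stabilized diagram $\cH^{+}$, one computes this composition on generators. The key point is that $S_{p',q'}^{+}$ sends $\ve{x}\mapsto \ve{x}\times\theta$ and then the differential on $\cH^{+}$ can carry $\ve{x}\times \theta$ to terms $\ve{y}\times \xi$ with a coefficient recording a multiplicity of the relevant holomorphic disk at $p$; applying $T_{p',q'}^{-}$ extracts exactly those terms. The upshot is that the composition counts index-$1$ disks on $\cH$ weighted by $n_p(\phi)$, with a single power of $U^{-1}$ absorbed from the extra basepoint region, which is literally the defining formula for $\Phi_p$.

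More carefully, the main technical step is the analysis of the differential on the quasi-stabilized diagram $\cH^{+}$ and of how the connecting disks between the old generators and the new intersection points $\theta,\xi$ interact with the basepoint $p$. This is where I expect the calculation to live, and it is essentially the same analysis carried out in \cite{ZemQuasi} for the quasi-stabilization maps and their compositions; I would cite those results (in particular the computation of how $S^{\pm}$ and $T^{\pm}$ compose, and the fact that they commute with change-of-diagrams maps) rather than redo them. An alternative, perhaps cleaner, route is to use the bypass relation, Lemma~\ref{thm:D}: the three cobordisms with (i) all vertical divides (the identity), (ii) a finger across $p$, and (iii) a finger across $q$ on the same component fit into a bypass triple, so their maps sum to zero up to homotopy; combined with the known identity $\rho_*\simeq \id+\Phi_p\circ\Psi_q$ from Sarkar's formula and the factorization of $\rho_*$ through these finger moves, one pins down the individual maps. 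The hard part, in either approach, will be bookkeeping the $\Spin^c$ structures and the powers of $U$ and $V$ (equivalently, the gradings), to confirm that the cobordism map lands in the correct filtration level and equals $\Phi_p$ on the nose up to filtered chain homotopy, rather than merely up to a unit or an overall shift; the grading formulas \eqref{eq:grading1}, \eqref{eq:grading2}, \eqref{eq:grading3} applied to the surface in Figure~\ref{fig::17} (which has $\tilde\chi(S_{\ve{w}})$ off by the right amount) provide the needed consistency check. Finally, for the case of more than two basepoints on the component, the extra vertical divides contribute only trivial (identity) factors away from the finger, so the argument is unchanged; I would remark on this at the end.
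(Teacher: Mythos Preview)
Your overall strategy matches the paper's: decompose into quasi-stabilization maps and invoke the quasi-stabilized differential from \cite{ZemQuasi}. But the composition you write, $T_{p',q'}^{-}\circ S_{p',q'}^{+}$, is the identity, not $\Phi_p$: from Equations~\eqref{eq:quasidef1'}--\eqref{eq:quasidef2'} one has $S^+(\xs)=\xs\times\theta^{\ws}$ and, since $\theta^{\ws}=\xi^{\zs}$, then $T^-(\xs\times\theta^{\ws})=\xs$. No differential is interposed in the composition of cobordism maps. The paper's actual decomposition, when the component carries more than two basepoints, is $S_{p,q}^{+}\circ S_{p,q}^{-}$: one \emph{removes} the existing adjacent pair $p,q$ and reinserts it. The matrix form \cite{ZemQuasi}*{Proposition~5.3} of the differential on $\cH^+$ then shows that its formal $U_p$-derivative is the map $\xs\times\theta^{\ws}\mapsto 0$, $\xs\times\xi^{\ws}\mapsto\xs\times\theta^{\ws}$, which coincides with $S^+S^-$; the differential enters only in computing what $\Phi_p$ \emph{is} on $\cH^+$, not between the two quasi-stabilization maps.

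You also omit the two-basepoint case, which requires genuine extra work: one cannot remove the only pair of basepoints, so the paper first isotopes the dividing set (Figure~\ref{fig::20}) to obtain $(\phi^{-1})_*\circ S_{q',p}^{-}\circ S_{p',q'}^{+}$ for a basepoint-moving diffeomorphism $\phi$, then uses \cite{ZemCFLTQFT}*{Lemma~4.26} to rewrite $(\phi^{-1})_*\simeq T_{p',q'}^{-}S_{q',p}^{+}$, and reduces via the first case and $T^-S^+\simeq\id$ to $\Phi_p$. Your bypass alternative cannot isolate $\Phi_p$ alone: the relevant bypass triple (Figure~\ref{fig::15}) relates $\rho_*$, $\id$, and the \emph{composition} $\Psi_q\Phi_p$, so Sarkar's formula recovers only the product, not the individual maps.
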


\begin{proof}Let us focus on the map $\Phi_p$, since the map $\Psi_q$ can be handled in an analogous fashion. Let $(W,\cF)$ denote the link cobordism on the left side of Figure \ref{fig::17}. If there are more than two basepoints on the link component, then the link cobordism map is a composition of two quasi-stabilization maps, and we can use the computation of the quasi-stabilized differential from \cite{ZemQuasi} to compute $\Phi_p$ directly. To make this explicit, suppose that $q$ is the basepoint immediately preceding $p$, and  suppose $\cH$ is a Heegaard diagram for the link with $p$ and and $q$ removed. Let $\cH^+$ be a diagram obtained from $\cH$ by quasi-stabilizing the link component at $p$ and $q$, similar to Figure \ref{fig::14}. Write $p'$ and $q'$ for the basepoints adjacent to $p$ and $q$, which are already on $\cH$. We consider the version of the full link Floer complex where we have a variable for each basepoint (this is mentioned in Section~\ref{sec:curvedcomplexesandeta}, and is described in detail in \cite{ZemCFLTQFT}). By \cite{ZemQuasi}*{Proposition 5.3}, the differential on the quasi-stabilized diagram $\cH^+$ can be written as
\begin{equation}\d_{\cH^+}=\begin{pmatrix}\d_{\cH} & U_p +U_{p'} \\
V_{q}+V_{q'}& \d_{\cH}
\end{pmatrix},\label{eq:quasistabdiff}\end{equation} under the identification (of modules) 
\[
\cCFL^-(\cH^+)=\cCFL^-(\cH)\otimes_{\bF_2} \bF_2[U_p,V_q]\otimes_{\bF_2} \langle \theta^{\ve{w}},\xi^{\ve{w}} \rangle.
\]
 The above matrix notation is with respect to writing the coefficient of $\theta^{\ws}$ in the first row and column, and the coefficient of $\xi^{\ws}$ in the second. Using this, we see that only the upper right component has any terms involving nonzero powers of $U_p$, and hence
 \begin{equation}
S_{p,q}^+S_{p,q}^-\simeq \Phi_p\label{eq:S^+S^-=Phi}
 \end{equation}
  using the formulas for $S_{p,q}^+$ and $S_{p,q}^-$ from Equation \eqref{eq:quasidef1'}. Since $F_{W,\cF,\frs}\simeq S_{p,q}^+S_{p,q}^-$ by construction, we conclude that $F_{W,\cF,\frs}\simeq \Phi_p$, concluding the proof in the case $p$ and $q$ aren't the only basepoints on their link component.

In the case where there are exactly two basepoints on the link component, our strategy is to move the dividing set around to reduce to the previous case. Our goal will be to move the dividing set around so that $\{t\}\times L $ intersects the dividing set non-trivially for each $t\in [0,1]$. Let $\phi$ be diffeomorphism of $(Y,L)$ which is the identity outside a small neighborhood of $p$, and which moves $p$ to a nearby point $p'\not \in \ps\cup \qs$. Let $q'$ be a new basepoint between $p$ and $p'$, and assume that the ordering of these points is $(p',q,p)$, ordered right to left.  We can move the dividing set around, as in Figure \ref{fig::20}.

\begin{figure}[ht!]
\centering
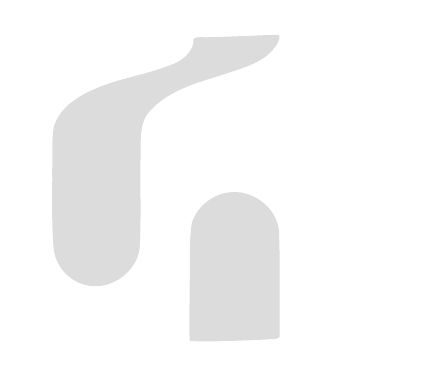
\caption{\textbf{Computing the link cobordism map for $\Phi_p$} by manipulating the dividing set for the link cobordism from Figure \ref{fig::17}. The induced link cobordism map in Lemma \ref{lem:cobordismsforPhiPsi} is denoted $F_{W,\cF,\frs}$.\label{fig::20}}
\end{figure}

   By construction, using the decomposition shown in Figure~\ref{fig::20}, we have
\begin{equation}
F_{W,\cF,\frs}\simeq (\phi^{-1})_*S^{-}_{q',p} S^+_{p',q'}.\label{eq:expressionforPhicobordism}
\end{equation}
  According to \cite{ZemCFLTQFT}*{Lemma~4.26} we have 
  \begin{equation}
  (\phi^{-1})_*\simeq T_{p',q'}^- S_{q',p}^+.
  \label{eq:basepointmovingmapformula}
\end{equation}  
  
   Combining Equations~\eqref{eq:expressionforPhicobordism} and \eqref{eq:basepointmovingmapformula}, we compute
\[
F_{W,\cF,\frs}\simeq (\phi^{-1})_*S^{-}_{q',p} S^+_{p',q'}\simeq  T_{p',q'}^- S_{q',p}^+S^{-}_{q',p} S^+_{p',q'}\simeq T_{p',q'}^- \Phi_p S^+_{p',q'}\simeq T_{p',q'}^-  S^+_{p',q'}\Phi_p\simeq \Phi_p.
\] 
The third chain homotopy is justified by Equation~\eqref{eq:S^+S^-=Phi}. The fourth chain homotopy follows from \cite{ZemCFLTQFT}*{Lemma~4.16}. The fifth chain homotopy follows from the relation $T_{p',q'}^-S_{p',q'}^+\simeq \id$, which follows from the formulas defining the quasi-stabilization maps in Equations~\eqref{eq:quasidef1'} and \eqref{eq:quasidef2'}. This completes the proof for $\Phi_p$.

The same strategy works for the map $\Psi_q$ and the corresponding link cobordism.
\end{proof}

\subsection{The bypass relation}\label{sec:bypassrelation}

In this section we prove Lemma \ref{thm:D}, the bypass relation. 

\begin{customlem}{\ref{thm:D}}If $(W,\cF_1),$ $(W,\cF_2)$ and $(W,\cF_3)$ are decorated link cobordisms which fit into a bypass triple, i.e. the underlying undecorated link cobordisms are equal, and the decorations differ only inside a disk on the surface, as shown in Figure \ref{fig::33}, then
\[
F_{W,\cF_1,\frs}+F_{W,\cF_2,\frs}+F_{W,\cF_3,\frs}\simeq 0.
\]
\end{customlem}
\begin{proof}We will interpret the relation
\[
T_{p,q}^{+} S_{p,q}^-+S_{p,q}^+T_{p,q}^-+\id\simeq 0,
\]
 in terms of surfaces with divides. The above relation is proven in \cite{ZemCFLTQFT}*{Lemma~4.13}, and is immediate from the formulas for the maps (Equations~\eqref{eq:quasidef1'}~and~\eqref{eq:quasidef2'}). The corresponding dividing sets for these maps form a bypass triple on the undecorated link cobordism $( [0,1]\times Y,[0,1]\times L)$, as shown in Figure~\ref{fig::34}. Combining the above result with the composition law from Equation~\eqref{eq:compositionlaw}, the bypass relation follows for general link cobordisms.
\end{proof}

	\begin{figure}[ht!]
		\centering
		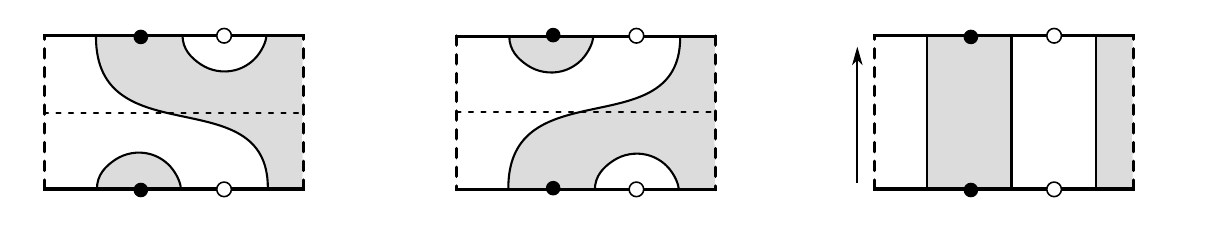
		\caption{\textbf{The bypass triple formed by the dividing sets for the three summands in the relation $T_{p,q}^+S_{p,q}^-+S^+_{p,q}T_{p,q}^-+\id\simeq 0$.}\label{fig::34}}
	\end{figure}

As a corollary, we can give a quick proof of a special case of Sarkar's formula \cite{SarkarMovingBasepoints}*{Theorem~1.1} for the diffeomorphism map induced by twisting a link component in one full twist, when the link component has exactly two basepoints. A similar pictorial argument using more bypasses could presumably be used to prove the formula in full generality, though we will not pursue the argument further.

\begin{cor}Suppose $\bL$ is a multi-based link in $Y$, and $K$ is a component of $\bL$ with exactly two basepoints, $p$ and $q$. If $\rho$ denotes the diffeomorphism resulting from twisting $K$ in one full twist, in the direction of its orientation, then the induced map 
\[
\rho_*\colon \cCFL^\infty(Y,\bL,\frs)\to \cCFL^\infty(Y,\bL,\frs)
\] has the filtered chain homotopy type
\[
\rho_*\simeq \id+\Psi_q\circ\Phi_p.
\]
\end{cor}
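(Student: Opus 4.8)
The plan is to realize the full twist diffeomorphism $\rho$ as a composition of elementary link cobordism maps, and then apply the bypass relation (Lemma~\ref{thm:D}) together with the computation of $\Phi_p$ and $\Psi_q$ from Lemma~\ref{lem:cobordismsforPhiPsi}. First I would recall the basepoint-moving formula from \cite{ZemCFLTQFT}*{Lemma~4.26}, which expresses a diffeomorphism map that pushes a basepoint around the knot in terms of quasi-stabilization maps; iterating or combining such formulas, $\rho_*$ should be expressible as a composition of $S^\pm$ and $T^\pm$ maps (and possibly some intermediate diffeomorphism maps that are isotopic to the identity). Concretely, twisting $K$ by one full twist moves $p$ all the way around $K$ and back, so $\rho_*$ factors as a product of the elementary basepoint-pushing maps, each of which by \cite{ZemCFLTQFT}*{Lemma~4.26} has the form $T^-_{\bullet}S^+_{\bullet}$ or $S^-_{\bullet}T^+_{\bullet}$.

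The key algebraic input is the identity $T^+_{p,q}S^-_{p,q}+S^+_{p,q}T^-_{p,q}+\id\simeq 0$ from \cite{ZemCFLTQFT}*{Lemma~4.13}, which is exactly the bypass relation in this setting (Figure~\ref{fig::34}). Rearranging, $\id+T^+_{p,q}S^-_{p,q}\simeq S^+_{p,q}T^-_{p,q}$, and similarly one gets expressions relating the various composites. The strategy is then: write $\rho_*$ via the bypass-triple picture as (identity cobordism) $+$ (a cobordism whose dividing set, after isotopy, has a closed component that can be pushed off, leaving $\Psi_q\Phi_p$). More precisely, from the bypass relation, $\rho_* \simeq \id + (\text{something})$, where the ``something'' is a composition of quasi-stabilization maps of the form $S^+_{p',q'}S^-_{p',q'}$ sandwiched appropriately; by Equation~\eqref{eq:S^+S^-=Phi} such a composite is $\Phi_{p}$ (or $\Psi_q$), and the remaining factors are seen to cancel via the relations $T^-S^+\simeq \id$ (Equation~\eqref{eq:quasidef1'}, \eqref{eq:quasidef2'}) and the commutation lemmas, leaving exactly $\Psi_q\circ\Phi_p$. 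Pictorially, one draws the annular cobordism $[0,1]\times K$ with a dividing set that spirals once around, decomposes it into bypass moves, and reads off that two of the three terms in each bypass triangle telescope while one contributes a $\Phi_p$ and one a $\Psi_q$; the order $\Psi_q\circ\Phi_p$ (rather than $\Phi_p\circ\Psi_q$) is dictated by the orientation of the twist.

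Here is the sequence of steps I would carry out. (i) Set up a Heegaard diagram for $(Y,\bL)$ in which $K$ carries only the two basepoints $p,q$, and fix the decorated cobordism $(W,\cF)$ with $W=[0,1]\times Y$, $S=[0,1]\times K$, and dividing set equal to the graph of the full twist. (ii) Isotope the dividing set (rel endpoints) to a standard ``staircase'' form built from bypass moves; record the decomposition of $\cF$ into a composition whose pieces are the quasi-stabilization cobordisms and identity-type cobordisms, as in the proof of Lemma~\ref{lem:cobordismsforPhiPsi} (Figures~\ref{fig::17}, \ref{fig::20}). (iii) Apply the composition law (Equation~\eqref{eq:compositionlaw}) to write $\rho_*$ as the composite of the corresponding maps. (iv) Use the bypass relation $T^+_{p,q}S^-_{p,q}+S^+_{p,q}T^-_{p,q}\simeq \id$ to expand one middle factor, splitting $\rho_*$ into two terms; show one term reduces to $\id$ using $T^-S^+\simeq \id$ and the commutation relations, and the other reduces to $\Psi_q\circ\Phi_p$ using Equation~\eqref{eq:S^+S^-=Phi} (for both the $\Phi_p$ and $\Psi_q$ factors) and \cite{ZemCFLTQFT}*{Lemma~4.16}. (v) Conclude $\rho_*\simeq \id+\Psi_q\circ\Phi_p$, noting filtered equivariance throughout since all constituent maps are filtered.

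The main obstacle I anticipate is step (ii)--(iv): getting the bookkeeping of the quasi-stabilization/basepoint-moving maps exactly right so that the telescoping works and produces the composite in the correct order $\Psi_q\circ\Phi_p$ rather than $\Phi_p\circ\Psi_q$ (these differ in general, though they are chain homotopic by Lemma~\ref{lem:PhiPsicommute}, so this is really a cosmetic matching with Sarkar's convention). In particular one must be careful that the auxiliary basepoints $p',q'$ introduced when manipulating the dividing set are removed by $T^-$-type maps at the end, and that the diffeomorphism maps appearing along the way (such as $(\phi^{-1})_*$ in Lemma~\ref{lem:cobordismsforPhiPsi}) are correctly expanded via \cite{ZemCFLTQFT}*{Lemma~4.26}. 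Since the full-generality version of Sarkar's formula is already known (\cite{SarkarMovingBasepoints}, \cite{ZemQuasi}), the point here is only to give the clean two-basepoint pictorial derivation, so I would keep the argument at the level of the figures and cite the relevant lemmas of \cite{ZemCFLTQFT} for the elementary relations rather than re-deriving them.
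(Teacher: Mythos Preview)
Your approach would likely work, but it is considerably more roundabout than the paper's argument, and the detour through an explicit factorization of $\rho_*$ into $S^\pm,T^\pm$ maps is where the bookkeeping risk you flag actually lives. The paper does not first express $\rho_*$ algebraically via \cite{ZemCFLTQFT}*{Lemma~4.26} and then manipulate quasi-stabilization identities; instead it applies the bypass relation (Lemma~\ref{thm:D}) \emph{once}, directly at the level of dividing sets on the cylinder $[0,1]\times K$. The dividing set for $\rho_*$ is the spiral you describe; one chooses a single bypass arc on this annulus, and the resulting bypass triple consists of exactly three dividing sets which are visibly (after isotopy) the ones for $\rho_*$, for $\id$, and for $\Psi_q\circ\Phi_p$ (the latter identification coming from Lemma~\ref{lem:cobordismsforPhiPsi}, stacking the two pictures in Figure~\ref{fig::17}). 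No telescoping, no auxiliary basepoints, no iterated application of $T^+S^-+S^+T^-+\id\simeq 0$ are needed; the formula drops out of one picture.

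The conceptual difference is that you are invoking the \emph{local} algebraic instance of the bypass relation (the identity $T^+_{p,q}S^-_{p,q}+S^+_{p,q}T^-_{p,q}+\id\simeq 0$ on a quasi-stabilized diagram) inside a longer composite, whereas the paper uses the \emph{global} statement of Lemma~\ref{thm:D} on the full cobordism. Since Lemma~\ref{thm:D} is already proved from that same local identity plus the composition law, the paper's route packages all of your steps (ii)--(iv) into the single invocation of that lemma. Your plan is not wrong, but the clean two-basepoint pictorial derivation you say you are aiming for is precisely the one-bypass argument of Figure~\ref{fig::15}, not an algebraic unwinding. Incidentally, the order $\Psi_q\circ\Phi_p$ is read off directly from the picture, so there is no convention-matching to worry about.
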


\begin{proof}We use the bypass triple shown in Figure \ref{fig::15}, relating three decorated link cobordisms with underlying undecorated link cobordism $([0,1]\times Y,[0,1]\times L)$. We use Lemma \ref{lem:cobordismsforPhiPsi} to interpret the resulting dividing sets as maps, and the formula follows.

\begin{figure}[ht!]
\centering
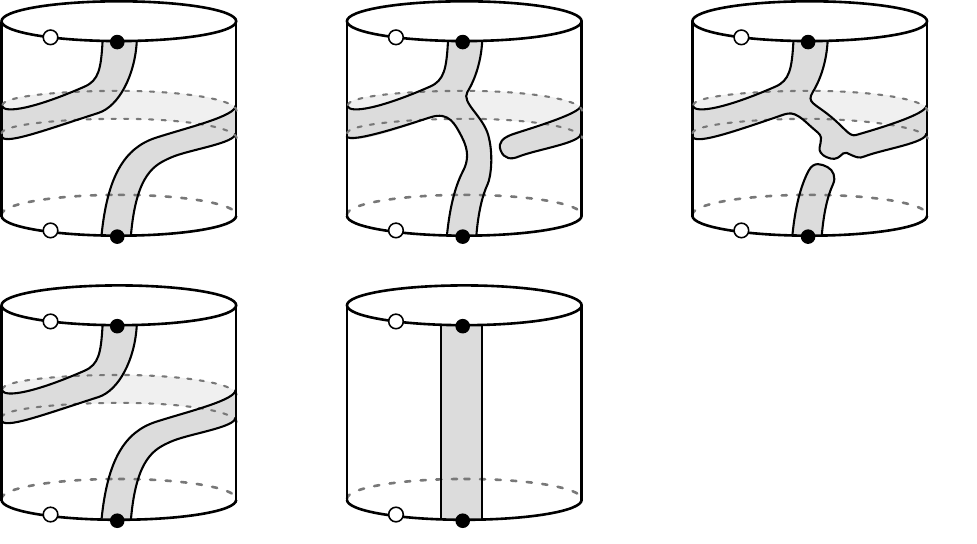
\caption{\textbf{Obtaining the formula $\rho_*\simeq \id+\Psi_q\circ \Phi_p$ from the bypass relation.} The bypass region is a neighborhood of the dashed arc in the top left. On the bottom row the same dividing sets have been manipulated to demonstrate the desired relation. These are dividing sets on the surfaces $[0,1]\times  L$ inside of $[0,1]\times Y$. \label{fig::15}}
\end{figure}
\end{proof}


\section{Link cobordisms and connected sums}\label{sec:connectedsumsandcobordisms}

  If $\bK_1=(K_1,p_1,q_1)$ is a knot in $Y_1$, and $\bK_2=(K_2,p_2,q_2)$ is a knot in $Y_2$, we will define two link cobordism maps
\[
G_1,\,G_2\colon \cCFL^\infty(Y_1\sqcup Y_2, \bK_1\sqcup \bK_2 ,\frs_1\sqcup \frs_2)\to \cCFL^\infty(Y_1\# Y_2,\bK_1\# \bK_2, \frs_1\# \frs_2)
\] 
as well as two maps, $E_1$ and $E_2$, in the opposite direction. Here $\bK_1\# \bK_2=(K_1\# K_2,p,q)$ is the connected sum, with exactly two basepoints. The main goal of this section is to show that $E_i\circ G_i\simeq \id$ and $G_i\circ E_i\simeq \id$ through filtered, equivariant, chain homotopies. This gives an alternate proof of the well known K\"{u}nneth formula for knot Floer homology \cite{OSKnots}*{Theorem~7.1}.

\subsection{The decorated link cobordisms for connected sum maps}
\label{sec:constructcobs}
We now describe the link cobordisms which we will use to induce the chain homotopy equivalences $E_1,G_1,E_2$ and $G_2$. Briefly, the link cobordisms are obtained by adding a 1-handle containing a band, but there is an interesting ambiguity in the dividing set which is important for our connected sum formula, leading to four maps, instead of two. They are depicted in Figure \ref{fig::39}.

More explicitly, the link cobordisms are constructed as follows. Pick points $x_1\in K_1\setminus \{p_1,q_1\}$ and $x_2\in K_2\setminus \{p_2,q_2\}$ where the connected sum will take place. We pick an embedding 
\[
\bS^0\colon S^0\times D^3\to Y_1\sqcup Y_2
\]
 so that the images of  $\{1\}\times D^3$ and $\{-1\}\times D^3$ are coordinate balls in $Y_1$ and $Y_2$, centered at $x_1$ and $x_2$, respectively. Viewing $D^3$ as the closed unit ball in $\R^3$, we  assume further that 
\[
(\bS^0)^{-1}(K_1)=\{1\}\times \{(0,0,t):t\in [-1,1]\}\subset S^0\times  D^3,
\]
  and similarly for $K_2$.  We can form $Y_1\# Y_2$ by removing the interior of $\im (\bS^0)$ and gluing in $[-1,1]\times S^2$. Inside of $Y_1\# Y_2$ we can form the connected sum $K_1\# K_2$ by gluing in $ [-1,1]\times \{(0,0,\pm 1)\}$. We assume further that the map $\bS^0$ is chosen so that $Y_1\#Y_2$ and $K_1\# K_2$ can be oriented compatibly with the orientations of $Y_1$ and $Y_2$.

Write $W$ for the 1-handle cobordism obtained by gluing $ [-1,1]\times D^3$ to $[0,1]\times (Y_1\sqcup Y_2)$ along $\{1\}\times \im (\bS^0)$. Inside of $W$, we  construct the surface
\[
 S= ([0,1]\times (K_1\sqcup K_2))\cup B,
\] 
where  $B$ is the band
\[
B:=  [-1,1]\times \{(0,0,t):t\in [-1,1]\}\times\subset [-1,1]\times D^3.
\] 

We now decorate $ S$ with a dividing set. However the decoration depends on the positions of $x_1$ and $x_2$ with respect to the basepoints. There are two natural configurations  which we will consider:
\begin{enumerate}
\item[$(C1)$] $x_1$ is in a component of $K_1\setminus \{p_1,q_1\}$ going from $p_1$ to $q_1$, while $x_2$ is in a component of $K_2\setminus \{p_2,q_2\}$ going from $q_2$ to $p_2$.
\item[$(C2)$] $x_1$ is in a component of $K_1\setminus \{p_1,q_1\}$  going from $q_1$ to $p_1$, while $x_2$ is in a component of $K_2\setminus \{p_2,q_2\}$ going from $p_2$ to $q_2$.
\end{enumerate}

Given a pair of points $x_1$ and $x_2$ on $K_1\sqcup K_2$,  which together satisfy either $(C1)$ or $(C2)$, we can decorate the surface $ S$ by adding three dividing arcs, as follows. One arc is constructed as the union of $[0,1]\times \{x_1\},$ $[0,1]\times \{x_2\}$ and the core $[-1,1]\times \{(0,0,0)\}$ of the band $ B$. The other two dividing arcs are obtained by picking points $x_i'\in K_i\setminus \{p_i,q_i\}$ in the components opposite to $x_i$, and defining the arc to be $[0,1]\times \{x_i'\}$. Writing $\cA$ for the dividing set consisting of these three arcs, the set $ S\setminus \cA$ has two components. To $K_1\# K_2\subset Y_1\# Y_2$ we add two basepoints, $p$ and $q$, so that $p$, $p_1$ an $p_2$ are in the same component of $S\setminus \cA$, and $q$, $q_1$ and $q_2$ are in the same component of $S\setminus \cA$.

Write $\cF_1=( S,\cA)$ for the surface with divides inside of the 1-handle cobordism $W$, constructed above using connected sum points $x_1\in K_1$ and $x_2\in K_2$ satisfying the configuration condition $(C1)$. We construct the link cobordism $(W',\cF_1')$ from $(Y_1\# Y_2, \bK_1\# \bK_2)$ to $(Y_1\sqcup Y_2, \bK_1\sqcup \bK_2)$ by turning around and reversing the orientation of $(W,\cF_1)$. 

We now define the maps
\[
G_1=F_{W,\cF_1,\frs} \qquad \text{and}\qquad E_1=F_{W',\cF_1',\frs},
\]
 where $\frs$ is the unique $\Spin^c$ structure on the 1-handle or 3-handle cobordism which extends $\frs_1\sqcup \frs_2$.

Naturally, we would like to define the maps $E_2$ and $G_2$ to be the link cobordism maps for the link cobordism maps constructed when the points $x_1$ and $x_2$ satisfy configuration condition $(C2)$. This however, makes for an awkward comparison with the maps $E_1$ and $G_1$, since moving the connected sum point changes both the 3-manifold $Y_1\# Y_2$ and the knot $K_1\# K_2$. Instead we define
\begin{equation} G_2=\tau_{K_1\# K_2}^{-1} \circ F_{W,\bar{\cF}_1,\frs}\circ (\tau_{K_1}|\tau_{K_2}),\qquad \text{and} \qquad  E_2=(\tau_{K_1}^{-1}|\tau_{K_2}^{-1})\circ  F_{W',\bar{\cF}_1',\frs}\circ  \tau_{K_1\# K_2}\label{eq:defEiGimaps}\end{equation} where $\tau_K$ denotes the half twist diffeomorphism of the knot $K$ (in the direction of the knot's orientation, switching the two basepoints). Also $\bar{\cF}_1$ and $\bar{\cF}'_1$ denote the conjugate link cobordisms of $\cF_1$ and $\cF'_1$ (obtained by switching the designation of regions as  type-$\ve{w}$ or  type-$\ve{z}$).

If we interpret the twisting diffeomorphisms in the definition of $G_2$ in terms of twisting dividing sets on cylindrical link cobordisms, we can write $G_2$ as the induced cobordism map for a decorated surface $\cF_2$ inside of $W$. Note that $(W,\cF_1)$ is not diffeomorphic as a decorated link cobordism to $(W,\cF_2)$, even for a diffeomorphism which is not required to be the identity on the boundary, since the cyclic order that $Y_1,$ $Y_2$ and $Y_1\# Y_2$ appear along the boundary of $S_{\ve{w}}$ is reversed between $(W,\cF_1)$ and $(W,\cF_2)$.

The link cobordisms for $E_1,$ $G_1,$ $E_2$ and $G_2$ are shown in Figure~\ref{fig::39}.

\begin{figure}[ht!]
\centering
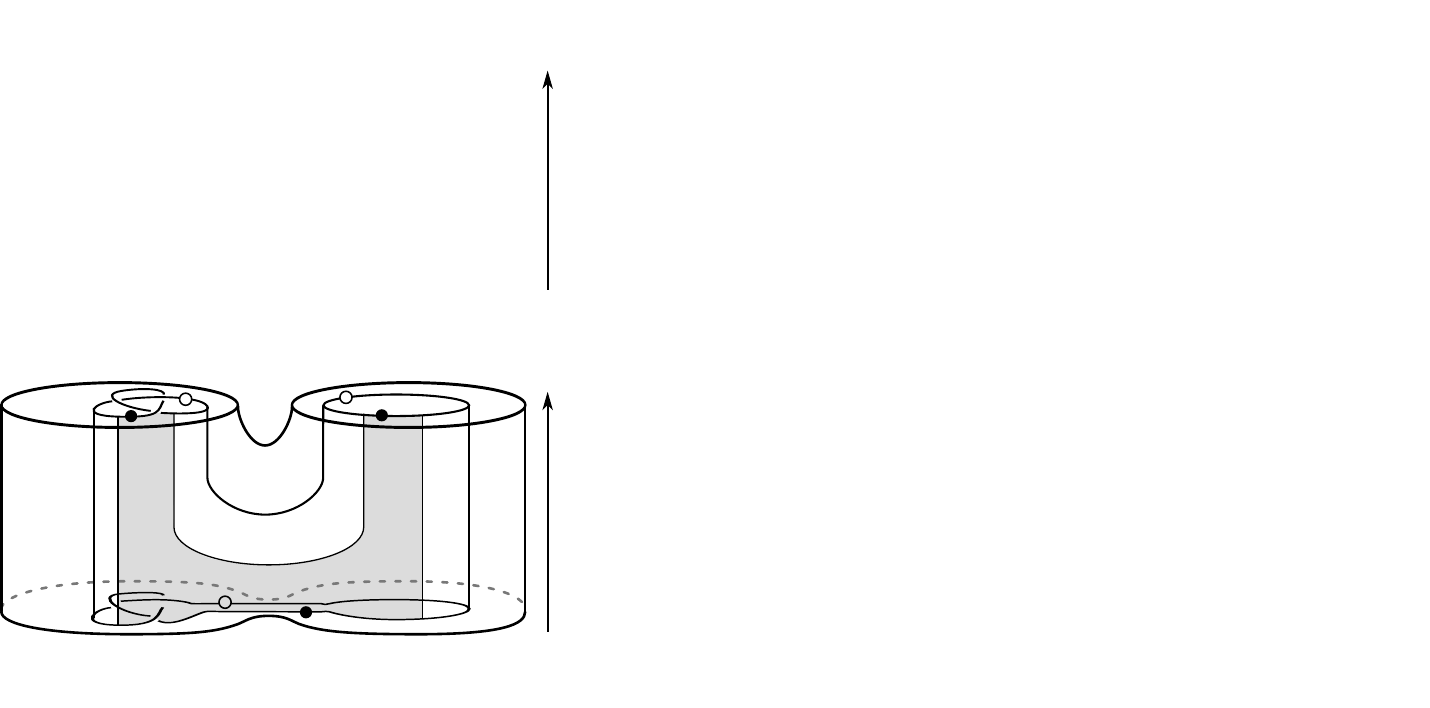
\caption{\textbf{The link cobordisms used to define the maps $E_1,$ $G_1,$ $E_2$ and $G_2$.} See Equation \eqref{eq:defEiGimaps} for the definition of $G_2$ and $E_2$.\label{fig::39}}
\end{figure}

\begin{prop}\label{prop:chainhomotopyequivandconnectedsums}As endomorphisms of $\cCFL^\infty(Y_1,\bK_1)\otimes_{\cR} \cCFL^\infty(Y_2,\bK_2)$, one has \[
E_1\circ G_1\simeq E_2\circ G_2\simeq \id
\]
 through  filtered, equivariant chain homotopies. As endomorphisms of $\cCFL^\infty(Y_1\# Y_2,\bK_1\# \bK_2)$, one has
\[
G_1\circ E_1\simeq G_2\circ E_2\simeq \id,
\] 
through filtered equivariant chain homotopies. 
\end{prop}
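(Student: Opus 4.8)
The plan is to verify both composition relations by a direct computation on Heegaard diagrams adapted to the $1$--handle and the band, using the explicit descriptions of the $1$--handle, $3$--handle, band, and quasi-stabilization maps recalled in Section~\ref{sec:linkcobordismmapsoverview} (these are the same ingredients used in the proofs of Theorem~\ref{thm:C} and Lemma~\ref{thm:D}), and then to deduce the statements for $E_2,G_2$ from those for $E_1,G_1$ using conjugation invariance.

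First I would establish $E_1\circ G_1\simeq\id$ and $G_1\circ E_1\simeq\id$. On a Heegaard diagram $\cH_1\sqcup\cH_2$ for $(Y_1\sqcup Y_2,\bK_1\sqcup\bK_2)$ in standard position with respect to $\bS^0$ and $B$, the map $G_1$ is a composition of the $1$--handle map $\xs\mapsto\xs\times\theta^+$, a band map $F_B^{\ve{w}}$ (or $F_B^{\ve{z}}$), which is a count of Maslov index zero holomorphic triangles, and the quasi-stabilization maps accounting for the difference between the four basepoints on the merged knot and the two basepoints of $\bK_1\#\bK_2$; dually, $E_1$ is a composition of quasi-stabilization maps, a band map for the reversed band, and the $3$--handle map $\xs\times\theta^+\mapsto 0$, $\xs\times\theta^-\mapsto\xs$. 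Choosing the two bands to be parallel, so that together they form a cancelling pair on the surface, one arranges that the two band Heegaard triples share their attaching curves up to small Hamiltonian isotopy; then by the standard associativity property of holomorphic triangle counts the composition of the two band maps becomes a triangle map whose top-graded input and near-diagonal attaching curves force it to be the nearest-point (continuation) map, which is $\simeq\id$ after identifying the diagrams. The $1$--handle map then cancels the $3$--handle map on the distinguished generators $\theta^{\pm}$, and the quasi-stabilization maps cancel in pairs using the relations of \cite{ZemCFLTQFT} (e.g.\ $T_{p',q'}^-S_{p',q'}^+\simeq\id$). Since all maps and homotopies involved are $\cR$--equivariant and filtered for the $\Z\oplus\Z$ filtration, so are the resulting homotopies. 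The verification of $G_1\circ E_1\simeq\id$ is identical, carried out on a connected-sum diagram $\cH_1\#\cH_2$ for $(Y_1\#Y_2,\bK_1\#\bK_2)$ with the elementary maps composed in the opposite order.

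To deduce the $(C2)$ statements, note that since $K_1,K_2$ are null-homologous and $\frs_1,\frs_2$ are self-conjugate, the $\Spin^c$ correction $\bar\frs+\PD[S]$ of Theorem~\ref{thm:C} agrees with $\frs$, so that conjugation invariance gives $F_{W,\bar\cF_1,\frs}\simeq\eta\circ G_1\circ\eta^{-1}$ and $F_{W',\bar\cF_1',\frs}\simeq\eta\circ E_1\circ\eta^{-1}$ with $\eta$ an isomorphism; hence $F_{W',\bar\cF_1',\frs}\circ F_{W,\bar\cF_1,\frs}\simeq\eta\circ(E_1\circ G_1)\circ\eta^{-1}\simeq\id$, and likewise $F_{W,\bar\cF_1,\frs}\circ F_{W',\bar\cF_1',\frs}\simeq\id$. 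By \eqref{eq:defEiGimaps},
\[
E_2\circ G_2=(\tau_{K_1}^{-1}|\tau_{K_2}^{-1})\circ\bigl(F_{W',\bar\cF_1',\frs}\circ F_{W,\bar\cF_1,\frs}\bigr)\circ(\tau_{K_1}|\tau_{K_2}),
\]
and since the half-twist diffeomorphism maps $\tau_{K_i}$ and $\tau_{K_i}^{-1}$ are mutually inverse up to chain homotopy (and likewise $\tau_{K_1\#K_2}$ cancels with its inverse in $G_2\circ E_2=\tau_{K_1\#K_2}^{-1}\circ(F_{W,\bar\cF_1,\frs}\circ F_{W',\bar\cF_1',\frs})\circ\tau_{K_1\#K_2}$), we get $E_2\circ G_2\simeq\id$ and $G_2\circ E_2\simeq\id$ through filtered, equivariant chain homotopies.

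The main obstacle I expect is the explicit analysis of the composed pair of band maps in the first step: one must position the Heegaard triples for the two parallel bands so that the relevant moduli of holomorphic triangles localize (in the correct $\Spin^c$ class, with no other classes contributing), interpret the composition via the triangle associativity relation, and handle the $1$--handle/$3$--handle and quasi-stabilization bookkeeping, all while keeping the chain homotopies filtered with respect to the $\Z\oplus\Z$ filtration rather than merely $\cR$--equivariant.
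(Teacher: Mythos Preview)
The paper's proof is quite different from yours. Rather than any Heegaard-diagram cancellation, it establishes two \emph{4-dimensional surgery relations} (Propositions~\ref{prop:connectedsumofcobordisms} and~\ref{prop:surgeryonclosedcurve}): the link cobordism map is unchanged by surgering along a framed $0$-sphere centred on the dividing set, and by surgering along a closed dividing curve whose class in $H_1(W;\Z)/\Tors$ is non-divisible. For $E_1\circ G_1$ the composite link cobordism is exactly the connected sum (in the sense of Section~\ref{subsec:4dimsurg0}) of the two identity cylinders $([0,1]\times Y_i,[0,1]\times K_i)$, so Proposition~\ref{prop:connectedsumofcobordisms} gives $E_1\circ G_1\simeq\id$ immediately. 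For $G_1\circ E_1$ the composite contains a single closed dividing curve $\gamma$, and surgery on $\gamma$ produces the identity cobordism on $(Y_1\#Y_2,\bK_1\#\bK_2)$; Proposition~\ref{prop:surgeryonclosedcurve} then gives $G_1\circ E_1\simeq\id$. The $E_2,G_2$ statements follow by cancelling the inner pair of half-twists in~\eqref{eq:defEiGimaps} and applying the same two surgery relations to the conjugate decorations $\bar\cF_1,\bar\cF_1'$. Your reduction of the $(E_2,G_2)$ case to the $(E_1,G_1)$ case via Theorem~\ref{thm:C} is a correct alternative for that last step.

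Your direct cancellation for $E_1,G_1$, however, has a genuine gap. In $E_1\circ G_1$ the quasi-\emph{de}stabilization at the end of $G_1$ is followed by the quasi-stabilization at the start of $E_1$, and this pair sits \emph{between} the two band maps; the bands are therefore not adjacent, and triangle associativity alone cannot collapse them to a nearest-point map. The relation you cite, $T^-S^+\simeq\id$, is for the opposite order (stabilize then destabilize); in the order that actually occurs one obtains $S^+S^-\simeq\Phi$ or $T^+T^-\simeq\Psi$ (see Equation~\eqref{eq:S^+S^-=Phi}), not the identity. The case $G_1\circ E_1$ is not ``identical'' either: here the stabilization and destabilization sit on the \emph{outside}, and since $E_1$ is the orientation-reversal of $G_1$ they are of the same type, so a naive cancellation of the interior would leave $S^-S^+\simeq 0$ rather than $\id$. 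Making this approach go through would require bypass moves or commutation identities amounting to essentially the same content as the paper's surgery propositions.
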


The proof of  Proposition~\ref{prop:chainhomotopyequivandconnectedsums} will take several steps. The key observation is that by performing 4-dimensional surgery operations to the link cobordisms $(W',\cF')\circ (W,\cF)$ and $(W,\cF)\circ (W',\cF')$, we can obtain the identity link cobordisms for $(Y_1\sqcup Y_2, \bK_1\sqcup \bK_2)$ and $(Y_1\# Y_2, \bK_1\# \bK_2)$.

\subsection{Surgering a link cobordism on a 0-sphere}
\label{subsec:4dimsurg0}

In this section, we describe the effect of taking the connected sum of two link cobordisms  at two points along the dividing sets. We phrase this in terms of surgering the link cobordism $(W_1\sqcup W_2,\cF_1\sqcup \cF_2)$ on an embedded $0$-sphere.
 
Consider  link cobordism $(S^0\times D^4, N_0)$ where $D^4$ is the unit ball in $\R^4$, $S^0=\{\pm 1\}$ (oriented as the boundary of $[-1,1]$) and $N_0=( S_0,\cA_0)$ is the surface with divides defined by 
\[
 S_0= S^0\times \{(x,y,0,0):x^2+y^2\le 1\}\qquad  \text{and} \qquad \cA_0= S^0\times \{(0,y,0,0):-1\le y\le 1\},
\]
 with designation of type-$\ve{w}$ and type-$\ve{z}$ regions given by
\[
 S_{0,\ve{w}}=S^0\times ( S_0\cap \{x\ge 0\}) \qquad \text{and} \qquad  S_{0,\ve{z}}=S^0\times ( S_0\cap \{x\le 0\}).
\]

Also define the decorated link cobordism $([-1,1]\times S^3,N_1)$ where $N_1=( S_1,\cA_1)$ is the surface with divides
\[
 S_1= [-1,1]\times \{(x,y,0,0): x^2+y^2=1\} \qquad \text{and} \qquad \cA_1= [-1,1]\times \{(0,\pm 1, 0,0)\},
\] 
with designation of type-$\ve{w}$ and type-$\ve{z}$ regions given by
\[
 S_{1,\ve{w}}= S_{1}\cap \{x\ge 0\} \qquad \text{and}\qquad  S_{1,\ve{z}}= S_1\cap \{x\le 0\}.
\]

Given an orientation preserving embedding of $(S^0\times D^4, N_0)$ into $(W_1\sqcup W_2,\cF_1\sqcup \cF_2)$ such that one component of $S^0\times D^4$ is mapped into $W_1$ and the other component is mapped into $W_2$, we can form the connected sum, by removing $( S^0\times D^4,N_0)$ and gluing in $([-1,1]\times S^3, N_1)$, according to the embedding of $( S^0\times D^4,N_0)$. We write $(W_1\# W_2, \cF_1\# \cF_2)$ for the surgered link cobordism.

We  prove the following result about the effect of this 4-dimensional surgery operation:

\begin{prop}\label{prop:connectedsumofcobordisms}Suppose that $(W_1,\cF_1)$ and $(W_2,\cF_2)$ are two link cobordisms, with chosen points $y_1\in \cA_1\subset W_1$ and $y_2\in \cA_2\subset W_2$, as well as an embedding of $(S^0\times D^4, N_0)$, centered at $\{y_1,y_2\}$, which is orientation preserving and maps type-$\ws$ regions to type-$\ws$ regions and maps type-$\zs$ regions to type-$\zs$ regions  Then
\[
F_{W_1\# W_2, \cF_1\# \cF_2, \frs_1\# \frs_2}\simeq F_{W_1\sqcup W_2, \cF_1\sqcup \cF_2,\frs_1\sqcup \frs_2}.
\]
\end{prop}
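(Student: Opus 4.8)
The plan is to localize the surgery using the composition law \eqref{eq:compositionlaw} and then to compute in a standard model. First I would isotope the configuration so that $y_1$ and $y_2$ lie at a common regular value $t_0$ of compatible Morse functions on $W_1$ and $W_2$, so that $y_i$ is a regular point of the Morse function restricted to $S_i$, and so that $S_i$ and the divide of $\cA_i$ through $y_i$ are vertical near $y_i$. Then $(S^0\times D^4,N_0)$ is contained in the interior of a product slice
\[ (N,\cG)=[t_0-\epsilon,t_0+\epsilon]\times(Y_1'\sqcup Y_2',\,L_1'\sqcup L_2') \]
of a decomposition $W_1\sqcup W_2=V_+\circ N\circ V_-$, and performing the surgery replaces only this slice: $W_1\# W_2=V_+\circ N'\circ V_-$, where $(N',\cG')$ is the model cobordism obtained by surgering the product cobordism $(N,\cG)$ on a $0$-sphere through its dividing set. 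The product slice $N$, and hence also its surgered version $N'$, carries a unique $\Spin^c$ structure over the relevant boundary data (the one restricting to $\frs_1\sqcup\frs_2$ on both ends), so the composition law, with the $\Spin^c$ bookkeeping carried out exactly as in the proof of Theorem~\ref{thm:C}, yields
\[ F_{W_1\# W_2,\cF_1\#\cF_2,\frs_1\#\frs_2}\simeq F_{V_+}\circ F_{N',\cG',\frt}\circ F_{V_-}, \qquad F_{W_1\sqcup W_2,\cF_1\sqcup\cF_2,\frs_1\sqcup\frs_2}\simeq F_{V_+}\circ F_{V_-}, \]
the product $(N,\cG)$ inducing the identity. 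Hence it suffices to prove $F_{N',\cG',\frt}\simeq\id$.

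To analyze the model, note that the $4$-manifold $N'$ is connected, both of its ends equal $Y_1'\sqcup Y_2'$, and $\chi(N')=-2$, while the surface $\cG'$ is obtained from two product annuli by attaching a tube through the surgery region running along the old divides. Choosing a compatible Morse function, $(N',\cG')$ decomposes — up to product pieces, which induce the identity — as a $4$-dimensional $1$-handle joining the two summands, a band $B_m$ merging $L_1'$ and $L_2'$ inside $Y_1'\# Y_2'$, a band $B_s$ re-splitting the resulting knot into a two-component link, and a $4$-dimensional $3$-handle. This order is forced, since a connected knot cannot lie in a disconnected $3$-manifold. Therefore
\[ F_{N',\cG',\frt}\simeq F_3\circ F_{B_s}\circ F_{B_m}\circ F_1, \]
where $F_1$ is the $1$-handle map $\xs\mapsto\xs\times\theta^+$, the map $F_3$ satisfies $\xs\times\theta^+\mapsto 0$ and $\xs\times\theta^-\mapsto\xs$, and $F_{B_m},F_{B_s}$ are the band maps of \cite{ZemCFLTQFT}*{Section~6}, whose types are dictated by the positions of the two longitudinal divides of $\cG'$ along the tube.

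To evaluate this composition I would realize $B_m$ and $B_s$ on a single Heegaard triple in which both bands are supported in a small region, so that the holomorphic triangle counts defining $F_{B_m}$ and $F_{B_s}$ collapse to the count of one small triangle, exactly as in the proof that a $1$-handle followed by a cancelling $2$-handle induces the identity \cite{OSTriangles}. Tracking the dividing set along the tube, one then computes that $F_{B_s}\circ F_{B_m}(\xs\times\theta^+)$ equals $\xs\times\theta^-$ plus a term annihilated by $F_3$, whence $F_3\circ F_{B_s}\circ F_{B_m}\circ F_1\simeq\id$; combined with the reduction above this proves the Proposition, and specializing to the case in which one of the $(W_i,\cF_i)$ is a product cobordism recovers the K\"{u}nneth formula \cite{OSKnots}*{Theorem~7.1}. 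The main obstacle is this last step: arranging a single Heegaard triple on which both the merge and the split band maps are computed by an explicit small-triangle count, while simultaneously keeping the dividing set and the induced $\Spin^c$ structures under control. The localization step is routine given the $\Spin^c$ arguments already used for Theorem~\ref{thm:C}, and the handle decomposition of the model is a bookkeeping exercise.
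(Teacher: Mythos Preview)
Your localization step is sound, and your handle decomposition of the model $(N',\cG')$ is correct, but your route diverges from the paper's at the point where you choose \emph{what} to localize to. You slice at a regular level and are left with the model cobordism on the arbitrary pair $(Y_1'\sqcup Y_2',L_1'\sqcup L_2')$; the paper instead uses the composition law to reduce to the case where both $(W_i,\cF_i)$ are identity cobordisms, and then \emph{punctures} each at a point of the dividing set. By \cite{ZemCFLTQFT}*{Lemma~12.9}, puncturing along the dividing set does not change the cobordism map, so both $F_{W_1\# W_2,\cF_1\#\cF_2}$ and $F_{W_1\sqcup W_2,\cF_1\sqcup\cF_2}$ factor through two punctured identity cobordisms; the difference between them is concentrated entirely in a piece going from $(\varnothing,\varnothing)$ to two copies of $(S^3,\bU)$. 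For the disjoint union this piece is two $0$-handles; for the connected sum it is a $0$-handle followed by the splitting cobordism $(S^3,\bU)\to(S^3,\bU_1)\sqcup(S^3,\bU_2)$ of Lemma~\ref{lem:modelcomputationonsphere}. That lemma is proven purely formally: both source and target are $\cR$, and capping off either $(S^3,\bU_i)$ with a $4$-ball forces $1\mapsto 1$. No holomorphic triangles are counted anywhere.

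The effect is that the paper trades your ``main obstacle'' --- arranging a Heegaard triple on an arbitrary $Y_1'\# Y_2'$ on which both band maps reduce to small triangles and verifying that $F_{B_s}\circ F_{B_m}(\xs\times\theta^+)=\xs\times\theta^-+(\text{term killed by }F_3)$ --- for a computation on unknots in $S^3$, where the complexes are one-dimensional and the answer is forced. Your approach is not wrong, and the computation you sketch is essentially that of the K\"unneth theorem itself, but it is genuinely harder: the band maps are of opposite type (one $\ws$, one $\zs$, dictated by the two divides on the tube), so their composition does not collapse to a single triangle count without an associativity or stretching argument, and you have not supplied one. The paper's puncturing trick sidesteps this entirely; it is worth internalizing as a general technique for localizing link cobordism computations to $(S^3,\bU)$.
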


The proof is based on the fact that the maps are invariant under puncturing a link cobordism at a point along the dividing set, as well as a simple model computation which shows that we can connect up the punctures without changing the map.

\begin{lem}\label{lem:modelcomputationonsphere}Let $(W,\cF)\colon (S^3,\bU)\to (S^3,\bU_1)\sqcup (S^3,\bU_2)$ be the link cobordism obtained by splitting an unknot in $S^3$ into two unknots, and then adding a 3-handle which separates the two unknots (similar to the link cobordisms for connected sums  shown in Figure \ref{fig::39}, when $Y_1=Y_2=S^3$ and $K_1=\bU_1$ and $K_2=\bU_2$). The link cobordism $(W,\cF)$ induces a map
\[
F_{W,\cF,\frs_0}\colon \cCFL^{\infty}(S^3,\bU)\to \cCFL^\infty(S^3\sqcup S^3, \bU_1\sqcup \bU_2)
\]
 which is filtered chain homotopic to
\[
1\mapsto 1,
\] under the identification of both the domain and codomain as $\cR$.
\end{lem}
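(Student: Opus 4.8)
The plan is to show that $F_{W,\cF,\frs_0}$ is grading preserving and nonzero on homology; these two facts pin it down as the identity. Indeed, since the domain and codomain are each identified with $\cR$, with generator in $\gr_{\ve w}$- and $\gr_{\ve z}$-grading $0$, any $\cR$-equivariant, filtered chain map between them is multiplication by an element $f\in\cR$; being filtered forces $f\in\bF_2[U,V]$, and preserving both Maslov gradings forces $f$ to be homogeneous of bidegree $(0,0)$, hence $f\in\{0,1\}$.

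To verify that the map is grading preserving, I would substitute the topological data of $(W,\cF)$ into the grading-shift formulas \eqref{eq:grading1}, \eqref{eq:grading2} and \eqref{eq:grading3}. The surface $S\subset W$ is a pair of pants, and its dividing set consists of three arcs which cut $S$ into two disks $S_{\ve w}$ and $S_{\ve z}$, so $\chi(S_{\ve w})=\chi(S_{\ve z})=1$; since the incoming end of $W$ carries one basepoint of each type while the outgoing end carries two of each, $\tilde\chi(S_{\ve w})=\tilde\chi(S_{\ve z})=-\tfrac12$. Moreover $W$ is $[0,1]\times S^3$ with a single $4$-dimensional $3$-handle attached, so $\chi(W)=-1$, $\sigma(W)=0$, $H^2(W;\Z)=0$, and all characteristic-number terms appearing in \eqref{eq:grading1}--\eqref{eq:grading3} vanish. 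Substituting, the grading shift is $0$ in each of $\gr_{\ve w}$, $\gr_{\ve z}$ and $A$.

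The crux, which I expect to be the main obstacle, is to show $F_{W,\cF,\frs_0}\neq 0$ by a direct model computation. Using the composition law \eqref{eq:compositionlaw}, which has a single summand here since $H^2(W;\Z)=0$, I would write $F_{W,\cF,\frs_0}$ as the composite of the maps of its elementary pieces: the band surgery splitting $\bU$ into a two-component unlink inside $S^3$ (together with the quasi-stabilizations that install the correct basepoints), followed by the $3$-handle cobordism separating $S^3$ into $S^3\sqcup S^3$. On standard small Heegaard diagrams and triples for the unknot, the band map is computed by a short count of ``small'' holomorphic triangles, and by the grading bookkeeping above it carries the generator to the generator of $\cCFL^\infty(S^3,\bU_1\sqcup\bU_2)$, any terms with positive powers of $U$ or $V$ being excluded on grading grounds. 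Writing the resulting diagram as the connected sum of the two unknot sub-diagrams together with the extra curve pair $\alpha_0,\beta_0$, $\alpha_0\cap\beta_0=\{\theta^+,\theta^-\}$, that computes the $3$-handle map, the surviving generator has the form $\ve y\times\theta^-$, and the $3$-handle map $\ve y\times\theta^-\mapsto\ve y$ carries it to the generator of $\cCFL^\infty(S^3\sqcup S^3,\bU_1\sqcup\bU_2)$. Thus $F_{W,\cF,\frs_0}$ is nonzero, hence equal to $1\mapsto 1$.

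The genuinely fiddly part of this last step is arranging the model diagrams so that the band map, the quasi-stabilizations, and the $3$-handle map are all transparent, carrying out the triangle count, and checking that the surviving generator pairs with $\theta^-$ rather than $\theta^+$; each of these is controlled by the grading computations together with a small explicit picture, so it is a matter of care rather than of difficulty. I would deliberately not attempt the tempting indirect argument via the reverse cobordism and Proposition~\ref{prop:connectedsumofcobordisms}, since that proposition is itself proved using this lemma.
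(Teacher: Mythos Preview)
Your approach is correct and is precisely the direct-computation route that the paper itself acknowledges is possible (``It is straightforward to explicitly compute the map $F_{W,\cF,\frs_0}$ in terms of a quasi-stabilization, followed by a band map, and finally followed by a 3-handle map''). Your grading computation is fine, and the model computation you sketch does pin the map down.

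The paper, however, sidesteps all of the explicit diagram-chasing with a much shorter formal argument that you seem to have overlooked. Rather than composing with the \emph{reverse} cobordism (which, as you correctly note, would be circular via Proposition~\ref{prop:connectedsumofcobordisms}), the paper simply caps off one of the outgoing $(S^3,\bU_i)$ ends with a $4$-ball. The resulting composite cobordism from $(S^3,\bU)$ to the remaining $(S^3,\bU_j)$ is diffeomorphic to the identity cobordism, so by functoriality the composite map is the identity on $\cR$. Since the $4$-handle map is the standard augmentation, this forces $F_{W,\cF,\frs_0}(1)=1$. This argument uses only the composition law and the definition of the $0$- and $4$-handle maps, none of which depends on the present lemma, so there is no circularity. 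What it buys is that one avoids the grading bookkeeping and the explicit triangle count entirely; what your approach buys is independence from the formal TQFT properties, at the cost of the ``fiddly'' verification you describe.
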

\begin{proof}It is straightforward to explicitly compute the map $F_{W,\cF,\frs_0}$ in terms of a quasi-stabilization, followed by a band map, and finally followed by a 3-handle map. It is easier, however, to simply use formal properties of the link Floer TQFT. Note that both the domain and range of $F_{W,\cF,\frs_0}$  are filtered chain homotopy equivalent to $\cR=\bF_2[U,V,U^{-1},V^{-1}]$, with vanishing differential. The cobordism map, being an equivariant map, is thus determined by its value on $1\in \cR$. However we can cap off either copy of $(S^3,\bU_i)$ with a 4-ball, and the result must be the identity map on $\cR$. This forces the image of $1\in \cCFL^\infty(S^3,\bU)$ in $\cCFL^\infty(S^3\sqcup S^3,\bU_1\sqcup \bU_2)$ to also be 1, completing the proof.
\end{proof}

We can now prove the connected sum formula for the link cobordism maps:

\begin{proof}[Proof of Proposition \ref{prop:connectedsumofcobordisms}]Using the composition law from Equation~\eqref{eq:compositionlaw}, it is sufficient to show the claim in the case that $(W_1,\cF_1)$ and $(W_2,\cF_2)$ are the identity cobordisms. Decompose the link cobordism representing $(W_1\# W_2,\cF_1\# \cF_2)$ as a 0-handle, followed by the cobordism considered in Lemma \ref{lem:modelcomputationonsphere}, followed by the link cobordisms $(W_1,\cF_1)$ and $(W_2,\cF_1)$, each with a puncture removed at a point along the divides. This is shown in Figure \ref{fig::8}.

\begin{figure}[ht!]
\centering
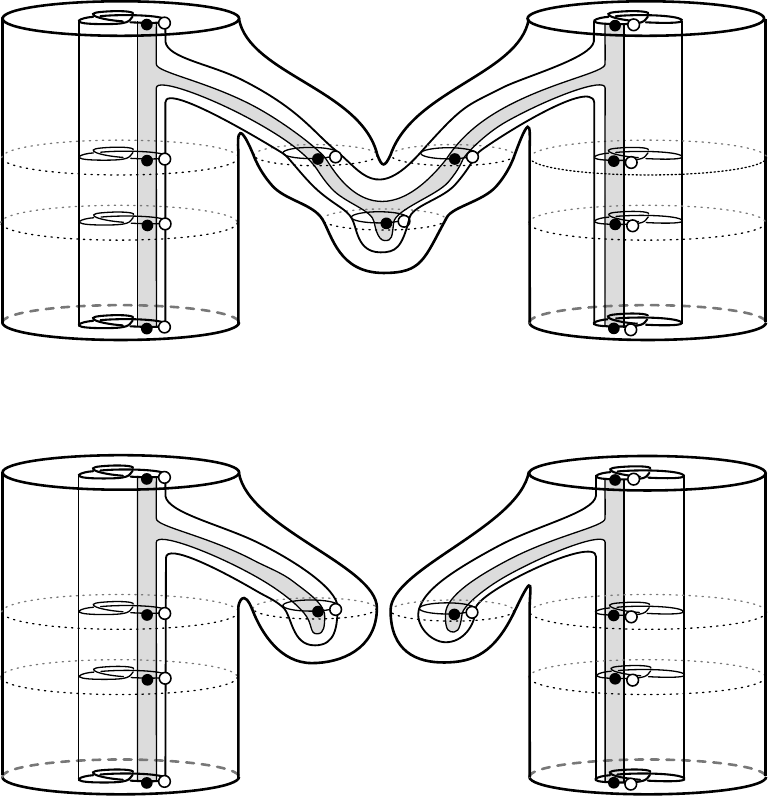
\caption{\textbf{The link cobordisms $(W_1\sqcup W_2,\cF_1\sqcup \cF_2)$ and $(W_1\# W_2, \cF_1\# \cF_2)$ considered in Proposition \ref{prop:connectedsumofcobordisms}.} Using Lemma \ref{lem:modelcomputationonsphere} and the composition law, we see that the two cobordism maps are equal.\label{fig::8}}
\end{figure}

Using the computation from Lemma \ref{lem:modelcomputationonsphere}, the middle portion of the cobordism $(W_1\# W_2, \cF_1\# \cF_2)$ obtained by attaching a 0-handle, quasi-stabilizing, attaching a band, and attaching a 3-handle can be replaced with two 0-handle attachments, without changing the cobordism map. By \cite{ZemCFLTQFT}*{Lemma~12.9}, removing a ball from $W$ which intersects $\cF$ along an arc of the dividing set (and adding the new copy of $(S^3,\bU)$ to one of the ends of $W$) does not affect the cobordism map. Hence the two cobordism maps are equal.
\end{proof}

\subsection{Surgering a link cobordism on a 1-sphere}
\label{subsec:4dimsurg1}

We now describe a second surgery relation for the link cobordism maps, for surgering on framed 1-spheres in the 4-manifold $W$.

If $\gamma$ is an embedded 1-manifold in the interior of $W$ and we are given an identification of a regular neighborhood of $\gamma$ with $S^1\times D^3$, we can remove the neighborhood of $\gamma$ and glue in a copy of $D^2\times S^2$. Let us call the surgered 4-manifold $W(\gamma)$. Our goal is to relate the invariants for link cobordisms in $W$ to those for the link cobordisms in the surgered 4-manifold $W(\gamma)$. To do this, we need to assume that $\gamma$ is embedded in the surface $ S\subset W$, and that in fact  $\gamma$ is one of the dividing curves in $\cA$. In this case, we can simultaneously surger both $W$ and $\cF$ to get a new decorated link cobordism. We describe precisely the procedure, below.

We define two link cobordisms, $(S^1\times D^3,M_1)$ and $(D^2\times S^2,M_2)$, both from $(\varnothing,\varnothing)$  to $(S^1\times S^2,\bL_0)$, for some link $\bL_0$ in $S^1\times S^2$. The decorated surfaces $M_1$ and $M_2$ are shown in Figure \ref{fig::36}. Write $M_i=( S_i,\cA_i)$. The underlying surface $ S_1$ is the annulus
\[
 S_1=S^1\times \{(y,0,0): -1\le y\le 1\}.
\] The underlying surface $ S_2$ is the pair of disks
\[
 S_2=D^2\times \{(\pm 1, 0 ,0)\}.
\]

\begin{figure}[ht!]
\centering
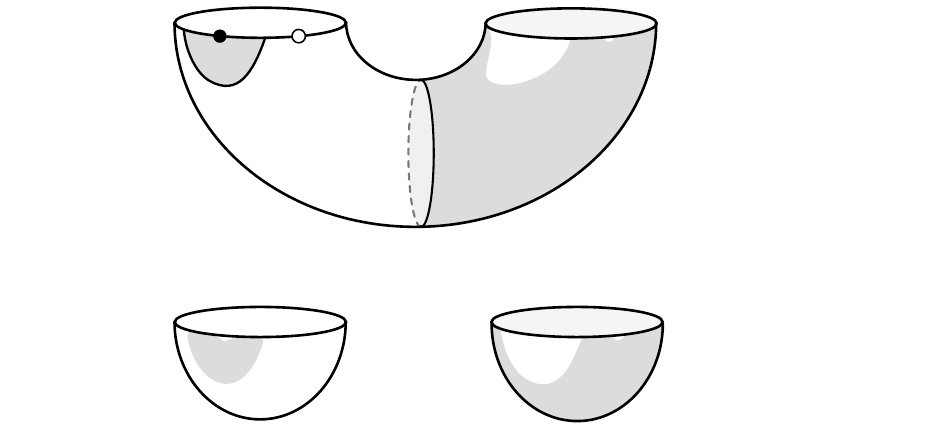
\caption{\textbf{The decorated surfaces $M_1$ and $M_2$.} The maps $f$ and $g$ in Lemma \ref{lem:4manifoldsurgery2} are the link cobordism maps induced by $(S^1\times D^3,M_1)$ and $(D^2\times S^2,M_2)$, respectively, viewed as link cobordisms from $(\varnothing,\varnothing)$ to $(S^1\times S^2,\bL_0)$. The closed curve $\gamma$ (which we think of as the curve which we perform surgery on) is shown on $M_1$. \label{fig::36}}
\end{figure}

  Notice that
\[
\d  S_1=\d  S_2=S^1\times \{(\pm 1,0,0)\}\subset S^1\times S^2. 
\]
 Write $\bL_0$ for the oriented link $\d S_1=\d S_2\subset S^1\times S^2$, decorated with two basepoints per component, oriented as the boundary of the surfaces $ S_1$ and $ S_2$. Note that $\bL_0$ is null-homologous in the sense that the total homology class is zero, since the two components go in opposite directions around $S^1\times S^2$.

Given a $\Spin^c$  structure $\frs$ on $W$, the restriction of $\frs$ to $W\setminus N(\gamma)$ is torsion on $\d N(\gamma)=S^1\times S^2$.  By considering the Mayer-Vietoris long exact sequence for cohomology, the obstruction to gluing a $\Spin^c$ structure on $W\setminus N(\gamma)$ and a $\Spin^c$ structure on $D^2\times S^2$ lies in $H^2(S^1\times S^2;\Z)$ (i.e. the restriction of the two $\Spin^c$ structures to $S^1\times S^2).$ There is a unique $\Spin^c$ structure on $D^2\times S^2$ which is torsion on $S^1\times S^2$, which we can thus glue to the $\Spin^c$ structure $\frs|_{W\setminus N(\gamma)}$. The ambiguity in gluing lies in $\delta H^1(S^1\times S^2;\Z)\subset H^2(W(\gamma);\Z)$.   By exactness of the Mayer-Vietoris sequence, the property that $\delta H^1(S^1\times S^2;\Z)$ vanishes is equivalent to the property that the restriction map $H^1(W\setminus N(\gamma);\Z)\to H^1(S^1\times S^2;\Z)$ is surjective. Surjectivity of the previous map is equivalent to the image of $[\gamma]$ being \emph{non-divisible} in $H_1(W;\Z)/\Tors$, i.e., having the property that if $n [\gamma']=[\gamma]$ for some $[\gamma']\in H_1(W;\Z)/\Tors$, then $n=\pm 1$.

When $[\gamma]$ is non-divisible in $H_1(W;\Z)/\Tors$, we will write $\frs(\gamma)$ for the $\Spin^c$ structure on $W(\gamma)$, as above.

\begin{prop}\label{prop:surgeryonclosedcurve} Suppose $(W,\cF)$ is a link cobordism with $\cF=( S,\cA)$ and $\gamma$ is a closed curve in $\cA\subset S$ such that the image $[\gamma]\in H_1(W;\Z)/\Tors$ is non-divisible. Further, suppose we are given an orientation preserving diffeomorphism of $\phi$ between a regular neighborhood $N(\gamma)$ of $\gamma$ and the decorated link cobordism $(S^1\times D^3, M_1)$, described above. Assume further that $\phi$ maps the dividing arcs of $\cF$ to the dividing arcs of $M_1$, and sends type-$\ws$ regions to type-$\ws$ regions, and similarly for type-$\zs$ regions. Writing $(W(\gamma),\cF(\gamma))$ for the surgered link cobordism (using  $\phi$ to glue in $(D^2\times S^2, M_2)$), we have
\[
F_{W,\cF,\frs}\simeq F_{W(\gamma),\cF(\gamma),\frs(\gamma)},
\] 
where $\frs(\gamma)$ is as above.
\end{prop}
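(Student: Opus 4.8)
The plan is to mimic the proof of the corresponding statement for surgery on a $0$-sphere, Proposition~\ref{prop:connectedsumofcobordisms}: reduce the assertion to a purely local model via the composition law, and then identify the two resulting ``cap'' maps by an explicit computation. Since $\gamma\subset\cA$ is a closed dividing curve, the diffeomorphism $\phi$ identifies a regular neighborhood $N(\gamma)$ with $(S^1\times D^3,M_1)$ so that $S$ meets $N(\gamma)$ in the annulus underlying $M_1$, with $\gamma$ as its dividing curve, and $\d N(\gamma)=S^1\times S^2$ carries the link $\bL_0$. Cutting $W$ along $\d N(\gamma)$ exhibits $(W,\cF)$ as a composition of the common piece $(W_0,\cF_0):=\big(W\setminus N(\gamma),\,\cF\setminus N(\gamma)\big)$, now a link cobordism having $(S^1\times S^2,\bL_0)$ among its incoming ends, with the cap $(S^1\times D^3,M_1)\colon(\varnothing,\varnothing)\to(S^1\times S^2,\bL_0)$; likewise $(W(\gamma),\cF(\gamma))$ is the composition of the same $(W_0,\cF_0)$ with $(D^2\times S^2,M_2)$, glued in via $\phi$. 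By the composition law from Equation~\eqref{eq:compositionlaw} it then suffices to show that the two local cobordism maps
\[
f:=F_{S^1\times D^3,M_1}\qquad\text{and}\qquad g:=F_{D^2\times S^2,M_2,\frt_0},
\]
both running from $\cCFL^\infty(\varnothing,\varnothing)=\cR$ to $\cCFL^\infty(S^1\times S^2,\bL_0,\frs_0)$, agree up to filtered, $\cR$-equivariant chain homotopy; this is exactly the content of Lemma~\ref{lem:4manifoldsurgery2} (compare Figure~\ref{fig::36}), and it plays the role that Lemma~\ref{lem:modelcomputationonsphere} played in the $0$-sphere case. Here $\frs_0$ is the torsion $\Spin^c$ structure on $S^1\times S^2$ and $\frt_0$ the unique $\Spin^c$ structure on $D^2\times S^2$ restricting to $\frs_0$, which exists and is unique since $H^2(D^2\times S^2;\Z)\to H^2(S^1\times S^2;\Z)$ is an isomorphism.

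Next I would carry out the $\Spin^c$ bookkeeping, which is precisely where the non-divisibility hypothesis is used. On the $W$ side there is no ambiguity: because $H^2(S^1\times D^3;\Z)=0$ and the restriction $H^1(S^1\times D^3;\Z)\to H^1(S^1\times S^2;\Z)$ is an isomorphism, the Mayer--Vietoris sequence shows that $\frs$ is the unique $\Spin^c$ structure on $W$ restricting to $\frs|_{W_0}$ and to the unique structure on $S^1\times D^3$, so the composition law produces the single term $F_{W,\cF,\frs}$. On the $W(\gamma)$ side the gluing ambiguity is governed by the image of $\delta\colon H^1(S^1\times S^2;\Z)\to H^2(W(\gamma);\Z)$, that is, by the cokernel of $H^1(W\setminus N(\gamma);\Z)\to H^1(S^1\times S^2;\Z)$; as recalled just before the Proposition, this cokernel vanishes exactly when $[\gamma]$ is non-divisible in $H_1(W;\Z)/\Tors$. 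Under that hypothesis $\frs(\gamma)$ is well defined, it restricts to $\frs|_{W_0}$ and to $\frt_0$, and the composition law again contributes the single term $F_{W(\gamma),\cF(\gamma),\frs(\gamma)}$. I would also invoke the hypothesis that $\phi$ matches the divides of $\cF$ with those of $M_1$ and matches type-$\ws$ regions with type-$\ws$ regions (and type-$\zs$ with type-$\zs$): without this one would be comparing $f$ with the \emph{conjugate} cap map in the sense of Theorem~\ref{thm:C}, which would change the conclusion.

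The heart of the argument is then the local identification $f\simeq g$. To prove it I would choose handle decompositions of the two model pairs and compute with the formulas of \cite{ZemCFLTQFT}: present $(S^1\times D^3,M_1)$ as a $0$-handle producing $(S^3,\bU)$ with a decorated disk, followed by a $4$-dimensional $1$-handle carrying the surface as a product, followed by a band that opens $\bU$ into the two-component link $\bL_0$ while closing the dividing arc into $\gamma$; and present $(D^2\times S^2,M_2)$ as a $0$-handle producing a decorated unlink, followed by the $2$-handle map for a $0$-framed unknot, which counts holomorphic triangles. Evaluating both maps on the generator $1\in\cR$ against a small Heegaard diagram for $(S^1\times S^2,\bL_0,\frs_0)$ and checking that the outputs coincide is the main computational obstacle; alternatively, one can sidestep an explicit description of $\cCFL^\infty(S^1\times S^2,\bL_0,\frs_0)$ by capping the $S^1\times S^2$ end with the turned-around cobordism $(D^2\times S^2,M_2)^\vee$ and arguing, as in Lemma~\ref{lem:modelcomputationonsphere}, that the resulting capped invariant equals $1$, which forces $f$ and $g$ to agree. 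Apart from this local computation and the $\Spin^c$ bookkeeping above, everything is a formal application of the composition law.
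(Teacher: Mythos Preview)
Your reduction is exactly the paper's: split along $\partial N(\gamma)$ via the composition law, invoke Lemma~\ref{lem:4manifoldsurgery2} to swap the cap $(S^1\times D^3,M_1)$ for $(D^2\times S^2,M_2)$, and use the non-divisibility hypothesis to ensure the $\Spin^c$ structure $\frs(\gamma)$ is uniquely determined so that the composition law recombines to a single term. Your $\Spin^c$ bookkeeping is more explicit than the paper's, but the content is identical.

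Your final paragraph, however, goes beyond the proposition and sketches a proof of Lemma~\ref{lem:4manifoldsurgery2} itself. Here your ``alternative'' suggestion---cap with a single turned-around $(D^2\times S^2,M_2)^\vee$ and check the result is $1$---would not suffice: after the grading reduction both $f(1)$ and $g(1)$ live in a $2$-dimensional $\bF_2$-vector space, and a single linear functional agreeing on them does not force equality. The paper instead uses the bypass relation to write $f=A+B$ and then pairs with \emph{two} independent functionals $A'$ and $B'$ (the turned-around $A$ and $B$ maps), verifying the system of equations $(A'f,B'f)=(A'g,B'g)$ together with $(A'A,B'B)=(0,0)$ and $(A'B,B'A)=(1,1)$; this dual-basis argument is what actually pins down $f=g$. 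Your first suggestion (a direct holomorphic computation) is closer to the appendix's alternate proof, but is considerably more involved than the formal argument the paper gives in the body.
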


\begin{rem}
The requirement that $[\gamma]\in H_1(W;\Z)/\Tors$ is non-divisible can be relaxed, though $\frs(\gamma)$ is no longer uniquely specified. Instead, using the composition law, we obtain
\[
F_{W,\cF,\frt}\simeq \sum_{\substack{\frt\in \Spin^c(W(\gamma))\\ \frt|_{W\setminus N(\gamma)}=\frs|_{W\setminus N(\gamma)}}} F_{W(\gamma),\cF(\gamma),\frt}.
\]
\end{rem}
The main ingredient of the proof of Proposition~\ref{prop:surgeryonclosedcurve} is the following lemma:

\begin{lem}\label{lem:4manifoldsurgery2}For the decorated surfaces with divides $M_1$ and $M_2$, described above, the two maps
\[
F_{S^1\times D^3, M_1,\frt},\quad F_{D^2\times S^2, M_2,\frt_0}\colon \cCFL^\infty(\varnothing,\varnothing)\to \cCFL^\infty(S^1\times S^2,\bL_0)
\]
 are filtered chain homotopy equivalent.  Here $\frt$ and $\frt_0$ are unique $\Spin^c$ structures on $S^1\times D^3$ and $D^2\times S^2$ which are torsion on $S^1\times S^2$. 
\end{lem}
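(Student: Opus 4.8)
The plan is to compute the two relative invariants and match them. Both $(S^1\times D^3, M_1)$ and $(D^2\times S^2, M_2)$ are cobordisms from $\varnothing$, so each of the maps in question is determined by the image of $1 \in \cR = \cCFL^\infty(\varnothing,\varnothing)$, and both maps are filtered and $\cR$-equivariant. Their targets agree on the nose: $\frt|_{S^1\times S^2} = \frt_0|_{S^1\times S^2}$ is the torsion $\Spin^c$ structure, call it $\frt_{\ve{w}\ve{z}}$. Using the grading formulas \eqref{eq:grading2} and \eqref{eq:grading3} one checks that the two maps shift $(\gr_{\ve{w}},\gr_{\ve{z}})$ by the same amount (the differing Euler characteristics $\chi(S_1)=0$, $\chi(S_2)=2$ are compensated by $\chi(S^1\times D^3)=0$, $\chi(D^2\times S^2)=2$, while $\sigma = c_1^2 = 0$ for both $4$-manifolds). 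So it suffices to show that $F_{S^1\times D^3, M_1,\frt}(1)$ and $F_{D^2\times S^2, M_2,\frt_0}(1)$ are homologous in $\cCFL^\infty(S^1\times S^2,\bL_0,\frt_{\ve{w}\ve{z}})$.

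To see they are both nonzero I would turn around $(D^2\times S^2,M_2)$ and stack it onto $(S^1\times D^3,M_1)$. The resulting closed decorated pair is $(S^4, S^2)$: the $4$-manifold is $S^1\times D^3 \cup_{S^1\times S^2} D^2\times S^2 = \partial(D^2\times D^3) = S^4$, and the surface is the annulus $S_1$ capped off by the two disks of $S_2$, which bounds the $3$-ball $D^2\times(\text{arc})$ in $D^2\times D^3$ and is therefore an unknotted $S^2 \subset S^4$, carrying a standard dividing set. By \cite{ZemCFLTQFT} (e.g. the normalization of the $0$- and $4$-handle maps of Section~5.2, together with the composition law \eqref{eq:compositionlaw}) the induced map $\cR \to \cR$ is an isomorphism, in fact the identity; hence $F_{S^1\times D^3, M_1,\frt}$ is split injective, and symmetrically so is $F_{D^2\times S^2, M_2,\frt_0}$. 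Since a free chain complex over $\cR$ with vanishing differential maps into a complex, the two cycles $F_{S^1\times D^3,M_1,\frt}(1)$ and $F_{D^2\times S^2,M_2,\frt_0}(1)$ are nonzero elements of bidegree $(0,0)$, living in the same filtration level. Finally, $\cHFL^\infty(S^1\times S^2,\bL_0,\frt_{\ve{w}\ve{z}})$ is free over $\cR$ with its $\cR$-generators in distinct $(\gr_{\ve{w}},\gr_{\ve{z}})$-bidegrees — this is a direct genus-one Heegaard diagram computation, consistent with $\HF^\infty(S^1\times S^2,\frt_{\ve{w}\ve{z}})$ being one-dimensional in each Maslov grading — so its bidegree-$(0,0)$ homology is one-dimensional over $\bF_2$, forcing the two cycles to be homologous and the two filtered, $\cR$-equivariant maps to be filtered chain homotopic.

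An alternative, more hands-on route is to decompose each cobordism into elementary link cobordisms: both begin with two $0$-handles and a $1$-handle producing a map $\cR \to \cCFL^\infty(S^3, \bU_1\sqcup\bU_2)$, after which $(D^2\times S^2,M_2)$ continues with a single $2$-handle along a $0$-framed unknot, while $(S^1\times D^3,M_1)$ continues with a band merging $\bU_1\sqcup\bU_2$ into one unknot and then a $1$-handle whose surgery produces $\bL_0$; by the composition law this reduces the lemma to an explicit equality of two short composites of handle-, band-, and triangle-counting maps out of $\cCFL^\infty(S^3,\bU_1\sqcup\bU_2)$, checked on a small Heegaard triple diagram, with Lemma~\ref{lem:trianglespincstructures} handling the $\PD$-shift in the $\Spin^c$ bookkeeping. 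Either way, the main obstacle is the same small computation: pinning down $\cHFL^\infty(S^1\times S^2,\bL_0,\frt_{\ve{w}\ve{z}})$ precisely enough (that its bidegree-$(0,0)$ homology is exactly one-dimensional, or equivalently that the closed invariant of the unknotted $(S^4,S^2)$ with the relevant dividing set is the identity) — the formal properties (composition law, $\cR$-equivariance, the grading-shift match) do everything else.
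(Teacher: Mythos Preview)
Your grading computation and the gluing argument showing that both images are nonzero are fine, and they match the paper's opening moves. The gap is in your final step: the bidegree-$(0,0)$ part of $\hat{\HFL}(S^1\times S^2,\bL_0,\frs_0)$ is \emph{two}-dimensional, not one. On the genus-one diagram the paper draws for $(S^1\times S^2,\bL_0)$, the two generators $\theta_1^{\ws}\xi_2^{\ws}$ and $\xi_1^{\ws}\theta_2^{\ws}$ both sit in $(\gr_{\ws},\gr_{\zs})=(0,0)$, and the hat differential vanishes (the two bigons between each $\alpha_i,\beta_i$ pair pass over basepoints of the same type and cancel). So the target space
\[
V=\Span_{\bF_2}(\theta_1^{\ws}\xi_2^{\ws},\,\xi_1^{\ws}\theta_2^{\ws})
\]
genuinely has rank two, and your appeal to $\HF^\infty(S^1\times S^2)$ being one-dimensional in each Maslov grading does not transfer: the link $\bL_0$ has two components, and the extra generator that distinguishes $\cHFL$ from $\HF$ lands in the same bidegree. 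Knowing that $f(1)$ and $g(1)$ are both nonzero in a $2$-dimensional $\bF_2$-space does not force them to coincide.

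What the paper does instead is manufacture \emph{two} linear functionals on $V$: using the bypass relation it writes $f=A+B$, then turns these cobordisms around to get maps $A',B'\colon V\to\bF_2$. An explicit identification of the four closed pairs $A'\!\circ\! A$, $A'\!\circ\! B$, $B'\!\circ\! A$, $B'\!\circ\! B$ (as decorated spheres in $S^4$ or decorated tori in $S^1\times S^3$) shows that $A'|_V$ and $B'|_V$ are linearly independent, hence a basis of $V^\vee$, and separate computations give $A'(f(1))=A'(g(1))$ and $B'(f(1))=B'(g(1))$. That is exactly the extra input your argument is missing. Your ``alternative route'' via an explicit triple-diagram computation is the content of the paper's appendix, and there too the crux is the same two-dimensional ambiguity, resolved by a Gromov compactness argument rather than a single nonvanishing.
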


\begin{proof}We will use formal properties of the link Floer TQFT. See Appendix \ref{app:directverification} for an alternate, explicit proof by decomposing the link cobordisms into pieces, and counting holomorphic curves to compute the map for each piece. A diagram for $(S^1\times S^2,\bL_0)$ is shown in Figure \ref{fig::40}. Using the intersection points labeled in that diagram, write
\[
\cCFL^\infty(S^1\times S^2,\bL_0,\frs_0)=\langle \theta_1^{\ws}\theta_2^{\ws},\theta_1^{\ws}\xi_2^{\ws},\xi_1^{\ws}\theta_2^{\ws},\xi_1^{\ws}\xi_2^{\ws}\rangle\otimes_{\bF_2} \cR.
\]
Here $\theta_{i}^{\ws}$ and $\xi_{i}^{\ws}$ respectively denote the higher and lower $\gr_{\ws}$-graded intersection points of $\alpha_i\cap \beta_i$.

\begin{figure}[ht!]
	\centering
	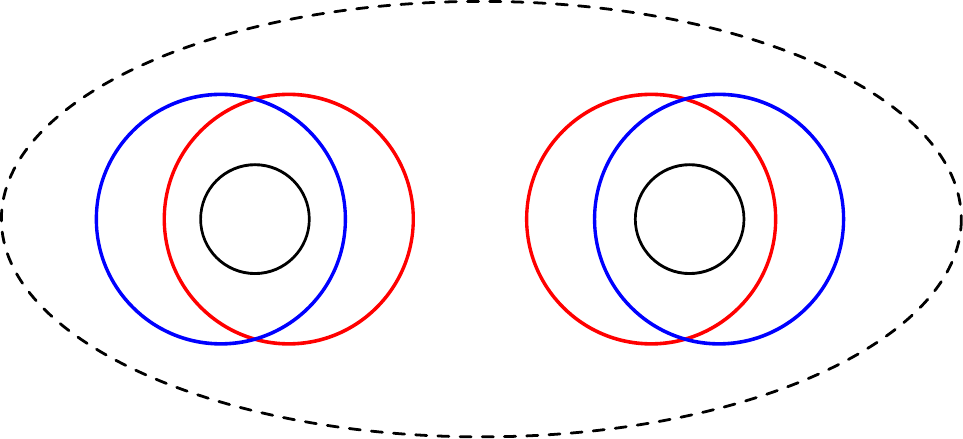
	\caption{\textbf{A genus one diagram for $(S^1\times S^2,\bL_0)$.} The intersection point $\theta^{\ws}_1\times \theta^{\ws}_2$ is the highest $\gr_{\ws}$-graded intersection point, while $\xi_1^{\ws}\times \xi_2^{\ws}$ is the lowest. \label{fig::40}}
\end{figure}

  Using the grading change formulas from Equations \eqref{eq:grading1}, \eqref{eq:grading2} and \eqref{eq:grading3}, one sees that both $F_{S^1\times D^3, M_1,\frt}$ and $F_{D^2\times S^2, M_2,\frt_0}$ induce $\gr_{\ve{w}}$ and $\gr_{\ve{z}}$-grading changes of $0$. For example, to compute the  $\gr_{\ve{w}}$-grading changes of the two maps, one computes that
\[
\tilde{\chi}( S_{1,\ve{w}})=0\qquad  \text{and} \qquad \chi(S^1\times D^3)=0,
\]
 while
\[
\tilde{\chi}( S_{2,\ve{w}})=1\qquad \text{and} \qquad \chi(D^2\times S^2)=2,
\]
 and that the other terms in the grading change formula vanish. Here $ S_{i,\ve{w}}$ denotes the  type-$\ve{w}$ sub-surface of $ S_i$.

Note that since $\cCFL^\infty(\varnothing,\varnothing)$ is by definition equal to $\cR$, it is sufficient to show that the value of the two maps on $1\in \cR$ are equal. Since the maps are filtered, they map $1$ into $\cCFL^-(S^1\times S^2,\bL_0,\frs_0)$. The set of elements of $\cCFL^-(S^1\times S^2,\bL_0,\frs_0)$ which are in both $\gr_{\ve{w}}$- and $\gr_{\ve{z}}$-grading zero is equal to the two dimensional vector space 
\[
V:=\Span_{\bF_2}(\theta_1^{\ws}\xi_2^{\ws},\xi_1^{\ws}\theta_2^{\ws}).
\]
 As such, it is sufficient to restrict to the hat flavor, where we set $U=V=0$. Let us write
\[
f=F_{S^1\times D^3,M_1,\frt}\qquad \text{and} \qquad g=F_{D^2\times S^2,M_2,\frt_0}.
\]
 Note that using the bypass relation, we can write 
\[
f=A+B,
\]
 where $A$ and $B$ are the link cobordism maps for the link cobordisms shown in Figure \ref{fig::38}, which have underlying, undecorated link cobordisms which are the same as those of $(S^1\times D^3,M_1)$.

\begin{figure}[ht!]
\centering
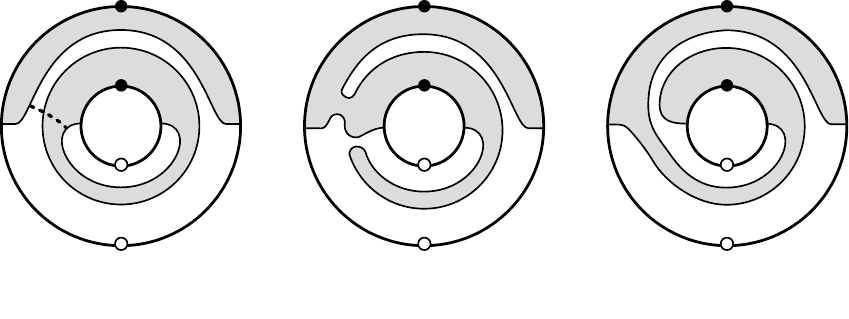
\caption{\textbf{The link cobordisms inducing the maps $f,$ $A$ and $B$, and a bypass relation between all three.} The bypass region is a neighborhood of the dashed line on the left. All three have underlying undecorated link cobordism $(S^1\times D^3,S^1\times \{(t,0,0):-1\le t \le 1\})$.\label{fig::38}}
\end{figure}

 Let $A'$ and $B'$ be the link cobordism maps obtained by turning around and reversing the orientation of the link cobordisms for $A$ and $B$ (i.e. viewing them as link cobordisms from $S^1\times S^2$ to $\varnothing$). We claim that we have the following relations:
\begin{align}
(A'\circ A)(1)&=(B'\circ B)(1)=0\label{eq:surgrels3}\\
(A'\circ B)(1)&=(B'\circ A)(1)=1\label{eq:surgrels4}\\
(A'\circ f)(1)&=(A'\circ g)(1)=1\label{eq:surgrels1}\\
(B'\circ f)(1)&=(B'\circ g)(1)=1.\label{eq:surgrels2}
\end{align}

Before we prove the above relations, we will show that they are sufficient to show that $f=g$. Equations \eqref{eq:surgrels3} and \eqref{eq:surgrels4} imply that $A'|_V$ and $B'|_V$ are linearly independent and hence form a basis  of the 2-dimensional vector space $V^\vee=\Hom_{\bF_2}(V,\bF_2)$. Equations \eqref{eq:surgrels1} and \eqref{eq:surgrels2} then imply that their values on $f(1)$ and $g(1)$ agree,  so $f=g$.

Hence it remains only to establish Equations \eqref{eq:surgrels3}--\eqref{eq:surgrels2}. This is done by interpreting each of the compositions as the link cobordism map for a link cobordism from $(\varnothing,\varnothing)$ to $(\varnothing,\varnothing)$. Notice that all compositions represent either a decorated $S^2$ which is unknotted in $S^4$, or a copy of $S^1\times U$ inside of $S^1\times S^3$ (where $U$ denotes the unknot). This is shown in Figure \ref{fig::37}. In Figure \ref{fig::37}, the link cobordism on the right side of the top row is just a 0-handle followed by a 4-handle (each containing standard disks). The composition induces the identity map. The maps associated to the two link cobordisms on left side of the top row can be computed by using the map from Lemma \ref{lem:modelcomputationonsphere} (splitting $(S^3,\bU)$ into two copies of itself), composed with the diffeomorphism associated to twisting one copy of the unknot in a full twist, composed with the link cobordism map from Lemma \ref{lem:modelcomputationonsphere}, turned upside down. This is clearly the identity. Finally the link cobordism shown on the bottom row, which induces the maps $A'\circ A$ and $B'\circ B$, induces the zero map, since by moving the dividing sets around we can factor the induced map through the map $\Phi|\Psi$ on $\hat{\CFL}(S^3,\bU)\otimes_{\bF_2} \hat{\CFL}(S^3,\bU)$, which trivially vanishes, since each of $\Phi$ and $\Psi$ vanish for an unknot in $S^3$ with exactly two basepoints. Hence Equations \eqref{eq:surgrels3}--\eqref{eq:surgrels2} hold, completing the proof.
\end{proof}

\begin{figure}[ht!]
\centering
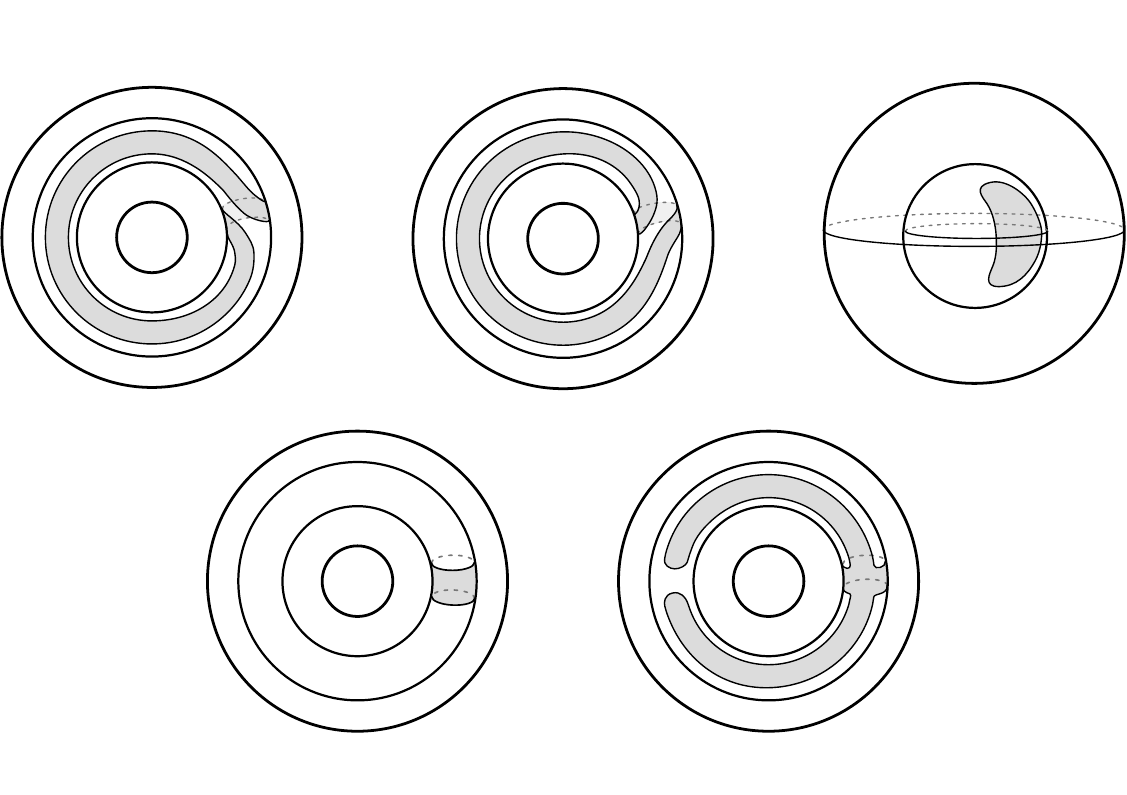
\caption{\textbf{Link cobordisms representing compositions appearing in Equations \eqref{eq:surgrels3}, \eqref{eq:surgrels4}, \eqref{eq:surgrels1} and \eqref{eq:surgrels2}.} The link cobordism in the top right consists of an unknotted $S^2$ inside of $S^4$. The remaining represent decorations of $S^1\times U\subset S^1\times S^3$. \label{fig::37}}
\end{figure}

\begin{proof}[Proof of Proposition \ref{prop:surgeryonclosedcurve}] The composition law implies that
\[
F_{W,\cF,\frs}\simeq F_{W\setminus N(\gamma),\cF\setminus N(\gamma),\frs|_{W\setminus N(\gamma)}}\circ F_{S^1\times D^3,M_1,\frt}
\]
 which is equal by Lemma \ref{lem:4manifoldsurgery2} to
\[
F_{W\setminus N(\gamma),\cF\setminus N(\gamma),\frs|_{W\setminus N(\gamma)}}\circ F_{D^2\times S^2,M_2,\frt_0}.
\]
The condition that $[\gamma]\in H_1(W;\Z)/\Tors$ is non-divisible is equivalent to the condition that there is a unique $\Spin^c$ structure $\frs(\gamma)$ on $W(\gamma)$ which restricts to $\frs|_{W\setminus N(\gamma)}$ on $W\setminus N(\gamma)$. Hence the composition law yields that the above composition is exactly $F_{W(\gamma),\cF(\gamma),\frs(\gamma)}$.
\end{proof}

\subsection{Proof of Proposition \ref{prop:chainhomotopyequivandconnectedsums}}

We now show that $E_i\circ G_i$ and $G_i\circ E_i$ are chain homotopic to the identity as consequences of the 4-dimensional surgery relations proven in the previous sections:

\begin{proof}[Proof of Proposition \ref{prop:chainhomotopyequivandconnectedsums}] The fact that $E_1\circ G_1\simeq \id$ follows from Proposition \ref{prop:connectedsumofcobordisms}, since the composition of the link cobordisms defining $E_1$ and $G_1$ is a connected sum of two identity link cobordisms (in the sense of Proposition \ref{prop:connectedsumofcobordisms}).

Similarly, the relation $G_1\circ E_1\simeq \id$  follows from Proposition \ref{prop:surgeryonclosedcurve}. Write $(W,\cF)$ for the link cobordism inducing the map $G_1\circ E_1$. There is a single divide $\gamma$ which is a closed curve. After performing an isotopy of the other two dividing arcs, we can arrange so that a neighborhood of this dividing arc looks like the surface $M_1$ in Figure \ref{fig::36}. After replacing this copy of $(S^1\times D^3,M_1)$ with $(D^2\times S^2,M_2)$ we are left with the identity link cobordism from $(Y_1\# Y_2, K_1\# K_2)$ to itself. We briefly describe the diffeomorphism between $W(\gamma)$ and $[0,1]\times  (Y_1\# Y_2)$.  Note that $W$ has a handle decomposition given by a 3-handle followed by a ``dual'' 1-handle. We pull the 1-handle below the 3-handle. The handle decomposition of $W(\gamma)$ is obtained by exchanging the 1-handle for a 2-handle attached along a 0-framed unknot. However, the original 3-handle cancels this 2-handle, and we are left with a handle decomposition for $W(\gamma)$ which has no 4-dimensional handles.

Showing that this diffeomorphism restricts to a diffeomorphism between the embedded surfaces is a straightforward. To describe this in terms of handle operations, one picks a handle decomposition of the surface in the link cobordism for $G_1\circ E_1$ with a band $B$ which disconnects $K_1\#K_2$, and a dual band $B'$ which connects $K_1\sqcup K_2$ to form $K_1\# K_2$.  Via an index 0/1 handle cancellation,  one replaces $B$  with a disk $D$ and two bands, $B_1$ and $B_2$. The band $B'$ has both ends on $D$. The effect of the 4-dimensional surgery operation is to trade $B'$ and $D$ for two disks, $D_1$ and $D_2$. After removing the canceling 2-handle and 3-handle of the surgered 4-manifold, as described in the previous paragraph, the disks $D_1$ and $D_2$ then cancel $B_1$ and $B_2$, and we are left with the identity link cobordism.

 Finally we note that the curve $\gamma$ which we surger on represents a non-divisible element of $H_1(W;\Z)/\Tors$ since $W$ has a handle decomposition as a 1-handle followed by a 3-handle, and $\gamma$ goes around the 1-handle exactly once. Hence by Proposition \ref{prop:surgeryonclosedcurve}, the composition $G_1\circ E_1$ is chain homotopy equivalent to the link cobordism map for the identity link cobordism from $(Y_1\# Y_2, K_1\# K_2)$ to itself.

Finally, to see that $G_2\circ E_2$ and $E_2\circ G_2$ are chain homotopy equivalent to the identity, we note that by definition (see Equation \eqref{eq:defEiGimaps}) the link cobordism maps for $G_2$ and $E_2$ are defined as a composition involving the conjugates of the link cobordisms used to define $G_1$ and $E_1$, as well as the half twist diffeomorphisms $\tau_{K_1},$ $\tau_{K_2}$ and $\tau_{K_1\# K_2}$. One can first cancel the  adjacent pair of half twists in the middle of either composition $E_2\circ G_2$ or $G_2\circ E_2$, since they go in opposite directions. Then one simply uses Proposition \ref{prop:connectedsumofcobordisms} or \ref{prop:surgeryonclosedcurve} to surger the cobordism, leaving only a composition of two half twists on the identity cobordism, which go in opposite directions. After canceling these, one is left with the identity cobordism.
\end{proof}

 \begin{rem}The 4-dimensional surgery formulas from Propositions \ref{prop:connectedsumofcobordisms} and \ref{prop:surgeryonclosedcurve} cannot be applied to the maps $E_i\circ G_j$ and $G_j\circ E_i$ if $i\neq j$. For example, the dividing set of the link cobordism corresponding to the composition $G_2\circ E_1$ does not contain a closed curve $\gamma$. Similarly the link cobordism corresponding to the composition $E_1\circ G_2$ is not diffeomorphic to a connected sum of two link cobordisms in the sense of Proposition \ref{prop:connectedsumofcobordisms}. This is because on a connected sum of two link cobordisms, two dividing arcs would have both endpoints in $\{0,1\}\times Y_1$ or both endpoints in $\{0,1\}\times Y_2$, while two arcs would have one endpoint in $\{0,1\}\times Y_1$ and another in $\{0,1\}\times Y_2$. In the decorated link cobordism for $E_1\circ G_2$, all four arcs have one end in $\{0,1\}\times Y_1$ and one end in $\{0,1\}\times Y_2$.
 \end{rem}
 
\section{The map $\iota_K$ on connected sums}\label{sec:connectedsumformulaforiotaK}

 In this section, we prove Theorem \ref{thm:B}:

\begin{customthm}{\ref{thm:B}}\label{thm:homotopytypeofinvolution}Suppose $\bK_1=(K_1,p_1,q_1)$ and $\bK_2=(K_2,p_2,q_2)$ are two doubly based, null-homologous knots in $Y_1$ and $Y_2$ respectively, and let $\bK_1\# \bK_2=(K_1\# K_2,p,q)$ denote their connected sum inside of $Y_1\# Y_2$, with two basepoints. If $\frs_1\in \Spin^c(Y_1)$ and $\frs_2\in \Spin^c(Y_2)$ are self-conjugate, then the two filtered chain homotopy equivalences 
\[
G_1,G_2\colon \cCFL^\infty(Y_1\sqcup Y_2,\bK_1\sqcup\bK_2, \frs_1\sqcup \frs_2)\to \cCFL^\infty(Y_1\# Y_2,\bK_1\# \bK_2, \frs_1\# \frs_2)
\](constructed in the last section) satisfy
\[
G_1(\iota_{K_1}| \iota_{K_2}+\Phi_{p_1}\iota_{K_1}| \Psi_{q_2}\iota_{K_2})\eqsim (\iota_{K_1\# K_2}) G_1,
\]
 and
\[
G_2(\iota_{K_1}| \iota_{K_2}+\Psi_{q_1}\iota_{K_1}| \Phi_{p_2}\iota_{K_2})\eqsim (\iota_{K_1\# K_2}) G_2.
\]
\end{customthm}

\begin{proof}[Proof of Theorem \ref{thm:homotopytypeofinvolution}] Let $\tau_{K_i}$ denote the half twist diffeomorphism on 
$(Y_i,\bK_i)$, and let $\tau_{K_1\# K_2}$ denote the half twist diffeomorphism on $(Y_1\# Y_2, \bK_1\# \bK_2)$. Let 
\[
(W,\cF_1)\colon (Y_1\sqcup Y_2, \bK_1\sqcup \bK_2)\to (Y_1\# Y_2,\bK_1\# \bK_2)
\] 
 be the link cobordism described in Section \ref{sec:constructcobs}. The decorations on the surface are redrawn schematically in Figure~\ref{fig::13}. Recall that we defined 
\[
G_1:=F_{W,\cF_1,\frs}
\]
 to be the link cobordism map for the unique $\Spin^c$ structure $\frs$ on $W$  which extends $\frs_1\sqcup \frs_2$ on $Y_1\sqcup Y_2$. Using the composition law, the composition $\tau_{K_1\# K_2}\circ G_1$ is also a link cobordism map, as shown in Figure \ref{fig::1}.

\begin{figure}[ht!]
\centering
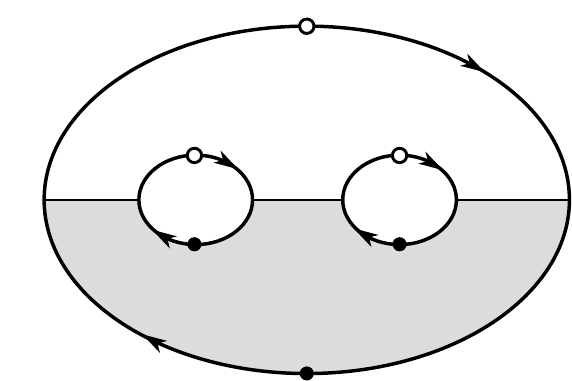
\caption{\textbf{The decoration on the surface used in the cobordism defining the map $G_1$.} This is the surface inside of the link cobordism from $K_1\sqcup K_2$ to $K_1\# K_2$ shown in Figure \ref{fig::39}. The arrows indicate the orientation of the knot components. \label{fig::13}}
\end{figure}

\begin{figure}[ht!]
\centering
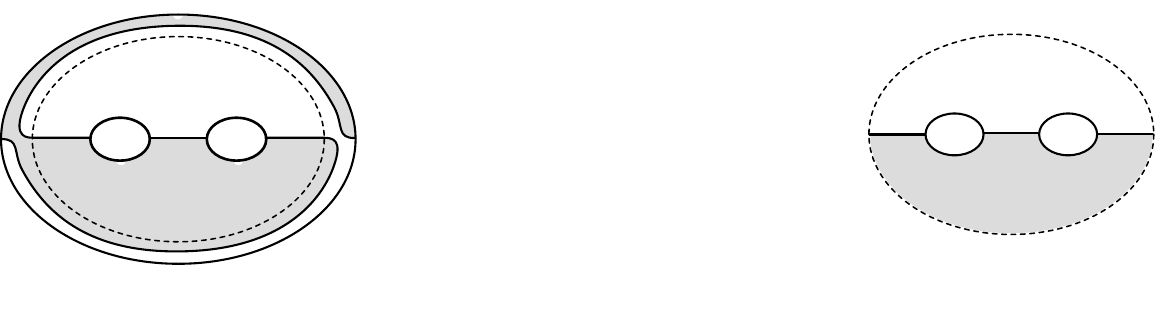
\caption{\textbf{The decoration on a link cobordism whose cobordism map is $\tau_{K_1\# K_2}\circ F_{W,\cF_1}$.} The arrows indicate the orientation of the knots. \label{fig::1}}
\end{figure}

We now use the bypass relation from Lemma \ref{thm:D} on the link cobordism for $\tau_{K_1\# K_2}\circ G_1$. The disk we pick for the bypass relation is a regular neighborhood of the dashed arc shown in Figure~\ref{fig::2}. The resulting three dividing sets, in a neighborhood of the dashed arc, are also shown in figure~\ref{fig::2}.

\begin{figure}[ht!]
\centering
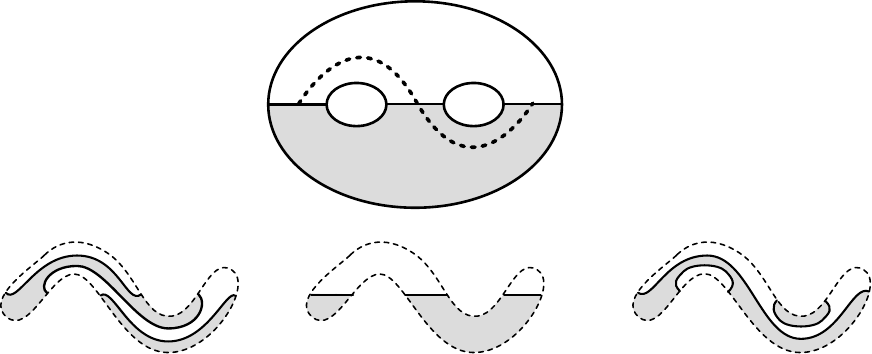
\caption{\textbf{A bypass relation.} We perform a bypass in a neighborhood of the dashed arc on top.  The resulting bypass triple of dividing sets are drawn on the line below. \label{fig::2}}
\end{figure}

\begin{figure}[ht!]
\centering
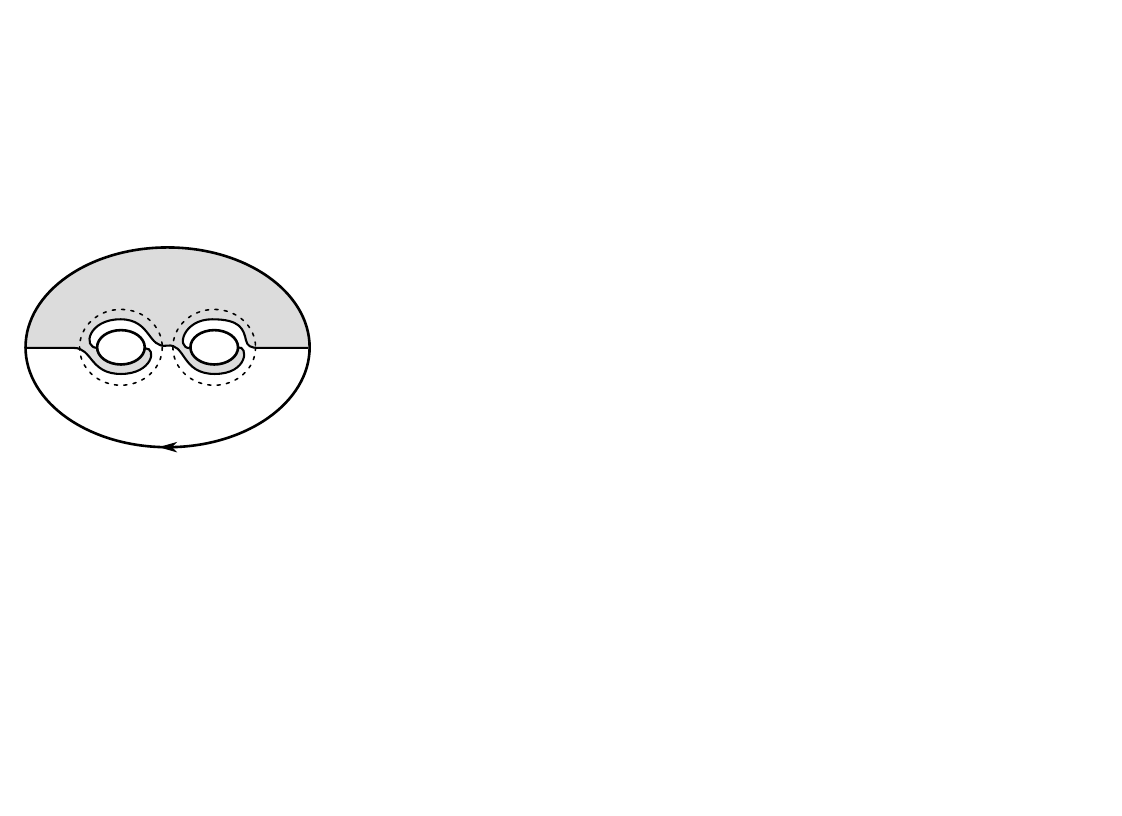
\caption{\textbf{Isotoping the dividing sets in the bypass relation, to derive the connected sum formula.} Composing with $\tau_{K_1\# K_2}$ and using the bypass relation for the bypass arc in Figure \ref{fig::2} yields the top row.  Performing some manipulations yields the middle row. The cobordisms go from the inner two circles to the outer circle. The bottom row is a zoomed-in manipulation of the one of the regions in the middle row, to illustrate the induced map.\label{fig::3}}
\end{figure}

In Figure \ref{fig::3}, we compose with the half twist map $\tau_{K_1\# K_2}$, and perform some manipulations. From Figure~\ref{fig::3} and Lemma~\ref{lem:cobordismsforPhiPsi} we see that
\[
F_{W,\bar{\cF}_1} \circ \tau_{K_1\sqcup K_2}+\tau_{K_1\# K_2}\circ  F_{W,\cF_1}\simeq F_{W,\bar{\cF}_1}\circ (\Psi_{p_1}\tau_{K_1}|\Phi_{q_2}\tau_{K_2}).
\]
 Here $\cF_1$ is the surface with divides used to define $G_1$, and $\bar{\cF}_1$ is the conjugate surface with divides (i.e. the one resulting from changing the types of regions from $\ve{w}$ to $\ve{z}$, and vice versa). We now compose with the conjugation map, $\eta$, on the left, which yields
\[
\eta\circ  F_{W,\bar{\cF}_1}\circ  \tau_{K_1\sqcup K_2}+\eta\circ  \tau_{K_1\# K_2}\circ F_{W,\cF_1}\simeq\eta\circ  F_{W,\bar{\cF}_1}\circ (\Psi_{p_1}\tau_{K_1}|\Phi_{q_2}\tau_{K_2}).
\]
 Using Theorem \ref{thm:C}, we have $\eta \circ F_{W,\bar{\cF}_1}\simeq F_{W,\cF_1}\circ \eta$. Since  $\iota_K=\tau_K\circ \eta$, by definition, we arrive at
\[
F_{W,\cF_1}\circ (\iota_{K_1}|\iota_{K_2})+\iota_{K_1\# K_2}\circ  F_{W,\cF_1}\simeq  F_{W,\cF_1}\circ(\Phi_{p_1}\iota_{K_1}|\Psi_{q_2}\iota_{K_2}).
\] 
Since $G_1=F_{W,\cF_1}$ by definition, the first statement of the theorem now follows.

The second statement, involving the map $G_2$, follows from an easy modification of the above argument.
\end{proof}

\section{The homomorphism $\cC\to \frI_K$}\label{sec:homomorphism}
We finally put together the pieces of our theory and describe the homomorphism $\cC\to \frI_K$. For a knot $K\in S^3$, let $[\cCFL^\infty(S^3,K)]$ denote the local equivalence class in $\frI_K$ determined by the tuple $(\cCFL^\infty(\Sigma,\ve{\alpha},\ve{\beta},p,q),\d, \bT_{\as}\cap \bT_{\bs},\iota_K)$ for a diagram $(\Sigma,\ve{\alpha},\ve{\beta},p,q)$ of $(S^3,K,p,q)$.  Note that with no additional effort, we can extend this homomorphism to the group $\cC_3^\Z$ consisting of pairs of integer homology spheres with knots, modulo integer homology concordance (i.e. integer homology cobordisms with a smoothly embedded genus zero surface). The homomorphism $\cC\to \frI_K$ factors through the natural map $\cC\to \cC_3^\Z$.

\begin{customthm}{\ref{thm:E}}The map 
	\[
	K\mapsto [\cCFL^\infty(S^3,K)],
	\] 
	defines a homomorphism from the smooth concordance group $\cC$ to $\frI_K$.
\end{customthm}

\begin{proof} We first claim that the map $K\mapsto [\cCFL^\infty(S^3,K)]$ descends to a well defined map on the smooth concordance group. If $(W,\cF)$ is a concordance from $K$ to $K'$ (or more generally an integer homology concordance) then we consider the map $F_{W,\cF,\frs}$ for the unique $\Spin^c$ structure on $W=[0,1]\times  S^3$. Note that by reversing the orientation and turning around $(W,\cF)$, we get a map $F_{-W,-\cF,\frs}$ in the opposite direction. We claim that $F_{W,\cF,\frs}$ and $F_{-W,-\cF,\frs}$ induce local equivalences between $\cCFL^\infty(S^3,K)$ and $\cCFL^\infty(S^3,K')$.

Note that $\cCFL^\infty(S^3,K_i)$ decomposes over Alexander gradings as $\bigoplus_{k\in \Z} \CFK^\infty(S^3,K_i)\{k\}$ where we think of $\CFK^\infty(S^3,K_i)$ as being concentrated in Alexander grading zero, and $\{k\}$ denotes a shift in the Alexander grading. Hence the homology of each $\cCFL^\infty(S^3,K_i)$ decomposes as
\[
\bigoplus_{k\in \Z} H_*(\CFK^\infty(S^3,K_i))\{k\}\iso \bigoplus_{k\in \Z}\HF^\infty(S^3)\{k\}.
\]
 The Alexander grading change of $F_{W,\cF,\frs}$ is equal to zero, by the Alexander grading change formula from Equation \eqref{eq:grading1}. The map $F_{W,\cF,\frs}$ on $\cHFL^\infty$ thus becomes simply the regular cobordism map $F_{W,\frs}^\infty$ in each Alexander grading. The map $F_{W,\frs}^\infty\colon \HF^\infty(S^3)\to \HF^\infty(S^3)$ is the identity map since $W=[0,1]\times S^3$. More generally, if $(W,\cF)\colon (Y_1,\bK_1)\to (Y_2,\bK_2)$ is a homology concordance, then by \cite{OSIntersectionForms} the map $F_{W,\frs}^\infty\colon\HF^\infty(Y_1,\frs_1)\to \HF^\infty(Y_2,\frs_2)$ will be an isomorphism, and induces Maslov grading change zero by the grading change formula from \cite{OSTriangles}.  We note that the link cobordism maps for a concordance intertwine the $\iota_K$ maps on the two ends, using Theorem \ref{thm:C} and the fact that the genus of the surface in a concordance is zero, so the half twist in the divides can be pulled from one end to the other. The map $F_{-W,-\cF,\frs}\colon \cCFL^\infty(S^3,K_2)\to \cCFL^\infty(S^3,K_1)$ intertwines the involutions and is an isomorphism on homology by the same reasoning. This implies that the map $\cC\to \frI_K$ is well defined.

Finally, we claim that the map $\cC\to \frI_K$ is a homomorphism. By Theorem \ref{thm:B}, the local equivalence class $[\cCFL^\infty (S^3,K_1\# K_2)]$ is equal to the product $[\cCFL^\infty(S^3,K_1)]\times [\cCFL^\infty(S^3,K_2)]$ in $\frI_K$, completing the proof.
\end{proof}

\section{Computations and examples}
\label{sec:examples}
In this section we apply Theorem \ref{thm:B} to compute involutive concordance invariants $\bar{V}_0$ and $\underline{V}_0$ from \cite{HMInvolutive} of connected sums of various knots. We compute the invariants for $T_r\# T_r$  ``by hand'' to demonstrate the formula, and then describe the results of some computer computations.

 We recall the definition of $V_0,\bar{V}_0$ and $\underline{V}_0$. If $(C,\d)$ is a $\Q$-graded chain complex, freely generated over $\bF_2[U]$, with a homotopy involution $\iota$, we consider the complex
\[
I^-(C,\iota)=\Cone(C\xrightarrow{Q(1+\iota)} Q\cdot C),
\]
 a chain complex freely generated over the ring $\bF_2[U,Q]/(Q^2)$. The grading on the cone is obtained by shifting grading of the domain on the cone up by one. Similarly one can define complexes $I^\infty(C,\iota)$ and $I^+(C,\iota)$. There is a long exact sequence
\[
\cdots \to H_*(I^-(C,\iota))\xrightarrow{i} H_*(I^\infty(C,\iota))\xrightarrow{\pi} H_*(I^+(C,\iota))\to \cdots.
\]

 Assuming  that $H_*(C\otimes\bF_2[U,U^{-1}])$ is isomorphic to $\bF_{2}[U,U^{-1}]$,  we can define $d(C)$ as the maximal grading of a non-torsion element of $H_{*}(C)$. Similarly, following \cite{HMInvolutive}, we define
 \[
 \underline{d}(C,\iota):=\max \{r\equiv d(C)+1 (\Mod{2})\, |\, \, H_r(I^-(C,\iota))\to H_r(I^\infty(C,\iota)) \text{ is nontrivial}\}-1,
 \]
  and 
 \[
 \bar{d}(C,\iota):=\max \{r\equiv d(C)(\Mod{2})\, |\, \, H_r(I^-(C,\iota))\to H_r(I^\infty(C,\iota)) \text{ is nontrivial}\}.
 \]
 Note that unlike \cite{HMInvolutive} and \cite{HMZConnectedSum}, the convention in this paper is that $\HF^-(S^3)$ has top degree generator in grading $0$.

To define the concordance invariants $\bar{V}_0$ and $\underline{V}_0$, one first defines $A_0^-(K)$ as the subcomplex of $\CFK^\infty(S^3,K)=\cCFL^\infty(Y,K)_0$ generated by terms $[\ve{x},i,j]$ with $i\le 0$,  $j\le 0$ and $A(\ve{x})+(i-j)=0$.  In the notation used elsewhere in this paper, one would write this as the module generated by monomials $\ve{x}\cdot U^iV^j$ with $i,j\ge 0$ and $A(\ve{x})+j-i=0$. The map $\iota_K$ on $\CFK^\infty(K)$ induces a homotopy involution $\iota_0$ on $A_0^-(K)$. The complex $A_0^-(K)$ inherits a homological grading from the Maslov grading on $\CFK^\infty(S^3,K)$. One then defines
\[
V_0(K)=-\frac{1}{2} d(A_0^-(K)),\qquad \underline{V}_0(K)=-\frac{1}{2}\underline{d}(A_0^-(K),\iota_0),\qquad \text{and} \qquad \bar{V}_0(K)=-\frac{1}{2} \bar{d}(A_0^-(K),\iota_0).
\]

\subsection{A simple example: $T_{r}\# T_{r}$} 
\label{sec:simpleexamples}

We consider the connected sum of the right handed trefoil with itself. The complexes $\cCFL^\infty(T_r)$ and $\CFK^\infty(T_r)$, as well as the maps $\Phi$ and $\Psi$ are shown in Figure \ref{fig::22}. In \cite{HMInvolutive}*{Section~7} it is shown that the knot involution $\iota_K$ is homotopic to reflection across the dashed line.  It's easily computed that $\bar{V}_0(T_r)=V_0(T_r)=\underline{V}_0(T_r)=1$.

\begin{figure}[ht!]
	\centering
	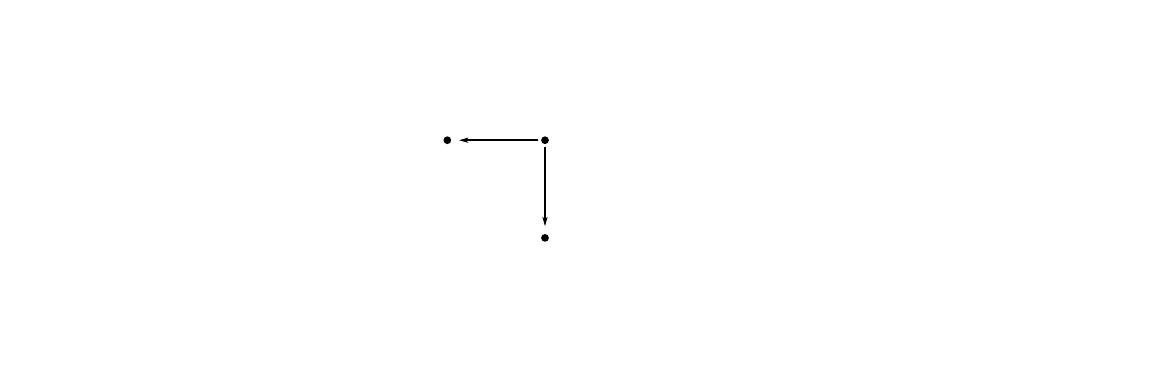
	\caption{\textbf{The complexes for $T_r$, and the maps $\Phi$ and $\Psi$.} On the left is the standard picture for $\CFK^\infty(T_r)$ for the right handed trefoil, drawn as a complex over $\bF_2$. The right three boxes show the complex $\cCFL^\infty(T_{r})$, a module over $\bF_2[U,V,U^{-1},V^{-1}]$, as well as the maps $\Phi$ and $\Psi$. Reflection across the dashed line yields $\iota_K$. Note that $\Phi$ and $\Psi$ individually do not define maps on $\CFK^\infty(T_r)$ since they shift Alexander grading.\label{fig::22}}
\end{figure}

 In Figure \ref{fig::23} we see the conjugation action on $\cCFL^\infty(T_{r}\# T_{r})$. Note that $\iota_K$ on the tensor product complex appears somewhat asymmetric: the extra arrow on the right could instead point to the other generator nearby. This reflects the two formulas for $\iota_{T_r\#T_r}$ from Theorem \ref{thm:B}. The arrow shown in Figure \ref{fig::22} corresponds to a summand of $\iota_1\Phi_1|\iota_2\Psi_2$. If we replaced that summand by $\iota_1\Psi_1|\iota_2\Phi_2$, we'd replace the arrow with one pointing to the nearby generator. Note that the two $\iota_K$-complexes are homotopy equivalent, by Lemma \ref{lem:productsarestronglyequivalent}.

In computing the invariants $\underline{d}$ and $\overline{d}$, the following lemma is convenient. It is modified slightly from \cite{HMZConnectedSum}, because of our grading convention:

\begin{lem}[\cite{HMZConnectedSum}*{Lemma 2.12}]\label{lem:dupperdlowercriteria} Suppose $\iota$ is a homotopy involution on a $\Z$-graded complex $(C,\d)$ which is freely generated over $\bF_2[U]$.
\begin{enumerate}[(a)]
\item\label{dupperlower-a} The quantity $\underline{d}(C,\iota)$ is the maximum grading of a homogeneous $v\in C$ such that $\d v=0$, $[U^n v]\neq 0$ for all $n\ge 0$ and there exists a $w\in C$ such that $\d w=(\id+\iota)v$.
\item\label{dupperlower-b} The quantity $\bar{d}(C,\iota)$ can be computed by considering triples $(x,y,z)$ of homogeneous elements in $C$ with at least one of $x$ and $y$ nonzero such that $\d y=(\id+\iota) x, \d z=U^m x$ for some $m\ge 0$ and $[U^n(U^m y+(\id+\iota)z)]\neq 0$ for all $n\ge 0$. The quantity $\overline{d}(C,\iota)$ is equal to the maximum of $\gr(x)+1$ (ranging over triples with $x\neq 0$) and $\gr(y)$ (ranging over triples where $x=0$). 
\end{enumerate}
\end{lem}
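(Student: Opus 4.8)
The plan is to unwind the mapping cone $I^-(C,\iota)$ into completely concrete data and then match the two ``towers'' of $H_*(I^\infty(C,\iota))$ against the two parity classes occurring in the definitions of $\underline d$ and $\bar d$. Write $C^\infty=C\otimes_{\bF_2[U]}\bF_2[U,U^{-1}]$, so $I^\infty(C,\iota)=\Cone(C^\infty\xrightarrow{Q(\id+\iota)}Q\cdot C^\infty)$, and recall that the grading convention shifts the domain of the cone up by one. Thus a homogeneous chain of $I^-(C,\iota)$ in degree $r$ is a pair $(a,Qb)$ with $a\in C_{r-1}$ and $b\in C_r$; it is a cycle iff $\d a=0$ and $\d b=(\id+\iota)a$; and $(a,Qb)=\d(c,Qd)$ iff $\d c=a$ and $\d d=b+(\id+\iota)c$ (signs are irrelevant over $\bF_2$). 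A preliminary observation I would record first: since $\bF_2[U,U^{-1}]$ has no degree-zero units other than $1$, the degree-preserving $\bF_2[U,U^{-1}]$-linear automorphism $\iota_*$ of $H_*(C^\infty)\cong\bF_2[U,U^{-1}]$ is the identity, so $(\id+\iota)_*=0$ on $H_*(C^\infty)$; equivalently $(\id+\iota)\zeta$ is null-homologous in $C^\infty$ for every cycle $\zeta$. I will also use freely that, for $C$ free over $\bF_2[U]$, a class $[a]\in H_*(C)$ vanishes in $H_*(C^\infty)$ iff it is $U$-torsion iff $U^m a=\d z$ for some $z\in C$, $m\ge 0$.

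Next I would analyze exactly when the class of a cycle $(a,Qb)$ survives under $H_*(I^-(C,\iota))\to H_*(I^\infty(C,\iota))$, splitting into cases according to whether $[a]\neq 0$ in $H_*(C^\infty)$. If $a$ is not $U$-torsion, then $(a,Qb)$ cannot bound in $I^\infty$, because its $C[1]$-component $a$ is not a boundary there; conversely all such classes survive. These are precisely the pairs $(v:=a,\ w:=b)$ of part (a): the condition $[U^nv]\neq 0$ for all $n\ge0$ is exactly $[v]\neq 0$ in $H_*(C^\infty)$, and $\d w=(\id+\iota)v$ is the cycle condition. For the grading: $[v]\neq 0$ in $H_*(C^\infty)\cong\bF_2[U,U^{-1}]$ forces $\gr(v)\equiv d(C)\pmod 2$, so $(a,Qb)$ lies in degree $r=\gr(v)+1\equiv d(C)+1\pmod 2$, which is exactly the parity range in the definition of $\underline d$; hence $\underline d(C,\iota)=(\max r)-1=\max\gr(v)$ over such $v$, proving (a).

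If instead $a$ is $U$-torsion, choose $m\ge0$ and $z\in C$ with $\d z=U^m a$, so $c_0:=U^{-m}z\in C^\infty$ satisfies $\d c_0=a$. Any solution of $\d c=a$ in $C^\infty$ differs from $c_0$ by a cycle $\zeta$, and $(\id+\iota)\zeta$ is a boundary by the preliminary observation; therefore $(a,Qb)$ bounds in $I^\infty$ iff $b+(\id+\iota)c_0$ is a boundary in $C^\infty$, i.e.\ iff $[U^m b+(\id+\iota)z]=0$ in $H_*(C^\infty)$, i.e.\ iff $[U^n(U^m b+(\id+\iota)z)]=0$ in $H_*(C)$ for some, equivalently all large, $n$. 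So $(a,Qb)$ survives exactly when $(x:=a,\ y:=b,\ z)$ is a triple as in part (b); note that $\d x=0$ (needed for $(a,Qb)$ to be a cycle) follows from $\d z=U^m x$ since $U^m$ is injective on the free module $C$, and when $x=0$ the clause involving $z$ is automatic and the condition collapses to $[U^n y]\neq 0$ for all $n$. Conversely every such triple yields a surviving class. The degree of $(a,Qb)$ is $r=\gr(b)=\gr(a)+1$, and the surviving homology class $[U^{-m}(U^m b+(\id+\iota)z)]$ sits in degree $r-2m\equiv d(C)\pmod 2$, matching the parity in the definition of $\bar d$. Reading off the maximum: the relevant degree is $\gr(y)$ when $x=0$ and $\gr(x)+1$ when $x\neq 0$, giving (b). I would close by observing that the parities $d(C)$ and $d(C)+1$ are complementary, so the two maxima indeed range over exactly the two disjoint cases just analyzed and nothing is missed or double-counted.

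\textbf{Expected main obstacle.} The conceptual content is short; the delicate part is the grading and $U$-power bookkeeping — in particular making the ``$\exists z$ with $\d z=U^m x$'' clause (clearing denominators to pass from $C^\infty$ back to $C$) line up correctly with the degree shifts coming from the cone convention, and verifying that the torsion/non-torsion dichotomy for $a$ corresponds precisely to the two parity classes so that the formulas for $\underline d$ and $\bar d$ see each tower exactly once.
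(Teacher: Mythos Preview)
Your argument is correct. Note, however, that the paper does not actually prove this lemma: it is quoted verbatim from \cite{HMZConnectedSum}*{Lemma~2.12} and used as a black box in the subsequent computations, so there is no ``paper's own proof'' to compare against. Your unwinding of the mapping cone is exactly the intended one, and the key dichotomy --- splitting cycles $(a,Qb)$ according to whether $[a]$ is $U$-torsion in $H_*(C^\infty)$ and matching the two cases to the two parities $d(C)+1$ and $d(C)$ --- is the right mechanism.

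One small slip in bookkeeping: you write that $[U^{-m}(U^m b+(\id+\iota)z)]$ sits in degree $r-2m$, but in fact $U^m b+(\id+\iota)z$ has degree $r-2m$ while $U^{-m}$ of it has degree $r$. This is harmless for the parity conclusion (since $r\equiv r-2m\pmod 2$), but you should correct the sentence so that the element whose nonvanishing is being checked, and its stated degree, agree.
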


We now compute the involutive invariants of $T_r\# T_r$. We note that $T_r\# T_r$ is alternating, and hence by \cite{HMInvolutive}*{Proposition~8.1} the map $\iota_K$ is uniquely determined by its grading and filtration properties, and by the relation $\iota_K^2\simeq \id+\Phi\circ\Psi$. Indeed one can use the computation from \cite{HMInvolutive}*{Theorem~1.7} to compute the invariants for $T_r\# T_r$, since it is alternating. Nonetheless, we include this example as a demonstration of the formula in a simple case.

\begin{prop} The invariants for $T_r\# T_r$ from large surgeries are 
\[
\bar{V}_0(T_r\# T_r)=V_0(T_r\# T_r)=1\qquad \text{and}\qquad \underline{V}_0(T_r\# T_r)= 2.
\]
\end{prop}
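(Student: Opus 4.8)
The plan is to build the full complex $\cCFL^\infty(T_r\# T_r)$ explicitly from the tensor product $\cCFL^\infty(T_r)\otimes_{\cR}\cCFL^\infty(T_r)$, equip it with the involution $\iota_{T_r\# T_r}\eqsim(\id|\id+\Phi_1|\Psi_2)(\iota_{T_r}|\iota_{T_r})$ provided by Theorem~\ref{thm:B}, restrict to zero Alexander grading to obtain $A_0^-(T_r\# T_r)$ with its induced involution $\iota_0$, and then apply Lemma~\ref{lem:dupperdlowercriteria} to read off $d$, $\underline{d}$, and $\bar d$. First I would recall from Figure~\ref{fig::22} that $\cCFL^\infty(T_r)$ has three generators $a,b,c$ over $\cR$ with $\d b = U a + V c$ (in the normalization of the paper, with appropriate Alexander and Maslov gradings), that $\Phi$ sends $b\mapsto a$ and is zero otherwise, that $\Psi$ sends $b\mapsto c$ and is zero otherwise, and that $\iota_{T_r}$ is reflection across the diagonal, i.e. it fixes $b$ and swaps $a\leftrightarrow c$ (up to the appropriate skew-equivariant bookkeeping). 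Tensoring gives nine generators, and I would write out $\d$ on the tensor product and the map $\iota_1|\iota_2+\Phi_1\iota_1|\Psi_2\iota_2$ on all nine, being careful with the $\cR$-module structure and the grading shifts; Figure~\ref{fig::23} is the picture to reproduce in symbols.

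Next I would pass to $A_0^-$: this is the $\bF_2[\hat U]$-submodule ($\hat U = UV$) generated by monomials $\ve x\cdot U^iV^j$ with $i,j\ge 0$ and $A(\ve x)+j-i=0$, as in the definition recalled just before Section~\ref{sec:simpleexamples}. Concretely this means intersecting the infinite staircase-type picture for $\CFK^\infty(T_r\#T_r)$ with the quadrant, which produces a small finitely-$\bF_2[\hat U]$-generated complex; I would identify its homology as $\bF_2[\hat U]$ (so that $d$ is defined) and compute $V_0(T_r\#T_r)=-\tfrac12 d(A_0^-)=1$, which also follows from additivity of $V_0$ under connected sum together with $V_0(T_r)=1$, or directly from the known $\tau=g=2$ behavior; this confirms the $V_0$ and $\bar V_0 \le V_0$-type expectations. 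Then the heart of the computation: using part~\eqref{dupperlower-a} of Lemma~\ref{lem:dupperdlowercriteria}, I would search for the highest-graded cycle $v$ in $A_0^-$ that is non-torsion (survives multiplication by all $\hat U^n$) and for which $(\id+\iota_0)v$ is a boundary; the extra $\Phi_1|\Psi_2$ term in $\iota_{T_r\#T_r}$ is exactly what makes $(\id+\iota_0)v$ hit something that $\d$ can reach one grading higher than in the non-involutive case, yielding $\underline d = -2$ and hence $\underline V_0 = 2$. For $\bar V_0$ I would use part~\eqref{dupperlower-b}, exhibiting the appropriate triple $(x,y,z)$ (or showing $x=0$ is forced) to get $\bar d = d$, hence $\bar V_0 = 1$.

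The main obstacle I expect is bookkeeping rather than conceptual: correctly tracking the Maslov grading $\gr_U$ (equivalently the homological grading on $A_0^-$) and the filtration levels through the tensor product and through the restriction to zero Alexander grading, and making sure the cross term $\Phi_1\iota_1|\Psi_2\iota_2$ is placed on the right generators with the right grading shift (it shifts Alexander grading by $+1$ on the first factor and $-1$ on the second, so it is grading-neutral overall, as the paper emphasizes, but one must verify it lands inside $A_0^-$ after restriction). A secondary point to be careful about is that Theorem~\ref{thm:B} gives two a priori different formulas for $\iota_{T_r\#T_r}$; by Lemma~\ref{lem:productsarestronglyequivalent} they give homotopy equivalent $\iota_K$-complexes, so the invariants are independent of the choice, but I would note this explicitly. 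Once the small complex $(A_0^-(T_r\#T_r),\iota_0)$ is pinned down, the three applications of Lemma~\ref{lem:dupperdlowercriteria} are short finite checks, and I would also cross-check the answer against the alternating-knot formula of Hendricks--Manolescu (\cite{HMInvolutive}*{Theorem~1.7}), since $T_r\#T_r$ is alternating, as a sanity check on the sign and grading conventions.
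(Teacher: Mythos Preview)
Your overall strategy is exactly the paper's: form the tensor product, compute $\iota_{T_r\#T_r}$ via Theorem~\ref{thm:B}, restrict to $A_0^-$, and apply Lemma~\ref{lem:dupperdlowercriteria}. However, several of your concrete assertions are wrong and would derail the computation.

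First, $V_0$ is \emph{not} additive under connected sum: here $V_0(T_r)=1$ and $V_0(T_r\#T_r)=1$, not $2$, so your proposed cross-check via additivity would have produced a contradiction with the statement you are trying to prove. (Nor is $V_0$ equal to $\tau$ or $g$.) The correct route is the one you list first: locate the top-graded non-torsion cycle in $A_0^-$ directly; in the paper's labeling (Figure~\ref{fig::27}) this is $b$ in grading $-2$, giving $V_0=1$.

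Second, your $\underline d$ claim is wrong twice over. You write ``$\underline d=-2$ and hence $\underline V_0=2$'', but $\underline V_0=-\tfrac12\underline d$, so $\underline d=-2$ would yield $\underline V_0=1$. The correct value is $\underline d=-4$. In the paper's notation the only cycles in grading $-2$ with $[\hat U^n v]\neq 0$ for all $n$ are $b$ and $b'$ (the sum $b+b'$ is $\hat U$-torsion), and $(\id+\iota_0)b=b+b'$ is \emph{not} a boundary because $A_0^-$ has no generators in grading $-1$. Thus no $v$ in grading $-2$ satisfies part~\eqref{dupperlower-a}, and one must descend to $v=\hat Ub$ in grading $-4$, where $(\id+\iota_0)(\hat Ub)=\hat U(b+b')=\d(d'+c')$.

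Third, your attribution of this drop to the cross-term $\Phi_1|\Psi_2$ is misplaced for $\underline V_0$. That term contributes only an arrow out of the non-cycle $b\otimes b$ (the paper's $a$); on the cycles $b,b'$ the involution is the plain swap $b\leftrightarrow b'$ with or without the cross-term, so the obstruction $(\id+\iota_0)b=b+b'\notin\im\d$ is already forced by the reflection alone. (Where the cross-term does enter is the paper's $\bar V_0$ argument: it is what makes $a$ fail to be $\iota_0$-fixed, shortening the case analysis for part~\eqref{dupperlower-b}.)
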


\begin{proof}The complex $\cCFL^\infty(T_r\# T_r)$ and the involution $\iota_{T_r\# T_r}$ are shown in Figure \ref{fig::23}. The complex $A_0^-(T_r\# T_r)$ is shown in Figure \ref{fig::27}.
\begin{figure}[ht!]
	\centering
	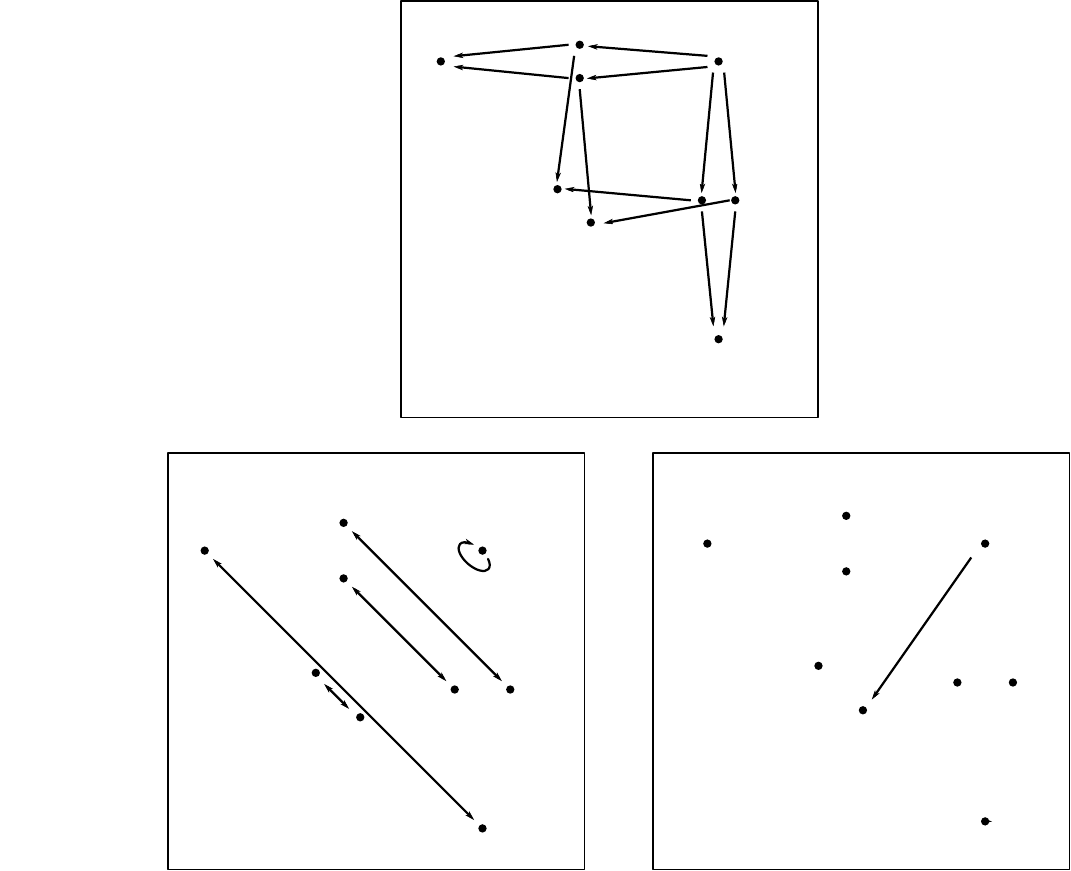
	\caption{\textbf{The complex $\cCFL^\infty(T_{r}\# T_{r})$, as well the conjugation map $\iota_K\eqsim \iota_1|\iota_2+\iota_1\Phi_1|\iota_2\Psi_2$.}\label{fig::23}}
\end{figure}

\begin{figure}[ht!]
	\centering
	\scriptsize{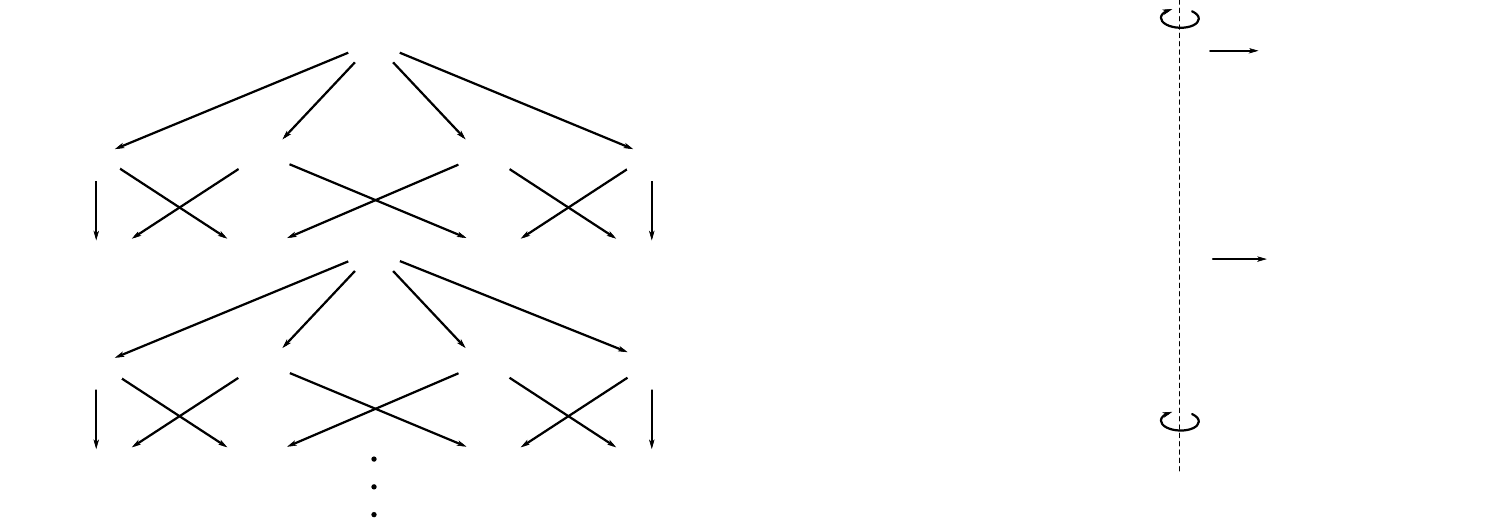}
	\caption{\textbf{The complex $A_0^-(T_r\# T_r)$ on the left and the involution $\iota_0$, on the right.} The involution on the right is equal to reflection across the dashed line plus the arrows.\label{fig::27}}
\end{figure}

Note that in the complex $A_0^-$, clearly $[b]$ is an element such that $U^n[b]\neq 0$, and $[b]$ has maximal grading in the complex. Hence
\[
V_0=-\frac{1}{2}d(A_0^-)=-\frac{1}{2}\gr([b])=1.
\]

To compute $\bar{V}_0=-\tfrac{1}{2}\bar{d}_0(AI_0^-)$, we note that $(0,b,0)$ is a triple as in part \eqref{dupperlower-b} of Lemma \ref{lem:dupperdlowercriteria}, so $\bar{d}(AI^-_0)\ge -2$. For this not to be an equality, we need there to be a triple $(x,y,z)$ satisfying condition \eqref{dupperlower-b} and with $\gr(x)\ge -2$ and $x\neq 0$.  This implies that $\gr(x)=-2$, as that is the top grading of elements in the chain complex. Similarly, grading considerations imply that such a triple must have $y=0$ and $x=\iota (x)$. There is only one nonzero element of grading $-2$ with $x=\iota( x)$, namely $x=b+b'$ (note that $a$ has grading $-2$ but is not preserved by $\iota$). The elements $z$ such that $\d z= U^m(b+b')$ are all of the form  $z=U^{m-1}(c+d)$ or $z=U^{m-1}(d'+c')$. But
\[
[U^nU^{m-1}(\id+\iota)(c+d)]=[U^{n+m-1}(c+d+d'+c')]=[0]
\] 
so there are no triples $(x,y,z)$ satisfying the criteria of part \eqref{dupperlower-b} of Lemma \ref{lem:dupperdlowercriteria} with $x=b+b'$ and $z=U^{m-1}(c+d)$. The same argument rules out triples with $x=b+b'$ and $z=U^{m-1}(c'+d')$, so we conclude that
\[
\bar{V}_0=-\frac{1}{2} \gr([b])=1.
\]

Finally we consider $\underline{V}_0=-\tfrac{1}{2}\underline{d}(AI^-_0)$ and part \eqref{dupperlower-a} of Lemma \ref{lem:dupperdlowercriteria}. Note that $[b]$ and $[b']$ are the only elements $x$ in grading $-2$ such that $[U^n x]\neq 0$. Since neither is fixed by the involution and there are no nonzero boundaries of degree $-2$, we see that neither can satisfy part \eqref{dupperlower-a} of Lemma \ref{lem:dupperdlowercriteria}. Hence $\underline{d}<-2$, and since it is an even integer, we must have $\underline{d}\le -4$. On the other hand, $x=Ub$ satisfies $U^n[Ub]\neq 0$ for all $n\ge 0$ and $\d(d'+c')=Ub+Ub'=(\id+\iota)(Ub)$, so $\underline{d}=-4$ and 
\[
\underline{V}_0=2.
\]
\end{proof}

\subsection{Further computer based computations}
The procedure for computing $V_0,$ $\bar{V}_0$ and $\underline{V}_0$ is amenable to computer computations when $\CFK^\infty(K)$ and the involution $\iota_K$ are known. For knots with relatively simple $\CFK^\infty(K)$ complex, such as thin knots, $L$-space knots and mirrors of $L$-space knots,  it is shown in \cite{HMInvolutive} that the involution $\iota_K$ is determined up to chain homotopy by the fact that $\iota_K^2\simeq \id+\Phi\circ  \Psi$.  Using the formula from Theorem \ref{thm:B}, we can compute the involution for any connected sum of such knots. In this section we provide some example computations using the assistance of \textit{Macaulay2} \cite{Mac2}.  The code can be found at the authors website.

Using our computations, we can prove the following:

\begin{customprop}{\ref{prop:applicationsofcomp}}None of the knots in Figure \ref{fig::table1}  is concordant to a thin knot, an $L$-space knot, or the mirror of an $L$-space knot.
\end{customprop}

\begin{proof}According to \cite{HMInvolutive}*{Proposition 8.2}, if $K$ is an thin knot, an $L$-space knot, or the mirror of an $L$-space knot, then the triple $(\bar{V}_0,V_0,\underline{V}_0)$ satisfies one of the two following conditions:

\begin{enumerate}\item All three of $\bar{V}_0,$ $V_0,$ and $\underline{V}_0$ are nonnegative and $0\le \underline{V}_0-\bar{V}_0\le 1$;
\item $\bar{V}_0\le 0$ while $V_0=\underline{V}_0=0$.
\end{enumerate}

The complexes for $T_{2,5},$ $T_{3,4},$ $T_{4,5},$ $T_{5,6}$ and $T_{6,7}$ are staircase complexes and are shown in Figure \ref{fig::31}. As shown in \cite{HMInvolutive}*{Section~7}, the involution is uniquely determined on these complexes, and is reflection across the diagonal in the $(i,j)$ plane. Using \textit{Macaulay2}, we can compute the involutive complexes associated to the connected sums. The relevant correction terms from large surgeries are shown in Figure \ref{fig::table1}. Note that for the knots shown in that figure, none of them have the pattern listed above for the $\bar{V}_0,$ $V_0$ and $\underline{V}_0$ invariants of thin knots, $L$-space knots or mirrors of $L$-space knots.
\end{proof}

\begin{figure}[ht!]
	\centering
\begin{tabular}{| c | c | c | c |}
\hline
  Knot & $\bar{V}_0$ &$V_0$ & $\underline{V}_0$ \\ 
 \hline  $T_{4,5}\# T_{4,5}$&$4$ &$4$ &$6 $\\
 \hline  $T_{4,5}\# T_{4,5}\# T_{5,6}$&$7$ &$7$ &$9 $\\
 \hline $T_{6,7}\# T_{6,7}$ &$9$ &$9$ &$12 $\\
  \hline $T_{4,5}\# T_{6,7}$ &$7$ &$7$ &$9 $\\
    \hline $T_{3,4}^{-1}\# T_{4,5}^{-1}\# T_{5,6}$ &$-1$ &$1$ &$1 $\\
        \hline

\end{tabular}
	\caption{\textbf{Some knots and their involutive invariants of large surgeries.}\label{fig::table1}}
\end{figure}

Some caution is warranted making conclusions about connected sums of torus knots from these computations. For example $T_{5,6}\# T_{5,6}$ has $(\bar{V}_0,$ $V_0,\underline{V}_0)$ equal to $(6,6,6)$.

\begin{figure}[ht!]
	\centering
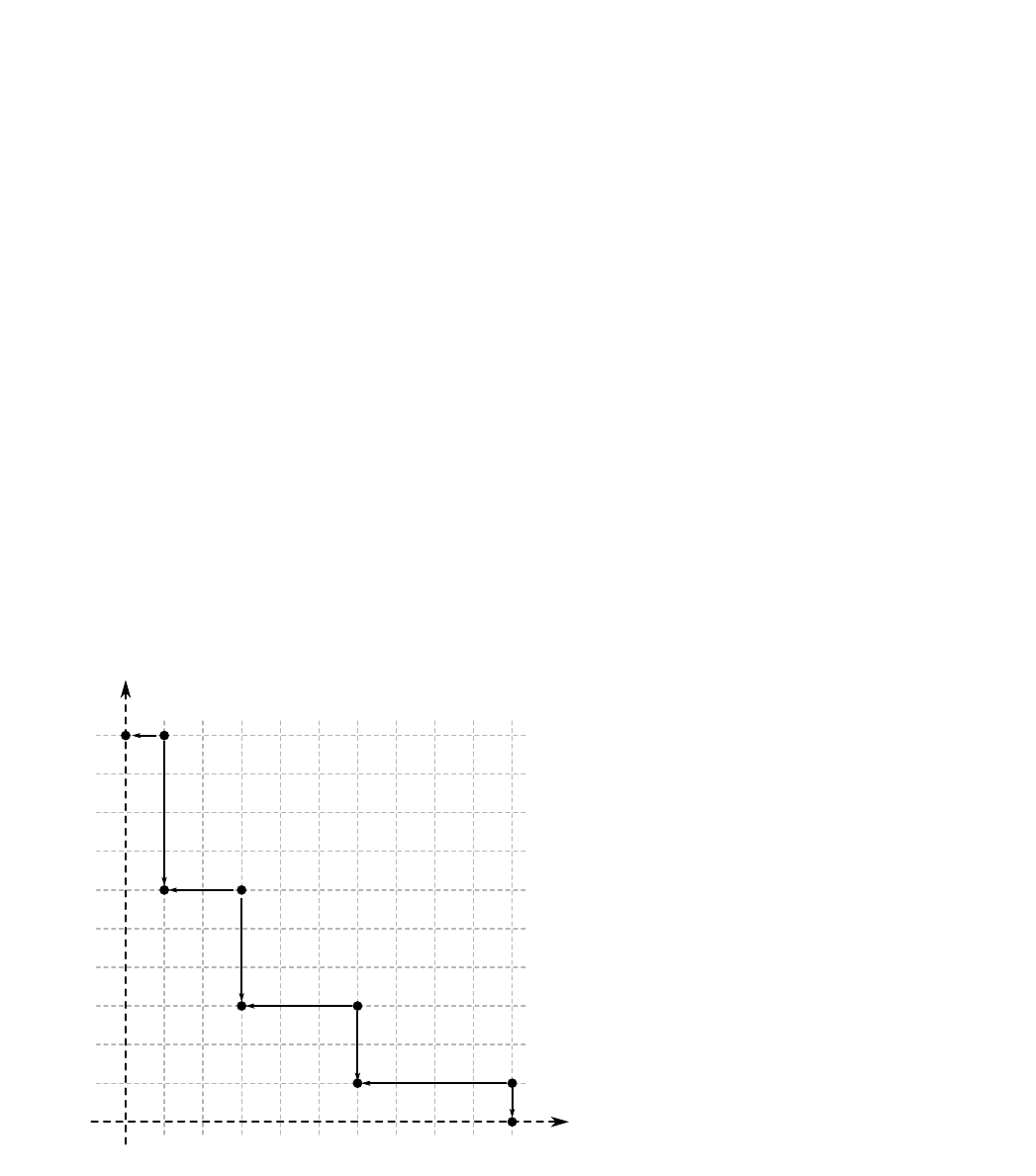
	\caption{\textbf{Model complexes for $T_{3,4},$ $T_{4,5},$ $T_{5,6}$ and $T_{6,7}$.} The $\CFK^\infty$ complexes are formed by tensoring with $\bF_2[U,U^{-1}]$.\label{fig::31}}
\end{figure}

\begin{question}Are there knots in $S^3$ where all three $\bar{V}_0,$ $V_0$ and $\underline{V}_0$ are all different?
\end{question}

The above question seems likely to have an affirmative answer, in light of the examples from \cite{HMZConnectedSum}.

\appendix 
\section{Holomorphic proof of the 4-dimensional surgery relations}
\label{app:directverification}

In this section, we give a direct verification of Lemma \ref{lem:4manifoldsurgery2}, a  key step in the proof of Proposition \ref{prop:surgeryonclosedcurve}. The proof we gave followed from formal properties of the link Floer TQFT. We have included this proof in the appendix since it gives a different perspective of the maps, but involves some subtle counts of holomorphic curves.

\begin{customlem}{\ref{lem:4manifoldsurgery2}}For the decorated surfaces with divides $M_1$ and $M_2$ in Section \ref{subsec:4dimsurg1}, the two maps
\[
F_{S^1\times D^3, M_1,\frt},\quad F_{D^2\times S^2, M_2,\frt_0}\colon \cCFL^\infty(\varnothing,\varnothing)\to \cCFL^\infty(S^1\times S^2,\bL_0)
\] 
are filtered chain homotopy equivalent.
\end{customlem}
\begin{proof}

First, note that by grading considerations, as in the argument from Section \ref{subsec:4dimsurg1}, it is sufficient to restrict to $\hat{\CFL}$, the version obtained from $\cCFL^-$ by formally setting $U=V=0$. On $\hat{\CFL}$, the maps $\Phi_p$ and $\Psi_q$ are defined by counting disks going over $p$ or $q$ exactly once.

To organize the proof, we list the main steps in the following sublemma:

\begin{sublem}\label{sublemma:modelcomp} In terms of the intersection points shown in Figure \ref{fig::40}, the following hold:
\begin{enumerate}
\item \label{subclaim:comp-1}One has
\[
F_{S^1\times D^3,M_3,\frs}(1)=\theta_1^{\ws}\xi_2^{\ws}+(C_2+C_3)\cdot \xi_1^{\ws}\theta_2^{\ws}
\]
 for two constants $C_2,C_3\in \bF_2$;
\item\label{subclaim:comp-2} One has 
\[
F_{D^2\times S^2,M_2,\frs_0}(1)=\theta_1^{\ws}\xi_2^{\ws}+(C_2'+C_3')\cdot \xi_1^{\ws}\theta_2^{\ws}
\]
 for two constants $C_2',C_3'\in \bF_2$;
\item\label{subclaim:comp-3} For any choice of almost complex structure, the constants $C_2,$ $C_3,$ $C_2',$ and $C_3'$ satisfy 
\[
C_2+C_3=C_2'+C_3'.
\]
\end{enumerate}
\end{sublem}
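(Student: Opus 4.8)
The plan is to establish Sublemma~\ref{sublemma:modelcomp} in three stages, mirroring its three parts, and then observe that parts~\eqref{subclaim:comp-1}--\eqref{subclaim:comp-3} immediately give the desired chain homotopy equivalence $F_{S^1\times D^3,M_1,\frt}\eqsim F_{D^2\times S^2,M_2,\frt_0}$, since both maps land in the two-dimensional space $V=\Span_{\bF_2}(\theta_1^{\ws}\xi_2^{\ws},\xi_1^{\ws}\theta_2^{\ws})$ after restricting to $\hat{\HFL}$ (by the grading computation already carried out in Section~\ref{subsec:4dimsurg1}), the coefficient of $\theta_1^{\ws}\xi_2^{\ws}$ is $1$ in both cases, and the coefficient of $\xi_1^{\ws}\theta_2^{\ws}$ agrees by part~\eqref{subclaim:comp-3}.

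First I would decompose each of the two link cobordisms into the elementary pieces provided by the construction in \cite{ZemCFLTQFT}, choosing convenient Heegaard data compatible with the genus-one diagram of Figure~\ref{fig::40}. For $M_3$ (the cobordism $S^1\times D^3$ with the annular surface, obtained from $M_1$ by the bypass manipulation so that $\{t\}\times L$ always meets the divides) the natural decomposition is: a $0$-handle, a quasi-stabilization, a band map, a $1$-handle, another band map, another quasi-stabilization-type map, and a $3$-handle; for $M_2$ (the $D^2\times S^2$ with the pair of disks) one gets a $0$-handle, a quasi-stabilization, a band, a $2$-handle, and a final quasi-stabilization/closure. In each case I compute the image of $1\in\cR$ step by step, using the explicit formulas for the $0$- and $3$-handle maps, the quasi-stabilization maps $S^{\pm}_{p,q},T^{\pm}_{p,q}$ from Equations~\eqref{eq:quasidef1'}--\eqref{eq:quasidef2'}, and the band maps $F_B^{\ws},F_B^{\zs}$ of \cite{ZemCFLTQFT}*{Section~6}, together with triangle counts for the $1$- or $2$-handle map. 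The grading shift formulas \eqref{eq:grading1}--\eqref{eq:grading3} force the output into grading $(0,0)$, hence into $V$; tracking the $\theta^{\ws}$-components through each elementary map pins the $\theta_1^{\ws}\xi_2^{\ws}$-coefficient to~$1$ (this is the ``positive'' diagonal generator and is detected, for instance, by capping off one $S^2$-component and using that the result must be the identity, exactly as in Lemma~\ref{lem:modelcomputationonsphere}), while the remaining holomorphic triangle/disk counts are absorbed into the unknown constants $C_2,C_3$ (resp.\ $C_2',C_3'$). The split into a sum of two constants reflects the two ways the bypass-decomposed cobordism picks up a contribution (one from each of the two relevant homotopy classes of triangles in the genus-one triple), and precisely which classes these are is read off from Figure~\ref{fig::40}.

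The third and genuinely substantive step is to show $C_2+C_3=C_2'+C_3'$ for any auxiliary almost complex structure. The approach is the one sketched at the end of the proof of Lemma~\ref{lem:4manifoldsurgery2}: form the compositions $A'\circ A$, $B'\circ B$, $A'\circ B$, $B'\circ A$, $A'\circ f$, $B'\circ f$, $A'\circ g$, $B'\circ g$ (where $f=A+B$ is the bypass decomposition of $F_{S^1\times D^3,M_1,\frt}$ and $g=F_{D^2\times S^2,M_2,\frt_0}$), interpret each as a closed link cobordism invariant $\cCFL^\infty(\varnothing,\varnothing)\to\cCFL^\infty(\varnothing,\varnothing)$, and identify each underlying surface with either an unknotted $S^2\subset S^4$ or $S^1\times U\subset S^1\times S^3$, as in Figure~\ref{fig::37}. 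The $S^4$ computations give the identity (a $0$-handle capped by a $4$-handle, or two applications of Lemma~\ref{lem:modelcomputationonsphere} with a full twist in between), and the $S^1\times S^3$ computation for $A'\circ A$ and $B'\circ B$ factors through $\Phi\,|\,\Psi$ on $\hat{\CFL}(S^3,\bU)\otimes_{\bF_2}\hat{\CFL}(S^3,\bU)$, which vanishes because $\Phi$ and $\Psi$ are zero on the two-pointed unknot. This yields Equations~\eqref{eq:surgrels3}--\eqref{eq:surgrels2}, from which linear-algebra (the dual functionals $A'|_V,B'|_V$ form a basis of $V^\vee$ and agree on $f(1)$ and $g(1)$) forces $f(1)=g(1)$, i.e.\ $C_2+C_3=C_2'+C_3'$.

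The main obstacle I anticipate is the bookkeeping of $\Spin^c$ structures and orientations when reversing the cobordisms to form $A'$ and $B'$, and the verification that the factorization of $A'\circ A$ through $\Phi\,|\,\Psi$ is genuine rather than merely grading-consistent; this requires a careful manipulation of the dividing sets (sliding the divides so the composite surface visibly contains the cylindrical pieces computing $\Phi$ and $\Psi$) and an application of the composition law~\eqref{eq:compositionlaw} with attention to the $H_1$-divisibility hypotheses so that the $\Spin^c$ sums collapse to single terms. Everything else is a direct, if lengthy, unwinding of the definitions of the elementary link cobordism maps.
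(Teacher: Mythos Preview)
Your proposal diverges from the paper in a way that largely defeats the purpose of the Sublemma. The Appendix exists precisely to give a \emph{holomorphic} alternative to the formal TQFT proof of Lemma~\ref{lem:4manifoldsurgery2} already carried out in Section~\ref{subsec:4dimsurg1}. In the paper, parts~\eqref{subclaim:comp-1} and~\eqref{subclaim:comp-2} are proved by writing down explicit Heegaard triples and identifying, by hand, the finitely many nonnegative Maslov index zero classes that contribute: $C_2,C_3$ are \emph{defined} as the counts $\#\hat{\cM}(\phi_2),\#\hat{\cM}(\phi_3)$ of two specific annular disk classes on $(\Sigma,\as',\bs)$ (arising from the $\Phi_{p_1}$ step), and $C_2',C_3'$ as $\#\cM(\psi_2),\#\cM(\psi_3)$ for two specific annular triangle classes on $(\Sigma,\as',\ds,\bs)$. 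Your sketch never pins these down as individual moduli counts; you simply absorb whatever remains into ``unknown constants,'' so the phrase ``for any choice of almost complex structure'' in part~\eqref{subclaim:comp-3} has no content in your framework.

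More seriously, your plan for part~\eqref{subclaim:comp-3} is to rerun the $A,B,A',B'$ closed-surface argument and Equations~\eqref{eq:surgrels3}--\eqref{eq:surgrels2}. That \emph{is} the proof of Lemma~\ref{lem:4manifoldsurgery2} from the main body; invoking it here is circular relative to the Appendix's stated goal, and in any case it proves $f(1)=g(1)$ directly, rendering parts~\eqref{subclaim:comp-1}--\eqref{subclaim:comp-2} superfluous. The paper's actual argument for~\eqref{subclaim:comp-3} is quite different: one first shows (via a differentiated chain-homotopy relation, Claim~(3)) that $\Phi_{p_1}$ annihilates $F_{D^2\times S^2,M_2,\frt_0}(1)$, which forces $C_2'+C_3'=C_2''+C_3''$ for auxiliary disk counts $C_2'',C_3''$; then one shows $C_i=C_i''$ by counting ends of Maslov index two moduli spaces on $(\Sigma,\as',\bs)$, where $\as$-boundary degenerations appear and are handled via \cite{OSLinks}*{Theorem~5.5}. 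This Gromov compactness step with boundary degenerations is the genuine holomorphic content, and it is absent from your proposal.
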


Once we complete the proof of the above sublemma, it will follow immediately that $F_{S^1\times D^3,M_3,\frs}=F_{D^2\times S^2,M_2,\frs_0}$. Below, we prove each part of the sublemma  separately.
\end{proof}

\begin{proof}[Proof of Part \eqref{subclaim:comp-1} of Sublemma \ref{sublemma:modelcomp}]
We now compute the map $F_{S^1\times D^3,M_1,\frs}$. The four dimensional cobordism is simply obtained by adding a 0-handle and a 1-handle. A decomposition of the decorated surface $M_1$ is shown in Figure \ref{fig::44}. The decorated surface $M_1$ is constructed by adding a disk (inside the 4-dimensional 0-handle), then performing a $T^+_{p_1,q_1}$ quasi-stabilization, then attaching a type-$\ve{z}$ band, which goes around the 1-handle once, then attaching a cylindrical link cobordism with dividing set corresponding to the composition $\Phi_{p_1}\circ \Psi_{q_1}$ for $p_1$ and $q_1$ both on the same components of $\bL_0$. 

\begin{figure}[ht!]
\centering
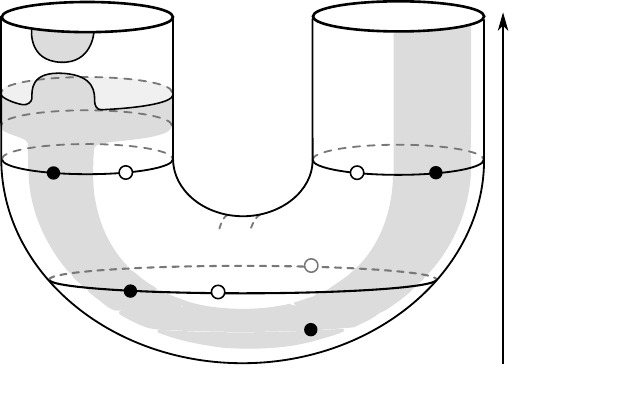
\caption{\textbf{A decomposition of the link cobordism $(S^1\times D^3, M_1)$.} Here we have drawn only the surface (and not the 4-manifold $S^1\times D^3$, obtained by adding a 0-handle and a 1-handle). Also $F_{S^3,\bU,\bS^0}$ and $ F_{\varnothing, \bS^{-1}}$ denote the 1-handle and 0-handle maps, respectively. \label{fig::44}}
\end{figure}

The 0-handle map $F_{\varnothing,\bS^{-1}}$ is the tautological map from $\bF_2$ to $\hat{\CFL}(S^3,\bU)\iso \bF_2$, and the composition $F_{S^3, \bU,\bS^0}\circ T_{p_1,q_1}^+$ of the 1-handle map and the quasi-stabilization map  sends $F_{\varnothing,\bS^{-1}}(1)$ to $\Theta_{\as,\bs}^{\ve{z}}$, the top $\gr_{\ve{z}}$-graded intersection point. The intersection point $\Theta_{\as,\bs}^{\ve{z}}$ can be seen in the subdiagram $(\Sigma,\as,\bs)$ of the Heegaard triple shown at the top of Figure \ref{fig::45}. The map $F_B^{\ve{z}}$ can be computed by using the Heegaard triple  $(\Sigma,\as',\as,\bs)$ from Figure \ref{fig::45}, using the formula
\[
F_B^{\ve{z}}(\ve{x})=F_{\as',\as,\bs}(\Theta^{\ve{w}}_{\as',\as},\ve{x}),
\]
where $\Theta^{\ws}_{\as',\as}$ is the top $\gr_{\ws}$-graded intersection point.

 It is straightforward to verify that, ranging over all intersection points $\ve{y}$, there is only one homology class in any $\pi_2(\Theta^{\ve{w}}_{\as',\as},\Theta^{\ve{z}}_{\as,\bs},\ve{y})$ which is nonnegative and has Maslov index zero. This class is shown in Figure \ref{fig::45} and has $\ve{y}=\xi_1^{\ws}\theta_2^{\ws}$. It has a unique holomorphic representative by the Riemann mapping theorem. Hence $F_{B}^{\ve{z}}(\Theta_{\as,\bs}^{\ve{z}})=\xi_1^{\ws}\theta_2^{\ws}$.

\begin{figure}[ht!]
\centering
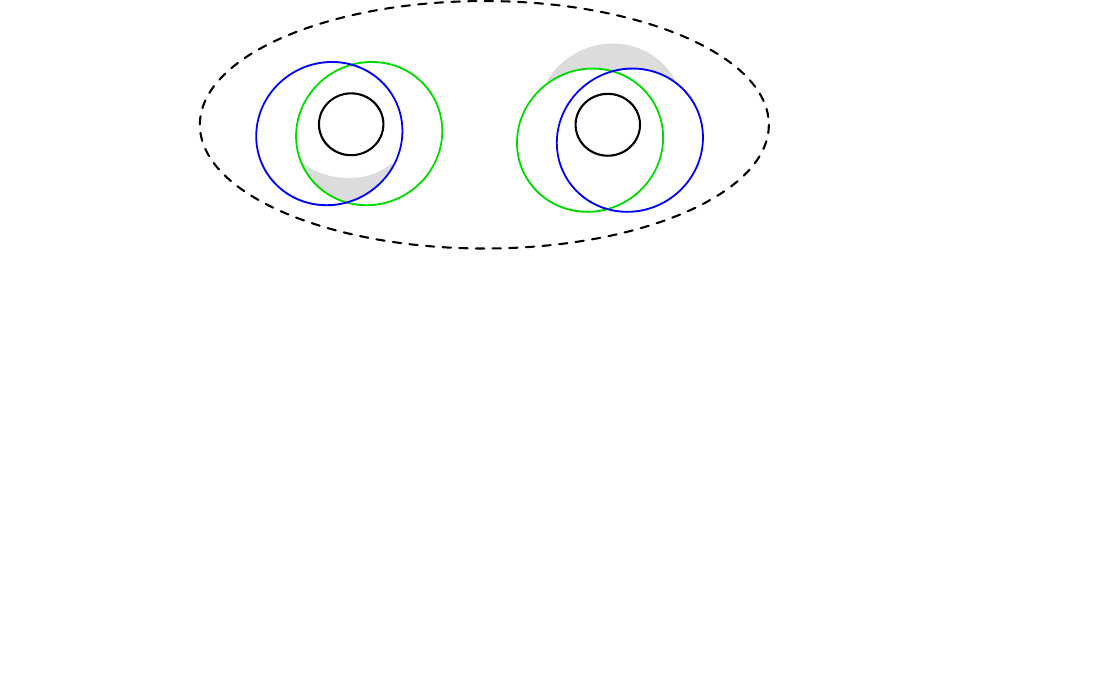
\caption{\textbf{Computing some terms of the maps $F_B^{\ve{z}}$ and $\Psi_{q_1}$ and $\Phi_{p_1}$.} On the top is a Heegaard triple $(\Sigma,\as',\as,\bs)$ which can be used to compute  $F_{B}^{\ve{z}}$. On the bottom left is the only Maslov index zero homology class of disks which contribute to $\Psi_{q_1}(\xi_1^{\ws}\theta_2^{\ws})$. On the bottom right are the three homology classes which contribute to  $\Phi_{p_1}\Psi_{q_1}(\xi_1^{\ws}\theta_2^{\ws})$\label{fig::45}}
\end{figure}

It remains to compute $\Phi_{p_1}\Psi_{q_1}(\xi_1^{\ws}\theta_2^{\ws})$. By inspection, we see that there is exactly one nonnegative Maslov index one homology class $\phi_0$ which contributes to  $\Psi_{q_1}(\xi_1^{\ws}\theta_2^{\ws})$ on $\hat{\CFL}(\Sigma,\as',\bs)$. This is shown in Figure \ref{fig::45}. Since $\# \hat{\cM}(\phi_0)=1$, have 
\[
\Psi_{q_1}(\xi_1^{\ws}\theta_2^{\ws})=\xi_1^{\ws}\xi_2^{\ws}.
\] 
 On the other hand, there are three nonnegative Maslov index one homology classes which potentially contribute to $\Phi_{p_1}(\xi_1^{\ws}\xi_2^{\ws})$. By the Riemann mapping theorem $\#\hat{\cM}(\phi_1)=1$. Write $C_2=\# \hat{\cM}(\phi_2)$ and $C_3=\# \hat{\cM}(\phi_3)$. Then
\[
\Phi_{p_1}(\xi_1^{\ws}\xi_2^{\ws})=\theta_1^{\ws}\xi_2^{\ws}+(C_2+C_3)\cdot \xi_1^{\ws}\theta_2^{\ws},
\] 
and hence,
\begin{equation}F_{S^1\times D^3,M_1,\frs}(1)=\theta_1^{\ws}\xi_2^{\ws}+(C_2+C_3)\cdot \xi_1^{\ws}\theta_2^{\ws}.\label{eq:FM1=}
\end{equation}
\end{proof}

\begin{proof}[Proof of Part \eqref{subclaim:comp-2} of Sublemma \ref{sublemma:modelcomp} ] The map $F_{D^2\times S^2, M_2,\frt_0}$ can be computed by starting with a 0-handle map, which sends $1$ to the generator of $\hat{\CFL}(S^3,\bU)$ (where $\bU$ is a doubly pointed unknot). Then one adds an additional unknotted component $\bU'$ using the ``birth cobordism map'' from \cite{ZemCFLTQFT}*{Section~7.1}. We will use the diagram $(\Sigma,\ds, \bs, \ps,\qs)$ for $(S^3,\bU\cup \bU')$ shown in Figure~\ref{fig::52}.

\begin{figure}[ht!]
\centering
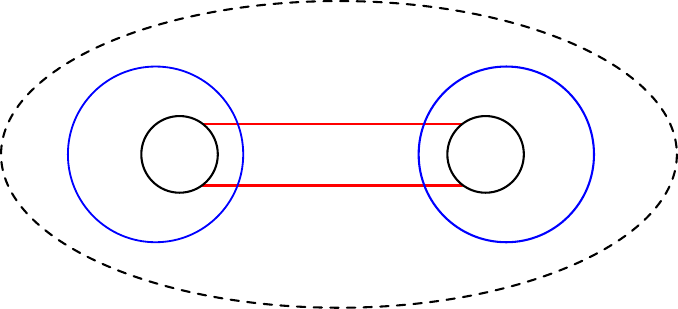
\caption{\textbf{The diagram $(\Sigma, \ds,\bs,\ps,\qs)$ for $(S^3, \bU\cup \bU')$.} The homology $\hat{\CFL}(\Sigma,\ds,\bs,\ps,\qs)$ is rank 2 over $\bF_2$, and the top degree intersection point $\Theta^+_{\ds,\bs}$ is marked.\label{fig::52}}
\end{figure}

 The composition of the 0-handle map and the birth cobordism maps sends the generator $1$ to $\Theta^+_{\ds,\bs}$, the top graded element of $\hat{\CFL}(S^3,\bU\cup \bU')$ (the gradings $\gr_{\ve{w}}$ and $\gr_{\ve{z}}$ coincide for $\hat{\CFL}(S^3,\bU\cup \bU')$). Finally, this is composed with a 2-handle map for a surgery on a 0-framed unknot which has linking number $+1$ with $\bU$ and $-1$ with $\bU'$.  The 2-handle map is computed by counting holomorphic triangles in the Heegaard triple $(\Sigma,\as',\ds,\bs,\ve{p},\ve{q})$  shown in Figure~\ref{fig::43}. Shown also in  Figure~\ref{fig::43} are three homology classes of triangles. One can easily check that these are the only homology classes with Maslov index zero in any $\pi_2(\Theta^{+}_{\as',\ds},\Theta_{\ds,\bs}^+,\ve{y})$ which have only nonnegative multiplicities.

Note that the maps 
\[
\frs_{\ve{w}},\, \frs_{\ve{z}}\colon\pi_2(\Theta^+_{\as',\ds},\Theta^+_{\ds,\bs},\ve{y})\to \Spin^c(D^2\times S^2)
\] 
are equal, since $\Spin^c$ structures on $D^2\times S^2$ are uniquely determined by their restriction to $S^1\times S^2$ and $\frs_{\ve{w}}(\ve{y})=\frs_{\ve{z}}(\ve{y})$ for an intersection point $\ve{y}$ on $(\Sigma,\as',\bs)$ by Lemma \ref{lem:changeSpincstructure} since $\bL_0$ is null-homologous. Furthermore, all three classes in Figure \ref{fig::43} represent $\frt_0$, since they restrict to the torsion $\Spin^c$ structure on $S^1\times S^2$.

\begin{figure}[ht!]
\centering
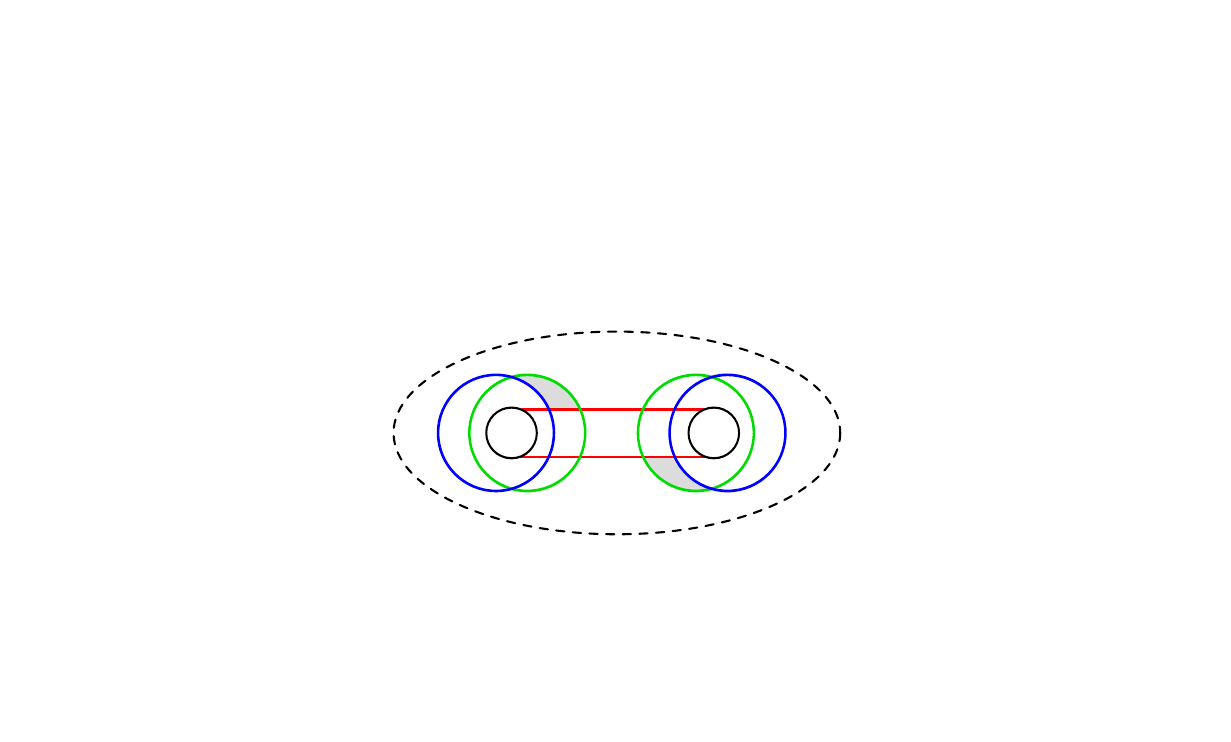
\caption{\textbf{Computing the 2-handle map.} On top is the Heegaard triple for computing the 2-handle map for the cobordism $D^2\times S^2$. The intersection points $\Theta_{\as',\ds}^+$ and $\Theta_{\ds,\bs}^+$ are labeled with dots, in the top diagram. On the bottom are the three nonnegative Maslov index zero homology classes in $\pi_2(\Theta_{\as',\ds}^+,\Theta^+_{\ds,\bs},\ve{x})$ for some $\ve{x}$.\label{fig::43}}
\end{figure}

The homology class $\psi_1$ is in $\pi_2(\Theta_{\as',\ds}^{+},\Theta_{\ds,\bs}^+,\theta^{\ws}_1\xi_2^{\ws})$, while $\psi_2$ and $\psi_3$ are in $\pi_2(\Theta_{\as',\ds}^{+},\Theta_{\ds,\bs}^+,\xi_1^{\ws}\theta_2^{\ws})$. The homology class $\psi_1$ consists of only small triangles, and has a unique holomorphic representative by the Riemann mapping theorem. The homology classes $\psi_2$ and $\psi_3$ each consist of annular domains.  Write $C_2'=\# \cM(\psi_2)$ and $C_3'=\# \cM(\psi_3)$.

We conclude
\begin{equation}F_{D^2\times S^2,M_2,\frs_0}(1)=\theta_1^{\ws}\xi_2^{\ws}+(C_2'+C_3')\cdot \xi_1^{\ws}\theta_2^{\ws}.\label{eq:FM2=}\end{equation}
\end{proof}

\begin{proof}[Proof of Part \eqref{subclaim:comp-3} of Sublemma \ref{sublemma:modelcomp} ] We now show that
\[
C_2+C_3=C_2'+C_3'.
\]
One could hope to find precise values for $C_2,$ $C_3,$ $C_2'$ and $C_3'$ (which we defined to be the number of representatives of $\phi_2,$ $\phi_3,$ $\psi_2$ and $\psi_3$, respectively) by using conformal mapping techniques, however this is challenging to make precise. Instead, we will use a (somewhat indirect) Gromov compactness argument to show that $C_2+C_3=C_2'+C_3',$ for any choice of almost complex structure.

We will prove the following subclaims:
\begin{enumerate}[\hspace{22mm}]
\item[\textbf{Claim (1):}] The map $\Phi_{p_1}$ vanishes on both $\hat{\CFL}(\Sigma,\as',\ds)$ and $\hat{\CFL}(\Sigma,\ds,\bs)$.
\item[\textbf{Claim (2):}] $\hat{\d}_{\as',\ds},$ $ \hat{\d}_{\ds,\bs}$ and $\hat{\d}_{\as',\bs}$ all vanish on $\hat{\CFL}$.
\item[\textbf{Claim (3):}] There is a filtered, equivariant map $H_{p_1}\colon \cCFL^-(\Sigma,\as',\ds)\otimes \cCFL^-(\Sigma,\ds,\bs)\to \cCFL^-(\Sigma,\as',\bs)$ such that
\[
F_{\as',\ds,\bs,\frt_0}(\Phi_{p_1}|1+1|\Phi_{p_1})+\Phi_{p_1} F_{\as',\ds,\bs,\frt_0}+H_{p_1}(\d_{\as',\ds}|1+1|\d_{\ds,\bs})+\d_{\as',\bs} H_{p_1}=0.
\]
 Furthermore, this holds on the curved version of the link Floer complexes over the ring $\bF_2[U_{p_1}, U_{p_2}, V_{q_1}, V_{q_2}]$.
\item[\textbf{Claim (4):}] As maps on $\hat{\CFL}(\Sigma,\ve{\alpha}',\ve{\beta})$, one has 
\begin{equation}
\Phi_{p_1}(\xi_1^{\ws}\theta_2^{\ws})=\theta_1^{\ws}\theta_1^{\ws} \qquad \text{and}\qquad  \Phi_{p_1}(\theta_1^{\ws}\xi_2^{\ws})=(C_1''+C_2'')\cdot  \theta_1^{\ws}\theta_2^{\ws},\label{eq:valuesofPhi}
\end{equation}
where $C_1''=\# \hat{\cM}(\phi_2'')$ and $C_2''=\# \hat{\cM}(\phi_3'')$ for the classes $\phi_2'',\phi_3''\in \pi_2(\theta_1^{\ws}\xi_2^{\ws},\theta_1^{\ws}\theta_2^{\ws})$ shown in Figure~\ref{fig::47}.
\end{enumerate}

\begin{figure}[ht!]
\centering
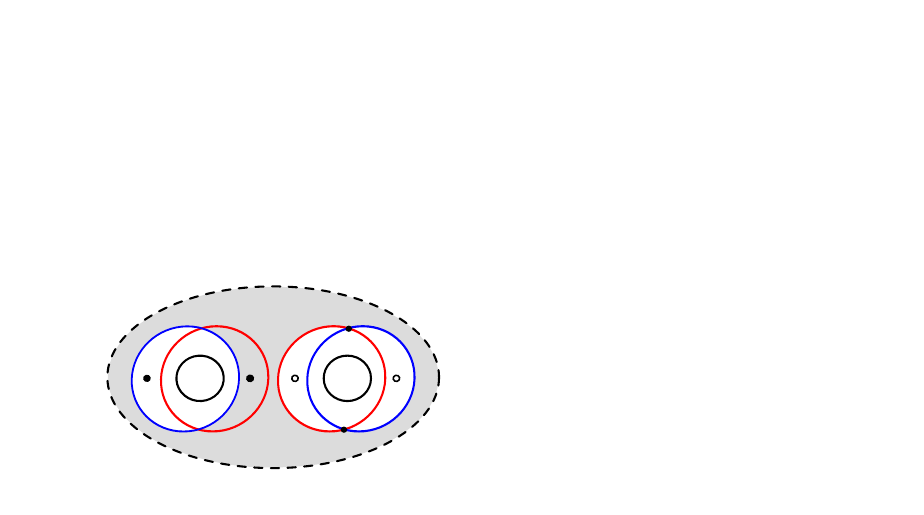
\caption{\textbf{The classes counted by $\Phi_{p_1}(\xi_1^{\ws}\theta_2^{\ws})$ ad $\Phi_{p_1}(\theta_1^{\ws}\xi_2^{\ws})$.}\label{fig::47}}
\end{figure}

We now verify Claims (1)--(4).

Claim (1) is obvious, since on $(\Sigma,\as',\ds)$ and $(\Sigma,\ds,\bs)$ the region with $p_1$ also has $q_1$ in it (so no disks are counted by $\Phi_{p_1}$ on the hat flavor).

Claim (2) is also straightforward. On $(\Sigma,\ve{\alpha}',\ve{\beta})$ there are no nonnegative, Maslov index 1 domains which have zero multiplicity over all of the basepoints. Similarly, on each of the diagrams $(\Sigma,\as',\ds)$ and $(\Sigma,\ds,\bs)$, there are exactly two nonnegative, Maslov index 1 classes which do not go over any basepoints. Each has domain equal to a rectangle, so each class has a unique representative and modulo two, and hence the total count between the two classes cancels, modulo two.

Claim (3) follows by noting that Gromov compactness yields that
 \[
 F_{\as',\ds,\bs,\frt_0}(\d_{\as',\ds}|1+1|\d_{\ds,\bs})+\d_{\as',\bs}F_{\as',\ds,\bs,\frt_0}=0,
 \] 
 on $\cCFL^-$, before setting any variables equal to each other. One then differentiates the above expression with respect to $U_{p_1}$, and then sets all of the $U_{p_i}$ and $V_{q_i}$ variables to zero. Claim (3) follows.

Claim (4) follows from direct examination of the diagram in Figure \ref{fig::47}, where we show all the index 1 classes that can have representatives and can contribute to $\Phi_{p_1}(\xi_1^{\ws}\theta_2^{\ws})$ or $\Phi_{p_1}(\theta_1^{\ws}\xi_2^{\ws})$.

Having proven Claims (1)--(4), we proceed with the proof that $C_2+C_3=C_2'+C_3'$. Combining Claims (1), (2) and (3), we conclude that
\[
\Phi_{p_1}( F_{\as',\ds,\bs,\frt_0}(\Theta^+_{\as',\ds},\Theta^+_{\ds,\bs}))=0.
\]
 In particular, 
\begin{equation}\Phi_{p_1}(F_{D^2\times S^2,M_2,\frt_0}(1))=0.\label{eq:phivanishesonimage}\end{equation} By Part \eqref{subclaim:comp-2} of Sublemma \ref{sublemma:modelcomp}, we have
\[
F_{D^2\times S^2,M_2,\frt_0}(1)= \theta_1^{\ws}\xi_2^{\ws}+(C_2'+C_3')\cdot \xi_1^{\ws}\theta_1^{\ws}.
\]
 Composing with $\Phi_{p_1}$ and using Equations \eqref{eq:valuesofPhi} and \eqref{eq:phivanishesonimage}, we see that
\[
0=\Phi_{p_1}(F_{D^2\times S^2,M_2,\frt_0}(1))=(C_2''+C_3''+C_2'+C_3')\cdot\theta_1^{\ws}\theta_1^{\ws}=0.
\] 
Thus we conclude that
\begin{equation}C_2'+C_3'=C_2''+C_3''.\label{eq:C'C''equal}\end{equation}

 The final step is to show that $C_2=C_2''$ and $C_3=C_3''$, which combined  \eqref{eq:C'C''equal} will finish the proof of Part \eqref{subclaim:comp-3} of Sublemma \ref{sublemma:modelcomp}. To this end, we will show that $C_3=C_3''$; the argument that $C_2=C_2''$ is nearly identical. This will follow from a holomorphic curve chasing argument (i.e. by using Gromov compactness). Recall that $C_3=\# \hat{\cM}(\phi_3)$ and $C_3''=\hat{\cM}(\phi_3'')$. Also note that $\phi_3$ and $\phi_3''$ have the same domain, but they represent different homology classes since they have different incoming and outgoing intersection points. We count the ends of two Maslov index 2 homology classes. This is shown in Figure \ref{fig::46}. Note that the domains are the same, though the homology classes have different choices of incoming and outgoing intersection points. Here $\#\hat{\cN}^\alpha(A)$ denotes the count of holomorphic $\as$-boundary degenerations in a class $A\in \pi_2^\alpha(\ve{x})$ (see \cite{OSLinks}*{Section~5.3}). By \cite{OSLinks}*{Theorem~5.5}, the count of each of the two spaces of boundary degenerations is 1 (modulo 2). Each the two 1-dimensional moduli spaces in Figure \ref{fig::46} has boundary components corresponding to a single class of $\as$-boundary degenerations, which hence each contribute equally to the boundaries of the two 1-dimensional moduli spaces.

Thus the left columns of Figure \ref{fig::46} have moduli spaces contributing $1$ to the total numbers of ends. The ends in the middle column feature the same pair of holomorphic disks resulting from strip breaking, just in reversed order. Hence the contributions from the middle columns are equal. In the right column, the end on the top of the Figure contributes $C_3''=\#\hat{ \cM}(\phi_3'')$. On the bottom, the contribution is $C_3=\# \hat{\cM}(\phi_3)$. Comparing all of these contributions, we conclude that $C_3=C_3''$, concluding the proof.
\end{proof}

\begin{figure}[ht!]
\centering
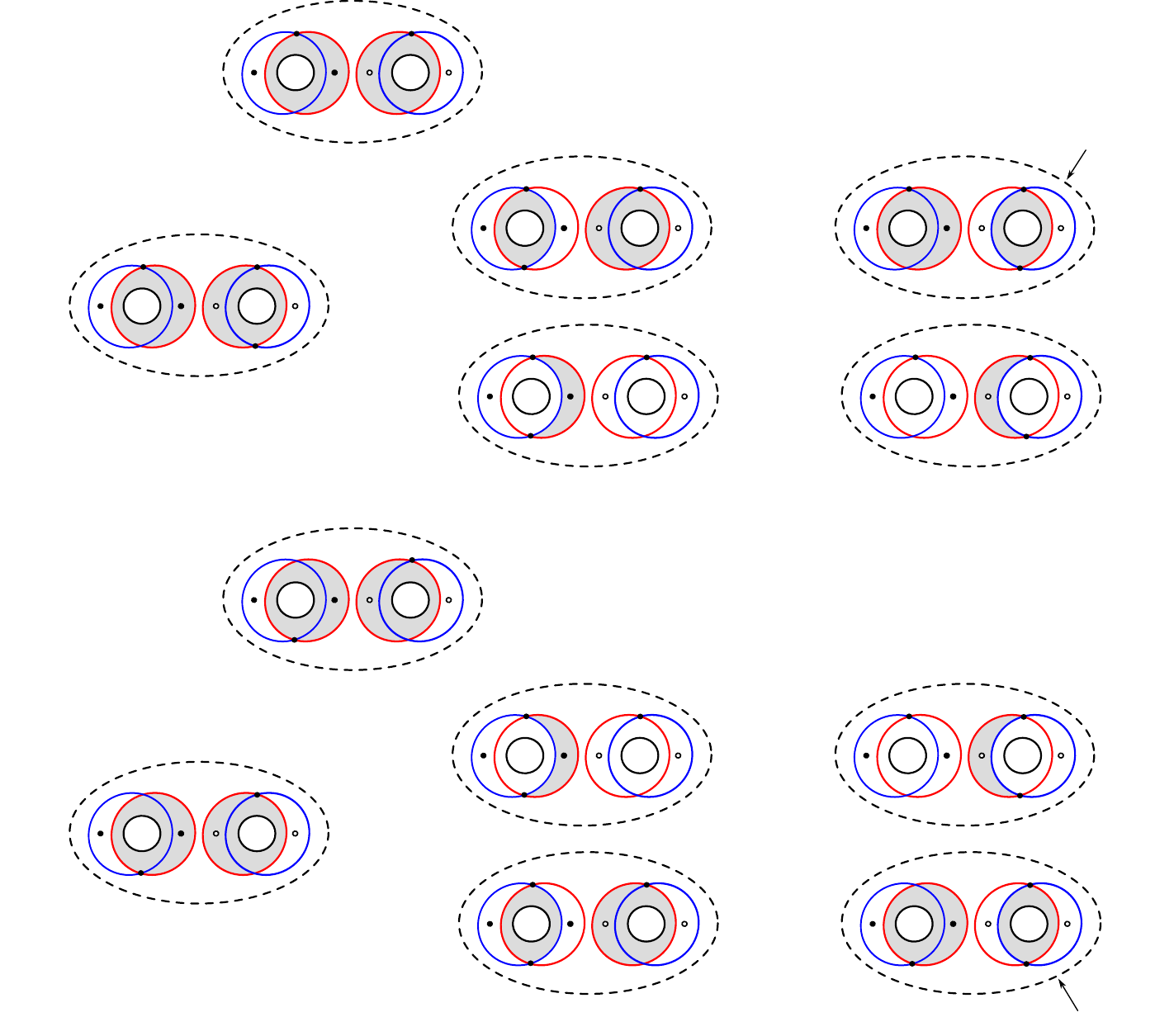
\caption{\textbf{Showing $C_3=C_3''$ by counting the ends of two 1-dimensional moduli spaces.} There are ends corresponding to $\as$ boundary degenerations, as well as strip breaking. \label{fig::46}}
\end{figure}

\bibliographystyle{custom} 
\bibliography{biblio}

\end{document}